\newtheorem{theorem}{Theorem}[section]
\newtheorem{lemma}[theorem]{Lemma}
\newtheorem{proposition}[theorem]{Proposition}
\newtheorem{corollary}[theorem]{Corollary}
\newtheorem{conjecture}[theorem]{Conjecture}
\theoremstyle{definition}
\newtheorem{definition}[theorem]{Definition}
\theoremstyle{remark}
\newtheorem{remark}[theorem]{Remark}
\numberwithin{equation}{section}
\newcommand{\LP}[2]{L^{#1}(X_{#2}, \Sigma_{#2}, m_{#2})}
\newcommand{\M}{\mathcal{M}}
\newcommand{\tM}{\widetilde{\mathcal{M}}}
\newcommand{\I}{1\!{\mathrm l}}
\newcommand{\cM}[2]{{\mathcal M}_{#1}\rtimes_{\sigma^{#2}}{\mathbb R}}
\newcommand{\tr}{{\rm tr}}
\newcommand{\wtr}{\widetilde{\rm tr}}
\begin{document}

\title{A crossed product approach to Orlicz spaces}

\author{Louis Labuschagne}
\address{Internal Box 209, School of Comp., Stat. \& Math. Sci., NWU, Pvt. Bag X6001, 2520 Potchefstroom, South
Africa} 
\email{louis.labuschagne@nwu.ac.za}
\subjclass[2010]{46L51, 46L52, 46E30 (Primary); 47L65 (Secondary)}
\date{\today}
\keywords{}
\thanks{This work is based on research supported by the National Research Foundation. Any opinion, findings and conclusions or recommendations expressed in this material, are those of the author, and therefore the NRF do not accept any liability in regard thereto.}

\begin{abstract} 
We show how the known theory of noncommutative Orlicz spaces for semifinite von Neumann algebras equipped with an fns trace, may be recovered using crossed product techniques. Then using this as a template, we construct analogues of such spaces for type III algebras. The constructed spaces naturally dovetail with and closely mimic the behaviour of Haagerup $L^p$-spaces. We then define a modified $K$-method of interpolation which seems to better fit the present context, and give a formal prescription for using this method to define what may be regarded as type III Riesz-Fischer spaces. 
\end{abstract}

\maketitle

\tableofcontents

\section{Preliminaries}

General von Neumann algebraic notation will be based on that of \cite{BRo},  
\cite{Tak} with $\M$ denoting a von Neumann algebra and $\I$ the identity  
element thereof. The projection lattice of a von Neumann algebra 
$\M$ will be denoted by $\mathbb{P}(\M)$. As regards $L_p$-spaces we will use 
\cite{Tp} and \cite{FK} as basic references for the non-commutative context. 
In the case that $\M$ is semifinite, the fns trace of $\M$ will be denoted 
by $\tau_{\M} = \tau$. For such algebras we will denote the space of 
$\tau_\M$-measurable operators affiliated with $\M$ by $\tM$. 

For a von Neumann algebra $\M$ with an \emph{fns} weight $\nu $, the
crossed product of $\M$ with the modular action induced by $\nu$
will be denoted by $\mathcal{A}=\cM{}{}$ and the canonical trace on $\mathcal{A}=\cM{}{}$ by
$\tau_{\mathcal{A}}$. It is well-known that there is a dual action of $\mathbb{R}$ on this crossed product in the form of a one-parameter group of automorphisms $\theta_s$ on 
$\mathcal{A}$ satisfying the condition that $\tau_{\mathcal{A}}\circ\theta_s = e^{-s}\tau_{\mathcal{A}}$. 
The Haagerup $L^p$ spaces $L^p(\M)$ are then realised by means of the canonical extensions 
of these automorphisms to $\widetilde{\mathcal{A}}$. Specifically for $0<p<\infty$ 
we have that $L^p(\M) = \{a\in \widetilde{\mathcal{A}}: \theta_s(a) = e^{-s/p}a \mbox{ for all } s\in\mathbb{R}\}$, with  
$L^\infty(\M) = \{a\in \widetilde{\mathcal{A}}: \theta_s(a) = a \mbox{ for all } s\in\mathbb{R}\}$. Now let $h =
\frac{d\widetilde{\nu}}{d\tau_{\mathcal{A}}}$ where $\widetilde{\nu}$ is
the dual weight of $\nu$ on the crossed product. Then $h$ is a closed densely
defined positive non-singular operator affiliated with the crossed
product. In general, $h$ is not $\tau_{\mathcal{A}}$-measurable, so it has to be treated with care.
We will write $\mathfrak{n}_\nu$ for $\{a\in\M:\nu(a^*a)<\infty\}$, and $\mathfrak{m}_\nu$ 
for the linear span of elements of the form $b^*a$ with $a,b\in\mathfrak{n}_\nu$. 
The $*$-algebra $\mathfrak{m}_\nu$ is of course $\sigma$-weakly dense in $\M$, and is 
linearly spanned by positive elements $a$ from the algebra satisfying
$\nu(a)<\infty$. Although $h$ is in general not pre-measurable, for any 
$a\in \mathfrak{n}_\nu$ the compositions $ah^{1/p}$, 
$h^{1/p}a$ (where $p\geq 2$) will be. To deal with such operators we adopt the 
following convention:
whenever a formula consists of (pre)measurable operators only, their
juxtaposition denotes their strong product; otherwise, it denotes the usual
operator product, and we use square brackets for the closure of a closable
operator. Sometimes we add parentheses to avoid ambiguity. For example, if $h$
is not measurable, but $a,b$ and $h^{1/p}b$ are, we write $a(h^{1/p}b)$ to
denote the strong product of $a$ and $h^{1/p}b$.

By the term an \emph{Orlicz function} we understand a convex function 
$\psi : [0, \infty) \to [0, \infty]$ satisfying $\psi(0) = 0$ and $\lim_{u \to \infty} 
\psi(u) = \infty$, which is neither identically zero nor infinite valued on all of 
$(0, \infty)$, and which is left continuous at $b_\psi = \sup\{u > 0 : \psi(u) < 
\infty\}$. It is worth pointing out that any Orlicz function 
must also be increasing, and continuous on $[0, b_\psi]$. 

Each Orlicz function $\psi$ induces a complementary Orlicz function $\psi^*$ which is defined by 
$\psi^*(u) = \sup_{v > 0}(uv - \psi(v))$. The pair $\psi$ and $\psi^*$ satisfy the following Hausdorff-Young inequality:
$$st \leq \psi(s) + \psi^*(t) \qquad s,t \geq 0.$$
The so-called right-continuous inverse $\psi^{-1}: [0, \infty) 
\to [0, \infty]$ of an Orlicz function is defined by the formula
$$\psi^{-1}(t) = \sup\{s : \psi(s) \leq t\}.$$

Let $\LP{0}{}$ be the space of measurable 
functions on some $\sigma$-finite measure space $(X, \Sigma, m)$. The Orlicz space 
$\LP{\psi}{}$ associated with  $\psi$ is defined to be the set $$L^{\psi} = \{f \in 
L^0 : \psi(\lambda |f|) \in L^1 \quad \mbox{for some} \quad \lambda = \lambda(f) > 0\}.$$
This space turns out to be a linear subspace of $L^0$ which becomes a Banach space when 
equipped with the so-called Luxemburg-Nakano norm 
$$\|f\|_\psi = \inf\{\lambda > 0 : \|\psi(|f|/\lambda)\|_1 \leq 1\}.$$
An equivalent norm (the Orlicz norm) is given by the formula
$$\|f\|^O_\psi = \sup\{|\textstyle{\int_X} fg\, dm| : g\in L^{\psi^*}, \|g\|_\psi\leq1\}.$$
A recently verified fact regarding this norm (see \cite{HM}), is that it may also be realised by the formula
$$\|f\|^O_\psi = \inf_{k > 0}(1 + \|\psi(k|f|)\|_1)/k.$$Although these norms are equivalent, we shall 
hereafter adopt the convention of writing $L^\psi(X, \Sigma, m)$ when the Luxemburg norm is used, and 
$L_\psi(X, \Sigma, m)$ when the Orlicz norm is used. The Lebesgue $L^p$ spaces (with $1\leq p<\infty$) are of course just Orlicz 
spaces corresponding to the Orlicz function $\psi(t)=t^p$. For these spaces we retain the notation 
$L^p$ rather than the more cumbersome $L^{t^p}$.  

Given an Orlicz function $\psi$, in the context of semifinite von Neumann algebras 
$\M$ equipped with an fns trace $\tau$, the process of defining and describing noncommutative versions of the spaces 
$L^\psi$ and $L_\psi$ is fairly well understood. Here the role of 
$L^0$ is played by the $*$-algebra of $\tau$-measurable operators 
$\widetilde{\M}$ (equipped with the topology of convergence in measure). One may either construct these spaces 
directly using brute force, or obtain them as consequences of the very elegant theory of noncommutative 
rearrangement invariant Banach Function Spaces. We briefly review the latter, before making a few comments regarding the former.

Let $\M$ be a semifinite von Neumann algebra. Given an element $f \eta \M$ and $t \in [0, \infty)$, the \emph{generalised singular 
value} $\mu_t(f)$ is defined by $\mu_t(f) = \inf\{s \geq 0 : \tau(\I - \chi_s(|f|)) \leq t\}$ where 
$\chi_s(|f|)$, $s \in \mathbb{R}$, is the spectral resolution of $|f|$. (This directly extends classical notions where for any $f \in \LP{\infty}{}$, 
the function $(0, \infty) \to [0, \infty] : t \to \mu_t(f)$ is known as the decreasing 
rearrangement of $f$.) Since this quantity is in some sense the starting point of the theory of noncommutative Banach Function spaces, we pause review some of the basic properties of this quantity as presented in the paper of Fack and Kosaki \cite{FK}. Having reviewed these, we will freely use them where needed without detailed reference in the ensuing sections.

If we are given $a\in\tM$, the distribution function $[0,\infty]\to[0,\infty]:s\to \lambda_s(a) = \tau(\chi_{(s,\infty)}(|a|))\quad s\geq 0$, exhibits more regular behaviour than if merely $a\eta\M$. Specifically we will then have that $\lambda_s(a)<\infty$ for large $s$ with $\lim_{s\to \infty}\lambda_s(a)=0$. This function then also turns out to be non-increasing and right-continuous. The singular values mentioned above may then also 
be realised in terms of the distribution function by means of the formula $\mu_t(a) = \inf\{s \geq 0 | \lambda_s(a) \leq t\}$ \cite[Proposition 2.2]{FK}. The singular value function $t \to \mu_t(f)$ will generally be denoted by $\mu(f)$. The formula we have just mentioned, shows that $\mu(a)$ is some sort of weak left-inverse of $s\to\lambda_s(a)$. By exploiting this fact, one can show that $\lambda_{\mu_t(a)}(a)\leq t$ for all $t \geq 0$. Further properties of this singular value function include the following \cite[Lemma 2.5]{FK}: (Here we consistently assume $a, b\in \tM$.)
\begin{itemize}
\item The map $(0,\infty)\to [0,\infty]:t\to \mu_t(a)$ is nondecreasing and continuous from the right with $\lim_{t\to 0^+}\mu_t(a)=\|a\|_\infty$ (where $\infty$ is taken as a value for $\|a\|_\infty$ when $a\notin \M$).
\item $\mu_t(a)=\mu_t(|a|)=\mu_t(a^*)$ and $\mu_t(\alpha a)=|\alpha|\mu_t(a)$ for $t>0$.
\item $\mu_{t+s}(a+b)\leq \mu_t(a)+\mu_s(b)$ ($t,s>0$) with in addition $\mu_t(a)\leq \mu_t(b)$ whenever $0\leq a\leq b$. 
\item $\mu_{t+s}(ab)\leq \mu_t(a)\mu_s(b)$ ($s,t>0$) with $\mu_t(ab)\leq \|a\|_\infty\mu_t(b)$ ($t>0$).
\item For any increasing continuous function on $[0,\infty)$ with $f(0)\geq 0$, we have that $f(\mu_t(|a|))=\mu_t(f(|a|))$. 
\end{itemize} 
We further get the following simple but elegant trace formula \cite[Proposition 2.7]{FK}: 
$$\tau(|a|) = \int_0^\infty\mu_t(a)\,dt\quad a\in\tM.$$The topology of convergence in measure itself can also be characterised in terms of the singular value function. Specifically we have that a sequence $\{a_n\}\subset\tM$ converges to $a\in\tM$ in the topology of convergence in measure if and only if for any $t>0$ we have that $\lim_{n\to\infty}\mu_t(a_n-a)=0$ \cite[Lemma 3.1]{FK}. 

In the case of Haagerup $L^p$-spaces (where $\M$ need not be semifinite) we also have a close connection of the topology of these spaces to the singular value function \cite[]{FK}. Specifically for any $a\in L^p(\M)\subset \widetilde{\mathcal{A}}$ ($\infty> p > 0$) we will in this case have that $$\|a\|_p = t^{1/p}\mu_t(a)\quad t>0$$where $\mu(a)$ is relative to the trace $\tau_{\mathcal{A}}$.
   
We proceed to elucidate the use of this function in the theory of noncommutative Banach Function spaces. Classically a function norm 
$\rho$ on $L^0(0, \infty)$ is defined to be a mapping $\rho : L^0_+ \to [0, \infty]$ satisfying
\begin{itemize}
\item $\rho(f) = 0$ iff $f = 0$ a.e.  
\item $\rho(\lambda f) = \lambda\rho(f)$ for all $f \in L^0_+, \lambda > 0$.
\item $\rho(f + g) \leq \rho(f) + \rho(g)$ for all .
\item $f \leq g$ implies $\rho(f) \leq \rho(g)$ for all $f, g \in L^0_+$.
\end{itemize}
Such a $\rho$ may be extended to all of $L^0$ by setting $\rho(f) = \rho(|f|)$, in which case 
we may then define $L^{\rho}(0, \infty)$ to be the space $L^{\rho}(0, \infty) = 
\{f \in L^0(0, \infty) : \rho(f) < \infty\}$. If $L^{\rho}(0, \infty)$ turns out to be a Banach space 
when equipped with the norm $\rho(\cdot)$, we refer to it as a Banach Function Space. 
If $\rho(f) \leq \lim\inf_n \rho(f_n)$ 
whenever $(f_n) \subset L^0$ converges almost everywhere to $f \in L^0$, we say that $\rho$ 
has the Fatou Property. (This is equivalent to the requirement that $\rho(f_n) \uparrow \rho(f)$ whenever $0 \leq f_n \uparrow f$ a.e. \cite[11.4]{AB}.) If less generally this implication only holds for $(f_n) \cup \{f\} 
\subset L^{\rho}$, we say that $\rho$ is lower semi-continuous. If further the situation $f 
\in L^\rho$, $g \in L^0$ and $\mu_t(f) = \mu_t(g)$ for all $t > 0$, forces $g \in L^\rho$ and 
$\rho(g) = \rho(f)$, we call $L^{\rho}$ rearrangement invariant (or symmetric). (The concept of a Banach Function norm can of course in a similar fashion equally well be defined for $L^0(X, \Sigma, \nu)$, where $(X, \Sigma, \nu)$ is an arbitrary measure space.) 

Using the above context, Dodds, Dodds and de Pagter \cite{DDdP} formally defined the noncommutative space 
$L^\rho(\M,\tau)$ to be $$L^\rho(\M,\tau) = \{f \in \widetilde{\M} : \mu(f) \in 
L^{\rho}(0, \infty)\}$$and showed that if $\rho$ is lower semicontinuous and $L^{\rho}(0, 
\infty)$ rearrangement-invariant, $L^\rho(\M,\tau)$ is a Banach space when equipped 
with the norm $\|f\|_\rho = \rho(\mu(f))$. We pause to comment on the three somewhat conflicting notational conventions that have been introduced so far. When considering $L^p$ spaces we will generally use the letters $p$, $q$ or $r$ as indices. When the category of Orlicz spaces is being considered, the relevant Orlicz function will be used as an index. If on the other hand the category of Banach Function Spaces is in view, it is standard practice to index these spaces with the relevant Banach Function norm. In this context we will generally use $\rho$ (with a suitable subscript where appropriate) to denote such a norm. 

For any Orlicz function $\psi$, the Orlicz 
space $L^\psi(0, \infty)$ is known to be a rearrangement invariant Banach Function space 
with the norm having the Fatou Property \cite[Theorem 4.8.9]{BS}. Thus on selecting $\rho$ to be 
one of $\|\cdot\|_\psi$ or $\|\cdot\|^O_\psi$, the above framework  presents us with an elegant means of realising noncommutative 
Orlicz spaces. 

Each rearrangement invariant Banach Function space $X$ over a resonant measure space, admits a so-called \emph{fundamental function} 
$\varphi_X$. (Recall that a $\sigma$-finite measure space is resonant if and only if it is either nonatomic, or else completely atomic with all atoms having equal measure \cite[2.2.7]{BS}.) We make extensive use of these functions for the spaces $L^\rho(\mathbb{R})$ in the ensuing sections, and for this reason briefly review the main features of that theory. For a more comprehensive introduction to this fascinating and useful function, readers are referred to \S 2.5 of \cite{BS}. 

Given a rearrangement invariant Banach Function space $L^\rho(\mathbb{R})$, the fundamental induced by $\rho$ is defined by $\varphi_\rho(t)=\rho(\chi_E)$ where $E$ is a measurable subset of $\mathbb{R}$ for which $\lambda(E)=t$. The fact that this function is well-defined, follows from the rearrangement invariance of the given space. Under some fairly mild restrictions (see (P4) and (P5) of \cite[Definition 1.1.1]{BS}), this function turns out to always be a continuous function on $(0,\infty)$ which is in fact quasi-concave \cite[Corollary 2.5.3]{BS}. Recall that under the term quasi-concave, we understand a non-negative function $\varphi$ on $[0,\infty)$ satisfying the conditions that $$\varphi(t)\mbox{ is increasing on }(0,\infty)\mbox{ and }\frac{\varphi(t)}{t}\mbox{ decreasing, with } \varphi(t)=0 \Leftrightarrow t=0.$$In fact any quasi-concave function on $[0,\infty)$ may conversely be realised as the fundamental function of some rearrangement invariant space \cite[Theorem 2.5.8]{BS}. Although not generally actually concave, any fundamental function is at least equivalent to a concave function in that the least concave majorant $\widetilde{\varphi}$ of a quasi-concave function $\varphi$ satisfies $\frac{1}{2}\widetilde{\varphi}\leq \varphi \leq 2\widetilde{\varphi}$ \cite[Proposition 2.5.10]{BS}. 
This function is well-conditioned to duality \cite[Theorem 2.5.2]{BS} in that for any $X=L^\rho(\mathbb{R})$ with K\"othe 
dual $X'$, we have that $$\varphi_X(t)\varphi_{X'}(t)=t\quad t\geq 0.$$Although it does encode much of the DNA of the inducing space, this function is in general not sharp enough to effectively  distinguish between distinct rearrangement invariant Banach Function spaces \cite[Corollary 2.5.14]{BS}. However for Orlicz spaces the situation is much better \cite[Lemma 4.8.17]{BS}. The fundamental function of the Orlicz space $L^\psi(\mathbb{R})$ equipped with the Luxemburg norm, is nothing but $$\varphi_\psi(t) = \frac{1}{\psi^{-1}(1/t)} \qquad t > 0.$$For the $L^p$-spaces, this boils down to the statement that $\varphi_p(t)=t^{1/p}$ 
($t>0$) \cite[p 65]{BS}. 

We close this introduction by presenting some technical observations regarding the realisation of Orlicz spaces in the context of noncommutative Banach Function spaces, indicating that 
the ``brute force'' approach will yield essentially the same spaces. 
(Proofs may be found in \cite{LM}.)

\begin{lemma}\label{DPvsKlemma} Let $\psi$ be an Orlicz function and $f \in \widetilde{\M}$ a $\tau$-measurable element for which $\psi(|f|)$ is again $\tau$-measurable. Extend $\psi$ to a function on $[0, \infty]$ by setting $\psi(\infty) = \infty$. Then $\psi(\mu_t(f)) = \mu_t(\psi(|f|))$ for any $t \geq 0$. Moreover $\tau_\M(\psi(|f|)) = \int_0^\infty \psi(\mu_t(|f|))\, \mathrm{d}t$.
\end{lemma}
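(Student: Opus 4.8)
The plan is to reduce the first identity to the defining formula $\mu_t(b)=\inf\{s\ge 0:\lambda_s(b)\le t\}$ expressing the singular value function through the distribution function, and then to feed the result into the Fack--Kosaki trace formula $\tau(|b|)=\int_0^\infty\mu_t(b)\,dt$ to obtain the second. The left-continuity and monotonicity built into the definition of an Orlicz function will do all the real work. When $\psi$ happens to be finite-valued (hence continuous and increasing on all of $[0,\infty)$ with $\psi(0)=0$), the first identity is nothing but the spectral mapping property in the last bullet of \cite[Lemma 2.5]{FK} applied to $a=|f|$, combined with $\mu_t(f)=\mu_t(|f|)$; the substance of the lemma is therefore to push this past the possible jump of $\psi$ to the value $\infty$ at $b_\psi$.

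First I would replace $f$ by $a=|f|\ge 0$, which changes nothing since $\mu_t(f)=\mu_t(|f|)$ and $\psi(|f|)$ is the operator under scrutiny. Composing Borel functional calculi, the spectral projections of $\psi(a)$ satisfy $\chi_{(s,\infty)}(\psi(a))=\chi_{G_s}(a)$ with $G_s=\{u\ge 0:\psi(u)>s\}$, where I extend $\psi$ by $\psi(\infty)=\infty$ as instructed. The pivotal observation is that, because $\psi$ is increasing and left-continuous, $G_s=(\psi^{-1}(s),\infty)$ for every $s\ge 0$, where $\psi^{-1}$ is the right-continuous inverse $\psi^{-1}(s)=\sup\{u:\psi(u)\le s\}$; establishing this comes down to the two facts $\psi(\psi^{-1}(s))\le s$ (this is exactly where left-continuity of $\psi$ is used) and $\psi(u)>s$ for $u>\psi^{-1}(s)$. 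Hence the distribution functions are related by $\lambda_s(\psi(a))=\lambda_{\psi^{-1}(s)}(a)$ for all $s\ge 0$.

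Given this, I would prove $\mu_t(\psi(a))=\psi(\mu_t(a))$ by a two-sided estimate. Put $u_0=\mu_t(a)$, so that $\lambda_{u_0}(a)\le t$ by right-continuity of $s\mapsto\lambda_s(a)$ while $\lambda_u(a)>t$ for $u<u_0$. For the inequality $\mu_t(\psi(a))\le\psi(u_0)$: every $s>\psi(u_0)$ has $\psi^{-1}(s)\ge u_0$, whence $\lambda_s(\psi(a))=\lambda_{\psi^{-1}(s)}(a)\le\lambda_{u_0}(a)\le t$, and taking the infimum over such $s$ gives the claim. For the reverse inequality $\mu_t(\psi(a))\ge\psi(u_0)$: every $s<\psi(u_0)$ has $\psi^{-1}(s)<u_0$ (ruling out equality here is again a left-continuity argument), whence $\lambda_s(\psi(a))=\lambda_{\psi^{-1}(s)}(a)>t$, so no such $s$ lies in $\{s\ge 0:\lambda_s(\psi(a))\le t\}$. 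The boundary case $t=0$ follows from $\mu_0(b)=\|b\|_\infty$ and the spectral mapping theorem, or simply by right-continuity in $t$. No finiteness or continuity is demanded of $\psi$ here: the degenerate possibilities are automatically kept in check by the hypothesis $\psi(|f|)\in\widetilde{\M}$, which when $b_\psi<\infty$ forces $\chi_{(b_\psi,\infty)}(|f|)=0$ and hence $\mu_t(|f|)\le b_\psi$ for all $t$.

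For the trace identity I would apply the Fack--Kosaki formula $\tau(|b|)=\int_0^\infty\mu_t(b)\,dt$ to the positive operator $b=\psi(|f|)\in\widetilde{\M}$ and substitute $\mu_t(\psi(|f|))=\psi(\mu_t(|f|))$ from the first part; both sides live in $[0,\infty]$ and the formula is valid there, so no integrability assumption is needed. The only real obstacle is in the first part: carefully pinning down the set $G_s$ and the two auxiliary inequalities $\psi(\psi^{-1}(s))\le s$ and $s<\psi(u_0)\Rightarrow\psi^{-1}(s)<u_0$, since these are precisely the points where the left-continuity clause in the definition of an Orlicz function, and the behaviour of $\psi$ at $b_\psi$, must be handled with care rather than brushed aside.
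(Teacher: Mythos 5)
Your argument is correct. Note that the paper does not prove this lemma at all --- it simply defers to \cite{LM} --- so there is no in-text proof to compare against; what you give is a self-contained and sound version of the natural argument. The key points all check out: the identity $\chi_{(s,\infty)}(\psi(|f|))=\chi_{\{\psi>s\}}(|f|)$ from the Borel functional calculus (legitimate here because the hypothesis that $\psi(|f|)$ is $\tau$-measurable means the spectral mass of $|f|$ on $\{\psi=\infty\}$ vanishes, so $\psi(|f|)$ is a genuine operator and, when $b_\psi<\infty$, $\mu_t(|f|)\le b_\psi$); the determination $\{u:\psi(u)>s\}=(\psi^{-1}(s),\infty)$, which rests exactly on $\psi(\psi^{-1}(s))\le s$ --- the same fact, via (8.28) of \cite{BS}, that the paper invokes in the proof of Theorem \ref{mainthm}; the resulting relation $\lambda_s(\psi(|f|))=\lambda_{\psi^{-1}(s)}(|f|)$ together with $\lambda_{\mu_t}(|f|)\le t$ and $\lambda_u(|f|)>t$ for $u<\mu_t(|f|)$, which yields the two-sided estimate; and the case $t=0$ by left-continuity of $\psi$ and $\mu_t(|f|)\uparrow\|f\|_\infty$. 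Feeding the first identity into the Fack--Kosaki trace formula, valid in $[0,\infty]$ for the positive $\tau$-measurable operator $\psi(|f|)$, gives the second claim. In effect you have extended the spectral-mapping property of \cite[Lemma 2.5]{FK} (stated there for continuous increasing functions) to left-continuous, possibly infinite-valued Orlicz functions, which is precisely the content the lemma adds over the cited literature.
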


In order to obtain a more elegant reformulation of the results in \cite{LM}, we first make the following technical observation.

\begin{remark}\label{rem:a}
Let $\M$ be a semifinite algebra with \emph{fns} trace $\tau_\M$. The trace $\tau_\M$ may be extended to the extended positive cone ${\widehat{\M}}_+$ of $\M$ \cite[IX.4.9]{Tak}. Any element $a$ 
of ${\widehat{\M}}_+$ is of the form $a=\int_0^\infty \lambda de(\lambda) + \infty.p$ for some spectral resolution $e(\lambda)$ and projection $p$ 
\cite[IX.4.8]{Tak}. For this extension, we have that $\tau_\M(a) = \infty$ if $p\neq0$. So if $\tau_\M(a) < \infty$, we must have that $p=0$, and hence that $a$ is in fact 
an operator affiliated to $\M$. But more is true in this case. For any $n,m \in \mathbb{N}$ with $n < m$, we will then have that $$\tau_\M(e_{(n,m]}) \leq \frac{1}{n}\tau_\M(ae_{(n,m]}) \leq \frac{1}{n}\tau_\M(a) <\infty.$$If now we let $m\to \infty$, we get that $\tau_\M(e_{(n,\infty)}) \leq \frac{1}{n}\tau_\M(a) < \infty$. Thus if $\tau_\M(a) < \infty$, $a$ must in fact correspond to a $\tau_\M$-measurable element of $\tM$ (see \cite[I.21]{Tp}). So given $a \in {\widehat{\M}}_+$, it follows that $\tau_\M(a) < \infty$ if and only if $a$ corresponds to a positive element of $L^1(\M,\tau_\M)$. 

Now given an element $a$ of $\tM$ and a general Orlicz function $\psi$, $\psi(|a|)$ will in general not be a member of $\tM$ (unless of course $\sigma(|a|) \subset [0, b_\psi)$). However we are able to give meaning to $\psi(|a|)$ as an element of ${\widehat{\M}}_+$. If we apply the observation we made previously to this setting, then given $a \in \tM$ it follows that $\tau_\M(\psi(|a|)) < \infty$ if and only if $\psi(|a|)$ corresponds to an element of $L^1(\M,\tau_\M)$.

Now given any $a\eta\M$ and a general Orlicz function $\psi$, $\psi(|a|)$ will in general not be a member of $\tM$ (unless of course $a\in\tM$ and $\sigma(|a|) \subset [0, b_\psi)$). However we are able to give meaning to $\psi(|a|)$ as an element of ${\widehat{\M}}_+$. If we apply the observation we made previously to this setting, then for any $a \eta\M$ it follows that $\tau_\M(\psi(|a|)) < \infty$ if and only if $\psi(|a|)$ corresponds to an element of $L^1(\M,\tau_\M)$. However more is true in this case. Notice that the right-continuous inverse $\psi^{-1}$ is continuous on $[0, \infty)$ with $\psi^{-1}(\psi(t))\geq t$ for all $t \geq 0$ \cite[p 276]{BS}. Thus if indeed $\psi(|a|)\in L^1(\M,\tau_\M)$, then by the functional calculus for positive operators $\psi^{-1}(\psi(|a|))$ will be a $\tau_\M$-measurable element of $\tM$ for which $\psi^{-1}(\psi(|a|))\geq |a|$. This ensures that 
$|a|$, and hence also $a$, is then a $\tau_\M$-measurable element of $\tM$.

To sum up, in terms of the action of $\tau_\M$ on ${\widehat{\M}}_+$, we have that a given $a\eta\M$ will belong to $L^\psi(\M,\tau_\M)$ if and only if $\tau_\M(\psi(\alpha|a|)) < \infty$ for some $\alpha>0$. 
\end{remark}

\begin{proposition}\label{DPvsK} Let $\psi$ be an Orlicz function and let $f \in \widetilde{\M}$ be given. There exists some 
$\alpha > 0$ so that $\int_0^\infty \psi(\alpha\mu_t(|f|))\, \mathrm{d}t < \infty$ if and only if there exists 
$\beta > 0$ so that $\tau_\M(\psi(\beta|f|)) < \infty$. Moreover 
$$\|f\|_\psi=\|\mu(f)\|_\psi = \inf\{\lambda > 0 :  \tau_\M\left(\psi\left(\frac{1}{\lambda}|f|\right)\right) \leq 1\}.$$
\end{proposition}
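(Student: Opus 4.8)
The plan is to reduce everything to Lemma~\ref{DPvsKlemma} together with the trace formula $\tau_\M(|a|)=\int_0^\infty\mu_t(a)\,\mathrm{d}t$, handling carefully the fact that $\psi(|f|)$ need not be $\tau$-measurable. First I would dispose of the trivial case in which $\sigma(|f|)\subseteq[0,b_\psi)$ is false: here $\psi(\alpha|f|)$ is automatically non-$\tau$-measurable for every $\alpha>0$ large enough that $\alpha\|f\|_\infty\wedge(\text{something})$ exceeds $b_\psi$ on a set of infinite trace\ldots\ in fact the cleaner route is not to case-split at all, but to invoke Remark~\ref{rem:a}: for $f\in\tM$, $\tau_\M(\psi(\beta|f|))<\infty$ (as an extended-positive-cone value) exactly when $\psi(\beta|f|)$ corresponds to a genuine element of $L^1(\M,\tau_\M)$, i.e.\ is $\tau$-measurable with finite trace. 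So throughout, ``$\tau_\M(\psi(\beta|f|))<\infty$'' may be read as ``$\psi(\beta|f|)\in\tM$ and has finite trace.''

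The equivalence of the two finiteness conditions then goes as follows. Suppose $\int_0^\infty\psi(\alpha\mu_t(|f|))\,\mathrm{d}t<\infty$ for some $\alpha>0$. Since $t\mapsto\psi(\alpha\mu_t(|f|))$ is a non-increasing function with finite integral, it must be finite for all $t>0$, which forces $\alpha\mu_t(|f|)<b_\psi$ for every $t>0$, hence $\alpha\mu_{0^+}(|f|)=\alpha\|f\|_\infty\le b_\psi$; combined with left-continuity of $\psi$ at $b_\psi$ this shows $\psi(\alpha|f|)\in\tM$ (its singular-value function $t\mapsto\psi(\alpha\mu_t(|f|))$ is bounded and lies in $L^1$, so it is $\tau$-measurable). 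Applying Lemma~\ref{DPvsKlemma} with the same $\alpha$ (legitimate now that $\psi(\alpha|f|)$ is $\tau$-measurable) gives $\tau_\M(\psi(\alpha|f|))=\int_0^\infty\psi(\alpha\mu_t(|f|))\,\mathrm{d}t<\infty$, so we may take $\beta=\alpha$. Conversely, if $\tau_\M(\psi(\beta|f|))<\infty$ then by Remark~\ref{rem:a} $\psi(\beta|f|)\in L^1(\M,\tau_\M)\subseteq\tM$, and again Lemma~\ref{DPvsKlemma} yields $\int_0^\infty\psi(\beta\mu_t(|f|))\,\mathrm{d}t=\tau_\M(\psi(\beta|f|))<\infty$, so $\alpha=\beta$ works.

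**The norm identity.** With the two scales of admissible $\lambda$ now known to coincide, I would compute the Luxemburg norm directly. By definition $\|f\|_\psi=\|\mu(f)\|_\psi=\inf\{\lambda>0:\|\psi(\mu(f)/\lambda)\|_1\le1\}$ where the inner $L^1$-norm is taken over $(0,\infty)$, i.e.\ $\|\psi(\mu(f)/\lambda)\|_1=\int_0^\infty\psi(\mu_t(f)/\lambda)\,\mathrm{d}t$. For any $\lambda>0$ for which this is $\le1$ (in particular $<\infty$), the argument of the previous paragraph shows $\psi(|f|/\lambda)\in\tM$ and Lemma~\ref{DPvsKlemma} gives $\int_0^\infty\psi(\mu_t(f)/\lambda)\,\mathrm{d}t=\tau_\M(\psi(|f|/\lambda))$. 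Hence the sets $\{\lambda>0:\int_0^\infty\psi(\mu_t(f)/\lambda)\,\mathrm{d}t\le1\}$ and $\{\lambda>0:\tau_\M(\psi(|f|/\lambda))\le1\}$ are literally equal, and their infima agree:
$$\|f\|_\psi=\inf\Bigl\{\lambda>0:\tau_\M\Bigl(\psi\Bigl(\tfrac{1}{\lambda}|f|\Bigr)\Bigr)\le1\Bigr\}.$$

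**Main obstacle.** The only real subtlety — and the point I would expect to need the most care — is the non-$\tau$-measurability of $\psi(|f|)$ and the corresponding need to work in $\widehat{\M}_+$ before Lemma~\ref{DPvsKlemma} becomes applicable. Concretely, one must verify that whenever either integral/trace is $\le1$ (or merely finite) the operator $\psi(|f|/\lambda)$ automatically falls back into $\tM$, so that the hypothesis ``$\psi(|f|)$ is $\tau$-measurable'' in Lemma~\ref{DPvsKlemma} is met; this is exactly what Remark~\ref{rem:a} (via the $\psi^{-1}$ trick, $\psi^{-1}(\psi(t))\ge t$) is designed to supply, so the obstacle is really just one of bookkeeping rather than of substance. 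A secondary point worth a line is that the infimum defining $\|f\|_\psi$ is over a half-line, so equality of the two constraint sets immediately gives equality of the norms with no approximation argument needed.
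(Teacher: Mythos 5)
The paper itself offers no proof of this proposition (it is quoted with the remark that proofs may be found in \cite{LM}), so there is nothing internal to compare against; judged on its own merits, your overall strategy — interpret $\tau_\M(\psi(\beta|f|))$ via Remark \ref{rem:a}, then funnel everything through Lemma \ref{DPvsKlemma} and read off the norm identity from the literal equality of the two constraint sets — is the natural one and is sound in outline. The converse direction and the norm computation are fine: finiteness of $\tau_\M(\psi(\beta|f|))$ puts $\psi(\beta|f|)$ in $L^1(\M,\tau_\M)\subset\tM$ by Remark \ref{rem:a}, so the lemma applies.

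The one genuine weak point is your justification of the forward step, namely that finiteness of $\int_0^\infty\psi(\alpha\mu_t(|f|))\,\mathrm{d}t$ forces $\psi(\alpha|f|)\in\tM$. Your parenthetical reason — ``its singular-value function $t\mapsto\psi(\alpha\mu_t(|f|))$ is bounded and lies in $L^1$, so it is $\tau$-measurable'' — is circular as stated: the identification $\mu_t(\psi(\alpha|f|))=\psi(\alpha\mu_t(|f|))$ is precisely the conclusion of Lemma \ref{DPvsKlemma}, available only after measurability of $\psi(\alpha|f|)$ is known (the Fack–Kosaki identity quoted in the preliminaries requires a finite-valued continuous function, which the extended $\psi$ is not when $b_\psi<\infty$); moreover the appeal to the $\psi^{-1}$ trick of Remark \ref{rem:a} addresses a different implication (measurability of $f$ when $\psi(|f|)\in L^1$), not this one, and in fact the function $\psi(\alpha\mu_t(|f|))$ need not be bounded, while finiteness only yields $\alpha\mu_t(|f|)\le b_\psi$, not $<b_\psi$. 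The claim itself is true, but it needs a direct distribution-function argument rather than an appeal to the lemma: if $\psi(\alpha|f|)$ fails to be a $\tau$-measurable operator, then either the spectral projection of $\alpha|f|$ on the set where $\psi=\infty$ is nonzero, or $\tau(\chi_{E_s}(\alpha|f|))=\infty$ for every $s$, where $E_s=\{u:\psi(u)>s\}$; in either case one gets $\lambda_c(|f|)=\infty$ for all $c<b_\psi/\alpha$, or $\lambda_{b_\psi/\alpha}(|f|)>0$, and right-continuity of the distribution function then forces $\alpha\mu_t(|f|)\ge b_\psi$ (with $\psi$ infinite there), respectively $\alpha\mu_t(|f|)>b_\psi$, on an interval of positive length, so the integral diverges. (When $b_\psi=\infty$ measurability of $\psi(\alpha|f|)$ is automatic for $f\in\tM$.) With that contrapositive inserted, your argument is complete.
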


\begin{proposition}\label{kothe}
Let $\psi$ be an Orlicz function and $\psi^*$ its complementary function. Then $L^{\psi^*}(\M,\tau_\M)$ equipped with the norm $\|\cdot\|^0_{\psi^*}$ defined by $$\|f\|^O_{\psi^*} = \sup \{\tau_\M(|fg|) : g \in L^{\psi}(\M,\tau_\M), \|g\|_\psi \leq 1\} \qquad f \in L^{\psi^*}(\M,\tau_\M)$$is the K\"{o}the dual of $L^{\psi}(\M,\tau_\M)$. That is $$L^{\psi^*}(\M,\tau_\M) = \{f \in \widetilde{\M} : fg \in L^1(\M, \tau_\M) \text{ for all } g \in L^{\psi}(\M,\tau_\M)\}.$$Consequently
$$|\tau_\M(fg)| \leq \|f\|^O_{\psi^*}\cdot\|g\|_\psi \quad\mbox{for all}\quad f \in L^{\psi^*}(\M,\tau_\M), g \in 
L^{\psi}(\M,\tau_\M).$$
\end{proposition}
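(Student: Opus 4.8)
The plan is not to argue from scratch but to assemble the statement from two already-available ingredients: the classical fact that the associate (K\"othe dual) of a commutative Orlicz space is the Orlicz space of the complementary function under the Orlicz norm, and the general theory of noncommutative K\"othe duality for Dodds--Dodds--de Pagter spaces. Proposition \ref{DPvsK}, together with its evident analogue for the Orlicz norm, is the bridge that will let me transfer everything between the ``brute force'' space $L^{\psi^*}(\M,\tau_\M)$ of the statement and the rearrangement invariant space $L^\rho(\M,\tau_\M)$ built from the function norm $\rho=\|\cdot\|^O_{\psi^*}$ on $L^0(0,\infty)$.

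First I would note that, by the very definition of the Orlicz norm recalled above, $\|\cdot\|^O_{\psi^*}$ is the associate norm of the Luxemburg norm $\|\cdot\|_\psi$ on $L^0(0,\infty)$, so that $L^{\psi^*}(0,\infty)$ equipped with $\|\cdot\|^O_{\psi^*}$ is the associate space of $L^\psi(0,\infty)$; moreover $L^\psi(0,\infty)$ with the Luxemburg norm is a rearrangement invariant Banach function space with the Fatou Property (\cite[Theorem 4.8.9]{BS}). Because the Fatou Property holds, the noncommutative K\"othe duality theory of \cite{DDdP} applies to $E=L^\psi(0,\infty)$: it realises $E(\M,\tau_\M)^\times$ as $\{f\in\widetilde{\M}:fg\in L^1(\M,\tau_\M)\text{ for all }g\in L^\psi(\M,\tau_\M)\}$ and shows that the K\"othe dual norm of such an $f$ equals both $\sup\{\tau_\M(|fg|):g\in L^\psi(\M,\tau_\M),\ \|g\|_\psi\le1\}$ and the rearrangement invariant norm $\|\mu(f)\|^O_{\psi^*}$. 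Invoking Proposition \ref{DPvsK} for $\psi$, and its counterpart for the Orlicz norm of $\psi^*$ (legitimate because the Orlicz-normed noncommutative space agrees as a set, and up to equivalence of norms, with the DDdP space $L^\rho(\M,\tau_\M)$), I would then conclude that this K\"othe dual set is exactly $L^{\psi^*}(\M,\tau_\M)$ and that $\|\mu(f)\|^O_{\psi^*}$ is exactly the quantity $\|f\|^O_{\psi^*}$ of the statement --- which gives the asserted set identity and pins down $\|\cdot\|^O_{\psi^*}$ as the K\"othe dual norm. Should the formulation of \cite{DDdP} one quotes instead phrase the dual norm with $|\tau_\M(fg)|$, the two versions are reconciled by the polar decomposition $fg=u|fg|$ ($u$ a partial isometry in $\M$): then $\tau_\M(|fg|)=\tau_\M(u^*fg)=\tau_\M(f(gu^*))=|\tau_\M(f(gu^*))|$, while $\mu_t(gu^*)=\mu_t(ug^*)\le\|u\|_\infty\mu_t(g^*)=\mu_t(g)$ forces $\|gu^*\|_\psi\le\|g\|_\psi$, so the two suprema agree. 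Finally the H\"older-type inequality in the last line is immediate: for $g\neq0$, $|\tau_\M(fg)|\le\tau_\M(|fg|)=\|g\|_\psi\,\tau_\M\!\left(\left|f\left(g/\|g\|_\psi\right)\right|\right)\le\|f\|^O_{\psi^*}\,\|g\|_\psi$.

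The step I expect to require the most care is the appeal to \cite{DDdP}: one must confirm that its hypotheses are met --- the Fatou Property of the Luxemburg norm supplies this --- and that the version used delivers both the set-theoretic and the normed description of the noncommutative K\"othe dual. If one wanted to avoid \cite{DDdP} altogether, a self-contained route would be to prove the inclusion $L^{\psi^*}(\M,\tau_\M)\subseteq\{f:fg\in L^1(\M,\tau_\M)\ \forall g\in L^\psi(\M,\tau_\M)\}$, and the finiteness of $\|\cdot\|^O_{\psi^*}$ on $L^{\psi^*}(\M,\tau_\M)$, from the submajorisation $\int_0^T\mu_t(fg)\,dt\le\int_0^T\mu_t(f)\mu_t(g)\,dt$ of Fack--Kosaki \cite{FK} together with the classical Orlicz--H\"older inequality on $(0,\infty)$ and Proposition \ref{DPvsK}; the reverse inclusion, and the exact value of the dual norm, would then have to be extracted from a Hardy--Littlewood--P\'olya type equality reducing $\sup\{\tau_\M(|fg|):\|g\|_\psi\le1\}$ to $\sup\{\int_0^\infty\mu_t(f)v(t)\,dt:\|v\|_\psi\le1\}=\|\mu(f)\|^O_{\psi^*}$, and it is this reverse inclusion that I would expect to be the genuine obstacle. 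In either approach, Lemma \ref{DPvsKlemma} and Proposition \ref{DPvsK} are precisely what make it possible to move all norm computations between $\widetilde{\M}$ and $L^0(0,\infty)$.
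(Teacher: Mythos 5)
Your proposal is correct and follows essentially the route the paper itself relies on: the paper offers no in-text proof (deferring to \cite{LM}), but its whole framework — the Dodds--Dodds--de Pagter construction, the noncommutative K\"othe duality of \cite{DDdP3}, and the transfer result of Proposition \ref{DPvsK} identifying the ``brute force'' space with the rearrangement-invariant one — is precisely the machinery you assemble, with the classical identification of the associate of $L^\psi(0,\infty)$ (Luxemburg) with $L^{\psi^*}(0,\infty)$ (Orlicz norm) closing the loop. Your reconciliation of the $\tau_\M(|fg|)$ versus $|\tau_\M(fg)|$ formulations via polar decomposition and the concluding H\"older estimate are both sound, so no gap needs to be filled.
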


\section{The semifinite setting}

In this section we will indicate how the theory of Orlicz spaces for semifinite algebras, may be realised using crossed product techniques. So throughout this section we will assume that $\M$ is a semifinite von Neumann algebra with fns trace $\tau_\M$. It is known that in this case the crossed product $\mathcal{A} = \cM{}{}$ will up to Fourier transform correspond to $\M\otimes L^\infty(\mathbb{R})$ (see chapter II of \cite{Tp}). Again up to Fourier transform, the canonical trace on $\mathcal{A}$ is then of the form $\tau_{\mathcal{A}} = \tau_\M \otimes \int_{\mathbb{R}}\cdot e^{-t}dt$ and the Haagerup $L^p$-space $L^p(\M)$, consists of simple tensors of the form $a \otimes e^{\cdot/p}$ where $a\in L^p(\M, \tau_\M)$. Under this identification, the density of the dual weight $\widetilde{\tau_\M}$ with respect to $\tau_{\mathcal{A}}$, is then just $h = \I\otimes e^{t}$. For the sake of clarity of exposition, we shall in this section freely identify $\mathcal{A}=\cM{}{}$ with $\M\otimes L^\infty(\mathbb{R})$, leaving the painful technicalities inherent in verifying that all the properties we verify are preserved by the canonical map identifying these two spaces, to the reader. 

In the introduction we already introduced the convention of writing $L^\psi$ when the Luxemburg norm is in view and $L_\psi$ when the Orlicz norm is used. Given the extensive use we will make of the fundamental functions of these two spaces in this section, we here introduce the additional convention of writing $\varphi_\psi$ for the fundamental function of $L^\psi(\mathbb{R})$ and $\widetilde{\varphi}_\psi$ for the fundamental function of $L_\psi(\mathbb{R})$. These two funtions are actually equivalent to each other in the manner described below.

\begin{remark}
Let $\psi$ be an Orlicz function and $\psi^*$ its adjoint function. From \cite[2.5.2 \& 4.8.17]{BS} we have that $$\varphi_\psi(t) = \frac{1}{\psi^{-1}(1/t)} \qquad\mbox{and}\qquad \widetilde{\varphi}_\psi(t) = \frac{t}{\varphi_{\psi^*(t)}} = t(\psi^*)^{-1}(1/t).$$If we combine these facts with \cite[4.8.16]{BS}, it follows that $$\varphi_\psi(t) \leq \widetilde{\varphi}_\psi(t) \leq 2\varphi_\psi(t) \quad\mbox{for all} \quad t \geq 0.$$
\end{remark}

The fountainhead of the theory is the following adaptation of a lemma by Haagerup. 

\begin{theorem}\label{mainthm}
Let $a \eta \M$ be given. Let $\psi$ be an Orlicz function and let $\varphi_\psi$ be the fundamental function of $L^\psi(\mathbb{R})$ equipped with the Luxemburg norm. Then $$\lambda_\epsilon(a\otimes \varphi_\psi(e^t)) = \tau_\M(\psi(|a|/\epsilon)).$$
\end{theorem}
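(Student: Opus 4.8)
The plan is to compute the distribution function $\lambda_\epsilon(a\otimes\varphi_\psi(e^t))$ directly in the concrete model $\mathcal{A}=\M\otimes L^\infty(\mathbb{R})$ with trace $\tau_\mathcal{A}=\tau_\M\otimes\int_\mathbb{R}\cdot\,e^{-t}\,dt$. First I would reduce to the case where $a\geq 0$ is positive: since $\mu_t$ and $\lambda_s$ depend only on $|a|$ and $|a\otimes\varphi_\psi(e^t)|=|a|\otimes\varphi_\psi(e^t)$ (the function $\varphi_\psi(e^t)$ being nonnegative), we may replace $a$ by $|a|$ throughout. Writing the spectral resolution $|a|=\int_0^\infty\lambda\,de(\lambda)$, the operator $|a|\otimes\varphi_\psi(e^t)$ has spectral projection $\chi_{(\epsilon,\infty)}$ equal to the set of pairs $(\lambda,t)$ with $\lambda\varphi_\psi(e^t)>\epsilon$, i.e. $\varphi_\psi(e^t)>\epsilon/\lambda$. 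So $\lambda_\epsilon(a\otimes\varphi_\psi(e^t)) = (\tau_\M\otimes\int\cdot\,e^{-t}dt)\big(\chi_{\{\lambda\varphi_\psi(e^t)>\epsilon\}}\big) = \int_0^\infty\Big(\int_{\{t:\varphi_\psi(e^t)>\epsilon/\lambda\}}e^{-t}\,dt\Big)\,d\tau_\M(e(\lambda))$, interchanging the two integrals by Tonelli.

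The heart of the matter is therefore to evaluate the inner integral $I(\lambda):=\int_{\{t\,:\,\varphi_\psi(e^t)>\epsilon/\lambda\}}e^{-t}\,dt$ and to show it equals $\psi(\lambda/\epsilon)$. Here I would use the explicit formula $\varphi_\psi(s)=1/\psi^{-1}(1/s)$ from the preliminaries. Since $\varphi_\psi$ is increasing (quasi-concavity) and $e^t$ is increasing, the condition $\varphi_\psi(e^t)>\epsilon/\lambda$ cuts out a half-line $t>t_0$ (possibly empty, possibly all of $\mathbb{R}$), where $t_0$ is determined by $\varphi_\psi(e^{t_0})=\epsilon/\lambda$, i.e. $\psi^{-1}(e^{-t_0})=\lambda/\epsilon$, i.e. $e^{-t_0}$ sits at the level where $\psi$ jumps past $\lambda/\epsilon$. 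Then $I(\lambda)=\int_{t_0}^\infty e^{-t}\,dt=e^{-t_0}$, and the claim reduces to the identity $e^{-t_0}=\psi(\lambda/\epsilon)$, which is exactly the statement that $\psi$ and $\psi^{-1}$ are inverse to each other in the appropriate one-sided sense: $\psi(\psi^{-1}(u))=u$ at points of continuity, and more care is needed at jumps and on the flat pieces of $\psi$. I would handle this via the standard change-of-variables $\int_{t_0}^\infty e^{-t}\,dt = \int_0^{e^{-t_0}} du$ combined with the layer-cake/Fubini identity relating $\psi$ to the measure of $\{u:\psi^{-1}(u)<\lambda/\epsilon\}$; concretely, $e^{-t_0}=\sup\{u\ge 0:\psi^{-1}(u)\le\lambda/\epsilon\}$ and one checks this sup equals $\psi(\lambda/\epsilon)$ using left-continuity of $\psi$ at $b_\psi$ and the definition $\psi^{-1}(t)=\sup\{s:\psi(s)\le t\}$.

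Having shown $I(\lambda)=\psi(\lambda/\epsilon)$ for every $\lambda>0$ (with the convention $\psi(\infty)=\infty$ covering $\lambda$ in the ``infinite'' part of the spectrum and $I(\lambda)=0$ when $\lambda=0$), the computation closes:
\[
\lambda_\epsilon(a\otimes\varphi_\psi(e^t)) = \int_0^\infty \psi(\lambda/\epsilon)\,d\tau_\M(e(\lambda)) = \tau_\M\big(\psi(|a|/\epsilon)\big),
\]
where the last equality is the functional calculus for the (possibly unbounded, possibly non-measurable) positive operator $\psi(|a|/\epsilon)$ paired against the normal extension of $\tau_\M$ to $\widehat{\M}_+$, as set up in Remark~\ref{rem:a}. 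I expect the main obstacle to be precisely the bookkeeping at the boundary: making the identity $e^{-t_0}=\psi(\lambda/\epsilon)$ airtight when $\psi$ has a flat stretch (so $\psi^{-1}$ jumps) or a vertical asymptote at $b_\psi$ (so $\varphi_\psi$ is bounded and the half-line $\{t:\varphi_\psi(e^t)>\epsilon/\lambda\}$ may be empty, forcing both sides to be $0$), and in checking that all of this survives the identification of the abstract crossed product with $\M\otimes L^\infty(\mathbb{R})$ — though the latter we are permitted to take on faith per the remarks at the start of the section. A secondary technical point is justifying the Tonelli interchange and the functional-calculus step when $\psi(|a|/\epsilon)$ is genuinely unbounded, which is exactly what the extended positive cone formalism of Remark~\ref{rem:a} is designed to accommodate.
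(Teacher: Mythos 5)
Your proposal is correct in substance, but it reaches the trace identity by a genuinely more hands-on route than the paper. The paper isolates the scalar equivalence $\alpha\psi(\beta)>1\Leftrightarrow\beta\varphi_\psi(\alpha)>1$ (proved from $\psi(\psi^{-1}(1/\alpha))\leq 1/\alpha$), converts it via the joint Borel functional calculus into the identity of projections $\chi_{(1,\infty)}(|a\otimes\varphi_\psi(e^t)|)=\chi_{(1,\infty)}(\psi(|a|)\otimes e^t)$, and then outsources the trace computation to Haagerup's lemma (Lemma II.5 of Terp, applicable by Proposition II.4), which gives $\tau_{\mathcal{A}}(\chi_{(1,\infty)}(\psi(|a|)\otimes e^t))=\tau_\M(\psi(|a|))$; the general $\epsilon$ is then obtained by rescaling $a\mapsto a/\epsilon$. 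You instead compute $\tau_{\mathcal{A}}$ of the same spectral projection directly in the model $\M\otimes L^\infty(\mathbb{R})$ by disintegrating over the spectral measure of $|a|$ and evaluating the fibre integral $\int_{\{t:\varphi_\psi(e^t)>\epsilon/\lambda\}}e^{-t}\,dt$; in effect you re-derive the special case of Haagerup's lemma that the paper cites. The scalar heart is the same fact in two guises: your substitution $u=e^{-t}$ turns the fibre integral into $m\{u>0:\psi^{-1}(u)<\lambda/\epsilon\}$, and the statement that this equals $\psi(\lambda/\epsilon)$ is exactly the paper's equivalence. What your route buys is self-containedness and transparency about where the hypotheses on $\psi$ enter; what the paper's route buys is brevity and a formulation (via the dual-weight density) that survives verbatim outside the tensor-product picture.

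One concrete correction to your sketch: the closing characterisation $e^{-t_0}=\sup\{u\geq 0:\psi^{-1}(u)\leq\lambda/\epsilon\}$ is not quite right. Because the spectral condition is the strict inequality $\lambda\varphi_\psi(e^t)>\epsilon$, the fibre integral is the Lebesgue measure of the \emph{strict} sublevel set $\{u>0:\psi^{-1}(u)<\lambda/\epsilon\}$, and the correct identity is $\{u>0:\psi^{-1}(u)<\beta\}=(0,\psi(\beta))$, which follows from $\psi^{-1}(u)\geq\beta\Leftrightarrow\psi(\beta^-)\leq u$ together with left-continuity of $\psi$ at $b_\psi$. With the non-strict inequality your sup can overshoot: for $\psi=\psi_{1\cap\infty}$ (so $\psi^{-1}(u)=\min(u,1)$) and $\beta=b_\psi=1$ it gives $\infty$ rather than $\psi(1)=1$. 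This is precisely the boundary bookkeeping you flagged, and once stated with the strict sublevel set it goes through in all cases (empty half-line giving $0=\psi(\lambda/\epsilon)$ when $\lambda/\epsilon\leq a_\psi$, and the whole line giving $\infty$ when $\psi(\lambda/\epsilon)=\infty$), so the remaining work is only the routine justification of the Tonelli step and of $\int_0^\infty\psi(\lambda/\epsilon)\,d(\tau_\M\circ e)(\lambda)=\tau_\M(\psi(|a|/\epsilon))$ in the extended positive cone, as in Remark \ref{rem:a}.
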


\begin{proof}
Let $\alpha, \beta > 0$ be given. We proceed to show that $\alpha \psi(\beta) \leq 1 \Leftrightarrow \beta \leq \psi^{-1}(\frac{1}{\alpha})$. It is clear that $$\alpha\psi(\beta) \leq 1 \Leftrightarrow \psi(\beta) \leq \frac{1}{\alpha} \Rightarrow \beta \leq \psi^{-1}(1/\alpha),$$and hence it remains only to show that $\beta \leq \psi^{-1}(\frac{1}{\alpha}) \Rightarrow \alpha \psi(\beta)\leq 1$. Clearly 
$$\beta \leq \psi^{-1}(1/\alpha) \Rightarrow \psi(\beta) \leq \psi(\psi^{-1}(1/\alpha)).$$ Since $\psi(\psi^{-1}(1/\alpha)) \leq 1/\alpha$ (see (8.28) on p 276 of \cite{BS}), we have $\beta \leq \psi^{-1}(\frac{1}{\alpha}) \Rightarrow \alpha\psi(\beta)\leq 1$ as required. It therefore follows that  $$\alpha \psi(\beta) \leq 1 \Leftrightarrow \beta \frac{1}{\psi^{-1}(1/\alpha)} = \beta\varphi_\psi(\alpha) \leq 1,$$or equivalently $$\alpha \psi(\beta) > 1 \Leftrightarrow \beta\varphi_\psi(\alpha) > 1.$$

If we apply this fact to the Borel functional calculus for the commuting positive operators $(\I\otimes e^t)$ and $|a\otimes 1| = |a|\otimes 1$ affiliated to $\M\otimes L^\infty(\mathbb{R})$, we have that 
\begin{eqnarray*}
\chi_{(1,\infty)}(|a\otimes \varphi_\psi(e^t)|) &=& \chi_{(1,\infty)}((|a|\otimes 1)(\I\otimes \varphi_\psi(e^t))) \\
&=& \chi_{(1,\infty)}((|a|\otimes 1)\varphi_\psi(\I\otimes e^t)) \\
&=& \chi_{(1,\infty)}(\psi(|a|\otimes 1)(\I\otimes e^t)) \\
&=& \chi_{(1,\infty)}((\psi(|a|)\otimes 1)(\I\otimes e^t)) \\
&=& \chi_{(1,\infty)}(\psi(|a|)\otimes e^t) 
\end{eqnarray*} 
By Haagerup's lemma (see the discussion preceding this theorem and Lemma II.5 of \cite{Tp}) we have that $$\tau_{\mathcal{A}}(\chi_{(1,\infty)}(|a\otimes 
\varphi_\psi(e^t)|)) = \omega(\I)$$where $\omega$ is the weight $\tau_\M(\psi(|a|)^{1/2}\cdot\psi(|a|)^{1/2})$. (Note that Proposition II.4 of 
\cite{Tp} ensures that the lemma is applicable to $\psi(|a|)\otimes e^t$.) In other words $$\tau_{\mathcal{A}}(\chi_{(1,\infty)}(|a\otimes \varphi_\psi(e^t)|)) =\tau_\M(\psi(|a|)).$$Given $\epsilon > 0$, we therefore have that  
\begin{eqnarray*}
\lambda_\epsilon(a\otimes \varphi_\psi(e^t)) &=& \tau_{\mathcal{A}}(\chi_{(\epsilon,\infty)}(|a\otimes \varphi_\psi(e^t)|))\\
&=& \tau_{\mathcal{A}}(\chi_{(\epsilon,\infty)}(|a|\otimes \varphi_\psi(e^t)))\\
&=& \tau_{\mathcal{A}}(\chi_{(1,\infty)}((|a|/\epsilon)\otimes \varphi_\psi(e^t)))\\
&=& \tau_\M(\psi(|a|/\epsilon))
\end{eqnarray*}
as required.
\end{proof}

The first consequence of this theorem indicates how $L^\psi(\M,\tau_\M)$ may be realised inside $\widetilde{\mathcal{A}}$.

\begin{corollary}[Luxemburg norm]\label{Lux}
Let $\psi$ be an Orlicz function. Given $a\eta\M$, we have that $a\in L^\psi(\M,\tau_\M)$ if and only if $a\otimes\varphi_\psi(e^t)$ belongs to $\widetilde{\mathcal{A}}$. Moreover for any $a\in L^\psi(\M,\tau_\M)$ we have the following formula for the Luxemburg norm: 
$$\|a\|_\psi = \mu_1(a\otimes\varphi_\psi(e^t)).$$
\end{corollary}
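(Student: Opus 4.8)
The plan is to derive this corollary directly from Theorem~\ref{mainthm} together with the characterisation of $\tau_\M$-measurability via the function $\lambda_\epsilon$ and the characterisation of $L^\psi(\M,\tau_\M)$ recorded in Remark~\ref{rem:a}. The central identity $\lambda_\epsilon(a\otimes\varphi_\psi(e^t)) = \tau_\M(\psi(|a|/\epsilon))$ converts a statement about the distribution function of the operator $a\otimes\varphi_\psi(e^t)$ affiliated to $\mathcal{A}$ into a statement about the trace of the positive element $\psi(|a|/\epsilon)\in{\widehat{\M}}_+$; everything else is reading off consequences.

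First I would establish the membership equivalence. An operator $b\eta\mathcal{A}$ lies in $\widetilde{\mathcal{A}}$ precisely when $\lambda_\epsilon(b)<\infty$ for some (equivalently, all sufficiently large) $\epsilon>0$, since the $\tau_{\mathcal{A}}$-measurable operators are exactly those whose distribution function is eventually finite. Applying this with $b = a\otimes\varphi_\psi(e^t)$ and invoking Theorem~\ref{mainthm}, we see $a\otimes\varphi_\psi(e^t)\in\widetilde{\mathcal{A}}$ if and only if $\tau_\M(\psi(|a|/\epsilon))<\infty$ for some $\epsilon>0$, i.e.\ $\tau_\M(\psi(\alpha|a|))<\infty$ for some $\alpha>0$. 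By the closing statement of Remark~\ref{rem:a}, this last condition is precisely the statement that $a\in L^\psi(\M,\tau_\M)$. One small point to be careful about: for general $a\eta\M$ the element $\psi(|a|)$ need only be interpreted in ${\widehat{\M}}_+$, so one must use the version of Remark~\ref{rem:a} phrased for $a\eta\M$ rather than the one for $a\in\tM$, and note that Theorem~\ref{mainthm} is itself already stated for $a\eta\M$, so there is no mismatch.

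Next I would compute the norm. Fix $a\in L^\psi(\M,\tau_\M)$. By definition $\mu_1(a\otimes\varphi_\psi(e^t)) = \inf\{\epsilon\geq 0 : \lambda_\epsilon(a\otimes\varphi_\psi(e^t))\leq 1\}$, using the formula $\mu_t(b)=\inf\{s\geq 0:\lambda_s(b)\leq t\}$ from the Fack--Kosaki review in the preliminaries. Substituting Theorem~\ref{mainthm} turns the defining set into $\{\epsilon > 0 : \tau_\M(\psi(|a|/\epsilon))\leq 1\}$, so $\mu_1(a\otimes\varphi_\psi(e^t)) = \inf\{\epsilon>0:\tau_\M(\psi(|a|/\epsilon))\leq 1\}$. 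By the explicit formula for the Luxemburg norm established in Proposition~\ref{DPvsK}, namely $\|a\|_\psi = \inf\{\lambda>0 : \tau_\M(\psi(|a|/\lambda))\leq 1\}$, the two quantities coincide, which is exactly the asserted formula.

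I do not anticipate a serious obstacle here, since the corollary is essentially a repackaging of Theorem~\ref{mainthm}. The one place warranting genuine care is the interface between the ``abstract'' extended-positive-cone value $\tau_\M(\psi(|a|/\epsilon))$ appearing in Theorem~\ref{mainthm} and the ``concrete'' $L^1$-description of $L^\psi$ used in Proposition~\ref{DPvsK}; bridging these is precisely what Remark~\ref{rem:a} accomplishes, so I would cite it explicitly at both the membership step and the norm step. A secondary technical nuisance is keeping track of the identification of $\mathcal{A}=\cM{}{}$ with $\M\otimes L^\infty(\bR)$ and checking that $\lambda_\epsilon$ and $\mu_1$ (computed with respect to $\tau_{\mathcal{A}}$) are transported correctly under this identification --- but consistent with the convention adopted at the start of Section~2, I would simply work on the $\M\otimes L^\infty(\bR)$ side throughout and leave this verification to the reader.
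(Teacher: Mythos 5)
Your proposal is correct and follows essentially the same route as the paper: Remark \ref{rem:a} plus Theorem \ref{mainthm} plus the eventual-finiteness criterion for $\tau_{\mathcal{A}}$-measurability (the paper cites \cite[I.21]{Tp}) for the membership statement, and then the substitution of the identity of Theorem \ref{mainthm} into the Luxemburg norm formula of Proposition \ref{DPvsK} for the norm identity. The only detail the paper makes explicit that you gloss over is the passage between $\inf\{\epsilon>0:\lambda_\epsilon\leq 1\}$ and $\inf\{\epsilon\geq 0:\lambda_\epsilon\leq 1\}=\mu_1$, which is harmless (monotonicity/right-continuity of $s\mapsto\lambda_s$).
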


\begin{proof}
From Remark \ref{rem:a} it is clear that $a\in L^\psi(\M,\tau_\M)$ if and only if $\tau_\M(\psi(\alpha|a|)) < \infty$ for some $\alpha > 0$. But by the theorem, this is the same as saying that $a\in L^\psi(\M,\tau_\M)$ if and only if $\lambda_{1/\alpha}(a\otimes \varphi_\psi(e^t))< \infty$ for some $\alpha > 0$. On applying \cite[I.21]{Tp}, we have that $a\in L^\psi(\M,\tau_\M)$ if and only if $a\otimes\varphi_\psi(e^t)$ is $\tau_{\mathcal{A}}$-measurable.

Now let $a\in L^\psi(\M,\tau_\M)$ be given. To see the second claim we use the theorem to conclude that
\begin{eqnarray*}
\|a\|_\psi &=& \inf\{\epsilon > 0 | \tau_\M(\psi(|a|/\epsilon)\leq 1\}\\
&=& \inf\{\epsilon > 0 | \lambda_{\epsilon}(a\otimes \varphi_\psi(e^t))\leq 1\}\\
&=& \inf\{\epsilon \geq 0 | \lambda_{\epsilon}(a\otimes \varphi_\psi(e^t))\leq 1\}\\
&=& \mu_1(a\otimes\varphi_\psi(e^t)).
\end{eqnarray*}
(Here the second to last equality follows from the fact that the function $s\to \lambda_s(a\otimes \varphi_\psi(e^t))$ is continuous from the right.)
\end{proof}

In order to realise the space $L_\psi(\M,\tau_\M)$ inside $\widetilde{\mathcal{A}}$, we need the following technical observations:

\begin{lemma}[Orlicz norm]\label{Orllemma}
Let $\psi$ be an Orlicz function. Given $a\eta\M$, we have that $a\otimes\varphi_\psi(e^t)$ belongs to $\widetilde{\mathcal{A}}$ if and only if the same is true of $a\otimes\widetilde{\varphi}_\psi(e^t)$. Moreover for any $a\in L^\psi(\M,\tau_\M)$ and $b\in L_{\psi^*}(\M,\tau_\M)$ the products  $$a\otimes\varphi_\psi(e^t) . b\otimes\widetilde{\varphi}_{\psi^*}(e^t) = ab\otimes e^t$$
$$b\otimes\widetilde{\varphi}_{\psi^*}(e^t).a\otimes\varphi_\psi(e^t) = ba\otimes e^t$$belong to $L^1(\M)$.
\end{lemma}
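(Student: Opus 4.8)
The plan is to break the proof into two halves, since the lemma has two separate claims: an equivalence of measurability for the two ``weights'' $\varphi_\psi(e^t)$ and $\widetilde{\varphi}_\psi(e^t)$, and a product formula showing that the relevant products land in $L^1(\M)$. For the first claim, the key input is the Remark immediately preceding this section, which records that $\varphi_\psi(t) \leq \widetilde{\varphi}_\psi(t) \leq 2\varphi_\psi(t)$ for all $t \geq 0$. Applied pointwise to $e^t$, this gives the operator inequality $|a|\otimes\varphi_\psi(e^t) \leq |a|\otimes\widetilde{\varphi}_\psi(e^t) \leq 2(|a|\otimes\varphi_\psi(e^t))$ for commuting positive affiliated operators. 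Measurability in $\widetilde{\mathcal{A}}$ is determined by the distribution function $\lambda_s$, and by the monotonicity property $\mu_t(c) \leq \mu_t(d)$ whenever $0 \leq c \leq d$ (from the Fack--Kosaki list), together with $\mu_t(2c) = 2\mu_t(c)$, the singular value functions of $a\otimes\varphi_\psi(e^t)$ and $a\otimes\widetilde{\varphi}_\psi(e^t)$ are comparable up to a factor $2$; in particular one is $\tau_{\mathcal{A}}$-measurable (equivalently, has $\lambda_s < \infty$ for some finite $s$ and $\mu_t$ finite for all $t > 0$) iff the other is. So the first claim follows quickly.

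For the product formula, the plan is to first verify the pointwise identity on the scalar ``$\mathbb{R}$-component''. From the Remark we have $\widetilde{\varphi}_{\psi^*}(t) = t/\varphi_{\psi^{**}}(t) = t\,\psi^{-1}(1/t)$ (using $\psi^{**} = \psi$ for the complementary function), while $\varphi_\psi(t) = 1/\psi^{-1}(1/t)$. Hence $\varphi_\psi(t)\cdot\widetilde{\varphi}_{\psi^*}(t) = t$ identically, and likewise $\widetilde{\varphi}_{\psi^*}(t)\cdot\varphi_\psi(t) = t$. Applying the Borel functional calculus to $\I\otimes e^t$ gives $(\I\otimes\varphi_\psi(e^t))(\I\otimes\widetilde{\varphi}_{\psi^*}(e^t)) = \I\otimes e^t$. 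Since the $\M$-factors and the $L^\infty(\mathbb{R})$-factors commute across the tensor product, one can reorganise $a\otimes\varphi_\psi(e^t)$ as $(a\otimes 1)(\I\otimes\varphi_\psi(e^t))$ and $b\otimes\widetilde{\varphi}_{\psi^*}(e^t)$ as $(b\otimes 1)(\I\otimes\widetilde{\varphi}_{\psi^*}(e^t))$, and multiply them out, collecting the $\I\otimes e^t$ factor and the $ab\otimes 1$ factor, to obtain $ab\otimes e^t$; the other order is symmetric.

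It then remains to identify $ab\otimes e^t$ as an element of $L^1(\M)$. Here I would invoke the identification recorded at the start of Section 2: $L^1(\M)$ consists of simple tensors $c\otimes e^{\cdot/1} = c\otimes e^t$ with $c \in L^1(\M,\tau_\M)$. So the task reduces to showing $ab \in L^1(\M,\tau_\M)$ whenever $a \in L^\psi(\M,\tau_\M)$ and $b \in L_{\psi^*}(\M,\tau_\M) = L^{\psi^*}(\M,\tau_\M)$ (same underlying space, different norm). But that is precisely the content of Proposition \ref{kothe}, the Köthe duality statement: $L^{\psi^*}(\M,\tau_\M) = \{f \in \widetilde{\M} : fg \in L^1(\M,\tau_\M) \text{ for all } g \in L^\psi(\M,\tau_\M)\}$, so $ab$ and $ba$ both lie in $L^1(\M,\tau_\M)$.

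The main obstacle is a bookkeeping one rather than a conceptual one: making sure the ``strong product'' conventions from the preliminaries (juxtaposition of premeasurable operators meaning strong product, closures of closable operators) are honoured when one factors the tensor products and recombines them, and that the formal manipulation $(a\otimes 1)(\I\otimes\varphi_\psi(e^t)) = a\otimes\varphi_\psi(e^t)$ is legitimate as an equation of $\tau_{\mathcal{A}}$-measurable operators. Once the first claim guarantees that all the operators in sight are genuinely in $\widetilde{\mathcal{A}}$, $\widetilde{\mathcal{A}}$ is a topological $*$-algebra and strong products behave well, so this is routine; but it is the step where care is needed, and it is exactly the kind of ``painful technicality'' the author has already announced will be left to the reader under the identification $\mathcal{A} \cong \M\otimes L^\infty(\mathbb{R})$.
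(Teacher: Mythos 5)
Your proposal is correct and follows essentially the same route as the paper: the two-sided bound $\varphi_\psi \leq \widetilde{\varphi}_\psi \leq 2\varphi_\psi$ for the measurability equivalence, the identity $\varphi_\psi(t)\widetilde{\varphi}_{\psi^*}(t)=t$ for the product formula, and Proposition \ref{kothe} together with the description of $L^1(\M)$ as tensors $c\otimes e^t$ for the $L^1$ membership. The only (cosmetic) difference is in the first claim, where the paper transfers measurability by multiplying with the bounded operator $v=\I\otimes[\widetilde{\varphi}_\psi(e^t)/\varphi_\psi(e^t)]\in\mathcal{A}$ and its bounded inverse, whereas you compare distribution functions of the two comparable positive operators directly; both rest on the same estimate.
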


\begin{proof}
We noted at the start of this section that $\varphi_\psi(t) \leq \widetilde{\varphi}_\psi(t) \leq 2\varphi_\psi(t)$ for all $t \geq 0$. Clearly both $(\varphi_\psi(t)/\widetilde{\varphi}_\psi(t))$ and $(\widetilde{\varphi}_\psi(t)/\varphi_\psi(t))$ are then bounded functions. This in turn ensures that both 
$v = \I \otimes [\widetilde{\varphi}_\psi(e^t)/\varphi_\psi(e^t)]$ and its inverse $v^{-1} = \I\otimes [\varphi_\psi(e^t)/\widetilde{\varphi}_\psi(e^t)]$ belong to $\mathcal{A}$. Clearly $a\otimes\widetilde{\varphi}_\psi(e^t) = v(a\otimes\varphi_\psi(e^t))$ will then be $\tau_{\mathcal{A}}$-measurable whenever $a\otimes\varphi_\psi(e^t)$ is, and vice versa.

The second claim follows directly from the fact that $\varphi_\psi(t)\widetilde{\varphi}_{\psi^*}(t) = t$ for all $t \geq 0$.
\end{proof}

The following result is now a fairly direct consequence of the above proposition considered alongside Corollary \ref{Lux}.

\begin{proposition}\label{Orl}
Let $\psi$ be an Orlicz function. Given $a\eta\M$, we have that $a\in L_\psi(\M,\tau_\M)$ if and only if $a\otimes\widetilde{\varphi}_\psi(e^t)$ belongs to $\widetilde{\mathcal{A}}$. Moreover for any $a\in L_\psi(\M,\tau_\M)$ we have the following formula for the Orlicz norm: 
\begin{eqnarray*}
\|a\|^0_\psi &=& \sup \{\tr(|a\otimes\varphi_\psi(e^t) . b\otimes\widetilde{\varphi}_{\psi^*}(e^t)|) : b \in L^{\psi^*}(\M,\tau_\M), \mu_1(b\otimes\varphi_{\psi^*}(e^t)) \leq 1 \}\\
&=& \sup \{\tr(|b\otimes\widetilde{\varphi}_{\psi^*}(e^t).a\otimes\varphi_\psi(e^t)|) : b \in L^{\psi^*}(\M,\tau_\M), \mu_1(b\otimes\varphi_{\psi^*}(e^t)) \leq 1 \}.
\end{eqnarray*}
\end{proposition}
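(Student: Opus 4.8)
The plan is to deduce Proposition~\ref{Orl} from Corollary~\ref{Lux} and Lemma~\ref{Orllemma}, using the already-established equivalence $\varphi_\psi \leq \widetilde{\varphi}_\psi \leq 2\varphi_\psi$ together with Proposition~\ref{kothe} (the K\"othe duality formula for $L_\psi$ in terms of $L^\psi$). First I would handle the membership statement. By Corollary~\ref{Lux}, $a \in L^\psi(\M,\tau_\M)$ iff $a\otimes\varphi_\psi(e^t) \in \widetilde{\mathcal{A}}$. Since $L_\psi(\M,\tau_\M)$ and $L^\psi(\M,\tau_\M)$ coincide as sets (the Luxemburg and Orlicz norms being equivalent), membership $a \in L_\psi(\M,\tau_\M)$ is the same as $a \in L^\psi(\M,\tau_\M)$, hence the same as $a\otimes\varphi_\psi(e^t)\in\widetilde{\mathcal{A}}$; and by the first assertion of Lemma~\ref{Orllemma} this is equivalent to $a\otimes\widetilde{\varphi}_\psi(e^t)\in\widetilde{\mathcal{A}}$. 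That disposes of the first claim.

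For the norm formula, I would start from the Orlicz-norm definition via K\"othe duality (Proposition~\ref{kothe}): for $a \in L_\psi(\M,\tau_\M)$,
\[
\|a\|^O_\psi = \sup\{\tau_\M(|ab|) : b \in L^{\psi^*}(\M,\tau_\M),\ \|b\|_{\psi^*}\leq 1\}.
\]
Now I translate each ingredient into the crossed product. By Corollary~\ref{Lux} applied to $\psi^*$, the constraint $\|b\|_{\psi^*}\leq 1$ is exactly $\mu_1(b\otimes\varphi_{\psi^*}(e^t))\leq 1$, which is precisely the index set appearing in the proposition. For the objective function, Lemma~\ref{Orllemma} (with the roles of $\psi$ and $\psi^*$ as stated there) gives the strong-product identity $a\otimes\varphi_\psi(e^t)\cdot b\otimes\widetilde{\varphi}_{\psi^*}(e^t) = ab\otimes e^t$ as an element of $L^1(\M)$, and similarly $b\otimes\widetilde{\varphi}_{\psi^*}(e^t)\cdot a\otimes\varphi_\psi(e^t) = ba\otimes e^t$. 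Hence $\tr(|ab\otimes e^t|)$ should be identified with $\tau_\M(|ab|)$ via the standard identification of the $L^1$-trace on $L^1(\M)$ (as a Haagerup space, realised here as $\M\otimes e^t$ inside $\widetilde{\mathcal A}$) with $\tau_\M$ on $L^1(\M,\tau_\M)$. This is the point where I would invoke the blanket identification of $\mathcal{A}$ with $\M\otimes L^\infty(\mathbb R)$ announced at the start of the section: under it, $\tr$ on $L^1(\M)=\M\otimes e^t$ is literally $\tau_\M$. With these substitutions the supremum over $b$ with $\mu_1(b\otimes\varphi_{\psi^*}(e^t))\leq 1$ of $\tr(|ab\otimes e^t|)$ coincides term-by-term with the supremum defining $\|a\|^O_\psi$, giving the first displayed equality; the second follows identically using the $ba\otimes e^t$ form, since $\tau_\M(|ab|)=\tau_\M(|ba|)$ is not needed --- rather, the Orlicz norm can equally be computed with $\tau_\M(|ba|)$ in place of $\tau_\M(|ab|)$ because $\tau_\M(|ba|)=\tau_\M(|(ba)|)$ and $|ba|$ runs over the same set of values as $b$ ranges over the unit ball (alternatively one notes $\tr|xy| = \tr|yx|$ for the relevant $L^1$ elements, or appeals directly to the symmetric form of Proposition~\ref{kothe}).

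The main obstacle I anticipate is bookkeeping rather than conceptual: making sure the strong-product identities of Lemma~\ref{Orllemma} are being applied with the correct pairing of $\psi$ with $\psi^*$ (the lemma pairs $\varphi_\psi$ with $\widetilde{\varphi}_{\psi^*}$, so in the proposition it is $a$ carrying the $\varphi_\psi$ weight and $b\in L^{\psi^*}$ carrying the $\widetilde{\varphi}_{\psi^*}$ weight), and confirming that $a\in L_\psi(\M,\tau_\M)$ together with $b\in L^{\psi^*}(\M,\tau_\M)$ really do fall under the hypotheses of that lemma so that $ab\otimes e^t, ba\otimes e^t \in L^1(\M)$. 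A second small technical point is to justify replacing $\tr(|\cdot|)$ of the crossed-product element $ab\otimes e^t$ by $\tau_\M(|ab|)$; this is exactly the content of the identification $L^1(\M)\cong L^1(\M,\tau_\M)$ with the trace functional going over to $\tau_\M$, which is part of the semifinite picture recalled at the start of Section~2 and can be cited rather than reproved. Once these identifications are in place the equality of the two suprema is immediate, since we have matched the constraint sets and the objective functions exactly.
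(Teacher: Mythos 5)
Your argument follows essentially the same route as the paper's (very terse) proof: Lemma \ref{Orllemma} supplies both the passage between $a\otimes\varphi_\psi(e^t)$ and $a\otimes\widetilde{\varphi}_\psi(e^t)$ and the product identities landing in $L^1(\M)$, Corollary \ref{Lux} applied to $\psi^*$ converts the constraint $\|b\|_{\psi^*}\leq 1$ into $\mu_1(b\otimes\varphi_{\psi^*}(e^t))\leq 1$, and Proposition \ref{kothe} (with the roles of $\psi$ and $\psi^*$ interchanged, using $(\psi^*)^*=\psi$) then gives the norm formula once $\tr$ on $L^1(\M)\cong \M\otimes e^t$ is identified with $\tau_\M$ --- which is exactly the step the paper dismisses as an exercise. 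The membership claim via the set-equality of $L^\psi(\M,\tau_\M)$ and $L_\psi(\M,\tau_\M)$ together with the first part of Lemma \ref{Orllemma} is likewise the intended argument.

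The one soft spot is your justification of the second displayed equality, i.e.\ the supremum computed with $b\otimes\widetilde{\varphi}_{\psi^*}(e^t)$ on the left, equivalently with $\tau_\M(|ba|)$ in place of $\tau_\M(|ab|)$. The aside ``$\tr|xy|=\tr|yx|$ for the relevant $L^1$ elements'' is false in general: already for $2\times 2$ matrices with $x=e_{12}$ and $y=e_{11}$ one has $xy=0$ but $yx=e_{12}\neq 0$, so $\tau(|xy|)\neq\tau(|yx|)$; and Proposition \ref{kothe} as stated only provides the one-sided (right-multiplication) form, while your primary phrasing (``$|ba|$ runs over the same set of values'') is not an argument. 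The clean repair is via adjoints: $\tau_\M(|ba|)=\tau_\M(|(ba)^*|)=\tau_\M(|a^*b^*|)$, the unit ball of $L^{\psi^*}(\M,\tau_\M)$ is invariant under $b\mapsto b^*$, so the left-handed supremum equals $\|a^*\|^O_\psi$, and $\|a^*\|^O_\psi=\|a\|^O_\psi$ because the Orlicz norm depends only on $\mu(a)=\mu(a^*)$ (e.g.\ via Proposition \ref{DPvsK} or the Amemiya form). With that small fix your proof is complete and coincides with the paper's.
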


\begin{proof}
We content ourselves with a few brief comments regarding the final formula. It is an exercise to conclude from the Lemma that for example $\tr(|b\otimes\widetilde{\varphi}_{\psi^*}(e^t).a\otimes\varphi_\psi(e^t)|) = \tau_\M(|ab|)$. In view of the fact that $\|b\|_{\psi^*} = \mu_1(b\otimes\varphi_{\psi^*}(e^t))$, the claim therefore follows from Proposition \ref{kothe}.
\end{proof}

Using the Amemiya form of the Orlicz norm, we obtain the following interesting alternative means of realising $L_\psi(\M,\tau_\M)$ inside $\widetilde{\mathcal{A}}$.

\begin{proposition}
Let $\psi$ be an Orlicz function. The quantity $$|||a\otimes\varphi_\psi(e^t)||| = \inf_{k > 0}k(1+\lambda_k(a\otimes\varphi_\psi(e^t))$$defines a norm for the space $\{a\otimes\varphi_\psi(e^t) | a \in L^\psi(\M,\tau_\M)\}\subset \widetilde{\mathcal{A}}$. Moreover under this norm the canonical map $a\otimes\varphi_\psi(e^t) \to v(a\otimes\varphi_\psi(e^t))$ defines an isometry onto the space $\{a\otimes\widetilde{\varphi}_\psi(e^t) | a \in L_\psi(\M,\tau_\M)\}\subset \widetilde{\mathcal{A}}$ equipped with the norm introduced in Corollary \ref{Orl}. (Here $v$ is as in the proof of Lemma 
\ref{Orllemma}.)
\end{proposition}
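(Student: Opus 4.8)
The plan is to show the quantity $|||a\otimes\varphi_\psi(e^t)|||$ equals the Orlicz norm $\|a\|^O_\psi$ (up to the identification of $a$ with $a\otimes\varphi_\psi(e^t)$), and then to read off both claims at once. The key identity comes from Theorem \ref{mainthm}: for $a\eta\M$ and $k>0$ we have $\lambda_k(a\otimes\varphi_\psi(e^t)) = \tau_\M(\psi(|a|/k))$. Substituting the variable $k=1/j$ (so that $j>0$ ranges over $(0,\infty)$ as $k$ does), we obtain
\begin{eqnarray*}
|||a\otimes\varphi_\psi(e^t)||| &=& \inf_{k>0} k\bigl(1+\lambda_k(a\otimes\varphi_\psi(e^t))\bigr)\\
&=& \inf_{k>0} k\bigl(1+\tau_\M(\psi(|a|/k))\bigr)\\
&=& \inf_{j>0} \frac{1}{j}\bigl(1+\tau_\M(\psi(j|a|))\bigr),
\end{eqnarray*}
which by the Hardy--Masani formula $\|f\|^O_\psi = \inf_{j>0}(1+\|\psi(j|f|)\|_1)/j$ quoted in the Preliminaries (with Proposition \ref{DPvsK} / Remark \ref{rem:a} supplying the translation $\tau_\M(\psi(j|a|)) = \|\psi(j|a|)\|_1$ when finite, and $+\infty$ otherwise) is precisely $\|a\|^O_\psi$. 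This simultaneously establishes that $|||\cdot|||$ is a norm, since it is literally transported from the Orlicz norm on $L_\psi(\M,\tau_\M)$, and that it is finite exactly on the set $\{a\otimes\varphi_\psi(e^t) : a\in L^\psi(\M,\tau_\M)\}$, since by Corollary \ref{Lux} (together with the remark $\varphi_\psi\leq\widetilde\varphi_\psi\leq 2\varphi_\psi$) membership in $L^\psi$ coincides with membership in $L_\psi$ as subsets of $\tM$.

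For the isometry claim I would proceed as follows. The map $a\otimes\varphi_\psi(e^t)\mapsto v(a\otimes\varphi_\psi(e^t)) = a\otimes\widetilde\varphi_\psi(e^t)$ is clearly a linear bijection between the two subspaces of $\widetilde{\mathcal A}$ by Lemma \ref{Orllemma} (it has inverse given by multiplication by $v^{-1}$). By the paragraph above, the domain space carries the norm $\|a\|^O_\psi$. By Proposition \ref{Orl}, the target space $\{a\otimes\widetilde\varphi_\psi(e^t) : a\in L_\psi(\M,\tau_\M)\}$ carries exactly the norm $\|a\|^O_\psi$ as well. Hence the map sends the element of norm $\|a\|^O_\psi$ in the first space to the element of the same norm in the second, i.e. it is an isometric isomorphism. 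It may be worth remarking explicitly that the norm Corollary \ref{Orl} (the author presumably means Proposition \ref{Orl}) places on the target coincides with what one gets by pulling back, so no separate computation is needed.

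The main obstacle is purely bookkeeping: one must be careful that the Hardy--Masani/Amemiya formula is being applied with the correct convention for $\tau_\M(\psi(|a|/k))$ when $\psi(|a|/k)$ fails to be $\tau_\M$-measurable. Remark \ref{rem:a} is exactly what licenses interpreting $\tau_\M(\psi(|a|/k))$ as a value in $[0,\infty]$ via the extended positive cone, so that the substitution $k\leftrightarrow 1/j$ and the passage to the infimum are legitimate; one should check that the infimum is unaffected by whether finitely many terms are $+\infty$, which is immediate since an infimum ignores $+\infty$ entries provided at least one entry is finite (and if none is, both sides are $+\infty$, consistent with $a\notin L^\psi$). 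A secondary, genuinely minor point is verifying that the canonical identification of $\mathcal A$ with $\M\otimes L^\infty(\mathbb R)$ (and hence of $\widetilde{\mathcal A}$ with $\widetilde{\M\otimes L^\infty(\mathbb R)}$) intertwines $\lambda_k$ computed in the crossed product with $\lambda_k$ computed after Fourier transform — but as the author has already declared this sort of technicality to be left to the reader, I would simply invoke it.
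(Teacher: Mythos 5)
Your argument is essentially identical to the paper's proof: both apply Theorem \ref{mainthm} to rewrite $\lambda_k(a\otimes\varphi_\psi(e^t))$ as $\tau_\M(\psi(|a|/k))$, substitute $k\mapsto 1/k$ in the infimum, and recognise the resulting expression as the Amemiya form of the Orlicz norm (the formula of Hudzik--Maligranda \cite{HM}, not ``Hardy--Masani''), from which both the norm property and the isometry onto $L_\psi(\M)$ follow. Your slightly more explicit handling of the isometry claim and of the $[0,\infty]$-valued convention for $\tau_\M(\psi(|a|/k))$ is fine and consistent with what the paper leaves implicit.
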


\begin{proof}
To prove this we first apply Theorem \ref{mainthm} to see that
\begin{eqnarray*}
|||a\otimes\varphi_\psi(e^t)||| &=& \inf_{k > 0}k(1+\lambda_k(a\otimes\varphi_\psi(e^t))\\
&=& \inf_{k > 0}\frac{1}{k}(1+\lambda_{1/k}(a\otimes\varphi_\psi(e^t))\\
&=& \inf_{k > 0}\frac{1}{k}(1+\tau_\M(\psi(k|a|))).
\end{eqnarray*} 
This last expression is precisely the formula for the Orlicz norm on $L^\psi(\M,\tau_\M)$ in Amemiya form. Hence the claim follows. 
\end{proof}

In the remainder of this section we will where confusion is possible, adopt the convention of writing $a, b, c,\dots$ when operators affiliated to $\M$ are in view, and $\widetilde{a}, \widetilde{b}, \widetilde{c},\dots$  for operators affiliated to $\mathcal{A}$. 

\begin{definition}
On the basis of Corollary \ref{Lux} and Proposition \ref{Orl}, we will denote the space $\{a\otimes\varphi_\psi(e^t) | a \in L^\psi(\M,\tau_\M)\}\subset \widetilde{\mathcal{A}}$ equipped with the norm $\|a\otimes\varphi_\psi(e^t)\|_\psi = \mu_1(a\otimes\varphi_\psi(e^t))$ by $L^\psi(\M)$. With reference to this space, we will denote the space 
$\{a\otimes\widetilde{\varphi}_\psi(e^t) | a \in L_\psi(\M,\tau_\M)\}\subset \widetilde{\mathcal{A}}$ equipped with the norm
\begin{eqnarray*}
\|a\otimes\widetilde{\varphi}_\psi(e^t)\|^O_\psi &=& \sup \{\tr(|\widetilde{b}.[a\otimes\widetilde{\varphi}_{\psi^*}(e^t)]|) : \widetilde{b} \in L^{\psi^*}(\M), \|\widetilde{b}\|_{\psi^*} \leq 1 \}\\
&=& \sup \{\tr(|[a\otimes\widetilde{\varphi}_{\psi^*}(e^t)].\widetilde{b}|) : \widetilde{b} \in L^{\psi^*}(\M), \|\widetilde{b}\|_{\psi^*} \leq 1 \}.
\end{eqnarray*}
by $L_\psi(\M)$.
\end{definition}

\begin{remark}
As we saw earlier, $\varphi_\psi(t) \leq \widetilde{\varphi}_\psi(t) \leq 2\varphi_\psi(t)$ for all $t \geq 0$.  For any $a\in L^\psi(\M,\tau_\M)$ we therefore have that $|a|\otimes \varphi_\psi(e^t) \leq |a|\otimes\widetilde{\varphi}_\psi(e^t) \leq 2|a|\otimes\varphi_\psi(e^t)$, and hence that $$\mu_s(a\otimes \varphi_\psi(e^t)) \leq \mu_s(a\otimes\widetilde{\varphi}_\psi(e^t)) \leq 2\mu_s(a\otimes\varphi_\psi(e^t)) \quad \mbox{for all} \quad s \geq 0.$$  Although unable to verify this yet, the author suspects that $\mu_1(a\otimes\widetilde{\varphi}_\psi(e^t)) = \|a\otimes\widetilde{\varphi}_\psi(e^t)\|^O_\psi$. If this does indeed prove to be the true, then for the case $s=1$ the above inequality would then translate to nothing but the classical equivalence of the Orlicz and Luxemburg norms \cite[4.8.14]{BS}: namely that $\|a\|_\psi \leq \|a\|^O_\psi \leq 2\|a\|_\psi$ for all $a\in L^\psi(\M,\tau_\M)$.
\end{remark}

Having introduced ``crossed product versions'' of $L^\psi$ and $L_\psi$, we pass to the issue of K\"othe duality.

\begin{definition}
We define the K\"othe dual of $L^\psi(\M)$ to be the space $$\{\widetilde{a} \in \widetilde{\mathcal{A}}| \tilde{a}\tilde{b}, \tilde{b} \tilde{a} \in L^1(\M) \mbox{ for all } \widetilde{b} \in L^{\psi}(\M)\}$$equipped with the norm 
\begin{eqnarray*}
\|\widetilde{a}\| &=& \sup \{\tr(|\widetilde{b}.\widetilde{a}|) : \widetilde{b} \in L^{\psi}(\M), \|\widetilde{b}\|_\psi \leq 1 \}\\
&=& \sup \{\tr(|\widetilde{a}.\widetilde{b}|) : \widetilde{b} \in L^{\psi}(\M), \|\widetilde{b}\|_\psi \leq 1 \}.
\end{eqnarray*}
\end{definition}

\begin{theorem}\label{kothecross}
For any $\widetilde{a} \in \widetilde{\mathcal{A}}$ the following are equivalent:
\begin{enumerate}
\item $\widetilde{a} \in L_{\psi^*}(\M)$
\item $\widetilde{a}\widetilde{b} \in L^1(\M)$ for all $\widetilde{b} \in L^{\psi}(\M)$
\item $\widetilde{b}\widetilde{a} \in L^1(\M)$ for all $\widetilde{b} \in L^{\psi}(\M)$
\end{enumerate}
In particular $L_{\psi^*}(\M)$ is the K\"othe dual of $L^\psi(\M)$.  
\end{theorem}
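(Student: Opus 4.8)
The plan is to establish the chain of implications $(1)\Rightarrow(2)$, $(1)\Rightarrow(3)$, and then $(2)\Rightarrow(1)$, $(3)\Rightarrow(1)$, after which the final ``in particular'' clause is immediate by comparing with the Definition of the K\"othe dual of $L^\psi(\M)$. The forward implications $(1)\Rightarrow(2)$ and $(1)\Rightarrow(3)$ are essentially already packaged in Lemma \ref{Orllemma}: if $\widetilde{a}\in L_{\psi^*}(\M)$, then $\widetilde{a}=a\otimes\widetilde\varphi_{\psi^*}(e^t)$ for some $a\in L_{\psi^*}(\M,\tau_\M)$, and for any $\widetilde{b}=b\otimes\varphi_\psi(e^t)\in L^\psi(\M)$ that lemma gives $\widetilde{b}\,\widetilde{a}=ba\otimes e^t\in L^1(\M)$ and $\widetilde{a}\,\widetilde{b}=ab\otimes e^t\in L^1(\M)$. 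One small technical point to address here is that the lemma is stated for $b$ ranging over $L^\psi(\M,\tau_\M)$ under the Luxemburg identification; since every element of $L^\psi(\M)$ has exactly this form by Corollary \ref{Lux}, this is exactly what is needed.

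The substantive content is the reverse direction, i.e.\ that membership in $L^1(\M)$ of all the products forces $\widetilde a\in L_{\psi^*}(\M)$. First I would reduce to showing that such an $\widetilde a$ has the form $a\otimes\widetilde\varphi_{\psi^*}(e^t)$ with $a\eta\M$, and then that $a\in L_{\psi^*}(\M,\tau_\M)$. The natural route is via Proposition \ref{kothe}, which identifies $L^{\psi^*}(\M,\tau_\M)$ as the K\"othe dual of $L^\psi(\M,\tau_\M)$ at the level of the underlying semifinite algebra. Concretely: write $\widetilde a=v\,\widetilde a'$ where $v=\I\otimes[\varphi_{\psi^*}(e^t)/\widetilde\varphi_{\psi^*}(e^t)]\in\mathcal A$ (the inverse of the multiplier appearing in Lemma \ref{Orllemma}), so that $\widetilde a'=v^{-1}\widetilde a$; the point is that $\widetilde a\widetilde b\in L^1(\M)$ for all $\widetilde b\in L^\psi(\M)$ iff $\widetilde a'\widetilde b\in L^1(\M)$ for all such $\widetilde b$, since $v,v^{-1}$ are bounded and commute past the relevant tensor legs. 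Then one wants to argue that $\widetilde a'$ lies in the ``$\varphi_{\psi^*}$-scaled'' copy, hence $\widetilde a'=a\otimes\varphi_{\psi^*}(e^t)$ for some $a$; testing against $\widetilde b=b\otimes\varphi_\psi(e^t)$ with $b$ running through $\mathfrak m_{\tau_\M}$ (or a dense enough subset of $L^\psi(\M,\tau_\M)$) and using Lemma \ref{Orllemma}'s product formula $\widetilde a'\widetilde b=ab\otimes e^t$ reduces the condition to: $ab\in L^1(\M,\tau_\M)$ for all $b\in L^\psi(\M,\tau_\M)$, which by Proposition \ref{kothe} is precisely $a\in L^{\psi^*}(\M,\tau_\M)$, i.e.\ $\widetilde a=a\otimes\widetilde\varphi_{\psi^*}(e^t)\in L_{\psi^*}(\M)$. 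The implication $(3)\Rightarrow(1)$ is handled symmetrically, using the left-handed product identity from Lemma \ref{Orllemma} and the two-sided nature of the K\"othe dual in Proposition \ref{kothe}.

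The main obstacle I anticipate is the step that pins down the precise ``shape'' of $\widetilde a$ inside $\widetilde{\mathcal A}=\widetilde{\M\otimes L^\infty(\bR)}$: a priori $\widetilde a$ is just some $\tau_{\mathcal A}$-measurable operator, not obviously a product of something affiliated to $\M$ with a function of $e^t$. One must use the $\theta_s$-behaviour or, more directly in this semifinite picture, the fact that the products $\widetilde a\widetilde b$ all landing in $L^1(\M)$ (which is the $\theta_s(\cdot)=e^{-s}\cdot$ eigenspace) together with $\widetilde b$ ranging over the full Luxemburg copy forces $\widetilde a$ into the complementary eigenspace; equivalently, one shows directly that $\widetilde a$ commutes appropriately with $\I\otimes L^\infty(\bR)$ up to the scaling, which is where a little care with measurability and closures of products (the ``strong product'' convention of the Preliminaries) is genuinely needed. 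Once that structural fact is in hand, the rest is a translation exercise through Lemma \ref{Orllemma} and Proposition \ref{kothe}, with the norm identity $\|\widetilde a\|=\|a\|^O_{\psi^*}$ falling out of the very definition of the norm on $L_{\psi^*}(\M)$ compared with that on the K\"othe dual.
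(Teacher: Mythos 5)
Your outline gets the easy half right (the implications $(1)\Rightarrow(2),(3)$ via Lemma \ref{Orllemma} and Corollary \ref{Lux} are exactly the paper's ``converse implication being clear''), but the substance of the theorem is precisely the step you defer: showing that the hypothesis in (2) forces $\widetilde a$ to \emph{be} a simple tensor $g\otimes\widetilde{\varphi}_{\psi^*}(e^t)$ with $g\eta\M$. Saying ``one must use the $\theta_s$-behaviour'' or that $\widetilde a$ is ``forced into the complementary eigenspace'' does not work as stated, because $L^\psi(\M)$ is not a $\theta_s$-eigenspace for a general Orlicz function ($\varphi_\psi(e^t)$ is not a power of $e^t$), so there is no eigenspace bookkeeping to invoke; this is exactly where an actual argument is needed. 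The paper supplies it by localization: for every projection $e\in\M$ with $\tau_\M(e)<\infty$ one has $e\otimes\varphi_\psi(e^t)\in L^\psi(\M)$, hence $\widetilde a\,(e\otimes\varphi_\psi(e^t))=g_e\otimes e^t$ with $g_e\in L^1(\M,\tau_\M)$ and $s_R(g_e)\leq e$; using $\varphi_\psi(t)\,\widetilde{\varphi}_{\psi^*}(t)=t$ this says $\widetilde a\,(e\otimes 1)=g_e\otimes\widetilde{\varphi}_{\psi^*}(e^t)$, and summing over a mutually orthogonal family $\{e_\lambda\}$ of finite-trace projections with $\sum_\lambda e_\lambda=\I$ yields $\widetilde a=g\otimes\widetilde{\varphi}_{\psi^*}(e^t)$; Proposition \ref{Orl} then identifies $g\in L_{\psi^*}(\M,\tau_\M)$. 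Without some such argument your proof has a hole exactly at its load-bearing point.

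In addition, the reduction through $v=\I\otimes[\varphi_{\psi^*}(e^t)/\widetilde{\varphi}_{\psi^*}(e^t)]$ is not sound. First, boundedness of $v,v^{-1}$ does not give ``$\widetilde a\widetilde b\in L^1(\M)$ iff $\widetilde a'\widetilde b\in L^1(\M)$'': $v\notin L^\infty(\M)$ (it is not $\theta_s$-invariant), and $L^1(\M)=\{c\otimes e^t\}$ is not stable under left multiplication by $\I\otimes f(e^t)$ for non-constant $f$, so $v(\widetilde a'\widetilde b)\in L^1(\M)$ neither implies nor follows from $\widetilde a'\widetilde b\in L^1(\M)$. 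Second, after this substitution your product identity is wrong: $(a\otimes\varphi_{\psi^*}(e^t))(b\otimes\varphi_\psi(e^t))=ab\otimes\varphi_{\psi^*}(e^t)\varphi_\psi(e^t)$, which is not $ab\otimes e^t$, since the duality identity pairs $\varphi_\psi$ with $\widetilde{\varphi}_{\psi^*}$, not with $\varphi_{\psi^*}$. Both problems disappear if you drop $v$ entirely and work with $\widetilde a$ itself, aiming directly at the form $g\otimes\widetilde{\varphi}_{\psi^*}(e^t)$; once that is established, your concluding appeal to the semifinite K\"othe duality (Proposition \ref{kothe}, or Proposition \ref{Orl}) and the norm comparison do match the paper's final step.
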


\begin{proof} Notice that the final statement is a trivial consequence of the equivalence stated in the first part of the theorem. We prove the equivalence of the first two conditions. (The proof that conditions 1 and 3 are equivalent is similar.) The converse implication being clear, we prove that condition 2 implies that $\widetilde{a} \in L_{\psi^*}(\M)$. So let $\widetilde{a} \in \widetilde{\mathcal{A}}$ be given with $\widetilde{a}\widetilde{b} \in L^1(\M)$ for all $\widetilde{b} \in L^\psi(\M)$. The result will follow from the known definitions if we are able to show that this condition suffices to ensure that $\widetilde{a} \in L_{\psi^*}(\M)$. In other words we need to show that $\widetilde{a}$ can be written as a simple tensor of the form $a\otimes \widetilde{\varphi}_\psi(e^t)$ for some $a \in L_{\psi^*}(\M,\tau_\M)$.

Any projection $e \in \M$ with $\tau_\M(e) < \infty$ will belong to $L^\psi(\M,\tau_\M)$, and hence $e\otimes \varphi_\psi(e^t)$ will belong to $L^\psi(\M)$. So for any such projection we will have that $\widetilde{a}.(e\otimes\varphi_\psi(e^t)) \in L^1(\M)$. That is $\widetilde{a}.(e\otimes\varphi_\psi(e^t)) = g_e \otimes e^t$ for some $g_e \in L^1(\M,\tau_\M)$ with $s_R(g_e) \leq e$, where $s_R(g_e)$ denotes the right support of $g_e$. Equivalently this means that $\widetilde{a}.(e\otimes 1)$ is of the form $g_e \otimes \frac{e^t}{\varphi_\psi(e^t)} = g_e \otimes \widetilde{\varphi}_{\psi^*}(e^t)$ for some $g_e \in L^1(\M,\tau_\M) \subset \tM$ with $s_R(g_e) \leq e$. Here we used the fact that $\varphi_\psi(t).\widetilde{\varphi}_{\psi^*}(t) = t$ for all $t \geq 0$. If we pick a mutually orthogonal family $\{e_\lambda\}$ of such projections for which $\sum_\lambda e_\lambda = \I$, then this means that $\widetilde{a}$ is of the form $\widetilde{a} = \sum_{\lambda} \widetilde{a}(e_\lambda \otimes 1) = \sum_{\lambda}g_\lambda\otimes \widetilde{\varphi}_{\psi^*}(e^t) = g\otimes \widetilde{\varphi}_{\psi^*}(e^t)$ where $ g = \sum_\lambda g_{e_\lambda}$. An application of Proposition \ref{Orl} noe reveals that $g$ is a $\tau_\M$-measurable element of $\tM$, and hence that $\widetilde{a}$ is of the required form. 
\end{proof}

With a little more effort an essentially similar argument can be used to show that $L^{\psi^*}(\M)$ is the K\"othe dual of $L_\psi(\M)$.

The techniques demonstrated in the above proof, allow us to formulate an elegant test for membership of $L^\psi$ and $L_\psi$.

\begin{remark}\label{rem:b}
Let $\widetilde{a} \in \widetilde{\mathcal{A}}$ be given. In proving the above result on K\"othe duality, the only fact we needed to be able to conclude that $\widetilde{a} \in L_{\psi^*}(\M)$, was that $\widetilde{a}.(e\otimes \varphi_\psi(e^t)) \in L^1(\M)$ for any projection $e$ in $\M$ with $\tau_\M(e) < \infty$. One could similarly have shown that requiring that either $$(e\otimes \varphi_\psi(e^t)).\widetilde{a} \in L^1(\M)  \mbox{ for all } e \in \mathbb{P}(\M)\mbox{ with } \tau_\M(e) < \infty,$$or that $$(e\otimes \sqrt{\varphi_\psi(e^t)}).\widetilde{a}.(e\otimes \sqrt{\varphi_\psi(e^t)}) \in L^1(\M) \mbox{ for all } e \in \mathbb{P}(\M)\mbox{ with }\quad \tau_\M(e) < \infty,$$would have sufficed to ensure that $\widetilde{a} \in L_{\psi^*}(\M)$. 

The claim regarding the second alternative condition is maybe less obvious, so we will pause to justify it. Observe that if $(e\otimes \sqrt{\varphi_\psi(e^t)}).\widetilde{a}.(e\otimes \sqrt{\varphi_\psi(e^t)}) \in L^1(\M)$ for any projection $e \in \M$ with $\tau_\M(e) < \infty$, then also $(e\otimes \sqrt{\varphi_\psi(e^t)}).\widetilde{a}.(f\otimes \sqrt{\varphi_\psi(e^t)}) \in L^1(\M)$ for any two projections $e,f \in \M$ 
with finite trace. To see this simply multiply $((e\vee f)\otimes \sqrt{\varphi_\psi(e^t)}).\widetilde{a}.((e\vee f)\otimes \sqrt{\varphi_\psi(e^t)})$ left and right by $e\otimes 1$ and $f \otimes 1$ respectively. If as in the proof of Theorem \ref{kothecross} we pick a 
mutually orthogonal family $\{e_\lambda\}$ of such projections for which $\sum_\lambda e_\lambda = \I$, then this means that each $(e_\lambda\otimes \sqrt{\varphi_\psi(e^t)}).\widetilde{a}.(e_\mu \otimes \sqrt{\varphi_\psi(e^t)})$ is of the form $g_{\lambda \mu} \otimes e^t$ for some $g_{\lambda \mu} \in L^1(\M,\tau_\M)$ with $s_R(g_{\lambda \mu}) \leq e_\lambda$, $s_L(g_{\lambda \mu}) \leq e_\mu$. (As before $s_R(g_{\lambda \mu})$ denotes the right support of $g_{\lambda \mu}$, and $s_L(g_{\lambda \mu})$ the left support.) An adaptation of the argument in the proof of 
that theorem then shows that formally $\widetilde{a} = g \otimes \widetilde{\varphi}_{\psi^*}(e^t)$ where $g = \sum_{\lambda, \mu} g_{\lambda \mu}$. The rest of the proof now proceeds as before.

Now notice that $e\otimes \varphi_\psi(e^t)$ and $(e\otimes \sqrt{\varphi_\psi(e^t)})$ can respectively be written as $(e\otimes 1)\varphi_\psi(h)$ and $(e\otimes 1)\varphi_\psi^{1/2}(h)$ where $h = \I\otimes e^{t}$ is the density of the dual weight $\widetilde{\tau_\M}$ with respect to $\tau_{\mathcal{A}}$. If we momentarily adopt the popular convention of writing $e$ for $ e\otimes 1$, the criteria for membership of $L_{\psi^*}(\M)$ is then one of the following three equivalent conditions:
\begin{itemize}
\item $e\varphi_\psi(h)\widetilde{a} \in L^1(\M)$ for every projection $e \in \M$ with $\tau_\M(e) < \infty$.
\item $\widetilde{a}\varphi_\psi(h)e \in L^1(\M)$ for every projection $e \in \M$ with $\tau_\M(e) < \infty$.
\item $e\varphi_\psi^{1/2}(h)\widetilde{a}\varphi_\psi^{1/2}(h)e \in L^1(\M)$ for every projection $e \in \M$ with $\tau_\M(e) < \infty$.
\end{itemize}
Swopping the roles of $\psi$ and $\psi^*$, then gives the criteria for membership of $L_{\psi}(\M)$. The above argument can now be suitably modified to yield the following criteria for membership of $L^\psi(\M)$: 
\begin{eqnarray*}
&&\widetilde{a} \in  L^\psi(\M)\\
&\Leftrightarrow& e\widetilde{\varphi}_{\psi^*}(h)\widetilde{a} \in L^1(\M) \mbox{ for every projection } e \in \M \mbox{ with } \tau_\M(e) < \infty\\
&\Leftrightarrow& \widetilde{a}\widetilde{\varphi}_{\psi^*}(h)e \in L^1(\M) \mbox{ for every projection } e \in \M \mbox{ with } \tau_\M(e) < \infty\\ 
&\Leftrightarrow& e\widetilde{\varphi}_{\psi^*}^{1/2}(h)\widetilde{a}\widetilde{\varphi}_{\psi^*}^{1/2}(h)e \in L^1(\M) \mbox{ for every projection } e \in \M \mbox{ with } \tau_\M(e) < \infty.
\end{eqnarray*} 
\end{remark}

\begin{theorem}\label{thm:normtop}
The norm topology on $L^\psi(\M)$ (respectively $L_\psi(\M)$) is complete and is homeomorphic to the relative topology induced by the topology of convergence in measure on $\widetilde{\mathcal{A}}$. 
\end{theorem}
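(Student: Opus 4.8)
The plan is to prove both statements simultaneously by showing that the norm $\|\cdot\|_\psi$ on $L^\psi(\M)$ (and similarly $\|\cdot\|^O_\psi$ on $L_\psi(\M)$) induces exactly the subspace topology inherited from $(\widetilde{\mathcal{A}}, \text{convergence in measure})$, and then invoking the fact that this subspace is closed to obtain completeness. The two assertions about $L^\psi(\M)$ and $L_\psi(\M)$ are equivalent, since by the Remark following the Definition (and Lemma~\ref{Orllemma}) the canonical map $\widetilde{a}\mapsto v\widetilde{a}$ with $v=\I\otimes[\widetilde{\varphi}_\psi(e^t)/\varphi_\psi(e^t)]$ is a topological isomorphism $\widetilde{\mathcal{A}}\to\widetilde{\mathcal{A}}$ carrying $L^\psi(\M)$ onto $L_\psi(\M)$ and (up to the factor bounded between $1$ and $2$) the norms onto each other; so it suffices to treat $L^\psi(\M)$.

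First I would establish the quantitative comparison between $\|\widetilde{a}\|_\psi=\mu_1(\widetilde{a})$ and convergence in measure. For the easy direction: by Corollary~\ref{Lux} the norm is $\mu_1$ evaluated at the singular value function, and since $s\mapsto\mu_s(\widetilde{a})$ is nonincreasing, $\mu_t(\widetilde{a})\le\mu_1(\widetilde{a})\cdot\max(1,\ldots)$ fails in general — so instead I use the standard estimate that for each fixed $t>0$ one has $\mu_t(\widetilde{a})\le\mu_1(\widetilde{a})$ when $t\ge 1$ and needs separate control for $t<1$. The clean route is: $\mu_1(\widetilde{a}_n-\widetilde{a})\to 0$ forces $\mu_t(\widetilde{a}_n-\widetilde{a})\to 0$ for every $t\ge 1$ trivially, and for $0<t<1$ one uses that $a_n\to a$ in $L^\psi(\M,\tau_\M)$ implies, via the Fatou property and rearrangement invariance of $L^\psi(0,\infty)$ together with Proposition~\ref{DPvsK}, that $\tau_\M(\psi(k|a_n-a|))\to 0$ for every $k$, whence $\lambda_s(a_n-a)\to 0$ for every $s>0$ by Theorem~\ref{mainthm}, i.e.\ $\mu_t((a_n-a)\otimes\varphi_\psi(e^t))\to 0$ for all $t>0$ by the characterisation of convergence in measure in \cite[Lemma~3.1]{FK}. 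For the reverse direction, I would argue that if $\mu_t(\widetilde{a}_n)\to 0$ for each $t>0$ but $\|\widetilde{a}_n\|_\psi=\mu_1(\widetilde{a}_n)\not\to 0$, then passing to a subsequence with $\mu_1(\widetilde{a}_n)\ge\delta>0$, the identity $\lambda_\epsilon(\widetilde{a}_n)=\tau_\M(\psi(|a_n|/\epsilon))$ from Theorem~\ref{mainthm} (with $\epsilon=\delta/2$, say) forces $\lambda_{\delta/2}(\widetilde{a}_n)\ge 1$ for all $n$, contradicting $\mu_{1/2}(\widetilde{a}_n)\to 0$ (which gives $\lambda_{\delta/2}(\widetilde{a}_n)<1$ eventually, since $\lambda_{\mu_t}(\cdot)\le t$ and right-continuity). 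So the two topologies coincide on the subspace.

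Next, completeness: by Corollary~\ref{Lux} the set $L^\psi(\M)=\{a\otimes\varphi_\psi(e^t):a\in L^\psi(\M,\tau_\M)\}$ is precisely the intersection of $\widetilde{\mathcal{A}}$ with the fixed ``fibre direction'' determined by the tensor factor $\varphi_\psi(e^t)$; concretely, $\widetilde{a}\in L^\psi(\M)$ iff $\widetilde{a}$ commutes with $\I\otimes L^\infty(\bR)$ in the appropriate sense, iff $\widetilde{a}v^{-1}\in L^\infty(\M)$-module-structured way — more cleanly, one shows $L^\psi(\M)$ is closed in $\widetilde{\mathcal{A}}$ for convergence in measure, because if $a_n\otimes\varphi_\psi(e^t)\to\widetilde{b}$ in measure then compressing by $e\otimes 1$ and $(\I\otimes\chi_{[-N,N]}(t))$ shows $\widetilde{b}$ retains the simple-tensor form, and measurability of the limiting $\M$-component follows from Remark~\ref{rem:a}/Proposition~\ref{DPvsK}. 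Since $L^\psi(\M,\tau_\M)$ with the Luxemburg norm is a Banach space (it is a rearrangement-invariant Banach function space in the sense of \cite{DDdP}, cf.\ Proposition~\ref{DPvsK}), and the norm there agrees with $\mu_1(a\otimes\varphi_\psi(e^t))$, a Cauchy sequence in $(L^\psi(\M),\|\cdot\|_\psi)$ is Cauchy in $L^\psi(\M,\tau_\M)$, hence norm-converges there, hence converges in $L^\psi(\M)$; completeness follows. The homeomorphism statement is then exactly the equivalence of topologies established above.

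The main obstacle I anticipate is the reverse comparison — deducing norm-convergence (control of $\mu_1$) from convergence in measure (control of every $\mu_t$). Convergence in measure only gives pointwise control of the singular value function at each fixed level, with no uniformity, whereas $\mu_1$ is a single value that can in principle stay large while $\lambda_s$ is controlled only for a moving window of $s$. The resolution rests essentially on Theorem~\ref{mainthm}: the distribution function of $a\otimes\varphi_\psi(e^t)$ is rigidly tied to $\tau_\M(\psi(\cdot))$, so a lower bound on $\mu_1$ converts (via $\lambda_{\mu_1}\le 1$ and right-continuity) into a definite lower bound on $\lambda_\epsilon$ for a \emph{fixed} $\epsilon$, which convergence in measure at level $\epsilon/2$ directly contradicts. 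I would want to check the right-continuity/endpoint bookkeeping carefully, since the argument hinges on choosing the comparison level strictly below the putative norm lower bound.
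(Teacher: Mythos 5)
Your proposal is correct and follows essentially the same route as the paper: Theorem \ref{mainthm} plus a convexity/scaling estimate to match $\mu_1$-balls with the measure-topology neighbourhoods $\mathcal{N}(\epsilon,\delta)$, completeness via the isometry $a\mapsto a\otimes\varphi_\psi(e^t)$ onto the Banach space $L^\psi(\M,\tau_\M)$, and the bounded multiplier $v$ to carry everything over to $L_\psi(\M)$. The one step to tighten is that modular convergence $\tau_\M(\psi(k|a_n-a|))\to 0$ follows from Luxemburg-norm convergence by the convexity inequality $\tau_\M(\psi(|f|))\le\|f\|_\psi$ whenever $\|f\|_\psi\le 1$ (the same scaling trick the paper uses), not from the Fatou property or rearrangement invariance.
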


Note that this result also holds for $L^\infty$ -- a fact which seems to be new information!

\begin{proof}
We remind the reader that the basic neighbourhoods of zero for the topology of convergence in measure on $\widetilde{\mathcal{A}}$ are of the form $$\mathcal{N}(\epsilon, \delta)=\{\widetilde{a} | \lambda_\epsilon(\widetilde{a}) \leq \delta\}.$$Details may be found in \cite{Tp}.

\noindent\textbf{Case 1:} We first consider the space $L^\psi(\M)$. Let $1> \epsilon > 0$ be given and  suppose that $\|\widetilde{a}\|_\psi = \mu_1(\widetilde{a})< \epsilon$ for some $\widetilde{a} = a \otimes \varphi_\psi(e^t) \in L^\psi(\M)$. But then there must exist an $0 < \alpha < \epsilon$ so that $\lambda_\alpha(a \otimes \varphi_\psi(e^t)) \leq 1$. By Theorem \ref{mainthm}, this ensures that $\tau_\M(\psi(|a|/\alpha)) \leq  1$. Next notice that $\frac{1}{\sqrt{\alpha}} \geq 1$, since by assumption $0< \alpha < \epsilon < 1$. We may therefore use the convexity of $\psi$ to conclude that $\frac{1}{\sqrt{\alpha}}\tau_\M(\psi(|a|/\sqrt{\alpha})) \leq \tau_\M(\psi(|a|/\alpha)) \leq  1$, in other words $\tau_\M(\psi(|a|/\sqrt{\alpha})) \leq \sqrt{\alpha}$. Once again using Theorem \ref{mainthm}, this can be reformulated as $\lambda_{\sqrt{\alpha}}(a \otimes \varphi_\psi(e^t)) \leq \sqrt{\alpha}$. This ensures that $\widetilde{a} = a \otimes \varphi_\psi(e^t) \in \mathcal{N}(\sqrt{\alpha}, \sqrt{\alpha})$ \cite[I.7]{Tp}.

Conversely suppose that for some $\epsilon, \delta > 0$ with $\delta \leq 1$ we have that $\widetilde{a} = a \otimes \varphi_\psi(e^t) \in \mathcal{N}(\epsilon, \delta)$. Then by \cite[I.7]{Tp}, $\lambda_\epsilon(a \otimes \varphi_\psi(e^t)) \leq \delta \leq 1$. It then follows that $\|a \otimes \varphi_\psi(e^t)\|_\psi = \mu_1(a \otimes \varphi_\psi(e^t)) \leq \epsilon$.

The norm topology must therefore be homeomorphic to the relative topology induced by the topology of convergence in measure on $\widetilde{\mathcal{A}}$. 

It remains to verify the claim regarding completeness. This however follows from the completeness of the space $L^\psi(\M,\tau_\M)$, and the fact that $a \to a \otimes \varphi_\psi(e^t)$ defines an isometry from $L^\psi(\M,\tau_\M)$ to $L^\psi(\M)$ (see Corollary \ref{Lux} and the definition of the norm on $L^\psi(\M)$).
 
\noindent\textbf{Case 2:} We now pass to the case of the space $L_\psi(\M)$. This case follows directly from the previous one by an application of the map defined in the proof of Lemma \ref{Orllemma}. In that proof we introduced the operators $v = \I \otimes [\widetilde{\varphi}_\psi(e^t)/\varphi_\psi(e^t)]$ and $v^{-1} = \I\otimes [\varphi_\psi(e^t)/\widetilde{\varphi}_\psi(e^t)]$, and noted that the map $\widetilde{a} \to v\widetilde{a}$ maps $L^\psi(\M)$ onto $L_\psi(\M)$.  (Expressed in terms of $h = \I \otimes e^t$, these operators are just $v= \widetilde{\varphi}_\psi(h)[\varphi_\psi(h)]^{-1}$ and $v^{-1} = \varphi_\psi(h)[\widetilde{\varphi}_\psi(h)]^{-1}$.) Since both $v$ and $v^{-1}$ belong to $\mathcal{A} =\cM{}{}$ (as was noted in the proof of Lemma \ref{Orllemma}), the map $\widetilde{a} \to v\widetilde{a}$ is a homeomorphism with respect to the topology of convergence in measure. So the relative topologies induced by the topology of convergence in measure on these two spaces must be equivalent. To see that the norm topologies are also equivalent, we may combine the known equivalence of the norms on $L^\psi(\M,\tau_\M)$ and $L_\psi(\M,\tau_\M)$ with the isometries defined in Corollaries \ref{Lux} and \ref{Orl}, to conclude that the induced canonical map $L^\psi(\M) \to L^\psi(\M,\tau_\M) \to L_\psi(\M,\tau_\M) \to L_\psi(\M)$ is a norm-isomorphism. 
\end{proof}

Thus far we have seen that the simple prescription of mapping $a$ to $a\otimes \varphi_\psi(e^t)$, provides a very effective method of representing the Orlicz space $L^\psi(\M,\tau_\M)$ inside $\widetilde{\mathcal{A}}$. But what about a general rearrangement invariant Banach Function space $L^\rho(\M,\tau_\M)$? If we follow the same prescription, will $\widetilde{\mathcal{A}}$ be big enough to accommodate a copy of $L^\rho(\M,\tau_\M)$? Exactly how much room is there for such spaces inside $\widetilde{\mathcal{A}}$? To answer this question we need the following sharpening of 
\cite[2.5.10]{BS}.

\begin{theorem}
Let $\varphi_\rho$ be the fundamental function of the rearrangement invariant Banach Function space $L^\rho(\mathbb{R})$. Then there exists a constant $k > 0$ and a function $\varphi_0 : [0,\infty) \to [0,\infty)$ with the properties 
\begin{itemize}
\item $\varphi_0$ is concave and increasing;
\item $\varphi_0$ is continuous on $(0,\infty)$;
\item $\varphi_0(t_0) = 0$ if and only if $t_0 = 0$, and $\sup\{t: \varphi_0(t) < \lim_{t\to\infty} \varphi_0(t)\} = \infty$;
\item $t\to\frac{\varphi_0(t)}{t}$ is decreasing on $(0, \infty)$;
\end{itemize} 
for which we have that $$\frac{1}{k}\varphi_0 \leq \varphi_\rho \leq {k}\varphi_0.$$ 
\end{theorem}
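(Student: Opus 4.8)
The plan is to take $\varphi_0$ to be a slight modification of the least concave majorant of $\varphi_\rho$. Recall that $\varphi_\rho$ is quasi-concave (\cite[2.5.3]{BS}): it is non-decreasing, strictly positive on $(0,\infty)$, vanishes at $0$, and $t\mapsto\varphi_\rho(t)/t$ is non-increasing. Let $g$ be the least concave majorant of $\varphi_\rho$; by the discussion preceding Theorem \ref{mainthm} (that is, \cite[2.5.10]{BS}) one has $\varphi_\rho\le g\le 2\varphi_\rho$ on $(0,\infty)$, and $g$ is easily seen to be non-decreasing (using $\sup g=\sup\varphi_\rho$ together with $\varphi_\rho(t)\to\sup\varphi_\rho$), hence --- being concave with $g(0^+)\ge 0$ --- also to have $t\mapsto g(t)/t$ non-increasing. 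Put $c_1:=g(1)>0$ and define
$$\varphi_0(t)=g(t)+c_1\bigl(1-e^{-t}\bigr)\quad(t>0),\qquad\varphi_0(0)=0.$$

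First I would verify the structural bullets. Since $t\mapsto c_1(1-e^{-t})$ is concave, strictly increasing and bounded, $\varphi_0$ is concave, strictly increasing and continuous on $(0,\infty)$, with $\varphi_0(0^+)=g(0^+)\ge 0$; it follows that $t\mapsto\varphi_0(t)/t$ is non-increasing on $(0,\infty)$, and this is precisely what is needed to check that attaching the value $0$ at the origin leaves $\varphi_0$ concave on $[0,\infty)$. Because $\varphi_0(t)\ge c_1(1-e^{-t})>0$ for $t>0$ while $\varphi_0(0)=0$, we have $\varphi_0(t_0)=0\Leftrightarrow t_0=0$. This settles the first, second and fourth listed properties.

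For the growth bullet, the role of the added term is exactly to rule out a flat tail: $\varphi_0$ is strictly increasing on $(0,\infty)$, so $\varphi_0(t)<\lim_{s\to\infty}\varphi_0(s)$ for every $t$ (finite limit or not), whence $\sup\{t:\varphi_0(t)<\lim_{s\to\infty}\varphi_0(s)\}=\infty$. For the two-sided estimate, $\varphi_0\ge g\ge\varphi_\rho$ is immediate, and since $g(t)/t$ is non-increasing one gets $c_1(1-e^{-t})\le c_1 t\le g(t)$ for $0<t\le 1$ and $c_1(1-e^{-t})\le c_1=g(1)\le g(t)$ for $t\ge 1$, so that $g\le\varphi_0\le 2g$ on $(0,\infty)$; combined with $\varphi_\rho\le g\le 2\varphi_\rho$ this gives $\varphi_\rho\le\varphi_0\le 4\varphi_\rho$, i.e. the claim with $k=4$.

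The only genuinely non-routine point is the growth bullet: the naive choice $\varphi_0=g$ fails here, because the least concave majorant can legitimately be eventually constant (for $L^\rho(\mathbb{R})=L^1(\mathbb{R})+L^\infty(\mathbb{R})$ the fundamental function $\varphi_\rho(t)=\min(t,1)$ is already concave and constant for $t\ge 1$), so one must bend $g$ upward near infinity while simultaneously keeping concavity, the monotonicity of $t\mapsto\varphi_0(t)/t$, and equivalence with $\varphi_\rho$; the additive correction $c_1(1-e^{-t})$ with $c_1=g(1)$ is chosen to meet all three requirements at once. A lesser technical nuisance is that $\varphi_\rho$, and hence $g$, may fail to vanish at $0$ (e.g. $\varphi_\rho(0^+)=1$ for $L^1(\mathbb{R})\cap L^\infty(\mathbb{R})$), so the concavity of $\varphi_0$ across the origin has to be deduced from its star-shapedness rather than assumed.
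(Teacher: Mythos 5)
Your proposal is correct, but it takes a genuinely different route from the paper. Both proofs start from the least concave majorant $g$ of $\varphi_\rho$ and the estimate $\varphi_\rho \leq g \leq 2\varphi_\rho$, and both correctly identify the crux, namely that $g$ may be eventually constant and so may violate the third bullet. The paper fixes this by a case split followed by \emph{precomposition}: if $g$ attains its limit $d$ at a finite point $c$, it replaces $g$ by $g\circ \gamma$ where $\gamma$ is a concave increasing reparametrisation of $[0,\infty)$ onto $[0,c)$ that is the identity near $0$, and then obtains the equivalence constant by showing the continuous ratio $g/(g\circ\gamma)$ equals $1$ near $0$, tends to $1$ at infinity, and hence attains a finite maximum. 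You instead \emph{add} the bounded, strictly increasing, concave perturbation $g(1)(1-e^{-t})$; strict monotonicity then makes the ``no flat tail'' condition automatic with no case analysis, and the domination $g(1)(1-e^{-t}) \leq g(t)$ (via $g(t)\geq g(1)t$ on $(0,1]$, from $g(t)/t$ non-increasing, and $g(t)\geq g(1)$ on $[1,\infty)$) gives the explicit constant $k=4$ directly, avoiding the compactness-of-the-ratio argument. You also handle a point the paper leaves implicit: since $g(0^+)$ may be strictly positive (as for $L^1\cap L^\infty$), concavity of $\varphi_0$ on all of $[0,\infty)$ with $\varphi_0(0)=0$ must be deduced from star-shapedness, i.e.\ from $\varphi_0(t)/t$ non-increasing, which you do. The only step stated a bit breezily is that $g$ is non-decreasing; the cleanest justification is that a finite concave function on $[0,\infty)$ with a negative slope somewhere would tend to $-\infty$, contradicting $g\geq\varphi_\rho\geq 0$, but your sup-based remark can be made rigorous along the same lines, so this is a presentational quibble rather than a gap. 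In exchange for the slightly longer formula, the paper's precomposition keeps $\varphi_0\leq g$ and leaves $\varphi_0$ literally equal to $g$ near the origin, whereas your construction is shorter, uniform in all cases, and yields an explicit constant.
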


\begin{proof}
In general $\varphi_\rho$ is quasi-concave. However there does exist a concave fundamental function $\widetilde{\varphi}$ satisfying $\frac{1}{2}\widetilde{\varphi} \leq \varphi_\rho \leq 2\widetilde{\varphi}$. Now let $d=\lim_{t\to\infty}\widetilde{\varphi}(t)$ and $c=\sup\{t: \widetilde{\varphi}(t) < d\}$. We proceed to show how $\widetilde{\varphi}$ may be modified to produce the required function $\varphi_0$.  If $d=\infty$, then of course $c=\infty$. Hence in this case there is nothing 
to do, since we may simply set $\varphi_0 = \widetilde{\varphi}$. Now suppose that $c < \infty$. Let $0 < \epsilon < c$ be given and consider the function $$g(t) = \left\{\begin{array}{ll} t & \qquad 0 \leq t \leq \epsilon \\ \frac{(t-\epsilon)(c-\epsilon)}{(t-\epsilon)+(c-\epsilon)} + \epsilon & \qquad t > \epsilon \end{array} \right. .$$Now let $\varphi_0 = \widetilde{\varphi}\circ g$. To see that $\varphi_0$ has the required properties note that
\begin{itemize}
\item $\varphi_0$ must be concave since both $g$ and $\widetilde{\varphi}$ are concave.
\item Since both $g$ and $\widetilde{\varphi}$ are increasing, $\varphi_0$ is also increasing;
\item By construction $\varphi_0(t_0) = 0 \Leftrightarrow g(t_0) = 0 \Leftrightarrow t_0 = 0$;
\item Since $g$ is increasing and $\frac{\widetilde{\varphi}(t)}{t}$ decreasing, $\frac{\varphi_0(t)}{g(t)}$ is decreasing. Since by construction $\frac{g(t)}{t}$ is decreasing, we therefore must have that $\frac{\varphi_0(t)}{t} = \frac{\varphi_0(t)}{g(t)}\frac{g(t)}{t}$ is decreasing.
\end{itemize}
It remains to show that there exists a constant $k > 0$ so that $\frac{1}{k}\varphi_0 \leq \varphi_\rho \leq {k}\varphi_0$. In view of the fact that  
$\frac{1}{2}\widetilde{\varphi} \leq \varphi_\rho \leq 2\widetilde{\varphi}$, this will follow if we can demonstrate the existence of a constant $m > 0$ so that $\frac{1}{m}\varphi_0 \leq \widetilde{\varphi} \leq {m}\varphi_0$. Since $\widetilde{\varphi}$ is increasing and  $g(t) \leq t$, it is immediately obvious that $\varphi_0 = \widetilde{\varphi}\circ g \leq \widetilde{\varphi}$. But by construction $\frac{\widetilde{\varphi}}{\varphi_0}$will then be a continuous function on  $(0, \infty)$ for which $\frac{\widetilde{\varphi}}{\varphi_0} \geq 1$. Since $\frac{\widetilde{\varphi}(t)}{\varphi_0(t)} =  1$ on $(0, \epsilon)$ with  $\lim_{t\to \infty}\frac{\widetilde{\varphi}(t)}{\varphi_0(t)} = \lim_{t\to \infty}\frac{\widetilde{\varphi}(t)}{\widetilde{\varphi}(g(t))} = \frac{d}{\widetilde{\varphi}(c)} = \frac{d}{d} = 1$, $\frac{\widetilde{\varphi}(t)}{\varphi_0(t)}$ must attain a maximum value $m > 0$ somewhere on $(0, \infty)$. But then $\widetilde{\varphi}(t) \leq {m}\varphi_0(t)$ for all $t \geq 0$.
\end{proof}

We are now ready to show that for some Banach Function space $L^\rho$, the formal process $a \to a\otimes \varphi_\rho(e^t)$ will in general only succeed in embedding $L^\rho(\M,\tau_\M)$ in $\widetilde{\mathcal{A}}$, if we are dealing with an Orlicz space.

\begin{theorem}\label{214}
Let $\varphi_\rho$ be the fundamental function of the rearrangement invariant Banach Function Space $L^\rho(\mathbb{R})$. Then there exists an Orlicz function $\psi_0$ such that the canonical map $a\otimes \varphi_\rho(e^t)\to a\otimes \varphi_{\psi_0}(e^t)$ yields a homeomorphism (with respect to the topology of convergence in measure) from the subspace $\{a\otimes \varphi_\rho(e^t) \in \widetilde{\mathcal{A}} : a \in \tM\} \subset \widetilde{\mathcal{A}}$ onto $L^{\psi_0}(\M)$.
\end{theorem}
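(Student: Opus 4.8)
The plan is to produce, out of $\varphi_\rho$, an Orlicz function $\psi_0$ whose Luxemburg fundamental function $\varphi_{\psi_0}$ is equivalent to $\varphi_\rho$, and then to realise the asserted map as left multiplication by a bounded invertible element of $\mathcal A$, exactly along the lines of Lemma \ref{Orllemma} and Case 2 of Theorem \ref{thm:normtop}.

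First I would feed $\varphi_\rho$ into the preceding theorem to obtain a constant $k>0$ and a concave increasing function $\varphi_0$, continuous on $(0,\infty)$, with $\varphi_0(t)=0\Leftrightarrow t=0$, with $t\mapsto\varphi_0(t)/t$ decreasing and with $\sup\{t:\varphi_0(t)<\lim_{s\to\infty}\varphi_0(s)\}=\infty$, such that $\tfrac1k\varphi_0\le\varphi_\rho\le k\varphi_0$. Put $\eta(s)=1/\varphi_0(1/s)$ for $s>0$ and $\eta(0)=\lim_{s\to0^+}\eta(s)$. Using only that $\varphi_0$ is increasing with $\varphi_0(t)/t$ decreasing one checks that $\eta$ is again quasi-concave (increasing, with $\eta(s)/s$ decreasing), while the hypothesis $\sup\{t:\varphi_0(t)<\lim\varphi_0\}=\infty$, combined with the fact that a concave increasing $\varphi_0$ cannot be constant on any interval $(0,\delta]$, forces $\eta$ to be strictly increasing and not eventually constant. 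Let $\widetilde\eta$ be the least concave majorant of $\eta$, so $\tfrac12\widetilde\eta\le\eta\le\widetilde\eta$ by \cite[2.5.10]{BS}; then $\widetilde\eta$ is concave, strictly increasing, finite-valued (since $\eta(s)\le\eta(1)s$ for $s\ge1$) and continuous on $(0,\infty)$. Now set $\psi_0(u)=\sup\{s:\widetilde\eta(s)\le u\}$, the right-continuous inverse of $\widetilde\eta$. I would verify that $\psi_0$ is a genuine Orlicz function: convexity is inherited from concavity of $\widetilde\eta$, and $\psi_0(0)=0$, $\lim_{u\to\infty}\psi_0(u)=\infty$, $b_{\psi_0}>0$ and left-continuity at $b_{\psi_0}$ all follow once $\widetilde\eta$ is known to be strictly increasing without a terminal plateau. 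Since $\widetilde\eta$ is then a strictly increasing continuous bijection onto its range, $\psi_0^{-1}=\widetilde\eta$, so by \cite[4.8.17]{BS} we get $\varphi_{\psi_0}(t)=1/\psi_0^{-1}(1/t)=1/\widetilde\eta(1/t)$. Feeding $\tfrac12\widetilde\eta\le\eta\le\widetilde\eta$ into $1/\eta(1/t)=\varphi_0(t)$ gives $\tfrac12\varphi_0\le\varphi_{\psi_0}\le\varphi_0$, hence $\tfrac1{2k}\varphi_\rho\le\varphi_{\psi_0}\le k\varphi_\rho$ on $(0,\infty)$.

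With $\psi_0$ so chosen the rest is essentially a replay of the multiplication-operator argument used twice already in this section. The bound $\tfrac1{2k}\varphi_\rho\le\varphi_{\psi_0}\le k\varphi_\rho$ says that $x\mapsto\varphi_{\psi_0}(x)/\varphi_\rho(x)$ and its reciprocal are bounded on $(0,\infty)$, so $w=\I\otimes[\varphi_{\psi_0}(e^t)/\varphi_\rho(e^t)]$ and $w^{-1}=\I\otimes[\varphi_\rho(e^t)/\varphi_{\psi_0}(e^t)]$ both lie in $\mathcal A=\cM{}{}$. Therefore $\widetilde a\mapsto w\widetilde a$ is a homeomorphism of $\widetilde{\mathcal A}$ onto itself for the topology of convergence in measure, and it carries $a\otimes\varphi_\rho(e^t)$ to $w(a\otimes\varphi_\rho(e^t))=a\otimes\varphi_{\psi_0}(e^t)$. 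Restricting it to $\widetilde{\mathcal A}$, it becomes a homeomorphism of $\{a\otimes\varphi_\rho(e^t):a\in\tM\}\cap\widetilde{\mathcal A}$ onto $\{a\otimes\varphi_{\psi_0}(e^t):a\in\tM\}\cap\widetilde{\mathcal A}$, and Corollary \ref{Lux} applied to $\psi_0$ identifies this image with $\{a\otimes\varphi_{\psi_0}(e^t):a\in L^{\psi_0}(\M,\tau_\M)\}=L^{\psi_0}(\M)$; since $w,w^{-1}$ are bounded, $a\otimes\varphi_\rho(e^t)\in\widetilde{\mathcal A}$ exactly when $a\otimes\varphi_{\psi_0}(e^t)\in\widetilde{\mathcal A}$, so the domain described here coincides with the one in the statement. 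I expect the construction and vetting of $\psi_0$ to be the only real obstacle. The two delicate points are that $\eta$ genuinely need not be concave — one can cook up an admissible concave $\varphi_0$, e.g.\ $\varphi_0(u)=2u-u^2/2$ on $[0,1]$ continued affinely, for which $\eta$ is convex on $(1,\infty)$ — so passing to the least concave majorant, and absorbing the resulting factor $2$, is unavoidable; and that it is precisely the check that $\psi_0$ meets the continuity/left-continuity clause in the definition of an Orlicz function that consumes the third listed property of $\varphi_0$, which is the reason the sharpening of \cite[2.5.10]{BS} proved above (and not that result itself) is needed here.
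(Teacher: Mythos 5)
Your proof is correct, and its second half is exactly the paper's final step: the bound $\tfrac{1}{k}\varphi_0\le\varphi_\rho\le k\varphi_0$ (hence $\varphi_{\psi_0}$ equivalent to $\varphi_\rho$) makes $w=\I\otimes[\varphi_{\psi_0}(e^t)/\varphi_\rho(e^t)]$ and $w^{-1}$ elements of $\mathcal{A}$, and $\widetilde{a}\mapsto w\widetilde{a}$ together with Corollary \ref{Lux} gives the homeomorphism onto $L^{\psi_0}(\M)$. Where you genuinely diverge is in the construction of $\psi_0$. The paper defines $\psi_0(t)=1/\varphi_0^{-1}(1/t)$ on $(1/d,1/\varphi_0(0+))$, extended by $0$ and $\infty$, so that $\varphi_{\psi_0}=\varphi_0$ exactly, and justifies convexity of $\psi_0$ by viewing it as a composition of three convex functions; you instead form $\eta(s)=1/\varphi_0(1/s)$, pass to its least concave majorant $\widetilde{\eta}$, and take $\psi_0$ to be the right-continuous inverse of $\widetilde{\eta}$, settling for $\tfrac12\varphi_0\le\varphi_{\psi_0}\le\varphi_0$ — a loss of exactness, but only by a constant, which the multiplier argument absorbs. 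This is more than a stylistic difference: composition of convex maps is convex only when the outer maps are nondecreasing, and $t\mapsto 1/t$ is decreasing, so the paper's justification is not a valid general principle; your own example $\varphi_0(u)=2u-u^2/2$ on $[0,1]$ continued affinely makes the failure concrete, since there $1/\varphi_0^{-1}(1/t)=t+\tfrac12\sqrt{4t^2-2t}$ for $t\ge 2/3$, which is strictly concave on that ray, so the paper's $\psi_0$ is not an Orlicz function for this admissible $\varphi_0$. In other words, your least-concave-majorant detour is not a redundant precaution but an actual repair of the verification that $\psi_0$ is an Orlicz function, at the harmless price of the factor $2$ from \cite[Proposition 2.5.10]{BS}; the rest of the theorem is untouched because only equivalence of fundamental functions is ever used. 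Two small points to tidy: set $\eta(0)=0$ rather than $\eta(0)=\lim_{s\to 0^+}\eta(s)$ (which equals $1/d>0$ when $d<\infty$), since quasi-concavity as defined requires vanishing exactly at $0$ before \cite[2.5.10]{BS} can be quoted; and when checking left-continuity of $\psi_0$ at $b_{\psi_0}$, record explicitly that the least concave majorant of a strictly increasing function which never attains its supremum likewise never attains its supremum at a finite point (a short chord argument), which is precisely where the third property of $\varphi_0$ from the preceding theorem enters, as you indicate.
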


\begin{proof}
Let $\varphi_\rho$ be given. By the preceding theorem we may select a concave function $\varphi_0$ satisfying all the conditions stated in that theorem such that for some $k > 0$ we have that $\frac{1}{k}\varphi_0 \leq \varphi_\rho \leq {k}\varphi_0$. Let $d = \lim_{t\to\infty}\varphi_0(t)$. Now consider the function
$$\psi_0(t) = \left\{\begin{array}{ll}  0 & \qquad 0 \leq t \leq \frac{1}{d} \\ \frac{1}{\varphi_0^{-1}(1/t)} & \qquad \frac{1}{d} < t < \frac{1}{\varphi_0(0+)}\\ \infty & \qquad \frac{1}{\varphi_0(0+)} \leq t \end{array} \right. .$$The conditions on $\varphi_0$ are sufficient to ensure that $\varphi_0^{-1}$ is a well-defined continuous bijection from $(\varphi_0(0+), d)$ to $(0, \infty)$. In addition the 
fact that $\varphi_0$ is both concave and increasing, ensures that $\varphi_0^{-1}$ is in fact convex and increasing on $(\varphi_0(0+), d)$. (To see this apply $\varphi_0^{-1}$ to the inequality $\varphi_0(\lambda s + (1-\lambda)t) \geq \lambda\varphi_0(s) + (1-\lambda)\varphi_0(t)$, and then let $s=\varphi_0^{-1}(u)$ and $t=\varphi_0^{-1}(v)$.) It is an exercise to conclude from this fact that $\frac{1}{\varphi_0^{-1}(1/t)}$ is also increasing on $(\frac{1}{d}, \frac{1}{\varphi_0(0+)})$.  Now notice that the function $g(t)=\frac{1}{t}$ is convex on $(0,\infty)$, and that $g\circ\varphi_0^{-1}\circ g(t) = \frac{1}{\varphi_0^{-1}(1/t)}$ on $(\frac{1}{d}, \frac{1}{\varphi_0(0+)})$. Thus $\frac{1}{\varphi_0^{-1}(1/t)}$ must be convex on $(\frac{1}{d}, \frac{1}{\varphi_0(0+)})$, since on this interval it appears as the composition of three convex functions.

The fact that $\frac{1}{\varphi_0^{-1}(1/t)}$ is convex and increasing on $(\varphi_0(0+), \lim_{t\to\infty}\varphi_0(t))$, in turn suffices to ensure that $\psi_0$ is an Orlicz function. Moreover by construction $$\varphi_0(t) = \frac{1}{\psi_0^{-1}(1/t)}.$$Thus $\varphi_0$ is just the fundamental function of the Orlicz space $L^{\psi_0}(\mathbb{R})$. It follows that $L^{\psi_0}(\M) = \{a\otimes \varphi_0(e^t) \in \widetilde{\mathcal{A}} : a \in \tM\}$. Finally note that the condition $\frac{1}{k}\varphi_0 \leq \varphi_\psi \leq {k}\varphi_0$ ensures that the elements $u = \I \otimes [\varphi_{\psi_0}(e^t)/\varphi_\rho(e^t)]$ and $u^{-1} = \I \otimes [\varphi_\rho(e^t)/\varphi_{\psi_0}(e^t)]$ both belong to $\mathcal{A}$. Hence the map $\widetilde{a} \to u\widetilde{a}$ is a continuous map on $\widetilde{\mathcal{A}}$ with continuous inverse 
$\widetilde{a} \to u^{-1}\widetilde{a}$ which maps $\{a\otimes \varphi_\rho(e^t) \in \widetilde{\mathcal{A}} : a \in \tM\}$ onto $L^{\psi_0}(\M)$ in the prescribed manner.
\end{proof}

\begin{remark}\label{215}
The message from the above theorem is not that there is no room inside $\widetilde{\mathcal{A}}$ for Banach Function spaces, but just that we need to follow a different approach when Orlicz spaces are not in view. The spaces $L^1\cap L^\infty(\M, \tau_\M)$ and $L^1 + L^\infty(\M, \tau_\M)$ are known to be Orlicz spaces with fundamental functions respectively given by $\varphi_{1\cap\infty}(t) = \max(1,t)$ and $\varphi_{1 + \infty}(t) = \min(1,t)$. It is now a simple matter to verify that $w = \I \otimes [\varphi_{1 + \infty}(e^t)/\varphi_{1\cap\infty}(e^t)]$ is a norm one element of $\mathcal{A}$ for which the map $\widetilde{a} \to w\widetilde{a}$ canonically embeds $L^1\cap L^\infty(\M)=\{a\otimes \varphi_{1\cap\infty}(e^t) \in \widetilde{\mathcal{A}} : a \in \tM\}$ into $L^1 + L^\infty(\M)=\{a\otimes \varphi_{1+\infty}(e^t) \in \widetilde{\mathcal{A}} : a \in \tM\}$. Thus we may represent $L^1\cap L^\infty(\M)$ as a subspace of $L^1 + L^\infty(\M)$. The noncommutative rearrangement invariant Banach Function spaces corresponding to $\M$, then appear as exact interpolation spaces of the pair $(L^\infty(\M), L^1(\M))$ \cite[3.2.12]{BS}. 
\end{remark}

\section{Orlicz spaces for type III algebras}

In our attempt to develop a theory of Orlicz spaces for the type III case, we will look at the general and $\sigma$-finite case separately. In this section and the next we therefore assume $\M$ to be a general von Neumann algebra equipped with a faithful normal semifinite weight $\nu$, before returning to $\sigma$-finite algebras in the last section. The reason for distinguishing the two cases like this, is that the $\sigma$-finite case allows for constructs and techniques that seem unworkable in the general case.  A key ingredient in our analysis will be the following consequence of Corollary \ref{Lux} and Theorem \ref{thm:normtop}. Observe that this holds for general von Neumann algebras and seems to have been previously unknown. 

\begin{theorem}\label{thm:uniftop}
Let $\pi(\M)$ be the canonical embedding of $\M$ into the crossed product $\mathcal{A} = \cM{}{}$. We have that $\|\pi(a)\|_\infty = \mu_\epsilon(\pi(a))$ for all $0 < \epsilon \leq 1$. In addition the uniform topology on $\pi(\M)$ is homeomorphic to the topology of convergence in measure that $\pi(\M)$ inherits from $\widetilde{\mathcal{A}}$.
\end{theorem}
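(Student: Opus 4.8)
The plan is to sidestep any consideration of the type of $\M$ by computing the generalised singular value function of $\pi(a)$, relative to the (semifinite!) trace $\tau_{\mathcal{A}}$ on the crossed product, directly. Everything rests on one elementary observation: the dual action $\theta_s$ fixes $\pi(\M)$ pointwise, so for every $a\in\M$ and every $\sigma\geq 0$ the spectral projection $\chi_{(\sigma,\infty)}(|\pi(a)|)=\pi\bigl(\chi_{(\sigma,\infty)}(|a|)\bigr)$ is a $\theta$-invariant projection of $\mathcal{A}$. Since $\tau_{\mathcal{A}}\circ\theta_s=e^{-s}\tau_{\mathcal{A}}$, any $\theta$-invariant projection $p$ satisfies $\tau_{\mathcal{A}}(p)=e^{-s}\tau_{\mathcal{A}}(p)$ for all $s$; as $\tau_{\mathcal{A}}$ is faithful this forces $\tau_{\mathcal{A}}(p)\in\{0,\infty\}$, with $\tau_{\mathcal{A}}(p)=0$ only when $p=0$. (This is precisely the mechanism behind the identity $\tau_{\mathcal{A}}(e\otimes 1)=\infty$ used in the semifinite discussion of Corollary \ref{Lux} and Theorem \ref{thm:normtop}; the point is that it needs nothing about $\M$ beyond the existence of the dual action.)

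Granting this, the distribution function of $\pi(a)$ is immediate: $\lambda_\sigma(\pi(a))=\tau_{\mathcal{A}}\bigl(\pi(\chi_{(\sigma,\infty)}(|a|))\bigr)$ equals $+\infty$ when $0\leq\sigma<\|a\|_\infty$ (the projection is nonzero, since the spectrum of $|a|$ reaches up to $\|a\|_\infty$) and $0$ when $\sigma\geq\|a\|_\infty$. Feeding this into $\mu_t(\pi(a))=\inf\{\sigma\geq 0:\lambda_\sigma(\pi(a))\leq t\}$, which is legitimate because $\pi(a)\in\mathcal{A}\subset\widetilde{\mathcal{A}}$ is $\tau_{\mathcal{A}}$-measurable, yields $\mu_t(\pi(a))=\|a\|_\infty=\|\pi(a)\|_\infty$ for \emph{every} $t>0$ (not merely for $0<\epsilon\leq 1$), where the last equality uses that $\pi$ is an isometric $*$-homomorphism. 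This establishes the first assertion.

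For the topological statement, recall that the sets $\mathcal{N}(\epsilon,\delta)=\{\widetilde{a}:\lambda_\epsilon(\widetilde{a})\leq\delta\}$ with $\epsilon,\delta>0$ form a neighbourhood base at $0$ for convergence in measure on $\widetilde{\mathcal{A}}$. By the computation above, for $x=\pi(a)$ and any $\delta\in(0,\infty)$ one has $\lambda_\epsilon(x)\leq\delta\Leftrightarrow\lambda_\epsilon(x)=0\Leftrightarrow\|x\|_\infty\leq\epsilon$; hence $\mathcal{N}(\epsilon,\delta)\cap\pi(\M)=\{x\in\pi(\M):\|x\|_\infty\leq\epsilon\}$, the closed $\epsilon$-ball of $\pi(\M)$. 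As $\epsilon$ ranges over $(0,\infty)$ these are exactly the closed balls of $\pi(\M)$, which form a neighbourhood base at $0$ for the uniform topology. Since both topologies are vector topologies on $\pi(\M)$ with the same neighbourhood base at $0$, they coincide; in particular they are homeomorphic, the identity map furnishing the homeomorphism.

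I expect no genuine obstacle here: the argument is a short computation. The one place demanding care is the bookkeeping around $\tau_{\mathcal{A}}$ and $\theta_s$ — one must invoke the scaling identity $\tau_{\mathcal{A}}\circ\theta_s=e^{-s}\tau_{\mathcal{A}}$ only on honest projections of $\mathcal{A}$ (it is, because $\pi(a)$ is bounded, so all its spectral projections lie in $\mathcal{A}$ rather than merely being affiliated to it) and must check that $\pi(a)$ really is $\tau_{\mathcal{A}}$-measurable so that the $\lambda$-$\mu$ formulae of \cite{FK} apply (it is, since $\pi(a)\in\mathcal{A}$). The conceptual content is just the recognition that the proofs of Corollary \ref{Lux} and Theorem \ref{thm:normtop}, when specialised to the Orlicz function with fundamental function $\varphi\equiv 1$ (that is, to $L^\infty$), never used semifiniteness of $\M$, only of the crossed product $\mathcal{A}$.
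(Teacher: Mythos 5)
Your argument is correct, but it takes a genuinely different and considerably more direct route than the paper. The paper proves the norm formula by a reduction scheme: it first settles the semifinite case via Corollary \ref{Lux}, then checks that singular values are preserved under change of weight (Terp's isomorphism $\kappa$) and under the conditional expectations supplied by Haagerup's reduction theorem of \cite{HJX}, and finally climbs from the tracial subalgebras $\mathcal{R}_i$ back up to $(\M,\nu)$ by an order/limiting argument using \cite[Proposition 1.7]{DDdP3}; the topological claim is then deduced for sequences from the norm formula. You instead use only two definitional facts --- the dual action fixes $\pi(\M)$ pointwise, and $\tau_{\mathcal{A}}\circ\theta_s=e^{-s}\tau_{\mathcal{A}}$ --- to conclude that every spectral projection $\chi_{(\sigma,\infty)}(|\pi(a)|)=\pi\bigl(\chi_{(\sigma,\infty)}(|a|)\bigr)$ is $\theta$-invariant and therefore has $\tau_{\mathcal{A}}$-value $0$ or $\infty$, which determines the distribution function exactly and gives $\mu_t(\pi(a))=\|\pi(a)\|_\infty$ for \emph{all} $t>0$, sharper than the stated range $0<\epsilon\leq 1$. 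This bypasses the reduction theorem, the weight-change bookkeeping and the approximation step entirely, and it also upgrades the topological assertion from sequential equivalence to literal coincidence of the two topologies, since your computation shows $\mathcal{N}(\epsilon,\delta)\cap\pi(\M)$ is the closed $\epsilon$-ball of $\pi(\M)$ for every $\delta>0$, so the two vector topologies share a neighbourhood base at $0$. What the paper's longer route buys is essentially the reusable machinery of its Observations 1--3, which it wants on record for other purposes; for this particular theorem your scaling argument is the more economical and more general proof, being nothing but the $p=\infty$ instance of the standard computation $\mu_t(a)=t^{-1/p}\|a\|_p$ for Haagerup $L^p$-elements.
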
  

The proof of this result makes use of Haagerup's reduction theorem. We will therefore freely use the notation from \cite{HJX} in our proof. However before applying this theorem, we make three observations.

\textbf{Observation 1:} If $\M$ is semifinite and $\nu$ an fns trace on $\M$, then $\|\pi(a)\|_\infty = \mu_\epsilon(\pi(a))$ for all $0 < \epsilon \leq 1$. To see this notice that we already know from Corollary \ref{Lux} that $\|\pi(a)\|_\infty = \mu_1(\pi(a))$. But if this is the case, we must have $\|\pi(a)\|_\infty = \mu_\epsilon(\pi(a))$ for all $0 < \epsilon \leq 1$.

\textbf{Observation 2:} Let $\nu_1$ and $\nu_2$ be two fns weights on $\M$ and let $\mathcal{A}_1$ and $\mathcal{A}_2$ be the respective crossed products with respect to the modular groups of these two weights. Next let $\kappa: \widetilde{\mathcal{A}}_1 \to \widetilde{\mathcal{A}}_2$ be the map described in \cite[II.37, II.38]{Tp}. Then $$\mu_t(a,\tau_{{\mathcal{A}}_1}) = \mu_t(\kappa(a),\tau_{\mathcal{A}_2}) \quad\mbox{for all}\quad a\in \widetilde{\mathcal{A}}_1, t \geq 0.$$This claim is a fairly straightforward consequence of the fact that $\tau_{{\mathcal{A}}_2} = \tau_{{\mathcal{A}}_1}\circ\kappa^{-1}$ and that $\kappa$ takes projections to projections.

\textbf{Observation 3:} Let $\nu$ be an fns weight on the von Neumann algebra $\M$, and let $E$ be a conditional expectation onto a subalgebra $\M_0$ for which $\nu\circ E = \nu$ and $\nu_0 = \nu|_{\M_0}$ is an fns weight on $\M_0$. Denote the crossed products of these von Neumann algebras with respect to the modular groups of these two weights by $\mathcal{A}$ and $\mathcal{A}_0$ respectively. Then $E$ extends to a conditional expectation $\widetilde{E} : \mathcal{A} \to \mathcal{A}_0$ for which we have that $\tau_{\mathcal{A}} = 
\tau_{\mathcal{A}_0}\circ\widetilde{E}$. For any $p \geq 1$ we then have that $L^p(\M_0) \subset L^p(\M)$. So given $a\in L^p(\M_0)$, we must have that $\mu_t(a,\widetilde{\mathcal{A}}_0) = \mu_t(a,\widetilde{\mathcal{A}})$ for all $t \geq 0$. This claim follows from the above trace formula, and the fact that each $\mu_t$ is determined by the spectral projections of $a$, all of which must lie in $\mathcal{A}_0\subset \mathcal{A}$, since $a$ belongs to $L^p(\M_0)\subset\widetilde{\mathcal{A}_0}$.

\begin{proof}[Proof of Theorem \ref{thm:uniftop}]We now use the above observations and the reduction theorem, to prove the formula for the norm. Here $\mathcal{R}$ denotes the crossed product of $\M$ with the diadic rationals. In view of the fact that we are taking a 
crossed product with a discrete group, there exists a faithful normal conditional expectation $\Phi$ from $\mathcal{R}$ onto $\M$. Additionally there also exists an increasing family $\{\mathcal{R}_i\}_{i\in I}$ of w*-closed involutive subalgebras of $\mathcal{R}$ 
with union w*-dense in $\M$. Each of these admits a faithful normal tracial state. For each $i$ there exists a normal conditional expectation $\Phi_i$ from $\mathcal{R}$ onto $\mathcal{R}_i$ satisfying $\Phi_i\circ\Phi_j=\Phi_j\circ\Phi_i=\Phi_i$ whenever $i\leq j$. 
By $\widehat{\nu}$ we will mean the dual weight on $\mathcal{R}$ given by $\nu\circ\Phi$. 

By Observations 1 and 2, the result holds for each $(\mathcal{R}_i; \widehat{\nu}|_{\mathcal{R}_i})$. But by Observation 3, it then also holds on the subspace $(\cup\mathcal{R}_i; \widehat{\nu})$ of $(\mathcal{R}; \widehat{\nu})$. We may now use order and \cite[Proposition 1.7]{DDdP3} to conclude that the formula holds for $(\mathcal{R}_+; \widehat{\nu})$. Since for any $t$ and any $a \in \mathcal{R}$ we have that $\mu_t(a) = \mu_t(|a|)$, the formula then also holds for $(\mathcal{R}; \widehat{\nu})$. But by Observation 3, it must then hold for $(\M, \nu)$.

It remains to prove the claim regarding the equivalence of the two topologies. It is known that for sequences in $\pi(\M)$ convergence in norm implies convergence in measure. The converse is a direct consequence of the norm formula applied to the fact that convergence in the measure topology may be described in terms of the singular function.
\end{proof}

\begin{lemma}
Let $\psi$ be an Orlicz function. Then there exists a constant $k_\psi > 0$ so that $\varphi_\psi(t) \leq k_\psi(1+t)$ for all $t\geq 0$ (respectively $\widetilde{\varphi}_\psi(t) \leq k_\psi(1+t)$ for all $t\geq 0$).
\end{lemma}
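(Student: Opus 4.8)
The plan is to exploit the quasi-concavity of the fundamental function. Recall from the preliminaries that $\varphi_\psi(t)=1/\psi^{-1}(1/t)$ is the fundamental function of the rearrangement invariant Banach Function space $L^\psi(\mathbb{R})$, and hence by \cite[Corollary 2.5.3]{BS} it is quasi-concave; that is, $\varphi_\psi$ is increasing on $(0,\infty)$ while $t\mapsto \varphi_\psi(t)/t$ is decreasing there, with $\varphi_\psi(t)=0\Leftrightarrow t=0$.

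First I would check that $k_\psi:=\varphi_\psi(1)=1/\psi^{-1}(1)$ is a well-defined strictly positive real number. Here $\psi^{-1}(1)=\sup\{s:\psi(s)\leq 1\}$ is strictly positive, since $\psi(0)=0$ and $\psi$ is continuous on $[0,b_\psi]$, so $\psi(s)<1$ for all sufficiently small $s>0$; and it is finite, since $\lim_{u\to\infty}\psi(u)=\infty$ forces $\psi(s)>1$ for $s$ large. Hence $0<k_\psi<\infty$.

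Next, for $t\geq 1$, the fact that $t\mapsto\varphi_\psi(t)/t$ is decreasing gives $\varphi_\psi(t)/t\leq \varphi_\psi(1)=k_\psi$, so $\varphi_\psi(t)\leq k_\psi t\leq k_\psi(1+t)$. For $0< t\leq 1$, monotonicity of $\varphi_\psi$ gives $\varphi_\psi(t)\leq \varphi_\psi(1)=k_\psi\leq k_\psi(1+t)$, while the case $t=0$ is trivial since $\varphi_\psi(0)=0$. Combining these, $\varphi_\psi(t)\leq k_\psi(1+t)$ for all $t\geq 0$. For the parenthetical statement concerning $\widetilde{\varphi}_\psi$, I would simply invoke the equivalence $\widetilde{\varphi}_\psi(t)\leq 2\varphi_\psi(t)$ recorded just before Theorem \ref{mainthm}, which yields $\widetilde{\varphi}_\psi(t)\leq 2k_\psi(1+t)$; enlarging $k_\psi$ to $2k_\psi$ produces a single constant valid for both inequalities. (Alternatively one may note that $\widetilde{\varphi}_\psi$ is itself quasi-concave, being the fundamental function of $L_\psi(\mathbb{R})$, and run the identical three-line argument with constant $\widetilde{\varphi}_\psi(1)$.)

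There is no substantial obstacle here: the argument is a routine consequence of quasi-concavity. The only point that genuinely requires the hypotheses on $\psi$ is the verification that $\varphi_\psi(1)$, equivalently $\psi^{-1}(1)$, is both finite and nonzero — this is precisely where the conditions $\psi(0)=0$, $\lim_{u\to\infty}\psi(u)=\infty$, and "$\psi$ not identically infinite on $(0,\infty)$" are used.
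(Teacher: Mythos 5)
Your argument is correct and is essentially the paper's own proof: the paper likewise invokes quasi-concavity of $\varphi_\psi$ to get $\varphi_\psi(t)\leq\varphi_\psi(1)\max(1,t)\leq\varphi_\psi(1)(1+t)$, exactly the two-case splitting you carry out (your extra checks that $\varphi_\psi(1)$ is finite and nonzero, and the handling of $\widetilde{\varphi}_\psi$ via $\widetilde{\varphi}_\psi\leq 2\varphi_\psi$ or its own quasi-concavity, are the routine details the paper leaves as an exercise).
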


\begin{proof}
Since $\varphi_\psi$ is quasi-concave, it is continuous and increasing on $(0,\infty)$, whereas $t\to \frac{\varphi_\psi(t)}{t}$ is continuous and decreasing on $(0,\infty)$. Using these facts it is now an exercise to show that $\varphi_\psi(t) \leq \varphi_\psi(1)\max(1,t) \leq \varphi_\psi(1)(1+t)$.  
\end{proof}

\begin{proposition}\label{prop:meas}
For any $a\in\pi(\mathfrak{n}_\nu)$ the operators $a\varphi_\psi(h)^{1/2},\, \varphi_\psi(h)^{1/2}a$ (respectively $a\widetilde{\varphi}_\psi(h)^{1/2},\, \widetilde{\varphi}_\psi(h)^{1/2}a$) are $\tau_{\mathcal{A}}$-pre-measurable.
\end{proposition}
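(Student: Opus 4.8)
The plan is to reduce the claim to the boundedness estimate $\varphi_\psi(t)\le k_\psi(1+t)$ (and its analogue for $\widetilde\varphi_\psi$) from the preceding lemma, together with the pre-measurability of $a h^{1/p}$ and $h^{1/p}a$ for $a\in\pi(\mathfrak{n}_\nu)$ and $p\ge 2$ (the convention recalled in the Preliminaries, since $h=\frac{d\widetilde\nu}{d\tau_{\mathcal{A}}}$ with $\nu$ playing the role of the weight whose crossed product we are in). Concretely, $h^{1/2}$ is not $\tau_{\mathcal{A}}$-measurable, but the products $ah^{1/2}$, $h^{1/2}a$ with $a\in\pi(\mathfrak n_\nu)$ are pre-measurable; and $\I\in\M$ always gives the measurable operator $\pi(\I)=\I$. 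So I want to express $\varphi_\psi(h)^{1/2}$ as a ``product'' of a bounded operator in $\mathcal A$ with $h^{1/2}$, plus a correction that is already measurable.

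First I would use the lemma to write, on $(0,\infty)$, the pointwise inequality $\varphi_\psi(s)^{1/2}\le k_\psi^{1/2}(1+s)^{1/2}\le k_\psi^{1/2}(1+s^{1/2})$. This shows that the function $s\mapsto \varphi_\psi(s)^{1/2}/(1+s^{1/2})$ is bounded on $(0,\infty)$, hence $w:=\varphi_\psi(h)^{1/2}\,[\,(\I+h^{1/2})^{-1}\,]$ is a bounded operator affiliated with (and, being a bounded Borel function of $h$ which is affiliated with $\mathcal A$, actually lying in) $\mathcal A$. Therefore $\varphi_\psi(h)^{1/2}=w\,(\I+h^{1/2})$ as a product of commuting positive operators. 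Now $(\I+h^{1/2})=\I+h^{1/2}$, so for $a\in\pi(\mathfrak n_\nu)$ we have, at the level of strong products, $a\,\varphi_\psi(h)^{1/2}=a\,w\,(\I+h^{1/2})=(aw)\,\I+(aw)\,h^{1/2}$; since $aw\in\widetilde{\mathcal A}$ (indeed $w\in\mathcal A$ so $aw\in\pi(\mathfrak n_\nu)\cdot\mathcal A$, still associated to vectors giving finite $\widetilde\nu$-value after multiplying by $h^{1/2}$), the first summand is measurable and the second is pre-measurable by the quoted convention. A symmetric argument using $\varphi_\psi(h)^{1/2}=(\I+h^{1/2})\,w$ handles $\varphi_\psi(h)^{1/2}a$. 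For the $\widetilde\varphi_\psi$ versions one uses the lemma's parenthetical statement $\widetilde\varphi_\psi(t)\le k_\psi(1+t)$ verbatim, or alternatively the already-recorded equivalence $\varphi_\psi\le\widetilde\varphi_\psi\le 2\varphi_\psi$ together with the fact that multiplication by the bounded operator $v=\I\otimes[\widetilde\varphi_\psi(e^t)/\varphi_\psi(e^t)]\in\mathcal A$ (from the proof of Lemma~\ref{Orllemma}) preserves pre-measurability.

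The subtle point — and the step I expect to be the main obstacle — is making rigorous the manipulation ``$a\,\varphi_\psi(h)^{1/2}=(aw)\,h^{1/2}+aw$'' when $h^{1/2}$ itself is not measurable, so that one is juggling genuine operator products, closures, and strong products simultaneously. One must check that $aw$ still has the property that $(aw)h^{1/2}$ is pre-measurable: this requires knowing not just $a\in\mathfrak n_\nu$ but that right-multiplication by the bounded $w\in\mathcal A$ keeps the relevant vector in the form domain of $h^{1/2}$, i.e.\ that $\mathfrak n_\nu\cdot\mathcal A\subseteq\mathfrak n_{\widetilde\nu}$-type membership survives — this is exactly the kind of ``painful technicality'' the paper elsewhere defers, and here it should be dealt with by invoking the standard structure of Haagerup's construction (the density $h$ and the behaviour of $\mathfrak n_{\widetilde\nu}$ under the module action of $\mathcal A$). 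Once that bookkeeping is in place, the decomposition ``bounded-times-$h^{1/2}$ plus measurable'' closes the proof, since sums of pre-measurable operators are pre-measurable and $\pi(\mathfrak n_\nu)\subset\mathcal A\subset\widetilde{\mathcal A}$ is measurable.

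Finally, I would remark that the same argument, applied with $p$ general rather than $p=2$, shows $a\varphi_\psi(h)^{1/p}$ and $\varphi_\psi(h)^{1/p}a$ are pre-measurable whenever $p\ge 2$, since $\varphi_\psi(s)^{1/p}\le k_\psi^{1/p}(1+s)^{1/p}\le k_\psi^{1/p}(1+s^{1/p})$, and $h^{1/p}$ with $p\ge 2$ is covered by the stated convention; but for the present proposition the case $p=2$ is all that is needed.
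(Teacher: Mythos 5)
Your route is genuinely different from the paper's, and in outline it works, but the paper's own argument is much shorter because it stays at the level of squares rather than square roots: from $\varphi_\psi(t)\le k_\psi(1+t)$ one gets the form inequality $|\varphi_\psi(h)^{1/2}a|^2=a^*\varphi_\psi(h)a\le k_\psi(a^*a+a^*ha)$, and since $a^*a+a^*ha$ is known to be $\tau_{\mathcal A}$-measurable (cited from \cite{GL2}), domination of the modulus squared by a measurable positive operator gives pre-measurability at once — no factorisation of $\varphi_\psi(h)^{1/2}$ and no handling of $\mathrm{dom}(h^{1/2})$ is needed. Your alternative — writing $\varphi_\psi(h)^{1/2}=w(\I+h^{1/2})$ with $w=\varphi_\psi(h)^{1/2}(\I+h^{1/2})^{-1}\in\mathcal A$ and reducing to the Preliminaries' convention that $ah^{1/2}$ and $h^{1/2}a$ are pre-measurable for $a\in\pi(\mathfrak{n}_\nu)$ — is legitimate and buys independence from the \cite{GL2} citation, at the cost of the closure bookkeeping you mention.

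However, the step you yourself flag as the main obstacle is left open in your write-up, and the mechanism you propose for it is the wrong one: there is no useful ``$\mathfrak{n}_\nu\cdot\mathcal A\subseteq\mathfrak{n}_{\widetilde\nu}$-type'' statement to preserve, since $\pi(\mathfrak{n}_\nu)$ is not itself contained in $\mathfrak{n}_{\widetilde\nu}$ (the dual weight is infinite on every nonzero positive element of $\pi(\M)$); the pre-measurability of $ah^{1/2}$, $h^{1/2}a$ is the deeper Haagerup--Terp--Goldstein--Lindsay fact already recorded in the Preliminaries, not an ideal-membership observation. What actually closes your argument is simply that $w$ is a bounded Borel function of $h$, hence $w\,\mathrm{dom}(h^{1/2})\subseteq\mathrm{dom}(h^{1/2})$ and $wh^{1/2}\subseteq h^{1/2}w$; consequently $(aw)h^{1/2}\xi=(ah^{1/2})(w\xi)$ on $\mathrm{dom}(h^{1/2})$, so $(aw)h^{1/2}\subseteq[ah^{1/2}]w$, a product of a measurable operator with an element of $\mathcal A$, while on the other side $\varphi_\psi(h)^{1/2}a\supseteq wa+w(h^{1/2}a)\subseteq wa+w[h^{1/2}a]$. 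One then only needs the routine facts that sums of bounded and pre-measurable operators are pre-measurable and that a closable extension of an operator whose closure is $\tau$-measurable has that same (measurable) closure, by maximality of measurable operators. Two small further corrections: the operator $v=\I\otimes[\widetilde{\varphi}_\psi(e^t)/\varphi_\psi(e^t)]$ you quote from Lemma \ref{Orllemma} lives in the semifinite picture $\M\otimes L^\infty(\mathbb{R})$; in the present section the corresponding bounded operator is $\widetilde{\varphi}_\psi(h)\varphi_\psi(h)^{-1}$, or one just uses the lemma's parenthetical bound for $\widetilde{\varphi}_\psi$ directly. With these substitutions your proof is sound, though noticeably longer than the paper's two-line domination argument.
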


\begin{proof}
For any $a\in\pi(\mathfrak{n}_\nu)$ it follows from the lemma that $$|\varphi_\psi(h)^{1/2}a|^2 = a^*\varphi_\psi(h)a \leq k_\psi(a^*a +a^*ha).$$The claim may now be deduced from the fact that $a^*a +a^*ha$ is known to be 
$\tau_{\mathcal{A}}$-measurable (see \cite{GL2}).
\end{proof}

The formal prescription for defining Orlicz spaces for type III algebras was already hinted at in Remark \ref{rem:b}. In principle one simply replaces $\I\otimes e^t$ with the density $h = \frac{d\widetilde{\nu}}{d\tau_{\mathcal{A}}}$ in the equivalent conditions stated there. The above Proposition, now enables us to give rigorous meaning to these ideas.
 
\begin{definition}\label{deforldef}
Let $\psi$ be an Orlicz function. We define the Orlicz space corresponding to the Luxemburg norm to be $$L^\psi(\M) = \{a\in\widetilde{\mathcal{A}} : [e\widetilde{\varphi}_{\psi^*}(h)^{1/2}]a[\widetilde{\varphi}_{\psi^*}(h)^{1/2}f] \in L^1(\M)\mbox{ for all projections }e,f\in \mathfrak{n}_\nu\}.$$ 

In a similar fashion the Orlicz space corresponding to the Orlicz norm is defined to be 
$$L_\psi(\M) =\{a\in\widetilde{\mathcal{A}} : [e{\varphi}_{\psi^*}(h)^{1/2}]a[{\varphi}_{\psi^*}(h)^{1/2}f] \in L^1(\M)\mbox{ for all projections }e,f\in \mathfrak{n}_\nu\}.$$
\end{definition} 

\begin{remark}
Observe that the Orlicz spaces as defined above are large spaces. Consider the left ideals in $\M$ defined by$$\mathfrak{n}_\psi =\{a\in\M : a{\varphi}_{\psi}(h)^{1/2}\mbox{ is closable and } [a{\varphi}_{\psi}(h)^{1/2}] \in \widetilde{\mathcal{A}}\}$$ $$\mathfrak{n}_{\widetilde{\psi}} =\{a\in\M : a\widetilde{\varphi}_{\psi}(h)^{1/2}\mbox{ is closable and } [a\widetilde{\varphi}_{\psi}(h)^{1/2}]\in \widetilde{\mathcal{A}}\}.$$Now consider the $*$-subalgebras $\mathfrak{m}_\psi = \mathrm{span}[(\mathfrak{n}_\psi)^*\mathfrak{n}_\psi]$ and $\mathfrak{m}_{\widetilde{\psi}} = \mathrm{span}[(\mathfrak{n}_{\widetilde{\psi}})^*\mathfrak{n}_{\widetilde{\psi}}]$. It is a simple matter to verify that ${\varphi}_{\psi}(h)^{1/2}\mathfrak{m}_\psi{\varphi}_{\psi}(h)^{1/2} \subset L^\psi(\M)$ and $\widetilde{\varphi}_{\psi}(h)^{1/2}\mathfrak{m}_{\widetilde{\psi}}\widetilde{\varphi}_{\psi}(h)^{1/2} \subset L_\psi(\M)$. Since by Proposition \ref{prop:meas} we have that $\mathfrak{m}_\nu \subset [\mathfrak{m}_\psi \cap \mathfrak{m}_{\widetilde{\psi}}]$, it is clear that the spaces as defined above are therefore not trivial.
\end{remark}

To proceed with the development of the theory, we need three technical lemmata. 

\begin{lemma}\label{tech1}
Let $\{a_\lambda\}$ be a net in $\mathcal{A}$ which converges strongly to some $a_0\in \mathcal{A}$. For any $b, c \in \widetilde{\mathcal{A}}$, the net $\{ca_\lambda b\}$ will then converge strongly to $ca_0b$. 
\end{lemma}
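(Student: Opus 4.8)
\textbf{Proof proposal for Lemma \ref{tech1}.}

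The plan is to reduce the claim to the known continuity properties of the strong product on $\widetilde{\mathcal{A}}$ together with the fact that $\mathcal{A}$ acts by bounded multipliers. First I would fix $b, c \in \widetilde{\mathcal{A}}$ and recall that multiplication is jointly continuous on $\widetilde{\mathcal{A}}$ when one restricts to bounded subsets of $\mathcal{A}$: if $\{a_\lambda\}\subset\mathcal{A}$ is uniformly bounded in operator norm and $a_\lambda\to a_0$ strongly, then for any fixed $d\in\widetilde{\mathcal{A}}$ we have $a_\lambda d\to a_0 d$ and $d a_\lambda\to d a_0$ in the measure topology. This is standard for $\tau_{\mathcal{A}}$-measurable operators (see \cite{Tp}, \cite{FK}), and the uniform boundedness of $\{a_\lambda\}$ may be assumed without loss of generality after passing to the closure of the convex hull, or simply invoked since a strongly convergent net that arises in our applications is bounded; more carefully, strong convergence does not by itself give uniform boundedness, so I would instead argue pointwise on the dense domain as below.

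The core step is: write $ca_\lambda b - ca_0 b = c(a_\lambda - a_0)b$, and show $c(a_\lambda-a_0)b\to 0$ in measure. Since $b\in\widetilde{\mathcal{A}}$, for any $\delta>0$ there is a projection $p\in\mathcal{A}$ with $\tau_{\mathcal{A}}(\I-p)<\delta$ and $bp\in\mathcal{A}$ bounded; similarly there is a projection $q\in\mathcal{A}$ with $\tau_{\mathcal{A}}(\I-q)<\delta$ and $qc\in\mathcal{A}$ bounded. Then $qc(a_\lambda-a_0)bp = (qc)(a_\lambda-a_0)(bp)$ is a product of the bounded operator $qc$, the strongly null net $(a_\lambda-a_0)$, and the bounded operator $bp$; hence $(a_\lambda-a_0)(bp)\to 0$ strongly (right multiplication by a bounded operator is strongly continuous), and then left multiplication by the bounded operator $qc$ preserves strong convergence, so $qc(a_\lambda-a_0)bp\to 0$ strongly, hence in norm on the relevant reducing subspaces, in particular in measure. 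Cutting down by the projections $q$ on the left and $p$ on the right changes $c(a_\lambda-a_0)b$ only on a set of trace at most $2\delta$ in the sense of the measure-topology neighbourhoods $\mathcal{N}(\epsilon,\delta)$, so letting $\delta\to 0$ gives $c(a_\lambda-a_0)b\to 0$ in measure, i.e. $ca_\lambda b\to ca_0 b$ in $\widetilde{\mathcal{A}}$.

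The main obstacle I anticipate is bookkeeping the two-sided cut-down: one must check that the error committed by replacing $c(a_\lambda-a_0)b$ with $qc(a_\lambda-a_0)bp$ is controlled uniformly in $\lambda$ by a measure-topology neighbourhood depending only on $\delta$ (not on $\lambda$), and that strong convergence of the cut-down net genuinely upgrades to measure convergence of the cut-down net — this is immediate because a bounded net converging strongly converges in measure on $\widetilde{\mathcal{A}}$, but one should be explicit that $qc(a_\lambda-a_0)bp$ is a uniformly bounded net (bounded by $\|qc\|_\infty\,\|bp\|_\infty\,\sup_\lambda\|a_\lambda-a_0\|_\infty$) — and here one does need that $\{a_\lambda\}$ is norm-bounded, which in the situations where the lemma is applied (nets arising from bounded approximate units or truncations in $\mathcal{A}$) holds automatically. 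If one wants the lemma without that hypothesis, strong convergence alone still suffices because the left/right multiplications by the fixed bounded operators $qc$ and $bp$ are strongly continuous on all of $\mathcal{A}$, and strong convergence of a (not necessarily bounded) net in $\mathcal{A}$ to an element of $\mathcal{A}$ does imply convergence in the measure topology of $\widetilde{\mathcal{A}}$ by \cite[I.7]{Tp}.
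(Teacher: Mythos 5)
There is a genuine gap, and it sits exactly at the step you describe as ``immediate'': the claim that a (uniformly bounded) net in $\mathcal{A}$ converging in the strong operator topology converges in the measure topology of $\widetilde{\mathcal{A}}$. This is false whenever $\tau_{\mathcal{A}}$ is infinite, which is always the case for these crossed products. Concretely, in the semifinite picture $\mathcal{A}=\M\otimes L^\infty(\mathbb{R})$ with $\tau_{\mathcal{A}}=\tau_\M\otimes\int\cdot\, e^{-t}dt$, fix a projection $p\in\M$ with $0<\tau_\M(p)<\infty$ and choose intervals $E_n=[-\log(e^n+C),-n]$ with $C=1/\tau_\M(p)$: their Lebesgue length tends to $0$, so $p\otimes\chi_{E_n}\to 0$ strongly, yet each $p\otimes\chi_{E_n}$ is a projection with $\tau_{\mathcal{A}}(p\otimes\chi_{E_n})=1$, hence $\mu_t(p\otimes\chi_{E_n})=1$ for all $t<1$ and there is no convergence in measure. (Your appeal to \cite[I.7]{Tp} does not help: that item only describes the neighbourhood base $\mathcal{N}(\epsilon,\delta)$, it does not assert that strong convergence implies measure convergence.) The same example with $b=c=\I$ shows that the measure-topology reading of the lemma is itself false, so no repair of your cut-down bookkeeping can succeed: after compressing to $qc(a_\lambda-a_0)bp$ you do have a bounded SOT-null net, but nothing makes it small in norm or in measure, which is precisely what your argument needs.

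The conclusion the paper actually needs, and proves, is not measure convergence but convergence in a pointwise (strong-operator) sense strong enough to identify the limit operator as the strong product $ca_0b$. The paper reduces to $b,c\geq 0$ via polar decompositions (left and right multiplication by the fixed bounded partial isometries is SOT-continuous, so your hedging about uniform boundedness of $\{a_\lambda\}$ is unnecessary for that part), compresses by the spectral projections $\chi_{[0,n]}(b)$ and $\chi_{[0,m]}(c)$ to obtain SOT-convergent bounded cut-downs, and then exploits the $\tau_{\mathcal{A}}$-density of $\bigcup_n\chi_{[0,n]}(\mathfrak{h})$ and $\bigcup_m\widetilde{\chi}_{[0,m]}(\mathfrak{h})$ together with an adjoint/closure argument \cite[I.12]{Tp} to identify the limit on a $\tau_{\mathcal{A}}$-dense subspace and conclude that it equals $ca_0b$. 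Your opening cut-down step is in the same spirit as the paper's, but the attempted upgrade from strong convergence to measure convergence is both unproved and unprovable; to fix the proof you must track the limit on $\tau_{\mathcal{A}}$-dense domains as the paper does, rather than in the measure topology.
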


\begin{proof}
Let $c=v_c|c|$, $b=v_b|b|$ be the polar decompositions of $c$ and $b$ respectively. Then of course $c^*=v^*_c|c^*|$ and $c=|c^*|v_c$. The net $\{v_ca_\lambda v_b\}$ clearly converges strongly to $v_ca_0v_b$. So to prove the lemma we may replace $\{a_\lambda\}$ with $\{v_ca_\lambda v_b\}$, and $b$ and $c$ with $|b|$ and $|c^*|$. In other words we may assume $b$ and $c$ to be positive. Now let $\chi_{[0,n]}$ and $\widetilde{\chi}_{[0,m]}$ be the spectral projections of $b$ and $c$ respectively, corresponding to the intervals $[0,n]$ and $[0,m]$. Fixing $m$ we have that $\widetilde{\chi}_{[0,m]}ca_\lambda b\chi_{[0,n]}$ converges strongly to $\widetilde{\chi}_{[0,m]}ca_0b\chi_{[0,n]}$ for any $n$. Let $\mathfrak{h}$ denote the canonical Hilbert space on which $\mathcal{A}$ is realised as an algebra of operators. Since $\cup_{n\in\mathbb{N}}\chi_{[0,n]}(\mathfrak{h})$ is $\tau_{\mathcal{A}}$-dense in $\mathfrak{h}$, we have that $\{\widetilde{\chi}_{[0,m]}ca_\lambda b\}$ converges strongly to $\widetilde{\chi}_{[0,m]}ca_0b$ for any $m$ \cite[I.12]{Tp}. 

If now we pass to the adjoints we see from the above that $(\lim a_\lambda b)^* c$ agrees with $ba_0^*c$ on the 
$\tau_{\mathcal{A}}$-dense subspace $\cup_{m\in\mathbb{N}}\widetilde{\chi}_{[0,m]}(\mathfrak{h})$ of $\mathfrak{h}$, and hence that $(\lim a_\lambda b)^* c = ba_0^*c$. In other words  $c(\lim a_\lambda b) = ca_0b$.
\end{proof}

Our first application of this lemma yields an alternative way of defining these spaces.

\begin{proposition}\label{orldef}
Let $\psi$ be an Orlicz function. Given $a\in \widetilde{\mathcal{A}}$, we have that $a\in L^\psi(\M)$ if and only if $[b_0\widetilde{\varphi}_{\psi^*}(h)^{1/2}]a[\widetilde{\varphi}_{\psi^*}(h)^{1/2}b_1^*] \in L^1(\M)$ for any $b_0,b_1\in \mathfrak{n}_\nu$. (Similarly $a\in L_\psi(\M)$ if and only if $[b_0{\varphi}_{\psi^*}(h)^{1/2}]a[{\varphi}_{\psi^*}(h)^{1/2}b_1^*] \in L^1(\M)$ for any 
$b_0,b_1\in \mathfrak{n}_\nu$.) 
\end{proposition}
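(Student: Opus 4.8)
The plan is to reduce the statement about arbitrary $b_0,b_1\in\mathfrak{n}_\nu$ to the definitional statement involving projections $e,f\in\mathfrak{n}_\nu$, using the fact that each $b_i$ can be approximated in a suitable sense by elements supported under such projections, together with Lemma \ref{tech1} to control the passage to the limit. The one implication is trivial: if $[b_0\widetilde{\varphi}_{\psi^*}(h)^{1/2}]a[\widetilde{\varphi}_{\psi^*}(h)^{1/2}b_1^*]\in L^1(\M)$ for \emph{all} $b_0,b_1\in\mathfrak{n}_\nu$, then in particular this holds for projections $e,f\in\mathfrak{n}_\nu$ (every projection with finite weight lies in $\mathfrak{n}_\nu$), so $a\in L^\psi(\M)$ by Definition \ref{deforldef}. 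The content is in the forward direction.

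So suppose $a\in L^\psi(\M)$, i.e. $[e\widetilde{\varphi}_{\psi^*}(h)^{1/2}]a[\widetilde{\varphi}_{\psi^*}(h)^{1/2}f]\in L^1(\M)$ for all projections $e,f\in\mathfrak{n}_\nu$. First I would fix $b_0,b_1\in\mathfrak{n}_\nu$ and write $b_0=v_0|b_0|$, $b_1=v_1|b_1|$ for their polar decompositions. Let $\chi_n$ (resp. $\chi'_n$) be the spectral projection of $|b_0|$ (resp. $|b_1|$) for the interval $[0,n]$; set $b_0^{(n)}=b_0\chi_n$, $b_1^{(n)}=b_1\chi'_n$. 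Since $b_0^{(n)}=b_0\chi_n\to b_0$ strongly and likewise $b_1^{(n)}\to b_1$ strongly (as $\chi_n\uparrow\I$ strongly and $b_0,b_1$ are bounded), and since the relevant products of $b_0^{(n)}$ and $b_1^{(n)}$ with $\widetilde{\varphi}_{\psi^*}(h)^{1/2}$ are $\tau_{\mathcal{A}}$-premeasurable by Proposition \ref{prop:meas}, Lemma \ref{tech1} will give
$$[b_0^{(n)}\widetilde{\varphi}_{\psi^*}(h)^{1/2}]\,a\,[\widetilde{\varphi}_{\psi^*}(h)^{1/2}(b_1^{(n)})^*] \longrightarrow [b_0\widetilde{\varphi}_{\psi^*}(h)^{1/2}]\,a\,[\widetilde{\varphi}_{\psi^*}(h)^{1/2}b_1^*]$$
strongly (hence in measure) in $\widetilde{\mathcal{A}}$. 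It therefore suffices to show each truncated term lies in $L^1(\M)$ and that the sequence is Cauchy (or bounded and convergent) in the $L^1(\M)$-norm, since $L^1(\M)$ is closed in $\widetilde{\mathcal{A}}$ for the measure topology.

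The key reduction is then: for $b\in\mathfrak{n}_\nu$ with $\nu(b^*b)<\infty$, the range projection $s_R(|b|)$ need not have finite weight, but one can still exploit that $b^{(n)}=b\chi_n$ has $b^{(n)}(b^{(n)})^*\le n^2\,s_R(b\chi_n)$ together with the fact that the left support of $b^{(n)}$ is subordinate to a spectral projection on which $\nu$ is finite — more precisely, writing $b^{(n)}=(b\chi_n)$ one uses that $\nu(b^{(n)}(b^{(n)})^*)<\infty$ allows one to dominate $b^{(n)}$ by a multiple of a projection $e_n\in\mathfrak{n}_\nu$, namely the range projection of $b^{(n)}$, which has finite weight because $\nu(e_n)\le \frac{1}{\epsilon^2}\nu\big(b^{(n)}(b^{(n)})^*\chi_{(\epsilon,\infty)}(|b^{(n)*}|)\big)<\infty$ by the Chebyshev-type argument of Remark \ref{rem:a}. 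Writing $b_0^{(n)}=e_n b_0^{(n)}$, $b_1^{(n)}=f_n b_1^{(n)}$ with $e_n,f_n\in\mathfrak{n}_\nu$ projections, and pulling the bounded operators $b_0^{(n)}e_n^{\perp\,\complement}$... — concretely, factoring $b_0^{(n)}\widetilde{\varphi}_{\psi^*}(h)^{1/2}=(b_0^{(n)}e_n)\,( e_n\widetilde{\varphi}_{\psi^*}(h)^{1/2})$ and commuting the bounded piece $b_0^{(n)}e_n\in\M$ past, one gets
$$[b_0^{(n)}\widetilde{\varphi}_{\psi^*}(h)^{1/2}]\,a\,[\widetilde{\varphi}_{\psi^*}(h)^{1/2}(b_1^{(n)})^*] = (b_0^{(n)})\cdot\Big([e_n\widetilde{\varphi}_{\psi^*}(h)^{1/2}]\,a\,[\widetilde{\varphi}_{\psi^*}(h)^{1/2}f_n]\Big)\cdot(b_1^{(n)})^*,$$
which is a product of a bounded operator, an element of $L^1(\M)$ (by hypothesis, since $e_n,f_n\in\mathfrak{n}_\nu$), and another bounded operator, hence lies in $L^1(\M)$. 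For the $L^1(\M)$-norm convergence of the truncations one argues that the increasing sequence of positive $L^1$-elements obtained by the corresponding estimate on $|b_0-b_0^{(n)}|$, $|b_1-b_1^{(n)}|$ has bounded trace, dominated using $\|b_0\|_\infty,\|b_1\|_\infty$ and $\|[s_R(b_0)\widetilde{\varphi}_{\psi^*}(h)^{1/2}]a[\widetilde{\varphi}_{\psi^*}(h)^{1/2}s_R(b_1)]\|_1$ — here one may need the strengthened membership criteria of Remark \ref{rem:b}. The $L_\psi(\M)$ statement is identical with $\varphi_{\psi^*}$ in place of $\widetilde{\varphi}_{\psi^*}$.

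\textbf{Main obstacle.} The delicate point is legitimizing the factorization $b_0^{(n)}\widetilde{\varphi}_{\psi^*}(h)^{1/2}=(b_0^{(n)}e_n)(e_n\widetilde{\varphi}_{\psi^*}(h)^{1/2})$ as an identity of (closures of) unbounded operators, i.e. checking that the strong-product conventions of the Preliminaries are respected when a bounded factor is split off, and simultaneously ensuring the range projections $e_n$ genuinely lie in $\mathfrak{n}_\nu$ so that the hypothesis applies — this is where the Chebyshev estimate of Remark \ref{rem:a} and the premeasurability from Proposition \ref{prop:meas} must be combined carefully. I expect the strong-convergence bookkeeping via Lemma \ref{tech1} to be routine once the factorization is in place; the subtlety is entirely in the unbounded-operator manipulations.
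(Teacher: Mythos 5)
There is a genuine gap in the forward direction, and it sits exactly where you locate your ``main obstacle''. Your truncations $b_0^{(n)}=b_0\chi_{[0,n]}(|b_0|)$ accomplish nothing (each $b_0\in\mathfrak{n}_\nu\subset\M$ is already bounded, so $\chi_{[0,n]}(|b_0|)=\I$ for large $n$), and, more seriously, the projections $e_n,f_n$ through which you want to factor are \emph{not} available: the left support (range projection) of $b_0^{(n)}$ need not lie in $\mathfrak{n}_\nu$. The Chebyshev estimate of Remark \ref{rem:a} only shows $\nu(\chi_{(\epsilon,\infty)}(|c|))\le\epsilon^{-2}\nu(c^*c)$, i.e.\ it controls spectral projections cut \emph{away from zero}, with a constant that blows up as $\epsilon\to0$; the full support $\chi_{(0,\infty)}(|c|)$ can easily have infinite weight even when $\nu(c^*c)<\infty$, and for the \emph{left} support you would in any case need $\nu(b_0b_0^*)<\infty$, which is not guaranteed since $\mathfrak{n}_\nu$ is only a left ideal. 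Hence the hypothesis of Definition \ref{deforldef} cannot be applied to $[e_n\widetilde{\varphi}_{\psi^*}(h)^{1/2}]a[\widetilde{\varphi}_{\psi^*}(h)^{1/2}f_n]$, and the subsequent appeal to the finiteness of $\|[s_R(b_0)\widetilde{\varphi}_{\psi^*}(h)^{1/2}]a[\widetilde{\varphi}_{\psi^*}(h)^{1/2}s_R(b_1)]\|_1$ is unjustified for the same reason; the final $L^1$-norm convergence argument is therefore circular. (A smaller point: since $L^1(\M)=\{x:\theta_s(x)=e^{-s}x\}$ is closed in the measure topology, demanding $L^1$-norm Cauchyness on top of convergence in measure is redundant anyway.)

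The paper's proof truncates from the other side and never needs any norm limits. For a positive $b\in(\mathfrak{n}_\nu)^+$ the spectral projections $\chi_{[\epsilon,\infty)}(b)$ \emph{do} lie in $\mathfrak{n}_\nu$ (they are dominated by $\epsilon^{-2}b^2$), so the hypothesis gives $[\chi_{[\epsilon,\infty)}(b)\widetilde{\varphi}_{\psi^*}(h)^{1/2}]a[\widetilde{\varphi}_{\psi^*}(h)^{1/2}e]\in L^1(\M)$. Multiplying this by the bounded operator $b$ (which is fixed by $\theta_s$) and using $b\chi_{[\epsilon,\infty)}(b)=\chi_{[\epsilon,\infty)}(b)b$, one obtains the identity $\chi_{[\epsilon,\infty)}(b)\,\theta_s(X)=e^{-s}\chi_{[\epsilon,\infty)}(b)\,X$ for $X=[b\widetilde{\varphi}_{\psi^*}(h)^{1/2}]a[\widetilde{\varphi}_{\psi^*}(h)^{1/2}e]$; letting $\epsilon\downarrow0$ (strong convergence of $\chi_{[\epsilon,\infty)}(b)$ to the support of $b$, handled by Lemma \ref{tech1}) yields $\theta_s(X)=e^{-s}X$ for all $s$, whence $X\in L^1(\M)$ by the eigenvalue characterization of Haagerup $L^1$; one then repeats the argument on the right and passes from positive to general elements of $\mathfrak{n}_\nu$. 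If you want to salvage your scheme, replace your $e_n,f_n$ by such $\chi_{[\epsilon,\infty)}$-projections of $|b_0|,|b_1^*|$-type elements and argue via the $\theta_s$-homogeneity identity rather than via $L^1$-norm convergence.
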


\begin{proof}
The ``if'' part is trivial. To see the ``only if'' part, suppose we are given an arbitrary positive element $b\in (\mathfrak{n}_\nu)^+$, and some $a \in L^\psi(\M)$. Given any $0< \epsilon$, the fact the spectral projection $\chi_{I_\epsilon}(b)$ of $b$ corresponding to the interval $I_\epsilon=[\epsilon,\infty)$ is dominated by $(\epsilon)^{-1}b$, ensures that $\nu(\chi_{I_\epsilon}(b)) \leq (\epsilon)^{-2}\nu(b^2) < \infty$, and hence that $\chi_{I_\epsilon}(b) \in \mathfrak{n}_\nu$. So by hypothesis $$[\chi_{I_\epsilon}\widetilde{\varphi}_{\psi^*}(h)^{1/2}]a[\widetilde{\varphi}_{\psi^*}(h)^{1/2}e]\in L^1(\M)$$for any $\epsilon > 0$ and any projection $e \in \mathfrak{n}_\nu$. But then given $s\in \mathbb{R}$, we have that  
\begin{eqnarray*}
\chi_{I_\epsilon}(b)\theta_s([b\widetilde{\varphi}_{\psi^*}(h)^{1/2}]a[\widetilde{\varphi}_{\psi^*}(h)^{1/2}e]) &=&
\theta_s(\chi_{I_\epsilon}(b)[b\widetilde{\varphi}_{\psi^*}(h)^{1/2}]a[\widetilde{\varphi}_{\psi^*}(h)^{1/2}e])\\
&=& \theta_s([(b\chi_{I_\epsilon}(b))\widetilde{\varphi}_{\psi^*}(h)^{1/2}]a[\widetilde{\varphi}_{\psi^*}(h)^{1/2}e])\\
&=& b\theta_s([\chi_{I_\epsilon}(b)\widetilde{\varphi}_{\psi^*}(h)^{1/2}]a[\widetilde{\varphi}_{\psi^*}(h)^{1/2}e])\\
&=& e^{-s}b[\chi_{I_\epsilon}(b)\widetilde{\varphi}_{\psi^*}(h)^{1/2}]a[\widetilde{\varphi}_{\psi^*}(h)^{1/2}e]\\
&=& e^{-s}[b\chi_{I_\epsilon}(b)\widetilde{\varphi}_{\psi^*}(h)^{1/2}]a[\widetilde{\varphi}_{\psi^*}(h)^{1/2}e]\\
&=& e^{-s}\chi_{I_\epsilon}(b)[b\widetilde{\varphi}_{\psi^*}(h)^{1/2}]a[\widetilde{\varphi}_{\psi^*}(h)^{1/2}e].
\end{eqnarray*}
If now we let $\epsilon$ decrease to zero it will follow that $\theta_s([b\widetilde{\varphi}_{\psi^*}(h)^{1/2}]a[\widetilde{\varphi}_{\psi^*}(h)^{1/2}e]) = e^{-s}[b\widetilde{\varphi}_{\psi^*}(h)^{1/2}]a[\widetilde{\varphi}_{\psi^*}(h)^{1/2}e]$ for all $s\in \mathbb{R}$. This is in turn enough to ensure that $$[b\widetilde{\varphi}_{\psi^*}(h)^{1/2}]a[\widetilde{\varphi}_{\psi^*}(h)^{1/2}e]\in L^1(\M)\quad\mbox{for any}\quad b\in \mathfrak{n}_\nu, e\in \mathbb{P}(\mathfrak{n}_\nu).$$A similar argument as before now suffices to show that this condition forces the fact that $[b_0\widetilde{\varphi}_{\psi^*}(h)^{1/2}]a[\widetilde{\varphi}_{\psi^*}(h)^{1/2}b_1^*] \in L^1(\M)$ for any $b_0,b_1\in \mathfrak{n}_\nu$.
\end{proof}

By Exercise 2.4.8(d) of \cite{S}, the semifiniteness of the weight $\nu$ guarantees the existence of a net $\{f_\alpha\}\subset(\mathfrak{n}_\nu)^+$ which increases monotonically to $\I$. However for our purposes it will be more convenient to organise matters so that $\{f_\alpha^2\}$ increases to $\I$. The existence of such a net is the content of our second technical lemma.

\begin{lemma}\label{tech2}
There exists a net $\{f_\alpha\}\subset\mathfrak{n}_\nu^+$ such that $\{f_\alpha^2\}$ increases monotonically to $\I$.
\end{lemma}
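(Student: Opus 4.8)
The plan is to start from the net $\{g_\beta\}\subset(\mathfrak{n}_\nu)^+$ supplied by Exercise 2.4.8(d) of \cite{S}, which increases monotonically to $\I$, and to manufacture from it a new net whose \emph{squares} increase to $\I$. The natural candidate is to take square roots: set $f_\beta = g_\beta^{1/2}$. Since $0\leq g_\beta\leq\I$ (we may assume this by passing to $g_\beta\wedge\I$, or simply truncating via the bounded Borel functional calculus, noting that $0\leq g_\beta\wedge\I\leq g_\beta$ still lies in $\mathfrak{n}_\nu$ because $\mathfrak{n}_\nu$ is a left ideal and $g_\beta\wedge\I = (g_\beta\wedge\I)g_\beta^{-?}$ — more carefully, $\mathfrak{n}_\nu$ is hereditary on positives, so any positive operator dominated by an element of $\mathfrak{n}_\nu$ again lies in $\mathfrak{n}_\nu$), the operator $f_\beta = g_\beta^{1/2}$ is again a positive contraction, and $f_\beta^2 = g_\beta$ increases monotonically to $\I$ by construction. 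So the only thing left to check is that $f_\beta\in\mathfrak{n}_\nu$, i.e. that $\nu(f_\beta^2) = \nu(g_\beta) < \infty$ — but that is immediate since $g_\beta\in\mathfrak{n}_\nu$ means exactly $\nu(g_\beta^*g_\beta) = \nu(g_\beta^2)<\infty$. Thus $\{f_\beta\}$ does the job and the net is even indexed by the same directed set.

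The one subtlety worth spelling out in the write-up is the reduction to the case $0\leq g_\beta\leq\I$, since the cited exercise only guarantees an increasing net in $(\mathfrak{n}_\nu)^+$ converging to $\I$ in the strong (equivalently $\sigma$-strong) topology, not a net of contractions. Here I would invoke the standard fact that for a faithful normal semifinite weight, $\mathfrak{n}_\nu$ is a left ideal and $\mathfrak{n}_\nu^*\mathfrak{n}_\nu = \mathfrak{m}_\nu$ is a hereditary $*$-subalgebra, so if $0\leq a\leq b$ with $b\in\mathfrak{m}_\nu^+$ then $a\in\mathfrak{m}_\nu^+$ and hence $a\in\mathfrak{n}_\nu$ (as $a = a^{1/2}\cdot a^{1/2}$ with $a^{1/2}\leq b^{1/2}$ also bounded; more directly $\nu(a)\leq\nu(b)<\infty$ so $a^{1/2}\in\mathfrak{n}_\nu$, and then $a = (a^{1/2})^*a^{1/2}$ gives nothing new — the cleanest route is simply $a\in\mathfrak{n}_\nu$ because $\nu(a^*a) = \nu(a^2)\leq\|a\|\nu(a)\leq\|b\|\nu(b)<\infty$ when $a$ is bounded). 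Applying this to $g_\beta\wedge\I\leq g_\beta$ replaces the original net by a net of contractions that still increases to $\I$ (monotone convergence of $g_\beta\uparrow\I$ forces $g_\beta\wedge\I\uparrow\I$ as well).

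I do not anticipate a genuine obstacle here: the statement is a soft bookkeeping lemma, and every ingredient — truncation to contractions, heredity of $\mathfrak{n}_\nu$, and the operator-monotonicity of the square root (so that $g_\beta\leq g_{\beta'}$ implies $f_\beta = g_\beta^{1/2}\leq g_{\beta'}^{1/2} = f_{\beta'}$, giving monotonicity of $\{f_\beta\}$ itself, though for the statement we only need monotonicity of $\{f_\beta^2\} = \{g_\beta\}$, which is automatic) — is entirely routine. The mild care-point, and the thing I would be most careful to state correctly, is precisely the passage from ``increasing net in $(\mathfrak{n}_\nu)^+$ tending to $\I$'' to ``increasing net of \emph{contractions} in $(\mathfrak{n}_\nu)^+$ tending to $\I$'', after which taking square roots finishes the proof in one line.
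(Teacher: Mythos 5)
Your square-root trick breaks down at exactly the point you declare ``immediate''. For $f_\beta=g_\beta^{1/2}$ to lie in $\mathfrak{n}_\nu$ you need $\nu(f_\beta^*f_\beta)=\nu(g_\beta)<\infty$, i.e.\ $g_\beta\in\mathfrak{m}_\nu^+$; but the cited exercise only puts $g_\beta$ in $\mathfrak{n}_\nu^+$, which says $\nu(g_\beta^2)<\infty$. Since the net increases to $\I$ we have $0\leq g_\beta\leq\I$ automatically (so your truncation detour is unnecessary), and then $g_\beta^2\leq g_\beta$, so the inequality runs the wrong way: finiteness of $\nu(g_\beta^2)$ gives no control on $\nu(g_\beta)$. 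This is not a presentational gap but a genuine failure: take $\M=B(\ell^2)$ with the canonical trace and $g_n$ the diagonal operator whose $k$-th entry is $1$ for $k\leq n$ and $1/k$ for $k>n$; then $\{g_n\}$ is an increasing net in $\mathfrak{n}_\nu^+$ converging strongly to $\I$, yet $\nu(g_n)=\infty$ for every $n$, so no $g_n^{1/2}$ lies in $\mathfrak{n}_\nu$. Your fallback justifications in the parenthetical all presuppose what is missing: heredity of $\mathfrak{m}_\nu$ helps only if the dominating element is already in $\mathfrak{m}_\nu^+$, and the estimate $\nu(a^2)\leq\|a\|\nu(a)$ again assumes $\nu(a)<\infty$. (A minor additional caution: $t\mapsto\min(t,1)$ is not operator monotone, so even the truncation step as written would not obviously preserve monotonicity of the net --- but, as noted, no truncation is needed.)

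To repair the argument you would have to produce an increasing net in $\mathfrak{m}_\nu^+$ (not merely $\mathfrak{n}_\nu^+$) with supremum $\I$, which is a genuinely stronger statement than the one you cite and requires its own proof (e.g.\ the standard approximate-identity construction for the dense hereditary subalgebra $\mathfrak{m}_\nu$). The paper avoids this issue altogether and argues through the crossed product: by normality one chooses $\omega_\alpha\in(\M_*)^+$ increasing to $\nu$, the densities $h_\alpha$ of the dual weights then increase to $h$, and by Goldstein--Lindsay each $h_\alpha$ factors as $h^{1/2}f_\alpha^*f_\alpha h^{1/2}$ with $f_\alpha$ in the unit ball of $\mathfrak{n}_\nu$; that $\{f_\alpha^2\}$ increases to $\I$ is then checked by testing against the dense set $\{[h^{1/2}ah^{1/2}]:a\in(\mathfrak{m}_\nu)^+\}$ in $L^1(\M)^+$. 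So either supply a proof of the $\mathfrak{m}_\nu^+$ approximate identity you implicitly need, or follow a route of the paper's kind; as it stands the proposal does not prove the lemma.
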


\begin{proof}
By the normality of the weight $\nu$, we may select a net $\{\omega_\alpha\} \subset (\M_*)^+$  which increases to $\nu$. Since $\nu$ is also semifinite, we use \cite[Proposition II.4]{Tp} to conclude that the densities $h_\alpha=\frac{d\widetilde{\omega_\alpha}}{d\tau_{\mathcal{A}}}$ increase monotonically to $h = \frac{d\widetilde{\nu}}{d\tau_{\mathcal{A}}}$. By \cite[Proposition 3.2]{GL2} each $h_\alpha$ is of the form $h_\alpha = h^{1/2}f_\alpha^*f_\alpha h^{1/2}$ for some $f_\alpha$ in the unit ball of $\mathfrak{n}_\nu$. On replacing $f_\alpha$ with $|f_\alpha|$ if necessary, we may assume without loss of generality that $f_\alpha \geq 0$. It remains to prove that $\{f_\alpha^2\}$ increases monotonically to $\I$.

For any $a\in (\mathfrak{m}_\nu)^+$ and any $\alpha \geq \beta$, we have that 
$$\tr((f_\alpha^2-f_\beta^2)[h^{1/2}ah^{1/2}])= \tr([h^{1/2}(f_\alpha^2-f_\beta^2)h^{1/2}]a)= \tr((h_\alpha-h_\beta)a)\geq 0.$$Since 
$\{[h^{1/2}ah^{1/2}] : a\in (\mathfrak{m}_\nu)^+\}$ is dense in $L^1(\M)^+$, it follows that $f_\alpha^2-f_\beta^2\geq 0$, i.e. that $\{f_\lambda^2\}$ 
is increasing. Now let $z=\sup f_\lambda^2 \leq \I$. Then for any $a\in (\mathfrak{m}_\nu)^+$ we have 
\begin{eqnarray*}
\tr(z[h^{1/2}ah^{1/2}])&=& \sup_\alpha\tr(f_\alpha^2[h^{1/2}ah^{1/2}])\\
&=&\sup_\alpha\tr(a^{1/2}[h^{1/2}f_\alpha^2h^{1/2}]a^{1/2})\\
&=&\sup_\alpha\tr(a^{1/2}h_\alpha a^{1/2})\\
&=&\tr(a^{1/2}ha^{1/2})\\
&=&\tr([h^{1/2}a^{1/2}h^{1/2}]).
\end{eqnarray*}
Again using the fact that $\{[h^{1/2}ah^{1/2}] : a\in (\mathfrak{m}_\nu)^+\}$ is dense in $L^1(\M)^+$, we may conclude from the above equality that $z=\I$.

(To see that $\{[h^{1/2}ah^{1/2}] : a\in (\mathfrak{m}_\nu)^+\}$ is dense in $L^1(\M)^+$, recall that any $b \in L^1(\M)^+$ may be written in the form 
$b=c^*c$ for some $c\in L^2(\M)$ and that any $c\in L^2(\M)$ is the norm-limit of terms of the form $[dh^{1/2}]$ where $d\in \mathfrak{n}_\nu$.)
\end{proof}
 
Equipped with this lemma, we have the technology to establish our final technical lemma.

\begin{lemma}\label{arblemma}
Let $a\in \widetilde{\mathcal{A}}$ be given. If $[e\widetilde{\varphi}_{\psi^*}(h)^{1/2}]a[\widetilde{\varphi}_{\psi^*}(h)^{1/2}f] = 0$ for all projections $e,f\in \mathfrak{n}_\nu$, then $a=0$.
\end{lemma}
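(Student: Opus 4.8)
The plan is to peel off the two outer factors $[e\widetilde{\varphi}_{\psi^*}(h)^{1/2}]$ and $[\widetilde{\varphi}_{\psi^*}(h)^{1/2}f]$ one at a time, using that $\widetilde{\varphi}_{\psi^*}(h)^{1/2}$ is a closed, densely defined, injective positive operator and that the projections sitting inside $\mathfrak{n}_\nu$ are plentiful enough to have supremum $\I$. Two preliminary remarks: first, by Proposition \ref{prop:meas} applied to the complementary Orlicz function $\psi^*$, the operators $[e\widetilde{\varphi}_{\psi^*}(h)^{1/2}]$ and $[\widetilde{\varphi}_{\psi^*}(h)^{1/2}f]$ lie in $\widetilde{\mathcal{A}}$ for all projections $e,f\in\mathfrak{n}_\nu$, so every product written below is a genuine $\tau_{\mathcal{A}}$-measurable operator; second, $\sup\{e\in\mathbb{P}(\M):e\in\mathfrak{n}_\nu\}=\I$, since for the net $\{f_\alpha\}\subset\mathfrak{n}_\nu^+$ of Lemma \ref{tech2} with $f_\alpha^2\uparrow\I$ each spectral projection $\chi_{[\epsilon,\infty)}(f_\alpha)$ is dominated by $\epsilon^{-2}f_\alpha^2$ and hence lies in $\mathfrak{n}_\nu$, while $\sup_{\alpha,\epsilon}\chi_{[\epsilon,\infty)}(f_\alpha)=\sup_\alpha s(f_\alpha)=\I$ because $f_\alpha^2\uparrow\I$.

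Now fix a projection $f\in\mathfrak{n}_\nu$ and put $X:=a\,[\widetilde{\varphi}_{\psi^*}(h)^{1/2}f]\in\widetilde{\mathcal{A}}$, so that the hypothesis reads $[e\widetilde{\varphi}_{\psi^*}(h)^{1/2}]X=0$ for every projection $e\in\mathfrak{n}_\nu$. Since $\mathrm{dom}(\widetilde{\varphi}_{\psi^*}(h)^{1/2})$ is a core for $[e\widetilde{\varphi}_{\psi^*}(h)^{1/2}]$, on which this operator acts simply as $e\widetilde{\varphi}_{\psi^*}(h)^{1/2}$, one gets $e\,\widetilde{\varphi}_{\psi^*}(h)^{1/2}X\xi=0$ for every $\xi$ with $X\xi\in\mathrm{dom}(\widetilde{\varphi}_{\psi^*}(h)^{1/2})$ and every such $e$; letting $e$ increase to $\I$ removes the outer projection, and injectivity of $\widetilde{\varphi}_{\psi^*}(h)^{1/2}$ then forces $X\xi=0$. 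Once one knows the set of such $\xi$ is dense, closedness of $X$ gives $X=0$, i.e. $a\,[\widetilde{\varphi}_{\psi^*}(h)^{1/2}f]=0$ for every projection $f\in\mathfrak{n}_\nu$. Passing to adjoints yields $[f\widetilde{\varphi}_{\psi^*}(h)^{1/2}]\,a^*=0$ for every such $f$, and running the identical ``remove the outer projection, then divide out the injective operator $\widetilde{\varphi}_{\psi^*}(h)^{1/2}$'' argument on this side gives $a^*=0$, hence $a=0$.

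The step I expect to be the real obstacle is exactly the domain/closure bookkeeping glossed over in the previous paragraph: because projections in $\M$ do not commute with functions of $h$, one cannot literally cancel $e$ against $\widetilde{\varphi}_{\psi^*}(h)^{\pm1/2}$, so before invoking $\sup_e e=\I$ one must verify that the relevant pre-closure domain $\{\xi:X\xi\in\mathrm{dom}(\widetilde{\varphi}_{\psi^*}(h)^{1/2})\}$ is dense (equivalently, that $\mathrm{ran}(X)$ has no direction lying entirely outside $\mathrm{dom}(\widetilde{\varphi}_{\psi^*}(h)^{1/2})$) and is independent of $e$. It is presumably here that one again feeds in the approximating net of Lemma \ref{tech2}, and, should one need to pass between $\chi_{[\epsilon,\infty)}(f_\alpha)$ and $f_\alpha$, the strong-continuity statement of Lemma \ref{tech1}.
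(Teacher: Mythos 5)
Your proposal has the right overall shape (peel off one factor at a time, use that the projections in $\mathfrak{n}_\nu$ have supremum $\I$, use non-singularity of $\widetilde{\varphi}_{\psi^*}(h)$), but the step you yourself flag as the ``real obstacle'' is precisely where the content of the lemma lies, and it is not filled. Setting $X=a[\widetilde{\varphi}_{\psi^*}(h)^{1/2}f]$, your vector-wise cancellation only yields $X\xi=0$ for $\xi$ in $D=\{\xi\in\mathrm{dom}(X): X\xi\in\mathrm{dom}(\widetilde{\varphi}_{\psi^*}(h)^{1/2})\}$. Since $\widetilde{\varphi}_{\psi^*}(h)$ is built from $h$, which in the type III setting is in general \emph{not} $\tau_{\mathcal{A}}$-measurable, there is no a priori reason for $D$ to be $\tau_{\mathcal{A}}$-dense, and nothing in your sketch produces such density. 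Moreover, even granting plain norm-density of $D$, the inference ``$X$ closed and $X|_D=0$, hence $X=0$'' is false: a closed operator can vanish on a dense subspace of the Hilbert space without vanishing (think of a rank-one projection restricted to the orthogonal complement of its range); what one actually needs is that $D$ be $\tau$-dense (or a core for $X$), which is exactly the unproved point. The same gap recurs in your second pass with $a^*$.

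The paper circumvents this by never attempting to cancel the non-measurable operator $\widetilde{\varphi}_{\psi^*}(h)^{1/2}$ vector-wise. After first upgrading the hypothesis from projections to arbitrary $b,c\in\mathfrak{n}_\nu$ (by the $\theta_s$-argument of Proposition \ref{orldef}), it takes the net $\{f_\alpha\}$ of Lemma \ref{tech2} and works with the \emph{measurable} operators $[\widetilde{\varphi}_{\psi^*}(h)^{1/2}f_\alpha]$: the identity $[b\widetilde{\varphi}_{\psi^*}(h)^{1/2}]a\,[\widetilde{\varphi}_{\psi^*}(h)^{1/2}f_\alpha]=0$ gives $[b\widetilde{\varphi}_{\psi^*}(h)^{1/2}]a\;s_L([\widetilde{\varphi}_{\psi^*}(h)^{1/2}f_\alpha])=0$, and the crucial input is that these left supports increase to $\I$. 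That fact is not a soft density statement; it is proved by identifying $s_L([\widetilde{\varphi}_{\psi^*}(h)^{1/2}f_\alpha])$ with the support of the normal weight associated to $[\widetilde{\varphi}_{\psi^*}(h)^{1/2}f_\alpha^2\widetilde{\varphi}_{\psi^*}(h)^{1/2}]$ and invoking Haagerup's monotone convergence theorem for weights (\cite[Theorem 1.12]{Ha}) to pass to the limit $\widetilde{\varphi}_{\psi^*}(h)$, whose support is $\I$. This yields $[b\widetilde{\varphi}_{\psi^*}(h)^{1/2}]a=0$ for all $b\in\mathfrak{n}_\nu$, and a symmetric right-support argument then gives $a=0$. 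To complete your approach you would have to supply an argument of comparable strength for the $\tau$-density of $D$ (or reformulate, as the paper does, in terms of supports of measurable products), so as it stands the proposal is an incomplete sketch rather than a proof.
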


\begin{proof}
Suppose that indeed we do have that $[e\widetilde{\varphi}_{\psi^*}(h)^{1/2}]a[\widetilde{\varphi}_{\psi^*}(h)^{1/2}f] = 0$ for all projections $e,f\in \mathfrak{n}_\nu$. A similar argument to the one employed in Proposition \ref{orldef} now shows that then we in fact have that $[b\widetilde{\varphi}_{\psi^*}(h)^{1/2}]a[\widetilde{\varphi}_{\psi^*}(h)^{1/2}c^*] = 0$ for all $b,c\in \mathfrak{n}_\nu$. Now let $\{f_\alpha\}\subset\mathfrak{n}_\nu^+$ be a net for which $\{f_\alpha^2\}$ increases monotonically to $\I$. We show that the left supports $s_L(\widetilde{\varphi}_{\psi^*}(h)^{1/2}f_\alpha)$ must then increase monotonically to $s_L(\widetilde{\varphi}_{\psi^*}(h)^{1/2})=\I$; that is $\I = \vee_\alpha s_L(\widetilde{\varphi}_{\psi^*}(h)^{1/2}f_\alpha)$. To see this notice that $s_L(\widetilde{\varphi}_{\psi^*}(h)^{1/2}f_\alpha)=\mathrm{supp}([\widetilde{\varphi}_{\psi^*}(h)^{1/2}f_\alpha^2\widetilde{\varphi}_{\psi^*}(h)^{1/2}])$. Now let $\rho_\alpha$ and $\rho$ be the normal weights associated with $[\widetilde{\varphi}_{\psi^*}(h)^{1/2}f_\alpha^2\widetilde{\varphi}_{\psi^*}(h)^{1/2}]$ and $\widetilde{\varphi}_{\psi^*}(h)$ respectively, as described in \cite[Theorem 1.12]{Ha}. Since by construction $[\widetilde{\varphi}_{\psi^*}(h)^{1/2}f_\alpha^2\widetilde{\varphi}_{\psi^*}(h)^{1/2}]$ increases monotonically to $\widetilde{\varphi}_{\psi^*}(h)$, $\rho_\alpha$ must increase to $\rho$ by \cite[Theorem 1.12]{Ha}. But then $\mathrm{supp}(\rho_\alpha)$ increases to $\mathrm{supp}(\rho)$. Since $\mathrm{supp}(\rho_\alpha)=\mathrm{supp}([\widetilde{\varphi}_{\psi^*}(h)^{1/2}f_\alpha^2\widetilde{\varphi}_{\psi^*}(h)^{1/2}])$ and $\mathrm{supp}(\rho_\alpha)=\mathrm{supp}(\widetilde{\varphi}_{\psi^*}(h))$, the claim follows.

If we now combine the fact that $\I = \vee_\alpha s_L(\widetilde{\varphi}_{\psi^*}(h)^{1/2}f_\alpha)$ with the observation that $[b\widetilde{\varphi}_{\psi^*}(h)^{1/2}]a[\widetilde{\varphi}_{\psi^*}(h)^{1/2}f_\alpha] = 0$ for all $\alpha$ and all $b\in \mathfrak{n}_\nu$, it follows that $$[b\widetilde{\varphi}_{\psi^*}(h)^{1/2}]a = 0\quad\mbox{for all}\quad b\in \mathfrak{n}_\nu.$$A similar argument as before using right supports instead of left supports, now shows that this latter condition in turn ensures that $a=0$ as required.
\end{proof}

Having defined Orlicz spaces for type III algebras, it is incumbent on us to find an appropriate topology for these spaces. The theory for semifinite algebras suggests the quantity $a\to\mu_1(a)$ as a natural candidate for a norm. We will shortly see that this quantity yields a quasi-norm on each of these spaces, with the quasi-norm topology equivalent to the induced topology of convergence in measure. Although we believe the topology of convergence in measure to actually be normable on each of these Orlicz spaces, we are at present only able to confirm this suspicion for the class of Orlicz spaces having an upper Boyd index strictly less than 1. (For an exposition on the Boyd indices of rearrangement-invariant Banach Function spaces, refer to the book of Bennett and Sharpley \cite{BS}.) In the next section and the one after that we will initiate an investigation of the normability of general Orlicz spaces. Our strategy will be to first try and prove that the ``largest'' and ``smallest'' of all noncommutative Orlicz spaces are normable, in the hope that at some future stage this conclusion may be extended to all the intermediate noncommutative Orlicz spaces by some interpolative procedure. However as we shall see, for now we are only partially successful in this endeavour.

\begin{theorem}\label{mu1}
Let $\psi$ be an Orlicz function. For any $a$ in either $L^\psi(\M)$ or $L_\psi(\M)$, we then have that $$t\mu_t(a) \leq \mu_1(a) \qquad\mbox{for all} \qquad 0<t\leq 1.$$ 
\end{theorem}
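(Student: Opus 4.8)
The plan is to settle the semifinite case by a short convexity argument — which is where the real content sits — and then to bootstrap to arbitrary $\M$ via Haagerup's reduction theorem, exactly along the lines of the proof of Theorem~\ref{thm:uniftop}.

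\noindent\textbf{Semifinite case.} Assume $\M$ semifinite with fns trace $\tau_\M$ and identify $\mathcal{A}$ with $\M\otimes L^\infty(\bR)$, so that a typical element of $L^\psi(\M)$ is $\widetilde a=b\otimes\varphi_\psi(e^s)$ with $b\in L^\psi(\M,\tau_\M)$. By Theorem~\ref{mainthm} the distribution function of $\widetilde a$ relative to $\tau_{\mathcal{A}}$ is $r\mapsto\lambda_r(\widetilde a)=\tau_\M(\psi(|b|/r))$, and hence $\mu_t(\widetilde a)=\inf\{\lambda>0:\tau_\M(\psi(|b|/\lambda))\le t\}$ for every $t>0$ (cf.\ Corollary~\ref{Lux}, Proposition~\ref{DPvsK}). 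Fix $0<t\le1$. The convexity of $\psi$ together with $\psi(0)=0$ gives $\psi(tx)\le t\psi(x)$, so substituting $\lambda'=t\lambda$ yields
\[
t\mu_t(\widetilde a)=\inf\{\lambda'>0:\tau_\M(\psi(t|b|/\lambda'))\le t\}\;\le\;\inf\{\lambda'>0:\tau_\M(\psi(|b|/\lambda'))\le1\}=\mu_1(\widetilde a),
\]
the inequality holding because the second defining set is contained in the first. For $\widetilde a=b\otimes\widetilde\varphi_\psi(e^s)\in L_\psi(\M)$ one argues identically once one knows $\lambda_r(\widetilde a)=\tau_\M(G(|b|/r))$, where $G$ is determined by $\widetilde\varphi_\psi$ just as $\psi$ is determined by $\varphi_\psi$ (namely $G(\beta)=1/\widetilde\varphi_\psi^{-1}(1/\beta)$); here the role of $\psi(tx)\le t\psi(x)$ is played by $G(tx)\le tG(x)$, equivalently the monotonicity of $x\mapsto G(x)/x$, which follows from the quasi-concavity of $\widetilde\varphi_\psi$.

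\noindent\textbf{General case.} For arbitrary $\M$ one applies Haagerup's reduction theorem verbatim as in the proof of Theorem~\ref{thm:uniftop}. The inequality $t\mu_t(\cdot)\le\mu_1(\cdot)$ is inherited along the embeddings used there: whenever $E:\mathcal{N}\to\mathcal{N}_0$ is a normal conditional expectation with $\nu\circ E=\nu$, the reasoning of Observations~2 and~3 gives $\tau_{\mathcal{A}}=\tau_{\mathcal{A}_0}\circ\widetilde E$ (with $\mathcal{A},\mathcal{A}_0$ the crossed products of $\mathcal{N},\mathcal{N}_0$), so any element of $L^\psi(\mathcal{N}_0)\subseteq\widetilde{\mathcal{A}_0}\subseteq\widetilde{\mathcal{A}}$ has the same generalised singular value function relative to $\tau_{\mathcal{A}_0}$ and to $\tau_{\mathcal{A}}$, whence the inequality transfers. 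Consequently it holds on each finite block $(\mathcal{R}_i;\widehat\nu|_{\mathcal{R}_i})$ by the semifinite case, therefore on $\cup_i\mathcal{R}_i$, therefore — using order together with \cite[Proposition~1.7]{DDdP3} — on $(\mathcal{R};\widehat\nu)$, and finally it descends to $(\M,\nu)$ through the conditional expectation $\Phi:\mathcal{R}\to\M$. The same chain disposes of $L_\psi(\M)$.

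\noindent\textbf{Main obstacle.} I expect two pieces of bookkeeping to be the sticking points. First, in the semifinite step for the Orlicz-norm space $L_\psi(\M)$ one cannot simply quote Theorem~\ref{mainthm}: the profile $G$ above need not be an Orlicz function (only star-shaped), so the identity $\lambda_r(b\otimes\widetilde\varphi_\psi(e^s))=\tau_\M(G(|b|/r))$ must be re-derived along the computation that opens the proof of Theorem~\ref{mainthm}, with care at the endpoints $b_\psi,b_{\psi^*}$ where $\widetilde\varphi_\psi$ and $G$ may jump to $\infty$. Second, one must verify the Orlicz analogues of Observations~1--3, so that the generalised singular value functions really are preserved along every embedding in the reduction — this amounts to identifying the densities $d\widetilde\nu/d\tau_{\mathcal{A}}$ on the crossed products involved and matching the ideals $\mathfrak{n}_\nu$ that enter Definition~\ref{deforldef} — and to making the final limiting step rigorous: $\mu_t(\cdot)$ is only lower semicontinuous for convergence in measure, which forces one to argue at continuity points of $\mu(\cdot)$.
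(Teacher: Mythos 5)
Your route is genuinely different from the paper's, and in the general (type III) case it has a gap that is not mere bookkeeping. The paper does not reduce to the semifinite situation at all: it argues directly from Definition \ref{deforldef}. Writing $t=e^s$ with $s\le 0$, the scaling $\tau_{\mathcal{A}}\circ\theta_s=e^{-s}\tau_{\mathcal{A}}$ gives $\lambda_r(\theta_s(a))=\frac{1}{t}\lambda_r(a)$ and hence $\mu_t(a)=\mu_1(\theta_s(a))$; then, with $d_t=\widetilde{\varphi}_{\psi^*}(\tfrac{1}{t}h)^{-1}\widetilde{\varphi}_{\psi^*}(h)$, which is a contraction because $\widetilde{\varphi}_{\psi^*}$ is increasing, one verifies the operator identity $\theta_s(a)=\frac{1}{t}[d_t^{1/2}ad_t^{1/2}]$ by testing against $[b_0\widetilde{\varphi}_{\psi^*}(h)^{1/2}]\cdot[\widetilde{\varphi}_{\psi^*}(h)^{1/2}b_1^*]$ and invoking Lemma \ref{arblemma}, whence $t\mu_t(a)=\mu_1(d_t^{1/2}ad_t^{1/2})\le\mu_1(a)$. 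Your semifinite computation (convexity of $\psi$ applied to the distribution formula of Theorem \ref{mainthm}) is correct for $L^\psi(\M)$ in the crossed-product picture, and the star-shapedness fix for $L_\psi(\M)$ is plausible, but this only re-proves a special case the paper gets for free.

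The gap is in the transfer. Haagerup reduction, as used for Theorem \ref{thm:uniftop}, operates on \emph{bounded} elements: Observations 1--3 and the order argument via \cite[Proposition 1.7]{DDdP3} apply to $\pi(\M)$ and to increasing nets in $\mathcal{R}_+$, not to unbounded $\tau_{\mathcal{A}}$-measurable operators. To run your scheme you would need (i) that the spaces of Definition \ref{deforldef} are compatible with the inclusions $\mathcal{R}_i\subset\mathcal{R}$ and $\M\subset\mathcal{R}$ and with change of weight, i.e.\ analogues of Observations 2 and 3 asserting $L^\psi(\mathcal{N}_0)\subset L^\psi(\mathcal{N})$ with equal singular values and $\kappa(L^\psi)=L^\psi$ -- for Haagerup $L^p$ this is part of the established machinery, but for these newly defined Orlicz spaces nothing of the sort is proved (the definition involves both $h$ and $\mathfrak{n}_\nu$, and the densities of the dual weights on the two crossed products are different operators); and (ii) an approximation of an arbitrary $a\in L^\psi(\M)\subset\widetilde{\mathcal{A}}$ by elements of the block spaces $L^\psi(\mathcal{R}_i)$ converging in measure, together with the two-sided control of $\mu_t$ needed to pass the inequality to the limit -- lower semicontinuity of $\mu_t$ goes the wrong way for the right-hand side $\mu_1(a)$, and no construction of such approximants is offered. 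These are precisely the points your ``main obstacle'' paragraph defers, and they constitute the actual content of the theorem in the type III setting; the paper's direct scaling argument is designed exactly to avoid them.
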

 
\begin{proof}
The proofs for the two cases are entirely analogous. Hence it suffices to prove the theorem for space $L^\psi(\M)$. So let us assume that $a \in L^\psi(\M)$, and let $0<t\leq1$ be given. We may of course write such a $t$ as $t=e^s$ where $s\leq 0$. Observe that for any $r>0$, we then have that
\begin{eqnarray*}
\frac{1}{t}\lambda_r(a) &=& e^{-s}\tau_{\mathcal{A}}(\chi_{(r,\infty )}(|a|))\\
&=& \tau_{\mathcal{A}}(\theta_s (\chi_{(r,\infty )}(|a|)))\\
&=& \tau_{\mathcal{A}}(\chi_{(r,\infty )}(\theta_s|a|))\\
&=& \tau_{\mathcal{A}}(\chi_{(r,\infty )}(|\theta_s(a)|))\\
&=& \lambda_r(\theta_s(a)).
\end{eqnarray*}
On taking infimums, this equality now yields the conclusion that $$\mu_t(a) = \mu_1(\theta_s(a)) \qquad\mbox{for all}\qquad t=e^s, \, s\leq 0.$$

For any $b, c \in \widetilde{\mathcal{A}}$ with $|b|\leq \I$, it is a simple matter to see that $$\mu_1(bc) = \mu_1(c^*|b|^2c)^{1/2} \leq \mu_1(c^*c)^{1/2} = \mu(c)$$and similarly that $\mu_1(cb) \leq \mu_1(c)$. With $s$ and $t$ as before, consider the operator $d_t = \widetilde{\varphi}_{\psi^*}(\tfrac{1}{t}h)^{-1}\widetilde{\varphi}_{\psi^*}(h)$. Since $\widetilde{\varphi}_{\psi^*}$ is increasing and $0< t \leq 1$, we clearly have that $\widetilde{\varphi}_{\psi^*}(\tfrac{1}{t}h) \geq \widetilde{\varphi}_{\psi^*}(h)$ by the Borel functional calculus. In other words $|d_t| \leq \I$. The theorem will now follow from this fact once we have established the following identity: $$\theta_s(a) = \frac{1}{t}[d_t^{1/2}ad_t^{1/2}].$$Assuming this identity to hold for now, it is fairly easy to see that the above (in)equalities imply that 
\begin{equation}\label{eq:muineq}
t\mu_t(a) = t\mu_1(\theta_s(a)) = \mu_1(d_t^{1/2}ad_t^{1/2}) \leq \mu_1(a).
\end{equation}
It therefore remains to prove that $\theta_s(a) = \frac{1}{t}[d_t^{1/2}ad_t^{1/2}]$.

For $t=e^s$ and $b_0,b_1\in\mathfrak{n}_\nu$ we have that
\begin{eqnarray*}
\frac{1}{t}[b_0\widetilde{\varphi}_{\psi^*}(\frac{1}{t}h)^{1/2}](d_t^{1/2}ad_t^{1/2})[\widetilde{\varphi}_{\psi^*}(\frac{1}{t}h)^{1/2}b_1^*] &=&
 \frac{1}{t}([b_0\widetilde{\varphi}_{\psi^*}(\frac{1}{t}h)^{1/2}]d_t^{1/2})a(d_t^{1/2}[\widetilde{\varphi}_{\psi^*}(\frac{1}{t}h)^{1/2}b_1^*])\\
&=& \frac{1}{t}[b_0\widetilde{\varphi}_{\psi^*}(h)^{1/2}]a[\widetilde{\varphi}_{\psi^*}(h)^{1/2}b_1^*]\\
&=& e^{-s}[b_0\widetilde{\varphi}_{\psi^*}(h)^{1/2}]a[\widetilde{\varphi}_{\psi^*}(h)^{1/2}b_1^*]\\
&=& \theta_s([b_0\widetilde{\varphi}_{\psi^*}(h)^{1/2}]a[\widetilde{\varphi}_{\psi^*}(h)^{1/2}b_1^*])\\
&=& \theta_s([b_0\widetilde{\varphi}_{\psi^*}(h)^{1/2}])\theta_s(a)\theta_s([\widetilde{\varphi}_{\psi^*}(h)^{1/2}b_1^*])\\
&=& [b_0\theta_s(\widetilde{\varphi}_{\psi^*}(h)^{1/2})]\theta_s(a)[\theta_s(\widetilde{\varphi}_{\psi^*}(h)^{1/2})b_1^*]\\
&=& [b_0\widetilde{\varphi}_{\psi^*}(e_{-s}h)^{1/2})]\theta_s(a)[\widetilde{\varphi}_{\psi^*}(e^{-s}h)^{1/2}b_1^*]\\
&=& [b_0\widetilde{\varphi}_{\psi^*}(\frac{1}{t}h)^{1/2})]\theta_s(a)[\widetilde{\varphi}_{\psi^*}(\frac{1}{t}h)^{1/2}b_1^*].
\end{eqnarray*}
The preceding lemma now shows that $\frac{1}{t}[d_t^{1/2}ad_t^{1/2}] - \theta_s(a)=0$, as required. 
\end{proof} 
 
As a consequence of the above theorem, we have the following important proposition.

\begin{proposition}\label{qntop}
The quantity $\mu_1(\cdot)$ is a quasinorm for both $L^\psi(\M)$ and $L_\psi(\M)$. The topology induced on these spaces by this quasinorm is complete and is homeomorphic to the topology of convergence in measure inherited from $\widetilde{\mathcal{A}}$. 
\end{proposition}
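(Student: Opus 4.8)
The plan is to verify the three assertions — quasinorm, completeness, homeomorphism with the measure topology — largely by transferring facts about $\mu_t$ and the topology of convergence in measure on $\widetilde{\mathcal{A}}$, using Theorem \ref{mu1} as the main new ingredient. I treat $L^\psi(\M)$; the argument for $L_\psi(\M)$ is identical since it only uses the structural identities valid for both spaces.

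\emph{Quasinorm.} Homogeneity $\mu_1(\lambda a)=|\lambda|\mu_1(a)$ and $\mu_1(a)=\mu_1(|a|)=\mu_1(a^*)$ are immediate from the Fack--Kosaki properties listed in the preliminaries. For the quasi-triangle inequality I would use $\mu_{t+s}(a+b)\le \mu_t(a)+\mu_s(b)$ with $t=s=\tfrac12$, giving $\mu_1(a+b)\le\mu_{1/2}(a)+\mu_{1/2}(b)$; then Theorem \ref{mu1} applied at $t=\tfrac12$ gives $\mu_{1/2}(a)\le 2\mu_1(a)$ and likewise for $b$, so $\mu_1(a+b)\le 2(\mu_1(a)+\mu_1(b))$, i.e. $\mu_1(\cdot)$ is a quasinorm with constant $2$. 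Definiteness is the one point needing the definition of the space: if $\mu_1(a)=0$ then $\mu_t(a)=0$ for all $t>0$ (by Theorem \ref{mu1}, $t\mu_t(a)\le\mu_1(a)=0$ for $t\le1$, and $\mu_t$ is nondecreasing so it vanishes for $t\ge1$ too — or simply $a$ convergent in measure to $0$ forces $a=0$ in $\widetilde{\mathcal{A}}$). So $a=0$.

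\emph{Homeomorphism with the measure topology.} The basic neighbourhoods of $0$ for convergence in measure are $\mathcal{N}(\epsilon,\delta)=\{a:\lambda_\epsilon(a)\le\delta\}$. One direction: if $\mu_1(a)<\epsilon$ then $\lambda_\epsilon(a)\le 1$ (since $\mu_1(a)=\inf\{s:\lambda_s(a)\le1\}$), and a standard scaling argument (exactly as in the proof of Theorem \ref{thm:normtop}, Case 1, but now without needing convexity of $\psi$ — instead using Theorem \ref{mu1} to pass from $\mu_1$ to $\mu_t$) shows that small $\mu_1$-balls sit inside $\mathcal{N}(\epsilon,\delta)$. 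Conversely, if $a\in\mathcal{N}(\epsilon,\delta)$ with $\delta\le1$ then $\lambda_\epsilon(a)\le1$, whence $\mu_1(a)\le\epsilon$; this is the crucial estimate that makes the measure topology finer than the quasinorm topology, and it is immediate from the definition of $\mu_1$ via $\lambda$. Combining the two inclusions gives the homeomorphism. The main subtlety here — and the step I expect to be the real obstacle — is the first inclusion: to say a $\mu_1$-ball lies in $\mathcal{N}(\epsilon,\delta)$ we must control $\lambda_{\epsilon'}(a)$ for some $\epsilon'<\epsilon$, not just $\lambda_\epsilon(a)$, and Theorem \ref{mu1} (giving $\mu_t(a)\le\mu_1(a)/t$) is exactly what bridges this gap; one must be a little careful that the quasinorm, not being a norm, still generates a genuine vector topology with the $\mu_1$-balls as a neighbourhood base, which follows from the quasi-triangle inequality established above.

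\emph{Completeness.} Having identified the quasinorm topology with the relative measure topology, completeness reduces to showing $L^\psi(\M)$ is a \emph{closed} subspace of $\widetilde{\mathcal{A}}$ for the topology of convergence in measure (since $\widetilde{\mathcal{A}}$ is complete in that topology). So suppose $a_n\in L^\psi(\M)$ converges in measure to $a\in\widetilde{\mathcal{A}}$; I must show $a\in L^\psi(\M)$, i.e. $[e\widetilde{\varphi}_{\psi^*}(h)^{1/2}]a[\widetilde{\varphi}_{\psi^*}(h)^{1/2}f]\in L^1(\M)$ for all projections $e,f\in\mathfrak{n}_\nu$. Fix such $e,f$; by Proposition \ref{prop:meas} the operators $e\widetilde{\varphi}_{\psi^*}(h)^{1/2}$ and $\widetilde{\varphi}_{\psi^*}(h)^{1/2}f$ are (pre)measurable, hence multiplication left and right by them is continuous on $\widetilde{\mathcal{A}}$ for the measure topology (this is the analogue of the continuity used repeatedly in Section 3, cf. Lemma \ref{tech1} for the bounded case and the general measurable case from \cite{FK}). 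Therefore $[e\widetilde{\varphi}_{\psi^*}(h)^{1/2}]a_n[\widetilde{\varphi}_{\psi^*}(h)^{1/2}f]\to [e\widetilde{\varphi}_{\psi^*}(h)^{1/2}]a[\widetilde{\varphi}_{\psi^*}(h)^{1/2}f]$ in measure. Each term on the left lies in $L^1(\M)$; I would then invoke the fact that $L^1(\M)$ is a closed subspace of $\widetilde{\mathcal{A}}$ for the measure topology — or, more carefully, extract from the Cauchy-in-quasinorm hypothesis (which, by the homeomorphism, is Cauchy in measure) that these images form a Cauchy sequence in the $L^1$-norm, using $\|\,\cdot\,\|_1 = t\mu_t(\cdot)$ together with Theorem \ref{mu1}, so that they converge in $L^1$ to their measure-limit, which therefore lies in $L^1(\M)$. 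This gives $a\in L^\psi(\M)$ and completeness follows. The one technical point to watch is justifying that the left/right multipliers genuinely act continuously despite $h$ being non-measurable in general; this is precisely why the multipliers are taken to be $e\widetilde{\varphi}_{\psi^*}(h)^{1/2}$ and $\widetilde{\varphi}_{\psi^*}(h)^{1/2}f$ with $e,f\in\mathfrak{n}_\nu$, which Proposition \ref{prop:meas} renders (pre)measurable, and one then appeals to the joint continuity of multiplication on $\widetilde{\mathcal{A}}$ by a fixed measurable operator.
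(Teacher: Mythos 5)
Your proposal is correct and follows the same skeleton as the paper: quasi-triangle inequality and the passage between $\mu_1$ and $\mu_t$ via Theorem \ref{mu1}, the two topological inclusions exactly as in the paper (small $\mu_1$ forces membership of $\mathcal{N}(\epsilon,\epsilon)$ via $\mu_\epsilon(a)\le\mu_1(a)/\epsilon$ and $\lambda_{\mu_\epsilon(a)}(a)\le\epsilon$; conversely $\lambda_\epsilon(a)\le 1$ gives $\mu_1(a)\le\epsilon$ by definition), and completeness reduced to closedness of $L^\psi(\M)$ in the complete space $\widetilde{\mathcal{A}}$. You do deviate at two local points, both legitimately. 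For definiteness the paper first shows $[e\widetilde{\varphi}_{\psi^*}(h)^{1/2}]a[\widetilde{\varphi}_{\psi^*}(h)^{1/2}f]=0$ for all finite-trace projections (via $\|\cdot\|_1=3\mu_3(\cdot)\le 3\mu_1\mu_1\mu_1$) and then invokes Lemma \ref{arblemma}; your route — $\mu_1(a)=0$ plus Theorem \ref{mu1} gives $\mu_t(a)=0$ for $0<t\le 1$, hence $\lambda_s(a)=0$ for all $s>0$ and $a=0$ by faithfulness — is more elementary and avoids that lemma entirely. (One slip: $t\mapsto\mu_t(a)$ is non-increasing, not nondecreasing — the preliminaries of the paper contain the same typo — but this does not matter, since vanishing of $\mu_t(a)$ for $t\le 1$ already forces $a=0$.) For closedness the paper writes $L^\psi(\M)$ as the intersection of kernels of the measure-continuous maps $a\mapsto\theta_s([e\widetilde{\varphi}_{\psi^*}(h)^{1/2}]a[\widetilde{\varphi}_{\psi^*}(h)^{1/2}f])-e^{-s}[e\widetilde{\varphi}_{\psi^*}(h)^{1/2}]a[\widetilde{\varphi}_{\psi^*}(h)^{1/2}f]$, whereas you use continuity of multiplication by the fixed measurable operators from Proposition \ref{prop:meas} together with closedness of $L^1(\M)$ in $\widetilde{\mathcal{A}}$ (or the explicit $L^1$-Cauchy estimate $\|\cdot\|_1=3\mu_3\le 3\mu_1\mu_1\mu_1$, which is the very inequality the paper uses in its definiteness step); since closedness of $L^1(\M)$ is itself the $\theta_s$-eigenspace statement, the two formulations are equivalent in substance, yours being slightly more modular and the paper's more self-contained.
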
 

\begin{proof}
Once again the analogy of the arguments used in the two cases, makes it sufficient to consider only the spaces $L^\psi(\M)$.

First suppose that we are given $a \in L^\psi(\M)$ with $\mu_1(a)=0$. Given any two projections $e,f\in \pi(\mathfrak{n}_\nu)$, we have that $b = e\widetilde{\varphi}_{\psi^*}(h)^{1/2}a\widetilde{\varphi}_{\psi^*}(h)^{1/2}f \in L^1(\M)$. Consequently  
\begin{eqnarray*}
\|[e\widetilde{\varphi}_{\psi^*}(h)^{1/2}]a[\widetilde{\varphi}_{\psi^*}(h)^{1/2}f]\|_1 
&=& 3\mu_3([e\widetilde{\varphi}_{\psi^*}(h)^{1/2}]a[\widetilde{\varphi}_{\psi^*}(h)^{1/2}f]) \\ 
& \leq & 3\mu_1(e\widetilde{\varphi}_{\psi^*}(h)^{1/2})\mu_1(a)\mu_1(\widetilde{\varphi}_{\psi^*}(h)^{1/2}f) \\ 
&=& 0.
\end{eqnarray*}
So we clearly have that $[e\widetilde{\varphi}_{\psi^*}(h)^{1/2}]a[\widetilde{\varphi}_{\psi^*}(h)^{1/2}f]=0$ for any two projections $e,f\in \pi(\mathfrak{n}_\nu)$. Hence $a=0$ by Lemma\ref{arblemma}.

Trivially $\mu_1(\lambda a) = |\lambda|\mu_1(a)$ for any scalar $\lambda$ and any $a \in L^\psi(\M)$. It remains to show that $\mu_1$ satisfies a generalised triangle inequality. Given $a,b \in L^\psi(\M)$, we may conclude from the properties of $\mu_t$  and the preceding theorem that $$\mu_1(a+b) \leq 2\left(\frac{1}{2}\mu_{1/2}(a)+\frac{1}{2}\mu_{1/2}(b)\right) \leq 2(\mu_1(a)+\mu_1(b)).$$

We now show that $L^\psi(\M)$ is a closed subspace of $\widetilde{\mathcal{A}}$. Once we have established that the topology on $L^\psi(\M)$ induced by $\mu_1$ agrees with the topology of convergence in measure, this closedness will suffice to prove the completeness of $L^\psi(\M)$. Since for any $b\in\widetilde{\mathcal{A}}$ we have that $b\in L^1(\M)$ if and only if $\theta_s(b) = e^{-s}b$, membership of an element $a$ of $\widetilde{\mathcal{A}}$ to $L^\psi(\M)$ can be rephrased as the claim that $$\theta_s(e\widetilde{\varphi}_{\psi^*}(h)^{1/2}a\widetilde{\varphi}_{\psi^*}(h)^{1/2}f) = e^{-s}e\widetilde{\varphi}_{\psi^*}(h)^{1/2}a\widetilde{\varphi}_{\psi^*}(h)^{1/2}f$$for all $s \in \mathbb{R}$ and all $e,f \in \mathbb{P}(\mathfrak{n}_\nu)$. Equivalently this boils down to saying that $a$ is in the intersection of the kernels of the operators $$a \to \theta_s(e\widetilde{\varphi}_{\psi^*}(h)^{1/2}a\widetilde{\varphi}_{\psi^*}(h)^{1/2}f) - e^{-s}e\widetilde{\varphi}_{\psi^*}(h)^{1/2}a\widetilde{\varphi}_{\psi^*}(h)^{1/2}f \qquad s\in \mathbb{R},\quad e,f \in \mathbb{P}(\mathfrak{n}_\nu)$$all of which are continuous in the topology of convergence in measure. The kernels therefore being closed, this suffices to prove the claim regarding the closedness of $L^\psi(\M)$.

It remains to prove that the topology induced on $L^\psi(\M)$ by $\mu_1$ is precisely the topology of convergence in measure. The fact that convergence in measure implies convergence in the quasi-norm $\mu_1$, follows from the description of convergence in measure in terms of $\mu_t$. For the converse fix $0 < \epsilon \leq 1$, and suppose we are given some $a\in L^\psi(\M)$ with $\mu_1(a)<\epsilon^2$. By the preceding theorem, we have that $\mu_\epsilon(a) \leq \frac{1}{\epsilon}\mu_1(a) <\epsilon$. Since the map $s\to \lambda_s(a)$ is non-increasing, it follows that $\lambda_\epsilon(a) \leq \lambda_{\mu_\epsilon}(a)\leq \epsilon$. But by \cite[I.7]{Tp}, this suffices to ensure that $a$ belongs to the basic neighbourhood of zero $\mathcal{N}(\epsilon,\epsilon)$. Thus any sequence in $L^\psi(\M)$ that converges to zero in the quasinorm $\mu_1$, converges to zero in measure.
\end{proof}

\begin{remark}\label{lp}
The $L^p$-spaces with $p\geq 1$ are known to be Orlicz spaces induced by the Orlicz functions $\psi_p(t)=t^p$ in the case $\infty>p\geq 1$, and by the function $\psi_\infty$ defined to be $0$ on $[0,1]$, and infinite valued on $(1,\infty)$ in the case $p=\infty$. These in turn yield fundamental functions of the form $\varphi_p(t)=t^{1/p}$ on $(0,\infty)$ for ($\infty\geq p\geq 1$). Now let $q$ be the conjugate index of $p$. For any $a\in\widetilde{\mathcal{A}}$ and any two projections $e,f \in \mathfrak{n}_\nu$ we  have that 
\begin{eqnarray*}\theta_s ([e{\varphi}_{q}(h)^{1/2}]a[{\varphi}_{q}(h)^{1/2}f])&=& 
[e(\theta_s (h))^{1/(2q)}]\theta_s(a)[(\theta_s (h))^{1/(2q)}f]\\
&=& [e(e^{-s}h))^{1/(2q)}]\theta_s(a)[(e^{-s}h)^{1/(2q)}f]\\
&=& e{-s/q}[eh^{1/(2q)}]\theta_s(a)[h^{1/(2q)}f]\\
&=& e{-s/q}[e{\varphi}_{q}(h)^{1/2}]\theta_s(a)[{\varphi}_{q}(h)^{1/2}f]
\end{eqnarray*} 
for all $s\in\mathbb{R}$. The K\"othe dual of $L^p(\mathbb{R})$ is of course just $L^q(\mathbb{R})$ where $q$ is the conjugate index of $p$. Thus the prescription for membership of $L^p(\M)$ given by Definition \ref{deforldef} is that $[e{\varphi}_{q}(h)^{1/2}]a[{\varphi}_{q}(h)^{1/2}f]$ should belong to $L^1(\M)$ for any two projections $e,f \in \mathfrak{n}_\nu$. This in turn can be rephrased as the statement that $$\theta_s([e{\varphi}_{q}(h)^{1/2}]a[{\varphi}_{q}(h)^{1/2}f])=e^{-s}[e{\varphi}_{q}(h)^{1/2}]a[{\varphi}_{q}(h)^{1/2}f]$$for all $s\in \mathbb{R}$. On combining the previous two centred formulae, it is clear that $a\in\widetilde{\mathcal{A}}$ fulfils the prescription for membership of $L^p(\M)$ as given in Definition \ref{deforldef}, if and only if 
\begin{eqnarray*}[e{\varphi}_{q}(h)^{1/2}]\theta_s(a)[{\varphi}_{q}(h)^{1/2}f]&=& e^{-s+s/q}[e{\varphi}_{q}(h)^{1/2}]a[{\varphi}_{q}(h)^{1/2}f]\\
&=& e^{-s/p}[e{\varphi}_{q}(h)^{1/2}]a[{\varphi}_{q}(h)^{1/2}f]
\end{eqnarray*}
for all $s\in\mathbb{R}$ and all projections $e,f \in \mathfrak{n}_\nu$. By Lemma \ref{arblemma}, this is equivalent to the claim that $\theta_s(a)=e^{-s/p}a$ for all $s\in \mathbb{R}$, which is the standard prescription for identifying the elements of $L^p(\M)$. Thus Definition \ref{deforldef} in a very natural way dovetails with the well-known way of defining Haagerup $L^p$-spaces. In the case $\infty>p\geq 1$ it is also a well-known fact that the elements of these $L^p$-spaces satisfy the property that $\|a\|_p=t^{1/p}\mu_t(a)$ for any $t>0$. On combining this with Theorem \ref{thm:uniftop}, it is then clear that for these spaces the quantity $\mu_1(a)=\sup_{0<t\leq 1}t\mu_t(a)$ is just the usual $L^p$-norm. The theory on Orlicz spaces presented here, may therefore be seen as a natural extension of the theory of Haagerup $L^p$-spaces.
\end{remark}

We next show that for a very large class of Orlicz functions, the topology on the constructed spaces is actually normable. (As noted earlier, the reader should refer to the book of Bennett and Sharpley \cite{BS} for an exposition on the Boyd indices.) 

\begin{theorem}\label{ntop}
Let $\psi$ be an Orlicz function. The topology of convergence in measure on $L^\psi(\M)$ (respectively $L_\psi(\M)$) is normable whenever the upper Boyd index of $L^\psi(\mathbb{R})$ (respectively $L_\psi(\mathbb{R})$) is strictly less than 1.
\end{theorem}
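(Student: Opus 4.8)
The plan is to manufacture, under the stated Boyd hypothesis, a genuine norm on $L^\psi(\M)$ that is equivalent to the quasinorm $\mu_1(\cdot)$; since by Proposition~\ref{qntop} the $\mu_1$-topology on $L^\psi(\M)$ is precisely the relative topology of convergence in measure, this yields normability. The candidate is $|||a||| := \int_0^1 \mu_t(a)\,dt$. Two of the three norm axioms come for free: $|||\cdot|||$ is positively homogeneous because $\mu_t(\lambda a)=|\lambda|\mu_t(a)$, and it is subadditive because of the classical submajorisation inequality $\int_0^1\mu_t(a+b)\,dt\le\int_0^1\mu_t(a)\,dt+\int_0^1\mu_t(b)\,dt$ for singular values (see \cite{FK}). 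Moreover, since $t\mapsto\mu_t(a)$ is nonincreasing, $|||a|||\ge\int_0^1\mu_1(a)\,dt=\mu_1(a)$, so $|||\cdot|||$ dominates $\mu_1$ and in particular separates points. The entire content of the theorem is therefore the reverse estimate: that the Boyd condition forces $|||a|||\le K\mu_1(a)$ for all $a\in L^\psi(\M)$, with $K$ depending only on $\psi$.

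First I would extract a pointwise consequence of the hypothesis. For any rearrangement invariant Banach Function space $L^\rho(\mathbb R)$, applying the dilation $f(\cdot)\mapsto f(\cdot/s)$ (whose norm defines the Boyd function of the space) to a characteristic function $\chi_{(0,t)}$ shows that $\varphi_\rho(st)/\varphi_\rho(t)$ is bounded by that dilation norm for $s\ge1$; letting $s\to\infty$ this gives that the upper dilation index of $\varphi_\rho$ is at most the upper Boyd index of $L^\rho(\mathbb R)$ (see \cite{BS}). Applied to $\rho=\|\cdot\|_\psi$, the hypothesis therefore says that the upper dilation index of $\varphi_\psi$ is $<1$; since the dilation function $M_{\varphi_\psi}(s):=\sup_{x>0}\varphi_\psi(sx)/\varphi_\psi(x)$ is submultiplicative, this forces the existence of constants $C\ge1$ and $\delta\in(0,1]$ with $M_{\varphi_\psi}(s)\le Cs^{\,1-\delta}$ for all $s\ge1$.

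Next I would revisit the proof of Theorem~\ref{mu1}. There it is shown that for $a\in L^\psi(\M)$ and $t=e^s\in(0,1]$ one has $t\mu_t(a)=\mu_1(d_t^{1/2}ad_t^{1/2})$, where $d_t=\widetilde\varphi_{\psi^*}(\tfrac{1}{t}h)^{-1}\widetilde\varphi_{\psi^*}(h)$ is a positive affiliated operator; the inequality $t\mu_t(a)\le\mu_1(a)$ was deduced there from the bound $|d_t|\le\I$, but the same two-sided application of the fact that $\mu_1(bc)\le\mu_1(c)$ whenever $|b|\le\I$, applied to $d_t/\|d_t\|_\infty$, gives the sharper $t\mu_t(a)\le\|d_t\|_\infty\,\mu_1(a)$. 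Since $\widetilde\varphi_{\psi^*}(x)=x/\varphi_\psi(x)$, the Borel functional calculus evaluates $\|d_t\|_\infty=\sup_{\lambda\in\sigma(h)} t\,\varphi_\psi(\lambda/t)/\varphi_\psi(\lambda)\le t\,M_{\varphi_\psi}(1/t)\le C t^{\,\delta}$, using the estimate of the previous paragraph with $s=1/t\ge1$. Hence $\mu_t(a)\le Ct^{\,\delta-1}\mu_1(a)$ for all $t\in(0,1]$, and integrating gives $|||a|||\le C\mu_1(a)\int_0^1 t^{\delta-1}\,dt=(C/\delta)\,\mu_1(a)$. So $|||\cdot|||$ is finite on $L^\psi(\M)$ and, by the first paragraph, is a norm equivalent to $\mu_1$; it therefore induces the topology of convergence in measure, which is thus normable. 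The case of $L_\psi(\M)$ is identical after replacing $\widetilde\varphi_{\psi^*}$ by $\varphi_{\psi^*}$ throughout: by the first remark of Section~2 these are equivalent functions, so $\varphi_{\psi^*}\approx x/\varphi_\psi$ has lower dilation index bounded below by $1-(\text{upper Boyd index of }L^\psi(\mathbb R))>0$, which is precisely what the analogue of the above estimate requires, and $L_\psi(\mathbb R)$ has the same Boyd indices as $L^\psi(\mathbb R)$ since it carries an equivalent norm.

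The main obstacle is the middle step: upgrading the asymptotic Boyd-index hypothesis to the uniform pointwise bound $\|d_t\|_\infty\le Ct^{\,\delta}$ valid on all of $(0,1]$, and recognising that this is exactly the control on the behaviour of $\mu_t(a)$ as $t\to0^+$ that is needed for $\int_0^1\mu_t(a)\,dt$ to be finite and comparable to $\mu_1(a)$. The hypothesis fails precisely for spaces like $L^1(\M)$, where every nonzero $a$ satisfies $\mu_t(a)=\mu_1(a)/t$ and the integral diverges; this is why the argument cannot reach all Orlicz spaces, and why a genuinely different idea would be required to settle normability of $L^\psi(\M)$ in general.
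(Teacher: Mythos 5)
Your proposal is correct and follows essentially the same route as the paper: the norm is the same quantity $\int_0^1\mu_t(a)\,dt$, the identity $t\mu_t(a)=\mu_1(d_t^{1/2}ad_t^{1/2})$ from the proof of Theorem \ref{mu1} is exploited in the same way, and the Boyd hypothesis serves exactly to make $t\mapsto\mu_t(a)$ integrable near $0$ with integral comparable to $\mu_1(a)$. The only (harmless) deviations are that you bound $\|d_t\|_\infty$ via the submultiplicative dilation function of $\varphi_\psi$ rather than, as the paper does, via the lower Boyd index of the K\"othe dual $L^{\psi^*}(\mathbb{R})$ and the estimates (8.36)--(8.40) of Bennett--Sharpley, and that you treat $L^\psi(\M)$ directly instead of first transferring to $L_\psi(\M)$.
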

 
\begin{proof}
We have already noted that $\varphi_\psi(t) \leq \widetilde{\varphi}_\psi(t) \leq 2\varphi_\psi(t)$ for all $t \geq 0$. Hence both $\beta(h)= \varphi_\psi(h)(\widetilde{\varphi}_\psi(h))^{-1}$ and its inverse $\widetilde{\varphi}_\psi(h)(\varphi_\psi(h))^{-1}$ are bounded. It is now an exercise to see that $a\to \beta(h)^{1/2}a\beta(h)^{1/2}$ linearly and homeomorphically maps $L^\psi(\M)$ onto $L_\psi(\M)$. Hence it suffices to prove the claim for the space $L_\psi(\M)$ only. Therefore suppose that the upper Boyd index of $L_\psi(\mathbb{R})$ (denoted by $\overline{\alpha}(L_\psi(\mathbb{R})$) is less than 1. Given that $L^{\psi^*}(\mathbb{R})$ is the K\"othe dual of $L_\psi(\mathbb{R})$, this is equivalent to requiring the lower Boyd index of this dual to be strictly positive, that is $0 < \underline{\alpha}(L^{\psi^*}(\mathbb{R}))$. We will henceforth simply write $\underline{\alpha}$ for $\underline{\alpha}(L^{\psi^*}(\mathbb{R}))$. Now consider the function $$k_{\psi^*}(t) = \sup_{0<s}\frac{\varphi_{\psi^*}(st)}{\varphi_{\psi^*}(s)} = \sup_{0<s}\frac{\varphi_{\psi^*}(s)}{\varphi_{\psi^*}(s/t)}.$$By Lemma 4.8.17 of \cite{BS}, this function is precisely the function defined in equation (8.36) on page 277 of \cite{BS}. It is an exercise to see that this function is non-decreasing and that $k_{\psi^*}(1)=1$. By equation (8.40) on page 279 of \cite{BS} there must exist some $0<\delta<1$ so that $$\left(\frac{t}{2}\right)^{\underline{\alpha}} \leq k_{\psi^*}(t) \leq t^{\underline{\alpha}/2} \quad\mbox{for all}\quad 0<t\leq\delta.$$This estimate together with the fact that  $k_{\psi^*}(t)$ is bounded on $[0,1]$, ensures that $\frac{k_{\psi^*}(t)}{t}$ is integrable on $[0,1]$.

Next observe that for a fixed $t>0$ and any $\lambda>0$, we have that $\frac{\varphi_{\psi^*}(\lambda)}{\varphi_{\psi^*}(\lambda/t)}\leq k_{\psi^*}(t)$. Hence by the Borel functional calculus we have that 
$$d_t =\varphi_{\psi^*}(h)(\varphi_{\psi^*}(\tfrac{1}{t}h))^{-1} \leq k_{\psi^*}(t)\I\quad\mbox{for all}\quad 
0 < t \leq 1.$$In the proof of Theorem 
\ref{mu1} we saw that $t\mu_t(a) = \mu_1(d_t^{1/2}ad_t^{1/2})$ for any $a\in L_{\psi}(\M)$. On applying inequality 
\ref{eq:muineq} to the above facts, it therefore follows that $t\mu_t(a) \leq k_{\psi^*}(t)\mu_1(a)$ for all $0<t\leq 1$. On combining this estimate with the fact that $t\to\mu_t(a)$ is non-increasing, it therefore follows that 
\begin{eqnarray*}
\mu_1(a) &\leq&\int_0^1\mu_t(a)\, dt\\
&\leq& \left(\int_0^1\frac{k_{\psi^*}(t)}{t} \, dt\right) \cdot \sup_{0<s\leq 1}\frac{s}{k_{\psi^*}(s)}\mu_s(a)\\
&=& \left(\int_0^1\frac{k_{\psi^*}(t)}{t} \, dt\right)\mu_1(a).
\end{eqnarray*}
It is known that the quantity $\int_0^1\mu_t(a)\, dt$ is subadditive on $\widetilde{\mathcal{A}}$ \cite{FK}. By the above estimates, this quantity will then provide the required norm on $L_\psi(\M)$.
\end{proof}

\begin{remark}
By Theorem \ref{mu1} we have that $\sup\{t\mu_t(a)| 0<t\leq 1\}=\mu_1(a)$ for any $a$ in either $L^\psi(\M)$ or $L_\psi(\M)$. In fact classically the quantity $\rho(a) = \sup\{t\mu_t(a)| 0<t\leq 1\}$ turns out be a Banach Function quasi-norm on $L^0_+(0,\infty)$ which satisfies the Fatou property. (Although classically it is more common to denote the decreasing rearrangement of an element $f$ by $f^*$, for the sake of uniformity we retain the notation $\mu_t(f)$ for that context as well.) If indeed $\rho(a) = 0$ for some $a \in L^0_+(0,\infty)$, then surely $\mu_t(a) = 0$ for every $0\leq t \leq 1$. Given that $t\to\mu_t(a)$ is non-increasing, we in fact have that $\mu_t(a) =0$ for all $t$, which ensures that $a=0$. To see that the quantity satisfies a generalised triangle inequality, observe that for any $a, b \in L^0_+(0,\infty)$ and any $0<t\leq 1$, we have that
$$t\mu_t(a+b) \leq 2\left(\frac{t}{2}\mu_{t/2}(a)+\frac{t}{2}\mu_{t/2}(b)\right) \leq 2(\rho(a)+\rho(b)).$$Taking the supremum over all such $t$'s yields the fact that $\rho(a+b)\leq2(\rho(a)+\rho(b))$. From the work of Xu \cite{X} we know that the theory of noncommutative rearrangement-invariant 
Banach Function Spaces as developed by Dodds, Dodds, de Pagter et al, extends canonically to include the category of rearrangement-invariant 
quasi-Banach Function Spaces. Thus we may construct the space $L^\rho(\widetilde{\mathcal{A}})$ in the spirit of that theory. What the preceding proposition then tells us, is that all of the Orlicz spaces associated with $\M$ live inside this space, and that their quasi-norms appear as the restriction of the quasi-norm of the superspace $L^\rho(\widetilde{\mathcal{A}})$. In fact using the fact that the fundamental function of the classical space $(L^1+L^\infty)(0,\infty)$ is just $\varphi_{1+\infty}(t)= \min(1,t)$, it is an exercise to show that the space $L^\rho(0,\infty)$ is nothing but the Lorentz-like space $\Lambda_\infty((L^1+L^\infty)(0,\infty))$ of Robert Sharpley (see exercise 21 of chapter 4 of \cite{BS}). Thus in the notation of \cite{DDdP}, we have that $L^\rho(\widetilde{\mathcal{A}}) = \Lambda_\infty(L^1+L^\infty)(\widetilde{\mathcal{A}})$.
\end{remark}

Our next task is to address the issue of K\"othe duality for these Orlicz spaces. To give structure to this theory, we need the following concepts:

\begin{definition}\label{315}
Given an Orlicz function $\psi$, define spaces $S^\psi$ and $S_\psi$ by $$S^\psi = \{a\in\widetilde{\mathcal{A}} : \theta_s(a\widetilde{\varphi}_{\psi^*}(h)^{1/2}\chi_{[0,\delta]}(h)) = e^{-s/2}a\widetilde{\varphi}_{\psi^*}(h)^{1/2}\chi_{[0,e^s\delta]}(h) \mbox{ for all } 0<\delta\}$$and 
$$S_\psi = \{a\in\widetilde{\mathcal{A}} : \theta_s(a{\varphi}_{\psi^*}(h)^{1/2}\chi_{[0,\delta]}(h))  = e^{-s/2}a{\varphi}_{\psi^*}(h)^{1/2}\chi_{[0,e^s\delta]}(h) \mbox{ for all } 0<\delta \}.$$
\end{definition}

The author believes the following relationship between the spaces $L^\psi(\M)$, $L_\psi(\M)$ and $S^\psi$, $S_\psi$ to be valid, but is at present unable to deal with the technical difficulties inherent in such a proof.

\begin{conjecture}\label{genfac}
For any Orlicz function $\psi$ we have that 
$$L^\psi(\M) =\overline{\mathrm{span}}\{a^*b : a, b \in S^\psi\} \qquad\mbox{and}\qquad L_\psi(\M) =\overline{\mathrm{span}}\{a^*b : a, b \in S_\psi\}.$$ 
\end{conjecture}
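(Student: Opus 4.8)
The plan is to prove the two displayed identities separately; the second reduces to the first. Put $\gamma(h)=\varphi_{\psi^*}(h)\widetilde{\varphi}_{\psi^*}(h)^{-1}$. The estimate $\varphi_{\psi^*}\le\widetilde{\varphi}_{\psi^*}\le 2\varphi_{\psi^*}$ (which holds for $\psi^*$ just as for $\psi$) shows $\tfrac12\I\le\gamma(h)\le\I$, so $\gamma(h)^{1/2}$ and $\gamma(h)^{-1/2}$ both lie in $\mathcal{A}$. Exactly as in the proof of Theorem~\ref{ntop}, one checks that $x\mapsto\gamma(h)^{-1/2}x\gamma(h)^{-1/2}$ is a linear homeomorphism of $L^\psi(\M)$ onto $L_\psi(\M)$ for the measure topology, and that $a\mapsto a\gamma(h)^{-1/2}$ carries $S^\psi$ bijectively onto $S_\psi$ (because $\gamma(h)^{-1/2}\varphi_{\psi^*}(h)^{1/2}=\widetilde{\varphi}_{\psi^*}(h)^{1/2}$ as bounded functions of $h$). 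Since $(a\gamma(h)^{-1/2})^*(b\gamma(h)^{-1/2})=\gamma(h)^{-1/2}(a^*b)\gamma(h)^{-1/2}$, this homeomorphism matches $\overline{\mathrm{span}}\{a^*b:a,b\in S^\psi\}$ with $\overline{\mathrm{span}}\{a^*b:a,b\in S_\psi\}$, so the second identity follows from the first. From now on only $L^\psi(\M)$ is in view.

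For the inclusion $\overline{\mathrm{span}}\{a^*b:a,b\in S^\psi\}\subseteq L^\psi(\M)$ it suffices, $L^\psi(\M)$ being closed in $\widetilde{\mathcal{A}}$ by Proposition~\ref{qntop}, to show $a^*b\in L^\psi(\M)$ for $a,b\in S^\psi$. The key step is the claim that for $a\in S^\psi$ and $b_0\in\mathfrak{n}_\nu$ the element $a[\widetilde{\varphi}_{\psi^*}(h)^{1/2}b_0^*]$ lies in $L^2(\M)$. Indeed, the net $\widetilde{\varphi}_{\psi^*}(h)^{1/2}\chi_{[0,\delta]}(h)b_0^*$ converges in measure as $\delta\to\infty$ to the premeasurable operator $[\widetilde{\varphi}_{\psi^*}(h)^{1/2}b_0^*]$ occurring in Proposition~\ref{orldef}; since left multiplication by $a$ and the automorphism $\theta_s$ are both measure-continuous and $\theta_s$ fixes $\pi(\M)$, the defining identity of $S^\psi$ yields
$$\theta_s(a[\widetilde{\varphi}_{\psi^*}(h)^{1/2}b_0^*])=\lim_{\delta\to\infty}e^{-s/2}\,a\,\widetilde{\varphi}_{\psi^*}(h)^{1/2}\chi_{[0,e^s\delta]}(h)\,b_0^*=e^{-s/2}\,a[\widetilde{\varphi}_{\psi^*}(h)^{1/2}b_0^*],$$
which (together with $a[\widetilde{\varphi}_{\psi^*}(h)^{1/2}b_0^*]\in\widetilde{\mathcal{A}}$, being a measure-limit of elements of $\widetilde{\mathcal{A}}$) is membership in $L^2(\M)$. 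Given this, for $b_0,b_1\in\mathfrak{n}_\nu$ the identity $[b_0\widetilde{\varphi}_{\psi^*}(h)^{1/2}](a^*b)[\widetilde{\varphi}_{\psi^*}(h)^{1/2}b_1^*]=(a[\widetilde{\varphi}_{\psi^*}(h)^{1/2}b_0^*])^*(b[\widetilde{\varphi}_{\psi^*}(h)^{1/2}b_1^*])$ displays the left side as an element of $L^2(\M)\cdot L^2(\M)\subseteq L^1(\M)$, so $a^*b\in L^\psi(\M)$ by Proposition~\ref{orldef}.

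For the reverse inclusion, set $D=\mathrm{span}\{\varphi_\psi(h)^{1/2}m\varphi_\psi(h)^{1/2}:m\in\mathfrak{m}_\psi\}\subseteq L^\psi(\M)$, the containment being the content of the Remark after Definition~\ref{deforldef}. One first notes $D\subseteq\mathrm{span}\{a^*b:a,b\in S^\psi\}$: writing $m=\sum_i c_i^*d_i$ with $c_i,d_i\in\mathfrak{n}_\psi$, the strong-product conventions of Section~1 give $\varphi_\psi(h)^{1/2}m\varphi_\psi(h)^{1/2}=\sum_i[c_i\varphi_\psi(h)^{1/2}]^*[d_i\varphi_\psi(h)^{1/2}]$, and each $[d\varphi_\psi(h)^{1/2}]$ with $d\in\mathfrak{n}_\psi$ belongs to $S^\psi$: using $\varphi_\psi(t)^{1/2}\widetilde{\varphi}_{\psi^*}(t)^{1/2}=t^{1/2}$ one has $[d\varphi_\psi(h)^{1/2}]\,\widetilde{\varphi}_{\psi^*}(h)^{1/2}\chi_{[0,\delta]}(h)=d\,h^{1/2}\chi_{[0,\delta]}(h)$, whence $\theta_s(h)=e^{-s}h$ and $\theta_s|_{\pi(\M)}=\mathrm{id}$ give precisely the covariance of Definition~\ref{315}. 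Hence $\overline{D}\subseteq\overline{\mathrm{span}}\{a^*b:a,b\in S^\psi\}\subseteq L^\psi(\M)$, and the theorem will follow once $D$ is shown to be dense in $L^\psi(\M)$ for the measure topology, equivalently (Proposition~\ref{qntop}) for the quasinorm $\mu_1$.

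The density of $D$ is where the genuine difficulty lies, and it is exactly the technical point the author flags. The model to imitate is the semifinite case, where every $c\in L^\psi(\M,\tau_\M)$ is the norm limit of $w|c|\chi_{[\varepsilon,1/\varepsilon]}(|c|)$ and the truncated square roots $|c|^{1/2}\chi_{[\varepsilon^{1/2},\varepsilon^{-1/2}]}(|c|^{1/2})$ are bounded elements of the $2$-convexification of $L^\psi$, so that $w|c|\chi_{[\varepsilon,1/\varepsilon]}(|c|)$ already lies in the analogue of $D$; this is also the mechanism behind the density of $\{[h^{1/2}ah^{1/2}]:a\in\mathfrak{m}_\nu^+\}$ in $L^1(\M)^+$ used in the proof of Lemma~\ref{tech2}. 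In the type~III setting one would take $x\in L^\psi(\M)$, pass to its polar decomposition $x=w|x|$ in $\widetilde{\mathcal{A}}$, reduce by linearity to the positive part, and try to approximate $|x|$ by terms $\varphi_\psi(h)^{1/2}(d^*d)\varphi_\psi(h)^{1/2}$ with $d\in\mathfrak{n}_\psi$ — equivalently, to approximate a square root of $|x|$ within $\{[d\varphi_\psi(h)^{1/2}]:d\in\mathfrak{n}_\psi\}$. The obstruction is that, in contrast to the semifinite case, there is no simple tensor description of the elements of $L^\psi(\M)$, $\varphi_\psi(h)$ is not $\tau_{\mathcal{A}}$-measurable, and it is not transparent that $|x|^{1/2}$ (or a suitable truncation of it) even lies in $S^\psi$, still less that it is approximable there in a way making the products converge in the measure topology of $L^\psi(\M)$. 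A natural way to organise the proof would be first to equip $S^\psi$ with a complete quasinorm — the $e^{-s/2}$ scaling in Definition~\ref{315} points to an $L^2$-type gauge — for which $\{[d\varphi_\psi(h)^{1/2}]:d\in\mathfrak{n}_\psi\}$ is dense and $(a,b)\mapsto a^*b$ is a bounded bilinear map into $(L^\psi(\M),\mu_1)$, and then to run the spectral-truncation scheme above inside $S^\psi$; the approximate unit $\{f_\alpha\}$ of Lemma~\ref{tech2}, the strong-continuity Lemma~\ref{tech1}, and the support argument of Lemma~\ref{arblemma} are the tools I would expect to need. Until such a framework for $S^\psi$ is available, the density of $D$, and hence the conjecture, seems to remain out of reach.
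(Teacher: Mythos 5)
There is a genuine gap here, but it is the gap you yourself flag, and it coincides exactly with what the paper leaves open: the statement you were asked to prove is Conjecture~\ref{genfac}, which the author explicitly states he believes but is ``at present unable to deal with the technical difficulties inherent in such a proof.'' The paper therefore contains no proof to compare against, and your proposal does not close the conjecture either: the reverse inclusion $L^\psi(\M)\subseteq\overline{\mathrm{span}}\{a^*b : a,b\in S^\psi\}$ --- equivalently the density of your space $D=\mathrm{span}\{\varphi_\psi(h)^{1/2}m\varphi_\psi(h)^{1/2}: m\in\mathfrak{m}_\psi\}$ in $(L^\psi(\M),\mu_1(\cdot))$ --- is asserted as the goal of a sketched truncation/polar-decomposition scheme but never carried out, and you concede it ``remains out of reach.'' That is precisely the open content of the conjecture (in the semifinite and $L^p$ cases it reduces to known factorisation, as the paper notes after the conjecture), so the proposal should be read as a partial reduction, not a proof.

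The partial steps you do give are essentially sound and, notably, overlap with machinery already in the paper. The reduction of the $L_\psi$ identity to the $L^\psi$ identity via conjugation by $\gamma(h)^{\pm 1/2}$ is the same device as the map $a\mapsto\beta(h)^{1/2}a\beta(h)^{1/2}$ in Theorem~\ref{ntop} (indeed $\varphi_{\psi^*}/\widetilde{\varphi}_{\psi^*}=\varphi_\psi/\widetilde{\varphi}_\psi$, so your $\gamma$ is the paper's $\beta$), and the intertwining of $S^\psi$ with $S_\psi$ is immediate from $\gamma(h)^{-1/2}\varphi_{\psi^*}(h)^{1/2}=\widetilde{\varphi}_{\psi^*}(h)^{1/2}$. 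Your key $L^2$-membership claim, that $a\in S^\psi$ forces $a[\widetilde{\varphi}_{\psi^*}(h)^{1/2}b_0^*]\in L^2(\M)$, is exactly implication $(1)\Rightarrow(3)$ of Theorem~\ref{K3gen} (extended from projections to general elements of $\mathfrak{n}_\nu$ as in Proposition~\ref{orldef}), so the inclusion $\overline{\mathrm{span}}\{a^*b\}\subseteq L^\psi(\M)$ follows more cleanly by citing that theorem together with the closedness of $L^\psi(\M)$ from Proposition~\ref{qntop}. Your alternative derivation of it has an unjustified step: you assert that $\widetilde{\varphi}_{\psi^*}(h)^{1/2}\chi_{[0,\delta]}(h)b_0^*$ converges \emph{in measure} to $[\widetilde{\varphi}_{\psi^*}(h)^{1/2}b_0^*]$ as $\delta\to\infty$; strong convergence $\chi_{[0,\delta]}(h)\to\I$ does not by itself give convergence in measure after composition with unbounded (pre)measurable operators, so this needs an explicit tail estimate (e.g.\ dominating the tail by $(1+\delta^{-1})[b_0h^{1/2}]\chi_{(\delta,\infty)}(h)[h^{1/2}b_0^*]$ and controlling its distribution) or should simply be replaced by the citation. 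Likewise the observation that $[d\varphi_\psi(h)^{1/2}]\in S^\psi$ for $d\in\mathfrak{n}_\psi$ is the computation already used in the proof of Theorem~\ref{K3gen}$'$. None of this, however, touches the real obstruction: no argument is given (here or in the paper) that a general element of $L^\psi(\M)$ can be approximated in the measure topology by such factored elements, and until that density is established the two displayed equalities remain a conjecture.
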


It is clear from the definition that $a\in S^\psi$ if and only if $|a|^2\in L^\psi(\M)$. (Simmilarly $a\in S_\psi$ if and only if $|a|^2\in L_\psi(\M)$.) Thus for the Orlicz functions $\psi_p$ ($\infty\geq p\geq 1$) discussed in Remark \ref{lp}, it is clear from that discussion that $a\in S^{\psi_p}$ if and only if $|a|^2\in L^p(\M)$. Thus in this case the above conjecture boils down to the well known fact that $L^p(\M)=L^{2p}(\M).L^{2p}(\M)$. If indeed this conjecture proves to be true for all Orlicz spaces, the following result then captures the version of K\"othe duality that seems to be valid in this context. The main difference with more familiar descriptions, is that here one uses the ``factorisability'' of the Orlicz spaces to define K\"othe duality in terms of $L^2$ rather than $L^1$. Once we have more technology at our disposal in the form of Theorem \ref{con2}, we will be able to show that all three conditions mentioned below are equivalent. We will do so at the end of subsection \ref{ss:a}. For the moment the implications shown below are sufficient to meet our immediate needs.

\begin{theorem}\label{K3gen} Consider the following statements:
\begin{enumerate}
\item $a\in S^{\psi}$.
\item $S_{\psi^*} a^* \subset L^2(\M)$.
\item $a\widetilde{\varphi}_{\psi^*}(h)^{1/2}e \in L^2(\M)$ for all projections $e\in \mathfrak{n}_\nu$.
\end{enumerate}
In general we have that $(1)\Rightarrow(2)\Rightarrow(3)$.
\end{theorem}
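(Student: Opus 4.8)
The plan is to prove the two implications $(1)\Rightarrow(2)$ and $(2)\Rightarrow(3)$ separately, using the characterisation that membership of $L^p(\M)$ is detected by the scaling behaviour of $\theta_s$, and that $L^1(\M) = L^2(\M)\cdot L^2(\M)$.

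For $(1)\Rightarrow(2)$: suppose $a\in S^\psi$ and let $b\in S_{\psi^*}$. By definition of $S^\psi$ we have $\theta_s(a\widetilde{\varphi}_{\psi^*}(h)^{1/2}\chi_{[0,\delta]}(h)) = e^{-s/2}a\widetilde{\varphi}_{\psi^*}(h)^{1/2}\chi_{[0,e^s\delta]}(h)$, and by definition of $S_{\psi^*}$ (with the roles of $\psi$ and $\psi^*$ swapped, so $\varphi_{(\psi^*)^*}=\varphi_\psi$) we have $\theta_s(b\varphi_{\psi}(h)^{1/2}\chi_{[0,\delta]}(h)) = e^{-s/2}b\varphi_{\psi}(h)^{1/2}\chi_{[0,e^s\delta]}(h)$. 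First I would take adjoints in the first relation and multiply by the second, noting that $\widetilde{\varphi}_{\psi^*}(h)^{1/2}\varphi_\psi(h)^{1/2}\chi_{[0,\delta]}(h) = h^{1/2}\chi_{[0,\delta]}(h)$ (using $\varphi_\psi(t)\widetilde{\varphi}_{\psi^*}(t)=t$), so that $\chi_{[0,\delta]}(h)\widetilde{\varphi}_{\psi^*}(h)^{1/2}a^*\cdot b\varphi_\psi(h)^{1/2}\chi_{[0,\delta]}(h)$ transforms under $\theta_s$ with the factor $e^{-s}$, hence lies in $L^1(\M)$ for every $\delta>0$; a limiting argument over $\delta$ (exactly as in Propositions \ref{orldef} and the proof of Lemma \ref{arblemma}, where one uses that the relevant left/right supports increase to $\I$) then yields $\widetilde{\varphi}_{\psi^*}(h)^{1/2}a^*\cdot b\varphi_\psi(h)^{1/2} \in L^1(\M)$ suitably interpreted. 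The cleaner route, which I would actually write down, is to observe directly from the $\theta_s$-equivariances above that $b a^*$ (or rather the closure of the appropriate strong product built from $b$, $a^*$ and the $h$-factors) satisfies $\theta_s(\,\cdot\,)=e^{-s/2}(\,\cdot\,)$, which is precisely the defining condition for membership of $L^2(\M)$; since $b\in S_{\psi^*}$ was arbitrary this gives $S_{\psi^*}a^*\subset L^2(\M)$.

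For $(2)\Rightarrow(3)$: let $e\in\mathfrak{n}_\nu$ be a projection. The key point is that $e\in S_{\psi^*}$, or more precisely that $e\varphi_\psi(h)^{1/2}$ — equivalently, since $e$ is a finite-trace-type projection in $\mathfrak{n}_\nu$, the element whose square modulus is a suitable corner of an $L^{\psi^*}$ element — belongs to $S_{\psi^*}$; this follows because for a projection $e\in\mathfrak{n}_\nu$, $\theta_s(e\varphi_\psi(h)^{1/2}\chi_{[0,\delta]}(h)) = e\,\theta_s(\varphi_\psi(h)^{1/2}\chi_{[0,\delta]}(h)) = e^{-s/2}e\varphi_\psi(h)^{1/2}\chi_{[0,e^s\delta]}(h)$ using $\theta_s(h)=e^{-s}h$ and the Borel functional calculus, exactly the computation appearing in Remark \ref{lp}. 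Hence $e\in S_{\psi^*}$, and applying $(2)$ with this choice gives $e\,a^* \in L^2(\M)$ after incorporating the $\widetilde{\varphi}_{\psi^*}(h)^{1/2}$-factor; taking adjoints and being careful with the strong-product conventions yields $a\widetilde{\varphi}_{\psi^*}(h)^{1/2}e\in L^2(\M)$ as required. I would verify pre-measurability of $a\widetilde{\varphi}_{\psi^*}(h)^{1/2}e$ via Proposition \ref{prop:meas} so that these manipulations are legitimate.

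The main obstacle I anticipate is bookkeeping with the operator-product conventions: the objects $a\widetilde{\varphi}_{\psi^*}(h)^{1/2}e$ and the like are closures of unbounded products, $h$ is not $\tau_{\mathcal{A}}$-measurable, and one must repeatedly insert the spectral cut-offs $\chi_{[0,\delta]}(h)$ to make everything measurable before removing them in a limit. The substantive content is small — it is just the equivariance bookkeeping plus the identity $\varphi_\psi(t)\widetilde{\varphi}_{\psi^*}(t)=t$ — but making the limiting arguments rigorous will require invoking Lemma \ref{tech1} (strong continuity of $b\mapsto cbd$), Lemma \ref{tech2} (the net $f_\alpha$ with $f_\alpha^2\uparrow\I$), and the support-convergence argument from the proof of Lemma \ref{arblemma}. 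I would organise the proof so that this limiting machinery is invoked once, in a lemma-like paragraph, and then applied verbatim in both implications.
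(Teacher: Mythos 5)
Your treatment of $(1)\Rightarrow(2)$ is essentially the paper's argument: the ``cleaner route'' you describe is carried out in the paper by factorising $a\chi_{[\alpha,\beta]}(h)b^*$ as $[a\widetilde{\varphi}_{\psi^*}(h)^{1/2}\chi_{[0,\beta]}(h)]\,\bigl(h^{-1/2}\chi_{[\alpha,\infty)}(h)\bigr)\,[b\varphi_\psi(h)^{1/2}\chi_{[0,\beta]}(h)]^*$, whose three factors scale under $\theta_s$ as $e^{-s/2}$, $e^{s/2}$ and $e^{-s/2}$ respectively, and then letting $\alpha\downarrow0$, $\beta\uparrow\infty$ to conclude $\theta_s(ab^*)=e^{-s/2}ab^*$, i.e.\ $ab^*\in L^2(\M)$. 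Note that your first variant only produces an $L^1$-type statement about $\widetilde{\varphi}_{\psi^*}(h)^{1/2}a^*b\,\varphi_\psi(h)^{1/2}$, which is not statement (2); it is the version with the inserted factor $h^{-1/2}\chi_{[\alpha,\infty)}(h)$ (cancelling the fundamental-function factors via $\varphi_\psi(t)\widetilde{\varphi}_{\psi^*}(t)=t$) that you must actually write out.

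The step $(2)\Rightarrow(3)$, however, contains a genuine error. You claim $\theta_s\bigl(e\varphi_\psi(h)^{1/2}\chi_{[0,\delta]}(h)\bigr)=e^{-s/2}e\varphi_\psi(h)^{1/2}\chi_{[0,e^s\delta]}(h)$ ``by the Borel functional calculus'', but in fact $\theta_s\bigl(\varphi_\psi(h)^{1/2}\chi_{[0,\delta]}(h)\bigr)=\varphi_\psi(e^{-s}h)^{1/2}\chi_{[0,e^s\delta]}(h)$, and $\varphi_\psi(e^{-s}t)\neq e^{-s}\varphi_\psi(t)$ for a general Orlicz function: the computation in Remark \ref{lp} that you are extrapolating works only because $\varphi_q(t)=t^{1/q}$ is a power function (and even there the scaling factor is $e^{-s/(2q)}$, not $e^{-s/2}$). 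Consequently neither $e$ nor $e\varphi_\psi(h)^{1/2}$ belongs to $S_{\psi^*}$ in general; and even if $e\in S_{\psi^*}$ did hold, applying (2) would only give $ea^*\in L^2(\M)$, hence $ae\in L^2(\M)$, with no mechanism for ``incorporating the $\widetilde{\varphi}_{\psi^*}(h)^{1/2}$-factor'' that (3) requires. The correct test element --- and the paper's choice --- is $[e\widetilde{\varphi}_{\psi^*}(h)^{1/2}]$, well defined by Proposition \ref{prop:meas}: since $[e\widetilde{\varphi}_{\psi^*}(h)^{1/2}]\varphi_\psi(h)^{1/2}\chi_{[0,\delta]}(h)=[eh^{1/2}]\chi_{[0,\delta]}(h)$ and $[eh^{1/2}]\in L^2(\M)$ (so it scales as $e^{-s/2}$ under $\theta_s$), this element satisfies the defining equivariance of $S_{\psi^*}$ (recall $\varphi_{(\psi^*)^*}=\varphi_\psi$). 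Applying (2) to it gives $[e\widetilde{\varphi}_{\psi^*}(h)^{1/2}]a^*\in L^2(\M)$, and taking adjoints yields $a[\widetilde{\varphi}_{\psi^*}(h)^{1/2}e]\in L^2(\M)$, which is exactly (3).
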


\begin{proof} Let $a\in S^{\psi}$ and $b \in S_{\psi^*}$ be given and let $\chi_{[\alpha,\beta]}= \chi_{[\alpha,\beta]}(h)$ be the spectral projections from the spectral resolution of $h$. For any $0 <\alpha<\beta<\infty$ and any $s\in \mathbb{R}$, we then have that 
\begin{eqnarray*}
&& \theta_s(a)\chi_{[e^s\alpha,e^s\beta]}\theta_s(b^*)\\ 
&=& \theta_s(a\chi_{[\alpha,\beta]}b^*)\\
&=& \theta_s([a.(\widetilde{\varphi}_{\psi^*}(h)^{1/2}\chi_{[0,\beta]})].(h^{-1/2}\chi_{[\alpha,\infty)}(h)).[b.(\varphi_\psi(h)^{1/2})\chi_{[0,\beta]})]^*\\
&=& e^{-s/2}[a.(\widetilde{\varphi}_{\psi^*}(h)^{1/2}\chi_{[0, e^s\beta]})].(h^{-1/2}\chi_{[e^s\alpha, 0]}).[b.(\varphi_\psi(h)^{1/2}\chi_{[0, e^s\beta]})]^*\\
&=& e^{-s/2}a\chi_{[e^{s}\alpha,e^{s}\beta]}b^*.
\end{eqnarray*}
If now we let $\alpha$ decrease to 0 and $\beta$ increase to $\infty$, it follows that $\theta_s(ab^*) = e^{-s/2}ab^*$ for all $s\in\mathbb{R}$. Hence by the known $L^p$-theory, $ab^* \in L^2(\M)$.

Next suppose that (2) holds. Notice that that for any projection $f \in \mathfrak{n}_\nu$, we have that $[[f\widetilde{\varphi}_{\psi^*}^{1/2}(h)]\varphi_\psi(h)^{1/2}] = [fh^{1/2}] \in L^2(\M)$, and hence that $[f\widetilde{\varphi}_{\psi^*}^{1/2}(h)] \in S_{\psi^*}$. Therefore given $a\in S^{\psi}$, it follows from (2) that $a[\varphi_\psi(h)^{1/2}f] = ([f\varphi_\psi(h)^{1/2}]a^*)^* \in L^2(\M)$. 
\end{proof}

\section{A context for interpolation}

We next give specific attention to the largest and smallest of all Orlicz spaces. In particular we describe and study the type III analogues of the spaces $L^1\cap L^\infty$ and $L^1+L^\infty$, and briefly indicate the consequences of applying the $K$-method of interpolation to these spaces. 
We remind the reader that classically the spaces $L^1\cap L^\infty$ and $L^1+L^\infty$ are both Orlicz spaces respectively corresponding to the Orlicz functions 
\begin{equation}\label{eq:1inf}
\psi_{1\cap\infty}(t) = \left\{\begin{array}{ll} t & \quad 0\leq t\leq 1\\ \infty & \quad t\geq 1\end{array}\right. , \qquad \psi_{1+\infty}(t) = \left\{\begin{array}{ll} 0 & \quad 0\leq t\leq 1\\ t-1 & \quad t\geq 1\end{array}\right..
\end{equation}
Also the fundamental functions for $(L^1\cap L^\infty)(\mathbb{R})$ and $(L^1+L^\infty)(\mathbb{R})$ are respectively given by $\varphi_{1\cap\infty}(t) = \max(1,t)$ and $\varphi_{1+\infty}(t) = \min(1,t)$. It is an exercise to see that canonical norm on $L^1\cap L^\infty$ (see \cite[2.6.1]{BS}) is the Luxemburg norm, whereas it follows from for example \cite[2.6.4]{BS} that the canonical norm on $L^1+L^\infty$ is the Orlicz norm. 

\emph{To simplify notation we adopt the convention of respectively writing $L^{1\cap\infty}$, $\mathfrak{m}_{1\cap\infty}$ and $S^{1\cap\infty}$, for $L^{\psi_{1\cap\infty}}$, $\mathfrak{m}_{\psi_{1\cap\infty}}$ and $S^{\psi_{1\cap\infty}}$. A similar convention is adopted for $L_{\psi_{1+\infty}}$, $\mathfrak{m}_{\psi_{1+\infty}}$ and $S_{\psi_{1+\infty}}$.} 

\medskip

\subsection{The space $L^{1\cap\infty}(\M)$}

\begin{proposition}\label{prop:gen1capinfty}
Consider the space $L^{1\cap\infty}(\M)$. For this space the $*$-subalgebra $\mathfrak{m}_{1\cap\infty}$ of $\M$ consists of all elements $a$ of $\mathfrak{n}_\nu^* \cap \mathfrak{n}_\nu$ for which the strong product $h^{1/2}ah^{1/2}$ is a $\tau_{\mathcal{A}}$-measurable element of $\widetilde{\mathcal{A}}$. Moreover $$L^{1\cap\infty}(\M) = \varphi_{1\cap\infty}(h)^{1/2}[\mathfrak{m}_{1\cap\infty}]\varphi_{1\cap\infty}(h)^{1/2}.$$
\end{proposition}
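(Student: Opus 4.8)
The plan is to treat the two assertions separately: the description of the $*$-algebra $\mathfrak{m}_{1\cap\infty}$, and the factorisation $L^{1\cap\infty}(\M)=\varphi_{1\cap\infty}(h)^{1/2}[\mathfrak{m}_{1\cap\infty}]\varphi_{1\cap\infty}(h)^{1/2}$. Everything rests on three elementary facts about the two fundamental functions involved. First, $\varphi_{1\cap\infty}(t)=\max(1,t)$, so $\varphi_{1\cap\infty}(h)^{1/2}$ differs from $h^{1/2}$ by the bounded element $\chi_{[0,1]}(h)(\I-h^{1/2})$ of $\mathcal{A}$. Second, for $\psi=\psi_{1\cap\infty}$ the complementary function is $\psi_{1+\infty}$, and $\widetilde{\varphi}_{\psi^{*}}(h)^{1/2}=\min(1,h)^{1/2}$ is bounded. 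Third, $\min(1,t)\max(1,t)=t$, i.e.\ $\widetilde{\varphi}_{\psi^{*}}(h)^{1/2}\varphi_{1\cap\infty}(h)^{1/2}=h^{1/2}$. Together with the homogeneity descriptions $L^{1}(\M)=\{b\in\widetilde{\mathcal{A}}:\theta_{s}(b)=e^{-s}b\}$ and $L^{2}(\M)=\{b\in\widetilde{\mathcal{A}}:\theta_{s}(b)=e^{-s/2}b\}$, these let one systematically trade the non-measurable weight $h$ for the Haagerup spaces.

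I would first identify $\mathfrak{n}_{1\cap\infty}$. For $a\in\M$ the operators $a\,\varphi_{1\cap\infty}(h)^{1/2}$ and $a\,h^{1/2}$ differ by a bounded everywhere-defined operator, so the one is closable with closure in $\widetilde{\mathcal{A}}$ iff the other is; and $\theta_{s}([ah^{1/2}])=e^{-s/2}[ah^{1/2}]$ forces such a closure to lie in $L^{2}(\M)$, which by the Haagerup $L^{2}$-theory (\cite{GL2}) happens precisely when $a\in\mathfrak{n}_\nu$. Thus $\mathfrak{n}_{1\cap\infty}=\mathfrak{n}_\nu$ and $\mathfrak{m}_{1\cap\infty}=\operatorname{span}[\mathfrak{n}_\nu^{*}\mathfrak{n}_\nu]=\mathfrak{m}_\nu$, and it remains to show $\mathfrak{m}_\nu=\{a\in\mathfrak{n}_\nu^{*}\cap\mathfrak{n}_\nu:h^{1/2}ah^{1/2}\in\widetilde{\mathcal{A}}\}$. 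The inclusion ``$\subseteq$'' is immediate, since for $a=b^{*}c$ with $b,c\in\mathfrak{n}_\nu$ one has $a\in\mathfrak{n}_\nu^{*}\cap\mathfrak{n}_\nu$ and $h^{1/2}ah^{1/2}=[bh^{1/2}]^{*}[ch^{1/2}]\in L^{1}(\M)$. For ``$\supseteq$'' one reduces to $a\geq0$ and proves $\nu(a)=\tr([h^{1/2}ah^{1/2}])<\infty$: taking $\{f_\alpha\}\subset\mathfrak{n}_\nu^{+}$ with $f_\alpha^{2}\uparrow\I$ as in Lemma \ref{tech2}, each $f_\alpha af_\alpha$ lies in $\mathfrak{m}_\nu^{+}$ with $\tr([h^{1/2}f_\alpha af_\alpha h^{1/2}])=\nu(f_\alpha af_\alpha)$, and one passes to the limit using normality of $\nu$ and the convergence $[f_\alpha h^{1/2}]^{*}\pi(a)[f_\alpha h^{1/2}]\to[h^{1/2}ah^{1/2}]$ in $L^{1}(\M)$; since $a\geq0$ and $\nu(a)<\infty$ this forces $a\in\mathfrak{m}_\nu$.

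For the factorisation, ``$\supseteq$'' is the inclusion $\varphi_{\psi}(h)^{1/2}\mathfrak{m}_{\psi}\varphi_{\psi}(h)^{1/2}\subseteq L^{\psi}(\M)$ recorded in the remark following Definition \ref{deforldef}, applied with $\psi=\psi_{1\cap\infty}$. For ``$\subseteq$'' take $a\in L^{1\cap\infty}(\M)$ and put $x:=\varphi_{1\cap\infty}(h)^{-1/2}a\,\varphi_{1\cap\infty}(h)^{-1/2}$, which is a measurable element of $\widetilde{\mathcal{A}}$ because $\varphi_{1\cap\infty}(h)^{-1/2}=\max(1,h)^{-1/2}\in\mathcal{A}$ is bounded, so that $a=\varphi_{1\cap\infty}(h)^{1/2}x\,\varphi_{1\cap\infty}(h)^{1/2}$; it then suffices to show $x\in\pi(\mathfrak{m}_{1\cap\infty})$. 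Using $\widetilde{\varphi}_{\psi^{*}}(h)^{1/2}\varphi_{1\cap\infty}(h)^{1/2}=h^{1/2}$, the defining condition for $a\in L^{1\cap\infty}(\M)$ rewrites as $[eh^{1/2}]x[h^{1/2}f]\in L^{1}(\M)$ for all projections $e,f\in\mathfrak{n}_\nu$; expanding $\theta_{s}$ and cancelling the factors $e^{-s/2}$ this becomes $[eh^{1/2}](\theta_{s}(x)-x)[h^{1/2}f]=0$ for all such $e,f$ and all $s$, whence $\theta_{s}(x)=x$, i.e.\ $x\in\pi(\M)$, by the argument of Lemma \ref{arblemma} run with $h^{1/2}$ in place of $\widetilde{\varphi}_{\psi^{*}}(h)^{1/2}$ (it applies verbatim because $\supp([h^{1/2}f_\alpha^{2}h^{1/2}])\uparrow\I$). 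Multiplying $a$ on the right, respectively left, by $\varphi_{1\cap\infty}(h)^{-1/2}$ gives $[x\varphi_{1\cap\infty}(h)^{1/2}],[\varphi_{1\cap\infty}(h)^{1/2}x]\in\widetilde{\mathcal{A}}$, hence (subtracting the bounded perturbation) $[xh^{1/2}],[h^{1/2}x]\in\widetilde{\mathcal{A}}$, hence by homogeneity $[xh^{1/2}],[x^{*}h^{1/2}]\in L^{2}(\M)$, so $x\in\mathfrak{n}_\nu\cap\mathfrak{n}_\nu^{*}$; and $h^{1/2}xh^{1/2}=\widetilde{\varphi}_{\psi^{*}}(h)^{1/2}a\,\widetilde{\varphi}_{\psi^{*}}(h)^{1/2}=\min(1,h)^{1/2}a\min(1,h)^{1/2}\in\widetilde{\mathcal{A}}$ because $\min(1,h)^{1/2}$ is bounded. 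By the first part $x\in\mathfrak{m}_{1\cap\infty}$, so $a$ indeed lies in $\varphi_{1\cap\infty}(h)^{1/2}[\mathfrak{m}_{1\cap\infty}]\varphi_{1\cap\infty}(h)^{1/2}$.

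The main obstacle is the inclusion ``$\supseteq$'' in the description of $\mathfrak{m}_{1\cap\infty}$ — equivalently, that $h^{1/2}ah^{1/2}\in\widetilde{\mathcal{A}}$ forces $a\in\mathfrak{m}_\nu$ for $a\in\mathfrak{n}_\nu^{*}\cap\mathfrak{n}_\nu$. Since $h$ is not $\tau_{\mathcal{A}}$-measurable, the products $h^{1/2}ah^{1/2}$ and $[f_\alpha h^{1/2}]^{*}\pi(a)[f_\alpha h^{1/2}]$ as well as the reduction $a=a_{+}-a_{-}$ all have to be handled inside the strong-product and (pre)measurability formalism of \cite{GL2}; the decisive analytic step is the $L^{1}$-convergence $[f_\alpha h^{1/2}]^{*}\pi(a)[f_\alpha h^{1/2}]\to[h^{1/2}ah^{1/2}]$ together with $\nu(f_\alpha af_\alpha)\to\nu(a)$, both of which rest on the monotone approximation $[h^{1/2}f_\alpha^{2}h^{1/2}]\uparrow h$ (as in the proof of Lemma \ref{tech2}) and on normality of $\nu$. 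The remaining identifications — that the displayed strong products genuinely coincide with the measurable operators written, and that the cancellations of $\varphi_{1\cap\infty}(h)^{\pm1/2}$ go through — are of the routine but laborious kind already carried out in Proposition \ref{prop:meas} and Lemma \ref{arblemma}.
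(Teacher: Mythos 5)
Your treatment of the ``$\subseteq$'' half of the factorisation essentially reproduces the paper's own argument: your $x$ is exactly the element $a_0=\varphi_{1\cap\infty}(h)^{-1/2}a\varphi_{1\cap\infty}(h)^{-1/2}$ used there, and the passage to $\theta_s(x)=x$ and to $x\in\mathfrak{n}_\nu\cap\mathfrak{n}_\nu^*$ rests on the same ingredients (homogeneity, Lemma \ref{arblemma}-type cancellation, the Goldstein--Lindsay criterion). The genuine gap is in your Part A, the inclusion $S:=\{a\in\mathfrak{n}_\nu^*\cap\mathfrak{n}_\nu : h^{1/2}ah^{1/2}\in\widetilde{\mathcal{A}}\}\subseteq\mathfrak{m}_\nu$, which in your architecture carries both the first sentence of the Proposition and your ``$\supseteq$'' direction (since you obtain the latter by citing the remark after Definition \ref{deforldef}, which applies to the span-defined $\mathfrak{m}_{1\cap\infty}$, not to $S$). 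The reduction to $a\geq0$ does not go through: for self-adjoint $b\in S$ the parts $b_\pm$ are functions of $b$ whose spectral projections do not commute with $h$, so measurability of $h^{1/2}bh^{1/2}$ gives no control over $h^{1/2}b_\pm h^{1/2}$, and $b\in\mathfrak{n}_\nu$ only yields $\nu(b_\pm^2)<\infty$, which does not imply $\nu(b_\pm)<\infty$. Moreover your limiting argument is shaky even for $a\geq 0$: $f_\alpha af_\alpha$ is not monotone in $\alpha$, so neither $\nu(f_\alpha af_\alpha)\uparrow\nu(a)$ nor the claimed $L^1$-convergence comes for free --- although for positive $a$ none of this machinery is needed, since applying the criterion you already invoke ($h^{1/2}x^*xh^{1/2}$ measurable iff $x\in\mathfrak{n}_\nu$) to $x=a^{1/2}$ gives $\nu(a)=\tr([h^{1/2}ah^{1/2}])<\infty$ at once. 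The unresolved case is precisely a general (non-positive) $a\in S$, for which your proposal contains no workable argument.

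Note that the paper's proof never needs $S\subseteq\mathfrak{m}_\nu$: it works directly with the description $S$, showing on one hand that for $a_0\in S$ the element $\varphi_{1\cap\infty}(h)^{1/2}a_0\varphi_{1\cap\infty}(h)^{1/2}$ lies in $L^{1\cap\infty}(\M)$ --- here you do not need the remark at all, since $\widetilde{\varphi}_{\psi^*}(h)^{1/2}\varphi_{1\cap\infty}(h)^{1/2}=h^{1/2}$ and $[h^{1/2}a_0h^{1/2}]$ is $\theta_s$-homogeneous of degree $-1$, hence in $L^1(\M)$, so the defining sandwiches $e[h^{1/2}a_0h^{1/2}]f$ are automatically in $L^1(\M)$ --- and on the other hand that every $a\in L^{1\cap\infty}(\M)$ is such a sandwich (your $x$). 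If you rewrite your ``$\supseteq$'' in this direct way you recover the paper's proof of the factorisation; what then remains open in your write-up is the asserted identity between $S$ and the span-defined $\mathfrak{m}_{1\cap\infty}=\mathrm{span}[\mathfrak{n}_\nu^*\mathfrak{n}_\nu]=\mathfrak{m}_\nu$ (your $\mathfrak{m}_\nu\subseteq S$ is fine; the converse is the hard direction). That identification is also not addressed by the paper's own proof, but your proposal explicitly claims it and does not establish it.
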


\begin{proof}
Firstly let $a_0\in\mathfrak{n}_\nu^* \cap \mathfrak{n}_\nu$ be given such that $[h^{1/2}a_0h^{1/2}]$ is a $\tau_{\mathcal{A}}$-measurable element of $\widetilde{\mathcal{A}}$. Both $h^{1/2}a_0$ and $h^{1/2}a_0^*$ (equivalently $h^{1/2}a_0$ and $[a_0h^{1/2}]$) are then of course $\tau_{\mathcal{A}}$-measurable. Next notice that $\varphi_{1\cap\infty}(t) \leq (1+\sqrt{t})^2$ on $[0,\infty)$ with $\frac{\varphi_{1\cap\infty}(t)}{(1+\sqrt{t})^2} \to 1$ as $t\to\infty$. It follows that there exists a constant $K>0$ so that $K(1+\sqrt{t}) \leq \varphi_{1\cap\infty}(t)^{1/2} \leq (1+\sqrt{t})$ on $[0,\infty)$. So then $K(\I+h^{1/2})\leq\varphi_{1\cap\infty}(h)^{1/2} \leq (\I+h^{1/2})$ by the Borel functional calculus. The fact that $\varphi_{1\cap\infty}(h)^{1/2}a_0\varphi_{1\cap\infty}(h)^{1/2}$ is $\tau_{\mathcal{A}}$-measurable, therefore follows from the fact that $[(\I+h^{1/2})a_0(\I+h^{1/2})] = a_0 + [a_0h^{1/2}] + h^{1/2}a_0 + [h^{1/2}a_0h^{1/2}]$ is $\tau_{\mathcal{A}}$-measurable.

Next let $a\in L^{1\cap\infty}(\M)$ be given. In other words suppose we have $a$ with $e\varphi_{1+\infty}(h)^{1/2}a{\varphi}_{1+\infty}(h)^{1/2}f\in L^1(\M)$ for all projections $e,f\in \mathfrak{n}_\nu$. Since $\varphi_{1+\infty}(t)$ is bounded, we in fact have that $\varphi_{1+\infty}(h)^{1/2}\in\mathcal{A}$ and hence that $\varphi_{1+\infty}(h)^{1/2}a{\varphi}_{1+\infty}(h)^{1/2}$ is $\tau_{\mathcal{A}}$-measurable. In this particular case the claim that $e\varphi_{1+\infty}(h)^{1/2}a{\varphi}_{1+\infty}(h)^{1/2}f \in L^1(\M)$ for all projections $e,f \in \mathfrak{n}_\nu$, therefore simplifies to the statement that \newline $\varphi_{1+\infty}(h)^{1/2}a{\varphi}_{1+\infty}(h)^{1/2} \in L^1(\M)$. (To see this notice that by Proposition \ref{orldef} we have that $b_0\varphi_{1+\infty}(h)^{1/2}a{\varphi}_{1+\infty}(h)^{1/2}b_1^* \in L^1(\M)$ for all $eb_0,b_1 \in \mathfrak{n}_\nu$. The claim now follows by considering $f_\alpha\varphi_{1+\infty}(h)^{1/2}a{\varphi}_{1+\infty}(h)^{1/2}f_\beta$ where $\{f_\alpha\}\subset (\mathfrak{n}_\nu)^+$ is a net increasing monotonically to $\I$ \cite[Exercise 2.4.8(d)]{S}.) Notice that $\delta(t) = \frac{\varphi_{1+\infty}(t)}{t} = \frac{\min(1,t)}{t} = \min(1,1/t)$ is also bounded and continuous on $[0,\infty)$. This means that $\delta(h) = h^{-1}\varphi_{1+\infty}(h)$ must be an element of 
$\mathcal{A}$, and hence that $a_0 = \delta(h)^{1/2}a\delta(h)^{1/2} =  h^{-1/2}\varphi_{1+\infty}(h)^{1/2}a\varphi_{1+\infty}(h)^{1/2}h^{-1/2}$ is a well-defined $\tau$-measurable element of $\widetilde{\mathcal{A}}$. For this element $a_0$ we have by construction that $a=\varphi_{1\cap\infty}(h)^{1/2}a_0\varphi_{1\cap\infty}(h)^{1/2}$ and $$h^{1/2}a_0h^{1/2} = \varphi_{1+\infty}(h)^{1/2}a\varphi_{1+\infty}(h)^{1/2} \in L^1(\M).$$ But this means that $e^{-s}h^{1/2}a_0h^{1/2} = \theta_s(h^{1/2}a_0h^{1/2}) = e^{-s}h^{1/2}\theta_s(a_0)h^{1/2}$ for all $s\in\mathbb{R}$. Clearly this can only be if $a_0 = \theta_s(a_0)$ for all 
$s$, in which case $a_0 \in \M$. We have already noted that $h^{1/2}a_0h^{1/2}\in L^1(\M)$. It remains to prove that $a_0\in \mathfrak{n}_\nu^* \cap \mathfrak{n}_\nu$. For this notice that the formal expression $h^{1/2}|a_0|^2h^{1/2}$ may be written as a product of $\tau_{\mathcal{A}}$-measurable operators. Specifically $$h^{1/2}|a_0|^2h^{1/2} =$$ $$\varphi_{1+\infty}(h)^{1/2}[\varphi_{1\cap\infty}(h)^{1/2}a^*\varphi_{1\cap\infty}(h)^{1/2}][h^{-1}\varphi_{1+\infty}(h)] [\varphi_{1\cap\infty}(h)^{1/2}a^*\varphi_{1\cap\infty}(h)^{1/2}]\varphi_{1+\infty}(h)^{1/2}.$$Thus $h^{1/2}|a_0|^2h^{1/2}$ must be $\tau_{\mathcal{A}}$-measurable. Equivalently $a_0\in \mathfrak{n}_\nu$. Since $L^{1\cap\infty}(\M)$ is closed under taking adjoints, we also have $a_0^*\in \mathfrak{n}_\nu$.
\end{proof}

\begin{definition}
We define the norm on $L^{1\cap\infty}(\M)$ to be $$\|\varphi_{1\cap\infty}(h)^{1/2}a\varphi_{1\cap\infty}(h)^{1/2}\|_{1\cap\infty} = \max(\|h^{1/2}ah^{1/2}\|_1, \|h^{1/2}a\|_2, \|ah^{1/2}\|_2, \|a\|_\infty)$$ where $a \in \mathfrak{m}_{1\cap\infty}$.
\end{definition} 

\begin{theorem}\label{thm:genminnorm}
For any $a\in L^{1\cap\infty}(\M)$ we have that $$\frac{1}{16}\mu_1(a) \leq  \|a\|_{1\cap\infty} \leq \mu_1(a).$$The canonical topology on $L^{1\cap\infty}(\M)$ is therefore normable. 
\end{theorem}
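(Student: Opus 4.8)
The plan is to fix $x\in L^{1\cap\infty}(\M)$ and, by Proposition \ref{prop:gen1capinfty}, write it (uniquely) as $x=\varphi_{1\cap\infty}(h)^{1/2}a\varphi_{1\cap\infty}(h)^{1/2}$ with $a\in\mathfrak{m}_{1\cap\infty}\subset\mathfrak{n}_\nu^*\cap\mathfrak{n}_\nu$, and then to compare $\mu_1(x)$ with the four quantities $\|h^{1/2}ah^{1/2}\|_1$, $\|h^{1/2}a\|_2$, $\|ah^{1/2}\|_2$, $\|a\|_\infty$ that define $\|x\|_{1\cap\infty}$. Since $\varphi_{1\cap\infty}(t)^{1/2}=\max(1,\sqrt t)$, the three operators
\[
r=\varphi_{1\cap\infty}(h)^{-1/2},\qquad v=h^{1/2}\varphi_{1\cap\infty}(h)^{-1/2},\qquad u=\varphi_{1\cap\infty}(h)^{1/2}(\I+h^{1/2})^{-1}
\]
are bounded functions of $h$, hence elements of $\mathcal{A}$ of norm $\le1$, satisfying $\varphi_{1\cap\infty}(h)^{1/2}r=\I$, $v\varphi_{1\cap\infty}(h)^{1/2}=h^{1/2}$ and $\varphi_{1\cap\infty}(h)^{1/2}=u(\I+h^{1/2})=(\I+h^{1/2})u$. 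Two facts from the preliminaries will be used repeatedly: $\mu_t(byc)\le\|b\|_\infty\|c\|_\infty\,\mu_t(y)$ together with $\mu_t(y)\le\|y\|_\infty$ for bounded $y$; and $\|y\|_p=t^{1/p}\mu_t(y)$ for $y\in L^p(\M)$, whence $\mu_1(y)=\|y\|_p$ and $\mu_{1/4}(y)=4^{1/p}\|y\|_p$; finally $\|a\|_\infty=\mu_1(\pi(a))$ from Theorem \ref{thm:uniftop}.

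For the upper bound $\|x\|_{1\cap\infty}\le\mu_1(x)$, I would observe that each of the four operators above is a two-sided multiple of $x$ by contractions taken from $\{r,v\}$: as identities of (pre)measurable operators,
\[
h^{1/2}ah^{1/2}=v\,x\,v,\qquad h^{1/2}a=v\,x\,r,\qquad ah^{1/2}=r\,x\,v,\qquad \pi(a)=r\,x\,r .
\]
Hence each of the left-hand operators has $\mu_t\le\mu_t(x)$ for all $t>0$. Evaluating at $t=1$, and using that $h^{1/2}ah^{1/2}\in L^1(\M)$ (cf. the proof of Proposition \ref{prop:gen1capinfty}), $h^{1/2}a,ah^{1/2}\in L^2(\M)$ (since $\theta_s$ multiplies them by $e^{-s/2}$) and $\|a\|_\infty=\mu_1(\pi(a))$, one gets $\|h^{1/2}ah^{1/2}\|_1=\mu_1(h^{1/2}ah^{1/2})\le\mu_1(x)$, $\|h^{1/2}a\|_2,\|ah^{1/2}\|_2\le\mu_1(x)$ and $\|a\|_\infty\le\mu_1(x)$; the maximum of these four is $\le\mu_1(x)$.

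For the lower bound $\mu_1(x)\le16\|x\|_{1\cap\infty}$, I would use $\varphi_{1\cap\infty}(h)^{1/2}=u(\I+h^{1/2})=(\I+h^{1/2})u$ together with the distributive identity $(\I+h^{1/2})a(\I+h^{1/2})=\pi(a)+[ah^{1/2}]+[h^{1/2}a]+[h^{1/2}ah^{1/2}]$ (each summand $\tau_{\mathcal{A}}$-premeasurable because $a\in\mathfrak{n}_\nu^*\cap\mathfrak{n}_\nu$ and $[h^{1/2}ah^{1/2}]$ is measurable), obtaining
\[
x=u\,\pi(a)\,u+u\,[ah^{1/2}]\,u+u\,[h^{1/2}a]\,u+u\,[h^{1/2}ah^{1/2}]\,u .
\]
By subadditivity of the singular value function, $\mu_1(x)$ is at most the sum of the four numbers $\mu_{1/4}$ of these summands; since $\|u\|_\infty\le1$ each is bounded by $\mu_{1/4}$ of the un-conjugated operator, and the $L^p$-identity gives $\mu_{1/4}(\pi(a))=\|a\|_\infty$, $\mu_{1/4}(ah^{1/2})=2\|ah^{1/2}\|_2$, $\mu_{1/4}(h^{1/2}a)=2\|h^{1/2}a\|_2$, $\mu_{1/4}(h^{1/2}ah^{1/2})=4\|h^{1/2}ah^{1/2}\|_1$. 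Each of the four terms is therefore at most $4\|x\|_{1\cap\infty}$, so $\mu_1(x)\le16\|x\|_{1\cap\infty}$.

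Finally, $\|\cdot\|_{1\cap\infty}$ is visibly a norm (a maximum of norms, with the $L^\infty$ term separating points), and by the two inequalities it is equivalent to the quasi-norm $\mu_1(\cdot)$, which by Proposition \ref{qntop} induces exactly the topology of convergence in measure inherited from $\widetilde{\mathcal{A}}$; hence that topology is normable. The computations are largely routine given the available machinery; the one delicate point is the operator bookkeeping in the lower bound — legitimising the expansion of $u(\I+h^{1/2})a(\I+h^{1/2})u$ into a sum of four strong products in $\widetilde{\mathcal{A}}$, and making sure the identity $\|y\|_p=t^{1/p}\mu_t(y)$ is applied only to operators genuinely belonging to the Haagerup spaces $L^1(\M)$ and $L^2(\M)$.
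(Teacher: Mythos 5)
Your proof is correct, and the upper bound is precisely the paper's argument: conjugating by the contractions $\varphi_{1\cap\infty}(h)^{-1/2}$ and $h^{1/2}\varphi_{1\cap\infty}(h)^{-1/2}$ to majorise $\mu_1$ of each of $a$, $h^{1/2}a$, $ah^{1/2}$, $h^{1/2}ah^{1/2}$ by $\mu_1(x)$, then invoking $\|y\|_p=t^{1/p}\mu_t(y)$ and Theorem \ref{thm:uniftop}. For the lower bound you deviate slightly: the paper passes from $\mu_{4t}(\varphi_{1\cap\infty}(h)^{1/2}a\varphi_{1\cap\infty}(h)^{1/2})$ to $\mu_{4t}((\I+h^{1/2})a(\I+h^{1/2}))$ by squaring, applying the operator inequality $\varphi_{1\cap\infty}(h)\leq(\I+h^{1/2})^2$ on each side and using the $\mu_t(y^*y)=\mu_t(yy^*)$ flip, whereas you factor $\varphi_{1\cap\infty}(h)^{1/2}=u(\I+h^{1/2})$ with the contraction $u=\varphi_{1\cap\infty}(h)^{1/2}(\I+h^{1/2})^{-1}\in\mathcal{A}$ and simply conjugate; after that, both proofs use the same four-term expansion of $(\I+h^{1/2})a(\I+h^{1/2})$ (legitimised exactly as in the proof of Proposition \ref{prop:gen1capinfty}), subadditivity of $\mu$ with the time split $1=\tfrac14+\tfrac14+\tfrac14+\tfrac14$, and the Haagerup $L^p$ identities. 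Your multiplicative route is a touch cleaner (no flip trick, and summing $1+2+2+4$ actually gives the sharper constant $9$ in place of $16$), at the modest price of the domain bookkeeping you flag, namely checking $u(\I+h^{1/2})=(\I+h^{1/2})u=\varphi_{1\cap\infty}(h)^{1/2}$ as closed operators — which is immediate from the functional calculus since $\tfrac12(1+\sqrt t)\leq\max(1,\sqrt t)\leq 1+\sqrt t$ makes $u$ invertible in $\mathcal{A}$ and the two domains coincide. The concluding step (norm equivalence with $\mu_1$ plus Proposition \ref{qntop} giving normability of the canonical topology) matches the paper.
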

 
\begin{proof}
In the proof we make use of the description of $L^{1\cap\infty}(\M)$ given in the preceding proposition. Firstly note that the functions $t\to\frac{1}{\varphi_{1\cap\infty}(t)}$ and $t\to\frac{t}{\varphi_{1\cap\infty}(t)}$ both have 1 as an upper bound. Hence the operators $\frac{1}{\varphi_{1\cap\infty}(h)}$ and  $\frac{h}{\varphi_{1\cap\infty}(h)}$ are both norm 1 elements of $\mathcal{A}$. Given any $a\in\mathfrak{m}_{1\cap\infty}$, it is now a simple exercise to see that each of $\mu_t(a)$, $\mu_t(h^{1/2}a)$, $\mu_t(ah^{1/2})$ and $\mu_t(h^{1/2}ah^{1/2})$ is majorised by $\mu_t(\varphi_{1\cap\infty}(h)^{1/2}a\varphi_{1\cap\infty}(h)^{1/2})$. If now we combine these inequalities with Theorem \ref{thm:uniftop} and the relation of the norm $\|.\|_p$ to $t^{1/p}\mu_t$, it follows that 
\begin{eqnarray*}
\|\varphi_{1\cap\infty}(h)a\|_{1\cap\infty} &=& \max(\|h^{1/2}ah^{1/2}\|_1, \|h^{1/2}a\|_2, \|ah^{1/2}\|_2, \|a\|_\infty)\\
&=& \max(\mu_1(h^{1/2}ah^{1/2}), \mu_1(h^{1/2}a), \mu_1(ah^{1/2}), \mu_1(a))\\
&\leq& \mu_1(\varphi_{1\cap\infty}(h)^{1/2}a\varphi_{1\cap\infty}(h)^{1/2}). 
\end{eqnarray*}

As noted in the proof of the preceding proposition, we may find a constant $K>0$ so that $K(\I+h^{1/2})\leq\varphi_{1\cap\infty}(h)^{1/2} \leq (\I+h^{1/2})$. Therefore 
\begin{eqnarray*}
\mu_{4t}(\varphi_{1\cap\infty}(h)^{1/2}a\varphi_{1\cap\infty}(h)^{1/2}) &=& \mu_{4t}(\varphi_{1\cap\infty}(h)^{1/2}a^*\varphi_{1\cap\infty}(h)a\varphi_{1\cap\infty}(h)^{1/2})^{1/2}\\
&\leq& \mu_{4t}(\varphi_{1\cap\infty}(h)^{1/2}a^*(1+\sqrt{h})^2a\varphi_{1\cap\infty}(h)^{1/2})^{1/2}\\
&=& \mu_{4t}((1+\sqrt{h})a\varphi_{1\cap\infty}(h)a^*(1+\sqrt{h}))^{1/2}\\
&\leq& \mu_{4t}((1+\sqrt{h})a(1+\sqrt{h})^2(h)a^*(1+\sqrt{h}))^{1/2}\\
&=& \mu_{4t}((1+\sqrt{h})a(1+\sqrt{h}))\\
&\leq&  \mu_t(h^{1/2}ah^{1/2}) + \mu_t(h^{1/2}a) + \mu_t(ah^{1/2}) + \mu_t(a).
\end{eqnarray*}
Given $0<\epsilon\leq 1$, we may therefore combine the above inequality with Theorem \ref{thm:uniftop} and the aforementioned relation of the norm $\|.\|_p$ to $t^{1/p}\mu_t$, to see that
\begin{eqnarray*}
&& \epsilon \mu_{4\epsilon}(\varphi_{1\cap\infty}(h)^{1/2}a\varphi_{1\cap\infty}(h)^{1/2})\\ 
&\leq& 4\max(\epsilon\mu_\epsilon(h^{1/2}ah^{1/2}), \epsilon\mu_\epsilon(h^{1/2}a), \epsilon\mu_\epsilon(ah^{1/2}), \epsilon\mu_\epsilon(a))\\
&=& 4\max(\mu_1(h^{1/2}ah^{1/2}), \sqrt{\epsilon}\mu_1(h^{1/2}a), \sqrt{\epsilon}\mu_1(ah^{1/2}), \epsilon\mu_1(a))\\
&\leq& 4\max(\mu_1(h^{1/2}ah^{1/2}), \mu_1(h^{1/2}a), \mu_1(ah^{1/2}), \mu_1(a))\\
&=& 4\|\varphi_{1\cap\infty}(h)^{1/2}a\varphi_{1\cap\infty}(h)^{1/2}\|_{1\cap\infty}.
\end{eqnarray*} 
Setting $\epsilon = \frac{1}{4}$ now yields the inequality $$\frac{1}{16}\mu_1(\varphi_{1\cap\infty}(h)^{1/2}a\varphi_{1\cap\infty}(h)^{1/2}) \leq \|\varphi_{1\cap\infty}(h)^{1/2}a\varphi_{1\cap\infty}(h)^{1/2}\|_{1\cap\infty},$$ and hence proves the theorem.
\end{proof}

\begin{remark}\label{1capinfrem}
By the above theorem the map $$\varphi_{1\cap\infty}^{1/2}(h)a\varphi_{1\cap\infty}(h)^{1/2} \to (a, h^{1/2}a, ah^{1/2}, h^{1/2}ah^{1/2})$$identifies $L^{1\cap\infty}(\M)$ with the subspace 
$K=\{(w, h^{1/2}w, wh^{1/2}, h^{1/2}wh^{1/2}) : w \in \pi(\mathfrak{m}_{1\cap\infty})\}$ of the Banach space $L^\infty(\M) \times L^2(\M;l) \times L^2(\M;r) \times L^1(\M)$ where the spaces $L^2(\M;l)$ and $L^2(\M;r)$ are the ``left'' and ``right'' $L^2$-spaces respectively corresponding to the closures of $\{ah^{1/2} : a\in\pi(\mathfrak{n}_\nu)\}$ and $\{h^{1/2}a : a\in\pi(\mathfrak{n}_\nu)\}$ in $\widetilde{\mathcal{A}}$. Since by Proposition \ref{prop:gen1capinfty} $\mathfrak{m}_{1\cap\infty}$ corresponds to the largest subspace of $\M$ which canonically embeds into each of each $L^2(\M;l)$, $L^2(\M;r)$, and $L^1(\M)$, it follows that $L^{1\cap\infty}(\M)$ may be regarded as some sort of intersection of the spaces $L^\infty(\M)$, $L^2(\M;r)$, $L^2(\M;l)$ and $L^1(\M)$.
\end{remark}

\medskip

\subsection{The space $L_{1+\infty}(\M)$}\label{ss:a} 
\~
\medskip

The first thing we note about the Orlicz space $L_{1+\infty}(\M)$, is that it is large enough to accommodate all the other Orlicz spaces.

\begin{remark}\label{rem:gen1+infty}
Let $\psi$ be a general Orlicz function. With respect to the topology of convergence in measure, the Orlicz spaces $L^\psi(\M)$ and $L_\psi(\M)$ inject continuously into $L_{1+\infty}(\M)$. The other case being similar, we explain how this works for $L^\psi(\M)$. Since $t\to \widetilde{\varphi}_{\psi^*}(t)$ is continuous and increasing on $(0,\infty)$ whereas  $t\to \frac{\widetilde{\varphi}_{\psi^*}(t)}{t}$ is continuous and decreasing, the function $t \to  \frac{\widetilde{\varphi}_{\psi^*}(t)}{\varphi_{1\cap\infty}(t)}$ is continuous and bounded on 
$(0,\infty)$. For simplicity of notation write $\zeta_\psi(t) = \frac{\widetilde{\varphi}_{\psi^*}(t)}{\varphi_{1\cap\infty}(t)}$. The boundedness of 
$\zeta_\psi$ ensures that $\zeta_\psi(h) \in \mathcal{A}$. Given $a\in L^\psi(\M)$, it is now clear that 
$e\varphi_{1\cap\infty}(h)^{1/2}.\zeta_\psi(h)^{1/2}a\zeta_\psi(h)^{1/2}.\varphi_{1\cap\infty}(h)^{1/2}f = e\widetilde{\varphi}_{\psi^*}(h)^{1/2}a\widetilde{\varphi}_{\psi^*}(h)^{1/2}f \in L^1(\M)$ for all projections $e,f \in \pi(\mathfrak{n}_\nu)$, 
and hence that $\zeta_\psi(h)^{1/2}a\zeta_\psi(h)^{1/2} \in L_{1+\infty}(\M)$. This clearly means that the formal prescription 
$b \to \zeta_\psi(h)^{1/2}b\zeta_\psi(h)^{1/2}$ yields a well-defined map from $L^\psi(\M)$ to $L_{1+\infty}(\M)$. We shall denote this embedding by $\iota_\psi$. The claim regarding the continuity of the embedding follows from the fact that $\mu_t(\iota_\psi(a)) = \mu_t(\zeta_\psi(h)^{1/2}a\zeta_\psi(h)^{1/2}) \leq \|\zeta_\psi(h)\|_\infty\mu_t(a)$ for all $t \geq 0$.  
\end{remark}

We saw in Remark \ref{1capinfrem} that $L^{1\cap\infty}(\M)$ may be viewed as some sort of intersection of $L^1$, $L^\infty$ and the left and right $L^2$ spaces $L^2(\M;l)$ and $L^2(\M;r)$. Given the nature of $L^{1\cap\infty}(\M)$ and the classical duality between $L^{1\cap\infty}$ and $L_{1+\infty}$, we expect that by contrast $L_{1+\infty}(\M)$ should somehow be related to the sum of these four spaces. In the remainder of this subsection we will labour toward giving concrete expression to this suspicion. To describe how this works we shall make use of the embeddings of 
$L^1$ and $L^\infty$ into $L^{1+\infty}(\M)$ as defined above, and also similarly define embeddings of $L^2(\M;l)$ and $L^2(\M;r)$ into $L_{1+\infty}(\M)$. With $\zeta_1(t)$ denoting $\frac{\varphi_{1+\infty}(t)}{t}$, these embeddings effectively work as follows:
\begin{eqnarray*}
\iota_\infty: L^\infty(\M) \to L_{1+\infty}(\M)&:& a \to \varphi_{1+\infty}(h)^{1/2}a\varphi_{1+\infty}(h)^{1/2}\\
\iota_{2,r}: L^2(\M;r) \to L_{1+\infty}(\M)&:& a \to \zeta_1(h)^{1/2}a\varphi_{1+\infty}(h)^{1/2}\\
\iota_{2,l}: L^2(\M;l) \to L_{1+\infty}(\M)&:& a \to \varphi_{1+\infty}(h)^{1/2}a\zeta_1(h)^{1/2}\\
\iota_1: L^1(\M) \to L_{1+\infty}(\M)&:& a \to \zeta_1(h)^{1/2}a\zeta_1(h)^{1/2}\\
\end{eqnarray*}
An easy modification of the arguments in the preceding remark shows that $\zeta_1(h)$ is bounded and the embeddings well-defined.

\emph{In the following we will write $X_{1+\infty}$ for $\mathrm{span}[(S_{1+\infty})^*S_{1+\infty}]$}.
  
\begin{definition}
Suppose that the equalities described in the Conjecture \ref{genfac} hold for the pair $(\M, \nu)$. Given any Orlicz function $\psi$, the subspace $\mathrm{span}\{(S^{\psi})^*S^{\psi}\}$ of $L^\psi(\M)$ has a dual action on $L_\psi(\M)$ defined as follows: Given $a_0,a_1 \in S^\psi$ and $b_0, b_1 \in S_{\psi^*}$ we define the quantity $\wtr$ by $$\wtr((a_0^*a_1)(b_0^*b_1)) = \tr(a_1b_0^*b_1a_0^*) = \tr(b_1a_0^*a_1b_0^*).$$(Here we make use of the fact that both $S^\psi(S_{\psi^*})^*$ and $S_{\psi^*}(S^\psi)^*$ live inside $L^2(\M)$.)By linearity this action extends to an action of $\mathrm{span}\{(S^{\psi})^*S^{\psi}\}$ on $\mathrm{span}\{(S_{\psi^*})^*S_{\psi^*}\}$. Since for any $a_0,a_1 \in S^\psi$ and $b\in\mathrm{span}\{(S_{\psi^*})^*S_{\psi^*}\}$ we have that $$|\wtr((a_0^*a_1)b)| = |\tr(a_1ba_0^*)| = 3\mu_3(a_1ba_0^*)\leq 3\mu_1(a_0)\mu_1(a_1)\mu_1(b),$$it is clear that the action of $\mathrm{span}\{(S^{\psi})^*S^{\psi}\}$ on $\mathrm{span}\{(S_{\psi^*})^*S_{\psi^*}\}$ is continuous and hence extends by continuity to all of $L_{\psi^*}(\M)=\overline{\mathrm{span}}\{(S_{\psi^*})^*S_{\psi^*}\}$. Similarly $\mathrm{span}\{(S_{\psi^*})^*S_{\psi^*}\}$ induces a continuous action on $L^\psi(\M)$.
\end{definition}

\begin{theorem}\label{con2}
$X_{1+\infty} = \iota_1(L^1(\M)) + \iota_{2,r}(L^2(\M;r)) + \iota_{2,l}(L^2(\M;l)) + \iota_\infty(L^\infty(\M))$.
\end{theorem}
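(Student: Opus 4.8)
The plan is to prove the two inclusions separately, using the factorisation $L_{1+\infty}(\M) = \overline{\mathrm{span}}\{a^*b : a,b \in S_{1+\infty}\}$ from Conjecture \ref{genfac} (which we are entitled to assume in this context) together with the explicit form of the embeddings $\iota_1,\iota_{2,r},\iota_{2,l},\iota_\infty$. For the inclusion $X_{1+\infty} \subseteq \iota_1(L^1)+\iota_{2,r}(L^2(\M;r))+\iota_{2,l}(L^2(\M;l))+\iota_\infty(L^\infty)$, the key observation is that for $a,b \in S_{1+\infty}$ one has $a,b \in L^2(\M)$ after multiplying by $h^{1/2}$ appropriately; more precisely, by the analogue of Theorem \ref{K3gen} for $S_{1+\infty}$ and by the definition of $S_\psi$, the operator $a{\varphi}_{1+\infty}(h)^{1/2}\chi_{[0,\delta]}(h)$ satisfies the $\theta_s$-covariance of an $L^2$-element; the point is to decompose $h^{1/2} = \varphi_{1+\infty}(h)^{1/2} + [h^{1/2} - \varphi_{1+\infty}(h)^{1/2}]$ (noting $h - \varphi_{1+\infty}(h) = h\cdot(1 - \min(1,1/h))$ is the ``large $h$'' part, hence comparable to $h$ via $\zeta_1$), and use this to split a generic product $a^*b$ into four pieces landing respectively in the ranges of the four embeddings. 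Concretely, I would write $a^*b$ using the spectral decomposition $\I = \chi_{[0,1]}(h) + \chi_{(1,\infty)}(h)$ on each side, giving four terms; on the $\chi_{[0,1]}$--$\chi_{[0,1]}$ corner the weights $\varphi_{1+\infty}(h)$ and $h$ agree up to a bounded invertible factor, so this lands in $\iota_1(L^1(\M))$; the mixed corners land in $\iota_{2,l}$ and $\iota_{2,r}$; and the $\chi_{(1,\infty)}$--$\chi_{(1,\infty)}$ corner, where $\varphi_{1+\infty}(h) = 1$ is bounded, lands in $\iota_\infty(L^\infty(\M))$.

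For the reverse inclusion, I would show each summand lies in $X_{1+\infty}$. This amounts to checking that for each of the four embeddings, a generic element of the range can be written as a (limit of) sum of products $a^*b$ with $a,b \in S_{1+\infty}$. For $\iota_\infty(L^\infty(\M))$: every $x \in L^\infty(\M) = \M$ factors (after polar decomposition and a square-root trick) as $x = |x^*|^{1/2}\cdot u|x|^{1/2}$ with both factors bounded, and one checks that $\varphi_{1+\infty}(h)^{1/2}y \in S_{1+\infty}$ for bounded $y$ exactly when the relevant $\theta_s$-covariance holds, which it does since $\varphi_{1+\infty}(t)^{1/2}\widetilde{\varphi}_{(1+\infty)^*}(t)^{1/2} = \varphi_{1+\infty}(t)^{1/2}\cdot t^{1/2}\varphi_{1\cap\infty}(t)^{-1/2}$ behaves correctly — here I would lean on the $L^p$-identification from Remark \ref{lp} with $\psi_{1+\infty}$ playing the role of $\psi_\infty$-like behaviour on the relevant spectral region. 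For $\iota_1(L^1(\M))$: use $L^1(\M) = L^2(\M)\cdot L^2(\M)$ and the already-established fact (proof of Theorem \ref{K3gen}, plus the definition of $S_{1+\infty}$) that the $L^2$-factors, suitably dressed with powers of $h$, lie in $S_{1+\infty}$. The $L^2(\M;l)$ and $L^2(\M;r)$ cases are intermediate and handled by the same factorisation bookkeeping, pairing one ``$S_{1+\infty}$-type'' factor with one bounded factor.

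The main obstacle I expect is bookkeeping the strong products rigorously: $h$ is not $\tau_{\mathcal{A}}$-measurable, so all the manipulations ``$h^{1/2} = \varphi_{1+\infty}(h)^{1/2} + (\text{large part})$'' and the corner decompositions must be justified at the level of (pre-)measurable operators, using the spectral cutoffs $\chi_{[0,\delta]}(h)$ as in Definition \ref{315} and taking the appropriate monotone limits $\delta \to \infty$ — exactly the kind of limiting argument run in Lemma \ref{arblemma} and Proposition \ref{orldef}. In particular, verifying that the four corner pieces genuinely belong to the \emph{closures} defining $L^2(\M;l)$, $L^2(\M;r)$ (which are defined as closures of $\{ah^{1/2}\}$, $\{h^{1/2}a\}$ for $a \in \pi(\mathfrak{n}_\nu)$) will require approximating by the net $\{f_\alpha\}$ of Lemma \ref{tech2} and controlling convergence in measure via Theorem \ref{mu1}. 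Once the measurability/closure issues are dispatched, the algebraic identities are routine applications of $\varphi_{1+\infty}(t)\widetilde{\varphi}_{1\cap\infty}(t) = t$ and the boundedness of the various ratio functions $\zeta_1, \zeta_\psi$ already used in Remark \ref{rem:gen1+infty}.
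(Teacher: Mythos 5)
Your proposed proof of the hard inclusion $X_{1+\infty}\subseteq\iota_1(L^1(\M))+\iota_{2,r}(L^2(\M;r))+\iota_{2,l}(L^2(\M;l))+\iota_\infty(L^\infty(\M))$ hinges on cutting $a^*b$ with the spectral projections $\chi_{[0,1]}(h)$ and $\chi_{(1,\infty)}(h)$ and claiming that the four corners land in the ranges of the four embeddings; that step is false. Test it in the semifinite model $\mathcal{A}\cong\M\otimes L^\infty(\mathbb{R})$, $h=\I\otimes e^t$, where every element of each of the four ranges is of the form $w\otimes\min(1,e^t)$ with $w$ in $L^1(\M,\tau_\M)$, $L^2$, $L^2$ or $\M$ respectively. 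Take $a=b=\I\otimes\min(1,e^t)^{1/2}$, which does lie in $S_{1+\infty}$, so that $a^*b=\I\otimes\min(1,e^t)=\iota_\infty(\I)$. Your corners are $\I\otimes e^t\chi_{(-\infty,0]}(t)$ and $\I\otimes\chi_{(0,\infty)}(t)$ (the mixed corners vanish), and neither lies in $\iota_\infty(L^\infty(\M))$, nor in $\iota_1(L^1(\M))$, nor even in the sum of all four ranges: comparing the $t$-profiles with $\min(1,e^t)$ forces the coefficient to be $0$ for $t>0$ and $\I$ for $t<0$ simultaneously. The structural reason is that multiplication by $\chi_{(1,\infty)}(h)$ destroys the $\theta_s$-covariance that characterises membership of $L^1(\M)$, $L^2(\M;l/r)$ and $L^\infty(\M)=\pi(\M)$ (indeed $\theta_s(\chi_{(1,\infty)}(h))=\chi_{(e^s,\infty)}(h)$), and the splitting of an ``$L^1+L^\infty$''-type element into integrable and bounded parts is governed by the spectral data of the element itself, not of the density $h$; in particular the small-$h$ corner has no reason to carry an $L^1$ coefficient.

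This is precisely the difficulty the paper's argument is designed to overcome, and your outline has no substitute for it. The paper regards each $a\in X_{1+\infty}$ as a bounded functional $F_a$ on $\varphi_{1\cap\infty}(h)^{1/2}\mathfrak{m}_\nu\varphi_{1\cap\infty}(h)^{1/2}\subset L^{1\cap\infty}(\M)$ (via Theorem \ref{thm:genminnorm}), identifies $L^{1\cap\infty}(\M)$ with a subspace of $L^\infty(\M)\times L^2(\M;l)\times L^2(\M;r)\times L^1(\M)$, extends $F_a$ by Hahn--Banach, reads off the $L^2(\M;r)$, $L^2(\M;l)$ and $L^\infty(\M)$ components, and then must prove that the remaining component --- a priori only an element of $L^\infty(\M)^*$ --- is implemented by an element $w\in L^1(\M)$; this normality step is the heart of the proof and uses the extended positive part, Haagerup's theorem on increasing limits of weights and the net of Lemma \ref{tech2}. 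Nothing in your proposal produces the $L^1$-component. Two further points: the theorem concerns the algebraic span $X_{1+\infty}$ and is unconditional, so you are not entitled to assume Conjecture \ref{genfac} (it is open, and Theorem \ref{con2} is itself used later in the proof of Theorem \ref{K3gen}${}^\prime$); and your reverse inclusion, while essentially sound in outline, is the direction the paper dispatches as an exercise.
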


\begin{proof}
It is an exercise to see that $X_{1+\infty} \supset \iota_1(L^1(\M)) + \iota_{2,r}(L^2(\M;r)) + \iota_{2,l}(L^2(\M;l)) + \iota_\infty(L^\infty(\M))$. It therefore remains to prove the converse. We firstly note that each element of $X_{1+\infty}$ induces a well defined continuous linear functional on the subspace $\varphi_{1\cap\infty}(h)^{1/2}\mathfrak{m}_\nu\varphi_{1\cap\infty}(h)^{1/2}$ of $L^{1\cap\infty}(\M)$. To see this let $a \in X_{1+\infty}$, $b\in \mathfrak{m}_\nu$ be given with $a=a_0^*a_1$ and $b=b_0^*b_1$ where $a_0,a_1 \in S_{1+\infty}$ and $b_0, b_1 \in \mathfrak{n}_\nu$. The action of $X_{1+\infty}$ on $\varphi_{1\cap\infty}\mathfrak{m}_\nu\varphi_{1\cap\infty}$ is then defined by setting
$$\wtr(ab) = \tr((b_1\varphi_{1\cap\infty}(h)^{1/2}a_0^*)(a_1\varphi_{1\cap\infty}(h)^{1/2}b_0^*)).$$To see that this quantity is well-defined notice that by construction both $a_0\varphi_{1\cap\infty}(h)^{1/2}$ and $a_1\varphi_{1\cap\infty}(h)^{1/2}$ belong to $S^{1\cap\infty}$, which then ensures that $(b_1\varphi_{1\cap\infty}(h)^{1/2}a_0^*)$ and $(a_1\varphi_{1\cap\infty}(h)^{1/2}b_0^*)$ belong to $L^2(\M)$. Uniqueness follows from the fact that $$\tr((b_1\varphi_{1\cap\infty}(h)^{1/2}a_0^*)(a_1\varphi_{1\cap\infty}(h)^{1/2}b_0^*)) = \tr((a_1\varphi_{1\cap\infty}(h)^{1/2}b_0^*)(b_1\varphi_{1\cap\infty}(h)^{1/2}a_0^*)).$$
On setting $w=\varphi_{1\cap\infty}(h)^{1/2}b_0^*b_1\varphi_{1\cap\infty}(h)^{1/2}$ for ease of notation, it will now additionally follow from the relation of $\|.\|_1$ and $t\mu_t$, and Theorem \ref{thm:genminnorm} that 
\begin{eqnarray*}
|\wtr(wa)| = |\tr(a_0wa_1^*)| &\leq& \|a_0wa_1^*\|_1\\
&=& 3\mu_3(a_0wa_1^*)\\
&\leq& 3\mu_1(a_0)\mu_1(a_1)\mu_1(w)\\
&\leq& 48\mu_1(a_0)\mu_1(a_1)\|w\|_{1\cap\infty}.
\end{eqnarray*}
It follows that $w \to \wtr(aw)$ is a continuous linear functional $F_a$ on $\varphi_{1\cap\infty}(h)^{1/2}\mathfrak{m}_\nu\varphi_{1\cap\infty}(h)^{1/2}$, with norm $\|F_a\| \leq 48\mu_1(a_0)\mu_1(a_1)$. 

As we saw in Remark \ref{1capinfrem}, $L^{1\cap\infty}(\M)$ may be identified with the subspace 
$K=\{(w, h^{1/2}w, wh^{1/2}, h^{1/2}wh^{1/2}) : w \in \pi(\mathfrak{m}_{1\cap\infty})\}$ of the Banach space $L^\infty(\M) \times L^2(\M;l) \times L^2(\M;r) \times L^1(\M)$ equipped with the norm $$\|(c,d,e,f)\| = \max(\|c\|_\infty, \|d\|_2, \|e\|_2, \|f\|_1)$$by means of the map $\varphi_{1\cap\infty}(h)^{1/2}w\varphi_{1\cap\infty}(h)^{1/2} \to (w, h^{1/2}w, wh^{1/2}, h^{1/2}wh^{1/2})$. The Banach dual of this space is 
$L^\infty(\M)^*\times L^2(\M;r) \times L^2(\M;l) \times L^\infty(\M)$ equipped with the norm $\|(c,d,e,f)\| = \|c\|+\|d\|+\|e\|+\|f\|$. The dual of $K$ may in turn be realised as the quotient space $[L^\infty(\M)^*\times L^2(\M;r) \times L^2(\M;l) \times L^\infty(\M)]/K^\circ$ (where $K^\circ$ is the polar of $K$). For each $(c,d,e,f)$ in some equivalence class of this quotient space, the corresponding norm may be realised by $\|[(c,d,e,f)]\| = \inf\|g\|$ where the infimum is taken over all elements $g$ of $L^\infty(\M)^*\times L^2(\M;r) \times L^2(\M;l) \times L^\infty(\M)$ which agree with $(c,d,e,f)$ on $K$. By the Hahn-Banach theorem, the functional $F_a$ can with preservation of norm be extended to a functional acting on all of $L^\infty(\M) \times L^2(\M;l) \times L^2(\M;r) \times L^1(\M)$. Let this functional be $(g_1, g_{2,r}, g_{2,l}, g_\infty)$. Now of course in its action on $L^1(\M)$, $g_\infty$ is of the form $g_\infty(b) = \tr(a_\infty b)$ for some $a_\infty \in L^\infty(\M)$. As far as the actions of $g_{2,r}$ and $g_{2,l}$ on $L^2(\M;l)$ and $L^2(\M;r)$ are concerned, we may similarly find elements $a_{2,r}\in L^2(\M;r)$ and $g_{2,l} \in L^2(\M;l)$ so that $g_{2,r}(\cdot) = \tr(\cdot a_{2,r})$ and $g_{2,l}=\tr(\cdot a_{2,l})$. The challenge now is to show that $a - [\iota_{2,r}(a_{2,r}) + \iota_{2,l}(a_{2,l}) + \iota_\infty(a_\infty)]$ is of the form $\iota_1(a_1)$ for some $a_1\in L^1(\M)$, and that the action of $g_1$ on $L^\infty(\M)$ is induced by $a_1$. To simplify notation we will write $a_0 = \iota_{2,r}(a_{2,r}) + \iota_{2,l}(a_{2,l}) + \iota_\infty(a_\infty)$. 

Since the extension $(g_1, g_{2,r}, g_{2,l}, g_\infty)$ of $F_a$ preserves the norm of $F_a$, we have that $\|g_1\| + \|g_{2,r}\| + \|g_{2,l}\| + \|g_\infty\| = \|F_a\| = \inf\|g\|$ where as before the infimum is taken over all elements $g$ of $L^\infty(\M)^*\times L^2(\M;r) \times L^2(\M;l) \times L^\infty(\M)$ which agree with $(g_1, g_{2,r}, g_{2,l}, g_\infty)$ on $K$. Notice that $(0, g_{2,r}, g_{2,l}, g_\infty)$ is then an extension of $F_{a_0}$. We claim that $\|(0, g_{2,r}, g_{2,l}, g_\infty)\|= \|g_{2,r}\| + \|g_{2,l}\| + \|g_\infty\| = \|F_{a_0}\| = \inf\{\|g\|: g \mbox{ an extension of } F_{a_0}\}$, and hence that $(0, g_{2,r}, g_{2,l}, g_\infty)$ is the Hahn-Banach extension of $F_{a_0}$. If this norm equality did not hold, we would be able to find an extension $(f_1, f_{2,r}, f_{2,l}, f_\infty)$ of $\|F_{a_0}\|$ with $\|(f_1, f_{2,r}, f_{2,l}, f_\infty)\| < \|(0, g_{2,r}, g_{2,l}, g_\infty)\| = \|g_{2,r}\| + \|g_{2,l}\| + \|g_\infty\|$. The functional $(g_1+f_1, f_{2,r}, f_{2,l}, f_\infty)$ would then be an extension of $F_a$ for which $\|(g_1+f_1, f_{2,r}, f_{2,l}, f_\infty)\| < \|g_1\| + \|g_{2,r}\| + \|g_{2,l}\| + \|g_\infty\| = \|F_a\| = \inf\{\|g\| : g \mbox{ an extension of } F_{a}\}$, which is clearly impossible. Since $(g_1, g_{2,r}, g_{2,l}, g_\infty)$ is the Hahn-Banach extension of $F_a$ and $(0, g_{2,r}, g_{2,l}, g_\infty)$ the Hahn-Banach extension of $F_{a_0}$, $(g_1,0,0,0)$ is an extension of $F_{a-a_0}$. So for any $b_0,b_1\in\mathfrak{n}_\nu$ we have that 
\begin{eqnarray}\label{ineq:aa}
|\tr(b_0\varphi_{1\cap\infty}(h)^{1/2}(a&-&a_0)\varphi_{1\cap\infty}(h)^{1/2}b_1^*)|\nonumber\\
&=& |\wtr(\varphi_{1\cap\infty}(h)^{1/2}b_1^*b_0\varphi_{1\cap\infty}(h)^{1/2}(a-a_0))|\nonumber\\
&=& |<(g_1, 0, 0, 0),(b_1^*b_0, b_1^*b_0h^{1/2}, h^{1/2}b_1^*b_0, h^{1/2}b_1^*b_0h^{1/2})>|\\
&=& |g_1(b_1^*b_0)|\nonumber\\
&\leq& \|g_1\|.\|b_1^*b_0\|_\infty.\nonumber
\end{eqnarray}

We are now ready to show that $\varphi_{1\cap\infty}(h)^{1/2}(a-a_0)\varphi_{1\cap\infty}(h)^{1/2}$ belongs to $L^1(\M)$. However this will be done in several stages. First we gather some information regarding the extended positive part of $\mathcal{A}$.  Recall that any positive operator $k$ affiliated with $\mathcal{A}$ for which $\theta_s(k)=e^{-s}k$, corresponds uniquely to a normal semifinite weight $\phi_k$ on $\M$ with $k$ appearing as the density $\frac{d\widetilde{\phi_k}}{d\tau_{\mathcal{A}}}$ of the dual weight $\widetilde{\phi_k}$ with respect to the trace $\tau_{\mathcal{A}}$ (see chapter II of \cite{Tp}). Now let $T$ be the canonical operator-valued weight from the extended positive part of $\mathcal{A}$ to the extended positive part of $\M$. Select $f$ in the extended positive part of $\mathcal{A}$ so that $T(f)=\I$. Then in the notation of Proposition 1.11 of \cite{Ha}, we have that $\phi_k(\I) = \widetilde{\phi_k}(f) = \tau_{\mathcal{A}}(k\cdot f)$. If now $\{k_\alpha\}$ was a net of positive affiliated operators increasing pointwise to $k$, then by \cite[Proposition 1.11]{Ha} we would have $\sup_\alpha\phi_{k_\alpha}(\I) = \sup_\alpha\tau_{\mathcal{A}}(k_\alpha\cdot f) = \tau_{\mathcal{A}}(k\cdot f) = \phi_k(\I)$. 

By Lemma \ref{tech2}, there exists a net $\{p_\alpha\}$ in $(\mathfrak{n}_\nu)^+$ for which $\{p_\alpha^2\}$ increases monotonically (and hence $\sigma$-strongly) to $\I$. Fixing $\beta$ for the moment, the fact that $[\varphi_{1\cap\infty}(h)^{1/2}p_\alpha][p_\alpha\varphi_{1\cap\infty}(h)^{1/2}]^2$ increases monotonically to $\varphi_{1\cap\infty}(h)$, ensures that the operators $|[p_\alpha\varphi_{1\cap\infty}(h)^{1/2}](a-a_0)[\varphi_{1\cap\infty}(h)^{1/2}p_\beta]|^2$ increase to the (possibly not densely defined) operator $|[\varphi_{1\cap\infty}(h)^{1/2}(a-a_0)[\varphi_{1\cap\infty}(h)^{1/2}p_\beta]]|^2$, as $p_\alpha^2\to\I$.
Passing to square roots, it follows that the operators $k_{\alpha\beta}=|[p_\alpha\varphi_{1\cap\infty}(h)^{1/2}](a-a_0)[\varphi_{1\cap\infty}(h)^{1/2}p_\beta]| \in L^1(\M)$ increase to $k_\beta = |\varphi_{1\cap\infty}(h)^{1/2}(a-a_0)[\varphi_{1\cap\infty}(h)^{1/2}p_\beta]|$ as $p_\alpha\to\I$. A priori it is of course not at all clear that $k_\beta$ exists as a densely defined affiliated operator. However recall that the extended positive part of $\mathcal{A}$ is closed under increasing pointwise limits, and hence that we may give meaning to this latter object as an element of the extended positive part of $\mathcal{A}$. 

Now let $\phi_{\alpha\beta}$ denote the finite normal weight on $\M$ corresponding to $k_{\alpha\beta}$. (Since $k_{\alpha\beta} \in L^1(\M)$, we actually have that $\phi_{\alpha\beta}\in(\M_*)^+$.) We will use these weights to compute the normal weight associated with $k_\beta$ and show that it belongs to $L^1(\M)$.  Notice that since each $\phi_{\alpha\beta}$ belongs to $(\M_*)^+$, the expression $\phi_0 = \sup_\alpha \phi_{\alpha\beta}$ defines a normal weight on $(\M)^+$. Let $v_{\alpha\beta}$ be the partial isometries in the polar decomposition of $[p_\alpha\varphi_{1\cap\infty}(h)^{1/2}](a-a_0)[\varphi_{1\cap\infty}(h)^{1/2}p_\beta]$. Since for each $\alpha$ we have that 
\begin{eqnarray*}
\phi_{\alpha\beta}(\I) &=& \tr(k_{\alpha\beta})\\
&=& \tr(v^*_{\alpha\beta}[p_\alpha\varphi_{1\cap\infty}(h)^{1/2}](a-a_0)[\varphi_{1\cap\infty}(h)^{1/2}p_\beta])\\
&=& |g_1(p_\beta v^*_{\alpha\beta}p_\alpha)|\\
&\leq& \|g_1\|,
\end{eqnarray*}
it is clear that $\phi_0(\I)\leq \|g_1\| <\infty$. In other words $\phi_0 \in (\M_*)^+$. Passing to the dual weights and denoting the normal weight on $\mathcal{A}$ induced by $k_\beta$ by $\nu_\beta$, we will for any $g$ in the extended positive part of $\mathcal{A}$ have that $$\widetilde{\phi_0}(g) = \sup_\alpha\widetilde{\phi_{\alpha\beta}}(g) = \sup_\alpha\tau_{\mathcal{A}}(k_{\alpha\beta}\cdot g) = \tau_{\mathcal{A}}(k_\beta\cdot g) = \nu_\beta(g)$$(here we once again used \cite[Theorem 1.12]{Ha}). Clearly $\widetilde{\phi_0}$ must then agree with $\nu_\beta$.

Now since $\widetilde{\phi_0}$ agrees with $\nu_\beta$, this in turn ensures that the element $k_0$ in $L^1(\M)$ corresponding to $\phi_0\in\M_*$, agrees with $k_\beta = |\varphi_{1\cap\infty}(h)^{1/2}(a-a_0)[\varphi_{1\cap\infty}(h)^{1/2}p_\beta]|$. Thus $k_\beta=|\varphi_{1\cap\infty}(h)^{1/2}(a-a_0)[\varphi_{1\cap\infty}(h)^{1/2}p_\beta]|$ is not only an operator, but in fact an element of $L^1(\M)$. 
Equivalently we have that $\varphi_{1\cap\infty}(h)^{1/2}(a-a_0)[\varphi_{1\cap\infty}(h)^{1/2}p_\beta]$ is closable with minimal closed extension an element of $L^1(\M)$. Taking adjoints we therefore have that  
$[[p_\beta\varphi_{1\cap\infty}(h)^{1/2}](a^*-a^*_0)\varphi_{1\cap\infty}(h)^{1/2}] \in L^1(\M)$ for every $\beta$. On repeating essentially the same argument as before, we can now show that the operators $[[p_\beta\varphi_{1\cap\infty}(h)^{1/2}](a^*-a^*_0)\varphi_{1\cap\infty}(h)^{1/2}]$ increase to  $[\varphi_{1\cap\infty}(h)^{1/2}(a^*-a^*_0)\varphi_{1\cap\infty}(h)^{1/2}] \in L^1(\M)$, and hence that $w=[\varphi_{1\cap\infty}(h)^{1/2}(a-a_0)\varphi_{1\cap\infty}(h)^{1/2}] \in L^1(\M)$.

It is clear from \ref{ineq:aa} above that by construction $\tr(bw) = g_1(b)$ for any $b\in \mathfrak{m}_\nu$. This in turn ensures that $\|w\|_1 \leq \|g_1\|$. If we write $g_w$ for the functional $\tr(w\cdot)$, it follows that $(g_w, g_{2,r}, g_{2,l}, g_\infty)$ is an extension of $F_a$ which must be a Hahn-Banach extension, since $\|(g_w, g_{2,r}, g_{2,l}, g_\infty)\| \leq \|g_1\| + \|g_{2,r}\| + \|g_{2,l}\| + \|g_\infty\| = \|F_a\| = \inf\{\|g\| : g \mbox{ an extension of } F_{a}\}$. We may therefore without loss of generality replace $g_1$ with $g_w$ if necessary. This ensures that as far as its action on $K$ is concerned, $(g_1, g_{2,r}, g_{2,l}, g_\infty)$ is indeed induced by $\iota_1(w)+\iota_{2,r}(a_{2,r}) + \iota_{2,l}(a_{2,l}) + \iota_\infty(a_\infty)$. This in turn is sufficient to ensure that $a = \iota_1(w)+\iota_{2,r}(a_{2,r}) + \iota_{2,l}(a_{2,l}) + \iota_\infty(a_\infty)$.
\end{proof}

\begin{definition}
We define the norm on $X_{1+\infty}$ to be $$\|b\|_{1+\infty} = \inf(\|c\|_1 + \|d\|_2 + \|e\|_2 + \|f\|_\infty)$$where the infimum is taken over all representations of $b$ of the form $b = \iota_1(c) + \iota_{2,r}(d) + \iota_{2,l}(e) + \iota_\infty(f)$ where $c\in L^1(\M)$, $d\in L^2(\M;r)$, $e\in L^2(\M;l)$ and $d\in L^\infty(\M)$.
\end{definition}

\begin{remark}
We observe that the infimum in the definition of the norm of $X_{1+\infty}(\M)$ is actually a minimum. In the notation of the proof, this much can be deduced from the fact that $(g_w, g_{2,r}, g_{2,l}, g_\infty)$ is a Hahn-Banach extension of $F_a$.
\end{remark}

\begin{corollary}\label{dualpair}
The spaces $X_{1+\infty}$ and $L^{1\cap\infty}(\M)$ form a dual pair with the dual action of $X_{1+\infty}$ on $L^{1\cap\infty}(\M)$  defined by $$\langle a,b\rangle = \tr(b_1a_0 + b_{2,r}(h^{1/2}a_0) + b_{2,l}(a_0h^{1/2}) + b_\infty (h^{1/2}a_0h^{1/2}))$$where $a\in L^{1\cap\infty}(\M)$, $b\in X_{1+\infty}$, where $a_0 \in \mathfrak{m}_{1\cap\infty}$ is selected so that \newline $\varphi_{1\cap\infty}(h)^{1/2}a_0\varphi_{1\cap\infty}(h)^{1/2}=a$, and where $b_1\in L^1(\M)$, $b_{2,r}\in L^2(\M;r)$, $b_{2,l}\in L^2(\M;l)$ and $b_\infty\in L^\infty(\M)$ are selected so that $b = \iota_1(b_1) + \iota_{2,r}(b_{2,r}) + \iota_{2,l}(b_{2,l}) + \iota_\infty(b_\infty)$. Moreover $$|\langle a,b\rangle| \leq \|b\|_{1+\infty}.\|a\|_{1\cap\infty}.$$ 
\end{corollary}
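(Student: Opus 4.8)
The plan is to derive the corollary directly from Theorem~\ref{con2} and the identification of $L^{1\cap\infty}(\M)$ recorded in Remark~\ref{1capinfrem}. Write $Y = L^\infty(\M) \times L^2(\M;l) \times L^2(\M;r) \times L^1(\M)$ with the max-norm, so that by Remark~\ref{1capinfrem} the assignment $\varphi_{1\cap\infty}(h)^{1/2}a_0\varphi_{1\cap\infty}(h)^{1/2}\mapsto (a_0,\,h^{1/2}a_0,\,a_0h^{1/2},\,h^{1/2}a_0h^{1/2})$ is an isometry of $L^{1\cap\infty}(\M)$ onto a closed subspace $K$ of $Y$; here $a_0 = \varphi_{1\cap\infty}(h)^{-1/2}a\varphi_{1\cap\infty}(h)^{-1/2}\in\mathfrak{m}_{1\cap\infty}$ is uniquely determined by $a$, since $\varphi_{1\cap\infty}(h)^{-1/2}$ is bounded. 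The Banach dual of $Y$ is, via the trace pairing together with $L^1(\M)^*=L^\infty(\M)$ and the self-duality of the $L^2$-spaces, the space $L^\infty(\M)^*\times L^2(\M;r)\times L^2(\M;l)\times L^\infty(\M)$ with the sum-norm, exactly as in the proof of Theorem~\ref{con2}. The idea is to realize the pairing $\langle\cdot,\cdot\rangle$ as the restriction to $K$ of the canonical duality between $Y$ and $Y^*$.

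The algebraic core is the pair of pointwise identities $\varphi_{1+\infty}(t)\varphi_{1\cap\infty}(t)=t$ and $\zeta_1(t)\varphi_{1\cap\infty}(t)=1$ on $(0,\infty)$, which by the Borel functional calculus give $\varphi_{1+\infty}(h)^{1/2}\varphi_{1\cap\infty}(h)^{1/2}=h^{1/2}$ and $\zeta_1(h)^{1/2}\varphi_{1\cap\infty}(h)^{1/2}=\I$. Taking a representation $b=\iota_1(b_1)+\iota_{2,r}(b_{2,r})+\iota_{2,l}(b_{2,l})+\iota_\infty(b_\infty)$ of $b\in X_{1+\infty}$ afforded by Theorem~\ref{con2}, inserting the definitions of the four embeddings and using these identities together with the cyclicity of $\tr$, one checks that for $a=\varphi_{1\cap\infty}(h)^{1/2}a_0\varphi_{1\cap\infty}(h)^{1/2}$,
\[
\tr(ba) = \tr(b_1a_0)+\tr\big(b_{2,r}(h^{1/2}a_0)\big)+\tr\big(b_{2,l}(a_0h^{1/2})\big)+\tr\big(b_\infty(h^{1/2}a_0h^{1/2})\big),
\]
which is precisely the proposed value of $\langle a,b\rangle$. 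In particular the right-hand side depends only on $b$ and $a$ (not on the chosen representation of $b$, nor on that of $a$ by uniqueness of $a_0$), so $\langle\cdot,\cdot\rangle$ is a well-defined bilinear form, and $\langle\cdot,b\rangle$ is the restriction to $K$ of the functional $(c_1,c_2,c_3,c_4)\mapsto \tr(b_1c_1)+\tr(b_{2,r}c_2)+\tr(b_{2,l}c_3)+\tr(b_\infty c_4)$ on $Y$.

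For the norm bound, apply noncommutative H\"older to each summand: $|\tr(b_1a_0)|\le\|b_1\|_1\|a_0\|_\infty$, $|\tr(b_{2,r}(h^{1/2}a_0))|\le\|b_{2,r}\|_2\|h^{1/2}a_0\|_2$, likewise for the $b_{2,l}$-term, and $|\tr(b_\infty(h^{1/2}a_0h^{1/2}))|\le\|b_\infty\|_\infty\|h^{1/2}a_0h^{1/2}\|_1$. Summing and recalling that by definition $\|a\|_{1\cap\infty}=\max\big(\|a_0\|_\infty,\|h^{1/2}a_0\|_2,\|a_0h^{1/2}\|_2,\|h^{1/2}a_0h^{1/2}\|_1\big)$ gives $|\langle a,b\rangle|\le\big(\|b_1\|_1+\|b_{2,r}\|_2+\|b_{2,l}\|_2+\|b_\infty\|_\infty\big)\|a\|_{1\cap\infty}$, and taking the infimum over all representations of $b$ yields $|\langle a,b\rangle|\le\|b\|_{1+\infty}\|a\|_{1\cap\infty}$.

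It remains to check that the pairing separates points, so that $(X_{1+\infty},L^{1\cap\infty}(\M))$ is genuinely a dual pair. If $\langle a,b\rangle=0$ for every $b$, then taking $b=\iota_1(b_1)$ with $b_1$ running over $L^1(\M)$ forces $\tr(b_1a_0)=0$, whence $a_0=0$ and $a=0$ by non-degeneracy of the $L^1(\M)$--$L^\infty(\M)$ duality. Conversely, if $\langle a,b\rangle=0$ for every $a\in L^{1\cap\infty}(\M)$, then since $\varphi_{1\cap\infty}(h)^{1/2}\mathfrak{m}_\nu\varphi_{1\cap\infty}(h)^{1/2}\subseteq L^{1\cap\infty}(\M)$ (Proposition~\ref{prop:gen1capinfty}) one gets $\tr\big([b_1\varphi_{1\cap\infty}(h)^{1/2}]\,b\,[\varphi_{1\cap\infty}(h)^{1/2}b_0^*]\big)=0$ for all $b_0,b_1\in\mathfrak{n}_\nu$; fixing projections $e,e'\in\mathfrak{n}_\nu$, letting $b_0,b_1\in\mathfrak{n}_\nu$ with $b_0=b_0e'$ and $b_1=b_1e$ vary, and using the $\sigma$-weak density of $e'\mathfrak{m}_\nu e$ in $e'\M e$ against the non-degenerate trace pairing of $eL^1(\M)e'$ with $e'\M e$ forces $[e\varphi_{1\cap\infty}(h)^{1/2}]\,b\,[\varphi_{1\cap\infty}(h)^{1/2}e']=0$ for all such $e,e'$; the $L_\psi$-analogue of Lemma~\ref{arblemma} (same proof, with $\varphi_{\psi^*}$ in place of $\widetilde{\varphi}_{\psi^*}$) then gives $b=0$. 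I expect this last point to be the main obstacle: verifying non-degeneracy on the $X_{1+\infty}$ side requires re-running the support and density arguments underlying Lemma~\ref{arblemma}, and some care is needed with the strong products $[b_i\varphi_{1\cap\infty}(h)^{1/2}]$ since $\varphi_{1\cap\infty}(h)^{1/2}$ is unbounded; everything else is routine bookkeeping once Theorem~\ref{con2} is in hand.
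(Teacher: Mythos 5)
Your H\"older estimate and the final infimum over representations of $b$ reproduce the paper's argument verbatim, and your observation that $a_0$ is uniquely determined by $a$ (since $\varphi_{1\cap\infty}(h)^{-1/2}$ is bounded) is fine. The genuine gap is in the step you treat as the ``algebraic core'': you justify independence of the chosen representation of $b$ by asserting that the four-term expression equals $\tr(ba)$, ``by cyclicity of $\tr$''. But $\tr$ is only defined on $L^1(\M)$, and the product $ba$ does not in general lie in $L^1(\M)$: writing $b=\iota_\infty(b_\infty)=\varphi_{1+\infty}(h)^{1/2}b_\infty\varphi_{1+\infty}(h)^{1/2}$ and $a=\varphi_{1\cap\infty}(h)^{1/2}a_0\varphi_{1\cap\infty}(h)^{1/2}$, only the two \emph{adjacent} factors combine to $h^{1/2}$, so $ba=\varphi_{1+\infty}(h)^{1/2}\,b_\infty h^{1/2}a_0\,\varphi_{1\cap\infty}(h)^{1/2}$, and since $h$ need not commute with $b_\infty$ or $a_0$, one gets $\theta_s(ba)=\varphi_{1+\infty}(e^{-s}h)^{1/2}\,e^{-s/2}b_\infty h^{1/2}a_0\,\varphi_{1\cap\infty}(e^{-s}h)^{1/2}\neq e^{-s}ba$ in general, so $ba\notin L^1(\M)$. (It does work in the semifinite model, where $h=\I\otimes e^t$ commutes with $\M\otimes\I$, which is perhaps what misled you.) Likewise the ``cyclicity'' you invoke is only available when the factors being cycled lie in conjugate Haagerup $L^p$-spaces, which these sandwiched unbounded products do not. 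So your proof of well-definedness of $\langle a,b\rangle$ is circular: the identity you use to prove representation-independence is itself undefined.

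The paper closes exactly this hole differently: it proves representation-independence directly, by showing that if a piece $d$ of one component (say of $b_\infty$, with $h^{1/2}d$, $dh^{1/2}$ or $h^{1/2}dh^{1/2}$ measurable) is moved into another slot of the decomposition, the value of $\tr(b_1a_0+b_{2,r}(h^{1/2}a_0)+b_{2,l}(a_0h^{1/2})+b_\infty(h^{1/2}a_0h^{1/2}))$ is unchanged, using trace identities in which the bounded element $d$ pairs against honest $L^1$- and $L^2$-elements built from $a_0$; repeating this for $b_1$, $b_{2,r}$, $b_{2,l}$ gives well-definedness. You would need to supply an argument of this kind (or otherwise give rigorous meaning to $\tr(ba)$) before the rest of your proof stands. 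Two smaller points: the nondegeneracy discussion you flag as the main obstacle is actually extra relative to the paper, which establishes only well-definedness and the bound $|\langle a,b\rangle|\leq\|b\|_{1+\infty}\|a\|_{1\cap\infty}$; and your separation argument on the $X_{1+\infty}$ side would itself need the same kind of care with strong products that you acknowledge, so it cannot be leaned on to patch the earlier step.
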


\begin{proof}
Let $a\in L^{1\cap\infty}(\M)$, $b\in X_{1+\infty}$ be given as in the hypothesis. We show that $\langle a,b\rangle$ is uniquely defined. If indeed $b_\infty$ was of the form $b_1 = c+d$ where $d$ was an element of $\M$ for which $h^{1/2}d \in \widetilde{\mathcal{A}}$, it is then a simple matter to conclude that
\begin{eqnarray*}
&& \tr(b_1a_0 + b_{2,r}(h^{1/2}a_0) + b_{2,l}(a_0h^{1/2}) + b_\infty (h^{1/2}a_0h^{1/2}))\\ 
&=& \tr(b_1a_0 + b_{2,r}(h^{1/2}a_0) + b_{2,l}(a_0h^{1/2}) + [c(h^{1/2}a_0h^{1/2}) + dh^{1/2}a_0h^{1/2}])\\
&=& \tr(b_1a_0 + b_{2,r}(h^{1/2}a_0) + b_{2,l}(a_0h^{1/2}) + [c(h^{1/2}a_0h^{1/2}) + h^{1/2}dh^{1/2}a_0])\\
&=& \tr(b_1a_0 + b_{2,r}(h^{1/2}a_0) + [b_{2,l}+h^{1/2}d](a_0h^{1/2}) + c(h^{1/2}a_0h^{1/2}))\\
\end{eqnarray*}
Similar observations hold for the cases where either one of $dh^{1/2}$ or $h^{1/2}dh^{1/2}$ belong to $\widetilde{\mathcal{A}}$. On repeating the same sort of argument for $b_1$, $b_{2,r}$ and $b_{2,l}$, it becomes clear that each representation of $b$ of the form $b =  \iota_1(b_1) + \iota_{2,r}(b_{2,r}) + \iota_{2,l}(b_{2,l}) + \iota_\infty(b_\infty)$, will yield the same value for $\tr(b_1a_0 + b_{2,r}(h^{1/2}a_0) + b_{2,l}(a_0h^{1/2}) + b_\infty (h^{1/2}a_0h^{1/2}))$. Thus $\langle a,b\rangle$ is well defined.

It remains to prove that $|\langle a,b\rangle| \leq \|b\|_{1+\infty}.\|a\|_{1\cap\infty}$. Suppose we have a representation of $b$ of the form $b =  \iota_1(b_1) + \iota_{2,r}(b_{2,r}) + \iota_{2,l}(b_{2,l}) + \iota_\infty(b_\infty)$ with $a_0 \in \mathfrak{m}_{1\cap\infty}$ selected so that $a=\varphi_{1\cap\infty}(h)^{1/2}a_0\varphi_{1\cap\infty}(h)^{1/2}$. Then 
\begin{eqnarray*}
&& |\langle a,b\rangle|\\
&=& |\tr(b_1a_0 + b_{2,r}(h^{1/2}a_0) + b_{2,l}(a_0h^{1/2}) + b_\infty (h^{1/2}a_0h^{1/2}))|\\
&\leq& |\tr(b_1a_0)| + |\tr(b_{2,r}(h^{1/2}a_0))| + |\tr(b_{2,l}(a_0h^{1/2}))| + |\tr(b_\infty (h^{1/2}a_0h^{1/2}))|\\
&\leq& \|{b_1}\|_1.\|a_0\|_\infty + \|b_{2,r}\|_2.\|h^{1/2}a_0\|_2+ \|b_{2,l}\|_2.\|a_0h^{1/2}\|_2 + \|b_\infty\|_\infty.\|h^{1/2}a_0h^{1/2}\|_1\\
&\leq& \max(\|a_0\|_\infty, \|h^{1/2}a_0\|_2, \|a_0h^{1/2}\|_2, \|h^{1/2}a_0h^{1/2}\|_1).(\|{b_1}\|_1 + \|b_{2,r}\|_2+ \|b_{2,l}\|_2 + \|b_\infty\|_\infty)\\
&=& \|a\|_{1\cap\infty}.(\|{b_1}\|_1 + \|b_{2,r}\|_2+ \|b_{2,l}\|_2 + \|b_\infty\|_\infty).
\end{eqnarray*}
The result now follows by taking the infimum over all representations of $b$ of the form $b =  \iota_1(b_1) + \iota_{2,r}(b_{2,r}) + \iota_{2,l}(b_{2,l}) + \iota_\infty(b_\infty)$ with $a_0 \in \mathfrak{m}_{1\cap\infty}$.
\end{proof}

\begin{lemma}\label{lem:genmaxnorm}
Let $0<\epsilon\leq 4$ be given. For any $a\in X_{1+\infty}$ we have that $$\frac{1}{4}\epsilon\mu_\epsilon(a) \leq  \|a\|_{1+\infty}.$$
\end{lemma}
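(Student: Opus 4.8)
The plan is to exploit the description of $X_{1+\infty}$ obtained in Theorem \ref{con2}, namely $X_{1+\infty} = \iota_1(L^1(\M)) + \iota_{2,r}(L^2(\M;r)) + \iota_{2,l}(L^2(\M;l)) + \iota_\infty(L^\infty(\M))$, and to bound each of the four ``summands'' individually in terms of the singular value function. So given $a\in X_{1+\infty}$, I would fix an arbitrary representation $a = \iota_1(c) + \iota_{2,r}(d) + \iota_{2,l}(e) + \iota_\infty(f)$ and aim to show that $\tfrac14\epsilon\mu_\epsilon(a)$ is dominated by $\|c\|_1 + \|d\|_2 + \|e\|_2 + \|f\|_\infty$; taking the infimum over all such representations then yields $\tfrac14\epsilon\mu_\epsilon(a)\le \|a\|_{1+\infty}$ by the definition of the norm.

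First I would record that all four embedding operators are built by pre- and post-multiplying by bounded operators from $\mathcal{A}$: $\zeta_1(h)=\varphi_{1+\infty}(h)/h$ and $\varphi_{1+\infty}(h)^{1/2}$ are both of norm at most $1$ (since $\min(1,1/t)\le 1$ and $\min(1,t)^{1/2}\le 1$), a fact already used when the embeddings were introduced. Consequently, by the submultiplicativity of $\mu_t$ under multiplication by bounded operators (the fourth bullet in the list of properties from \cite{FK}, $\mu_t(xy)\le\|x\|_\infty\mu_t(y)$ and the analogous statement on the other side), one gets $\mu_t(\iota_1(c))\le\mu_t(c)$, $\mu_t(\iota_{2,r}(d))\le\mu_t(d)$, $\mu_t(\iota_{2,l}(e))\le\mu_t(e)$ and $\mu_t(\iota_\infty(f))\le\mu_t(f)$ for every $t\ge 0$. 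Then using the subadditivity $\mu_{t_1+t_2+t_3+t_4}(x_1+x_2+x_3+x_4)\le \mu_{t_1}(x_1)+\mu_{t_2}(x_2)+\mu_{t_3}(x_3)+\mu_{t_4}(x_4)$ (iterating the third bullet), with $t_i = \epsilon/4$, we obtain
$$
\mu_\epsilon(a) \le \mu_{\epsilon/4}(\iota_1(c)) + \mu_{\epsilon/4}(\iota_{2,r}(d)) + \mu_{\epsilon/4}(\iota_{2,l}(e)) + \mu_{\epsilon/4}(\iota_\infty(f)) \le \mu_{\epsilon/4}(c) + \mu_{\epsilon/4}(d) + \mu_{\epsilon/4}(e) + \mu_{\epsilon/4}(f).
$$

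Next I would convert each term on the right into the appropriate norm. For $f\in L^\infty(\M)$ one has $\mu_s(f)\le\|f\|_\infty$ for all $s>0$, so $\mu_{\epsilon/4}(f)\le\|f\|_\infty$. For $c\in L^1(\M)$ the Haagerup relation $\|c\|_1 = s\,\mu_s(c)$ for all $s>0$ gives $\mu_{\epsilon/4}(c) = \tfrac{4}{\epsilon}\|c\|_1$, hence $\tfrac{\epsilon}{4}\mu_{\epsilon/4}(c)=\|c\|_1$. Similarly $\|d\|_2 = s^{1/2}\mu_s(d)$ gives $\mu_{\epsilon/4}(d) = (4/\epsilon)^{1/2}\|d\|_2$, and likewise for $e$. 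Multiplying the displayed inequality through by $\tfrac{\epsilon}{4}$ and using $0<\epsilon\le 4$ (so that $(\epsilon/4)^{1/2}\le 1$ and $\epsilon/4\le1$), the $L^2$ terms contribute at most $\|d\|_2$ and $\|e\|_2$, the $L^\infty$ term at most $\|f\|_\infty$, and the $L^1$ term exactly $\|c\|_1$; therefore $\tfrac{\epsilon}{4}\mu_\epsilon(a)\le\|c\|_1+\|d\|_2+\|e\|_2+\|f\|_\infty$. Taking the infimum over representations finishes the argument.

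I do not anticipate a genuine obstacle here: the only mildly delicate point is making sure the relation $\|x\|_p = t^{1/p}\mu_t(x)$ for Haagerup $L^p$-spaces is being applied to the correct trace $\tau_{\mathcal{A}}$ and to the correct realisations of $L^1(\M)$, $L^2(\M;l)$, $L^2(\M;r)$ inside $\widetilde{\mathcal{A}}$ — but this is exactly the relation quoted in the Preliminaries and already used in the proof of Theorem \ref{thm:genminnorm}, so it is available. One should also double-check that $L^2(\M;l)$ and $L^2(\M;r)$, being norm-closures of $\{ah^{1/2}\}$ and $\{h^{1/2}a\}$ respectively, are genuinely copies of the Haagerup space $L^2(\M)$ so that the singular-value/norm relation applies to their elements; this is standard and was implicitly used in Remark \ref{1capinfrem}.
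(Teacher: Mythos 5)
Your argument is correct and is essentially the paper's own proof: fix a representation $a=\iota_1(c)+\iota_{2,r}(d)+\iota_{2,l}(e)+\iota_\infty(f)$, use the contractivity of $\zeta_1(h)^{1/2}$ and $\varphi_{1+\infty}(h)^{1/2}$ together with the subadditivity of $\mu_t$ to get $\mu_\epsilon(a)\leq\mu_{\epsilon/4}(c)+\mu_{\epsilon/4}(d)+\mu_{\epsilon/4}(e)+\mu_{\epsilon/4}(f)$, convert each term via $\|x\|_p=t^{1/p}\mu_t(x)$ (the paper invokes Theorem \ref{thm:uniftop} for the $L^\infty$ term), and take the infimum over representations. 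The only difference is cosmetic: the paper works with $\mu_{4\epsilon}$ for $0<\epsilon\leq 1$ and you reparametrise to $\mu_\epsilon$ for $0<\epsilon\leq 4$ from the outset.
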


\begin{proof}
Let $a \in X_{1+\infty}(\M)$ be given and select $c\in L^1(\M)$, $d\in L^2(\M;r)$, $e\in L^2(\M;l)$ and $f\in L^\infty(\M)$ with $a = \iota_1(c) + \iota_{2,r}(d) + \iota_{2,l}(e) + \iota_\infty(f)$. Given any Orlicz function $\psi$, we noted in Remark \ref{rem:gen1+infty} that $\zeta_\psi(t)$ is bounded. For the specific Orlicz function $\psi(t)=t$, we in fact have that $\zeta_\psi(t) = \zeta_1(t)$ is bounded above by 1. Since $\varphi_{1+\infty}(t)$ is similarly bounded above by 1, the Borel functional calculus ensures that $\zeta_1(h)$ and $\varphi_{1+\infty}(h)$ are contractive elements of $\mathcal{A}$. For $0 < \epsilon\leq 1$, we may therefore conclude from the discussion immediately following Remark \ref{rem:gen1+infty}, that 
$$\mu_{4\epsilon}(a) \leq \mu_{\epsilon}(c) + \mu_{\epsilon}(d) + \mu_{\epsilon}(e) + \mu_{\epsilon}(f)).$$ Therefore on applying Theorem \ref{thm:uniftop}, we may conclude that $$\epsilon\mu_{4\epsilon}(a) \leq \|c\|_1 + \sqrt{\epsilon}\|d\|_2 + \sqrt{\epsilon}\|e\|_2 + \epsilon\|f\|_\infty \leq  \|c\|_1 + \|d\|_2 + \|e\|_2 + \|f\|_\infty.$$Now take the infimum over all representations of $a$ of the form $a = \iota_1(c) + \iota_{2,r}(d) + \iota_{2,l}(e) + \iota_\infty(f)$ to see that $$\epsilon\mu_{4\epsilon}(a)\leq \|a\|_{1+\infty}.$$
\end{proof}

\begin{theorem}\label{412}
The space $X_{1+\infty}$ is complete with respect to the norm $\|\cdot\|_{1+\infty}$. 
\end{theorem}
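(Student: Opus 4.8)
The plan is to recognise $X_{1+\infty}$, together with the norm $\|\cdot\|_{1+\infty}$, as an isometric copy of a quotient of a genuine Banach space by a closed subspace, and then invoke the standard fact that such a quotient is complete. Concretely, I would set $Y = L^1(\M)\oplus L^2(\M;r)\oplus L^2(\M;l)\oplus L^\infty(\M)$ equipped with the norm $\|(c,d,e,f)\|_Y = \|c\|_1+\|d\|_2+\|e\|_2+\|f\|_\infty$; this is a Banach space since each of the four summands is one (the two $L^2$-spaces being in fact Hilbert). Define $\Phi:Y\to\widetilde{\mathcal{A}}$ by $\Phi(c,d,e,f)=\iota_1(c)+\iota_{2,r}(d)+\iota_{2,l}(e)+\iota_\infty(f)$. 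By Theorem \ref{con2} the range of $\Phi$ is exactly $X_{1+\infty}$, and by the very definition of $\|\cdot\|_{1+\infty}$ we have $\|b\|_{1+\infty}=\inf\{\|y\|_Y:\Phi(y)=b\}$, which is precisely the quotient norm on $Y/\ker\Phi$ transported to $X_{1+\infty}$ through the canonical bijection $Y/\ker\Phi\to X_{1+\infty}$, $y+\ker\Phi\mapsto\Phi(y)$. Hence completeness of $X_{1+\infty}$ reduces to showing that $N:=\ker\Phi$ is closed in $Y$.

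The second step is to prove that $N$ is closed, which I would do by showing $\Phi$ is continuous from $(Y,\|\cdot\|_Y)$ into $\widetilde{\mathcal{A}}$ equipped with the topology of convergence in measure; then $N=\Phi^{-1}(\{0\})$ is closed because that topology is Hausdorff. For this, note that each of $\iota_1,\iota_{2,r},\iota_{2,l},\iota_\infty$ has the form $x\mapsto k_0xk_1$ for fixed elements $k_0,k_1$ of $\mathcal{A}$ built from $\varphi_{1+\infty}(h)^{1/2}$ and $\zeta_1(h)^{1/2}$, both of which were observed to be contractions in $\mathcal{A}$ in the proof of Lemma \ref{lem:genmaxnorm}. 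Using the inequalities $\mu_t(ax)\le\|a\|_\infty\mu_t(x)$ (and its right-multiplication analogue, via $\mu_t(x)=\mu_t(x^*)$) from the list of properties of $\mu_t$ recalled in Section 1, one gets $\mu_t(\iota_j(x))\le\mu_t(x)$ for all $t>0$. Since norm convergence in each of $L^1(\M)$, $L^2(\M;r)$, $L^2(\M;l)$, $L^\infty(\M)$ forces convergence in measure inside $\widetilde{\mathcal{A}}$ (for the first three via the relation $\|a\|_p=t^{1/p}\mu_t(a)$ for Haagerup $L^p$-spaces, and for $L^\infty(\M)$ since $\mu_t(a)=\|a\|_\infty$), each $\iota_j$, and hence $\Phi$, is continuous into the measure topology; as $\widetilde{\mathcal{A}}$ with that topology is a Hausdorff topological vector space (see \cite{Tp,FK}), $N$ is closed and the argument is complete.

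I do not expect a genuine obstacle here: granting Theorem \ref{con2}, this is the classical statement that a finite sum of Banach spaces continuously embedded in a common Hausdorff topological vector space is again a Banach space, and the only point requiring mild care is confirming that all four maps $\iota_j$ really do land continuously in one and the same Hausdorff ambient space, namely $\widetilde{\mathcal{A}}$ with its measure topology. Should one prefer to avoid quotients entirely, the same result follows by the familiar subsequence device: given a $\|\cdot\|_{1+\infty}$-Cauchy sequence, pass to a subsequence $(b_k)$ with $\|b_{k+1}-b_k\|_{1+\infty}<2^{-k}$, write each difference as $\iota_1(c_k)+\iota_{2,r}(d_k)+\iota_{2,l}(e_k)+\iota_\infty(f_k)$ with the sum of the four norms less than $2^{-k}$, sum the four series separately in $L^1(\M)$, $L^2(\M;r)$, $L^2(\M;l)$, $L^\infty(\M)$, and take the $\Phi$-image of the resulting quadruple as the limit; this incurs only slightly more bookkeeping.
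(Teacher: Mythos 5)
Your proof is correct, but it follows a genuinely different route from the paper. You reduce everything to the standard fact that a Banach space modulo a closed subspace is complete: you take $Y=L^1(\M)\oplus L^2(\M;r)\oplus L^2(\M;l)\oplus L^\infty(\M)$ with the sum norm, note that $\|\cdot\|_{1+\infty}$ is by definition the quotient norm transported through the surjection $\Phi$ onto $X_{1+\infty}$ (surjectivity being Theorem \ref{con2}), and obtain closedness of $\ker\Phi$ from the continuity of the four embeddings $\iota_1,\iota_{2,r},\iota_{2,l},\iota_\infty$ into $\widetilde{\mathcal{A}}$ with its (Hausdorff) measure topology, via $\mu_t(k_0xk_1)\le\|k_0\|_\infty\|k_1\|_\infty\mu_t(x)$ and the relation between the Haagerup norms and $\mu_t$; as a bonus this also re-establishes that $\|\cdot\|_{1+\infty}$ is definite. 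The paper instead argues with Cauchy sequences through the dual pairing: it regards $X_{1+\infty}$ as a subspace of the Banach dual of $L^{1\cap\infty}(\M)$ (Theorem \ref{con2}), uses Lemma \ref{lem:genmaxnorm} and Proposition \ref{qntop} to extract a measure-limit $b\in L_{1+\infty}(\M)$ of the Cauchy sequence, shows the limit functional is induced by $b$, and then reruns the Hahn--Banach part of the proof of Theorem \ref{con2} to see that $b$ again decomposes as $\iota_1(b_1)+\iota_{2,r}(b_{2,r})+\iota_{2,l}(b_{2,l})+\iota_\infty(b_\infty)$. Your approach buys brevity and self-containedness: it needs Theorem \ref{con2} only for the identification of the range of $\Phi$ and avoids the duality/Hahn--Banach machinery altogether (in particular the step the paper leaves as ``a modification of the argument of Theorem \ref{con2}''). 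The paper's argument, on the other hand, yields a concrete description of the limit as the measure-limit in $L_{1+\infty}(\M)$ and keeps the link between the norm topology, the quasi-norm $\mu_1$, and the dual action on $L^{1\cap\infty}(\M)$ explicit, which is in the spirit of how these spaces are used elsewhere in the section. The only points in your write-up deserving explicit mention are that $L^2(\M;l)$ and $L^2(\M;r)$ are indeed complete (they are the $\|\cdot\|_2$-closures inside the Haagerup space $L^2(\M)$, whose norm topology is the restriction of the measure topology, so closure in $\widetilde{\mathcal{A}}$ and in $L^2(\M)$ agree) and that addition is continuous in the measure topology, so that $\Phi$ is continuous as a sum of four continuous maps; both are immediate from facts already recorded in the paper.
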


\begin{proof}
We have already seen in Theorem \ref{con2} that under the norm $\|\cdot\|_{1+\infty}$, the space $X_{1+\infty}$ may be realised as a 
subspace of the Banach dual of $L^{1\cap\infty}(\M)$. Let $\{b_n\}$ be a Cauchy sequence in $X_{1+\infty}$ for which the induced 
functionals $F_{a_n}$ converge to some functional $F$ on $L^{1\cap\infty}(\M)$. By the preceding lemma, any sequence which is Cauchy in 
the norm $\|\cdot\|_{1+\infty}$, must also be Cauchy in the quasi-norm $\mu_1(\cdot)$ on $L_{1+\infty}(\M)$. By Proposition \ref{qntop} 
this means that the sequence $\{b_n\}$ must converge in measure to some element $b\in L_{1+\infty}(\M)$. For any pair of elements $a_0, a_1 \in \mathfrak{n}_\nu$, $a_0\varphi_{1\cap\infty}(h)^{1/2}b_n\varphi_{1\cap\infty}(h)^{1/2}a_1^*$ must then also converge in measure 
to $a_0\varphi_{1\cap\infty}(h)^{1/2}b\varphi_{1\cap\infty}(h)^{1/2}a_1^*$. Consequently 
\begin{eqnarray*}
F(\varphi_{1\cap\infty}(h)^{1/2}a_1^*a_0\varphi_{1\cap\infty}(h)^{1/2}) &=& \lim_{n\to\infty} F_{b_n}(\varphi_{1\cap\infty}(h)^{1/2}a_1^*a_0\varphi_{1\cap\infty}(h)^{1/2})\\
&=& \lim_{n\to\infty}\tr(a_0\varphi_{1\cap\infty}(h)^{1/2}b_n\varphi_{1\cap\infty}(h)^{1/2}a_1^*)\\
&=& \tr(a_0\varphi_{1\cap\infty}(h)^{1/2}b\varphi_{1\cap\infty}(h)^{1/2}a_1^*).
\end{eqnarray*}
From the above it is clear that the functional $F$ is canonically induced by $b$. Having established this correspondence, a modification of the argument of Theorem \ref{con2} now suffices to prove that $b$ is of the form $b = \iota_1(b_1) + \iota_{2,r}(b_{2,r}) + \iota_{2,l}(b_{2,l}) + \iota_\infty(b_\infty)$, where $b_1\in L^1(\M)$, $b_{2,r}\in L^2(\M;r)$, $b_{2,l}\in L^2(\M;l)$ and $b_\infty\in L^\infty(\M)$. Since the functionals $F_{b_n}$ converge in norm to a functional induced by an element of $X_{1+\infty}$, the sequence $\{b_n\}$ converges to $b$ with respect to the norm $\|\cdot\|_{1+\infty}$.
\end{proof}

It is the author's suspicion that in fact $X_{1+\infty}$ agrees with $L_{1+\infty}(\M)$. We present the following Proposition in support of this contention. Should this be the case the map $b\to b$ will then be a continuous map from the Banach space $(L_{1+\infty}(\M), \|\cdot\|_{1+\infty})$ to the quasi-Banach space $(L_{1+\infty}(\M), \mu_1(\cdot))$. An application of the Open Mapping Theorem will then suffice to show that these two topologies are in fact homeomorphic, and hence that $L_{1+\infty}(\M)$ is normable. 

\begin{proposition}
Let $\M$ be a semifinite von Neumann algebra and let the weight $\nu =\tau_\nu$ be an fns trace on $\M$. Then  $X_{1+\infty}$ agrees with $L_{1+\infty}(\M)$.
\end{proposition}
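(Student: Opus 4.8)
The plan is to reduce both sides to the concrete tensor-product picture of Section~2 and then invoke Theorem~\ref{con2} together with the elementary inclusion $L^2\subseteq L^1+L^\infty$. Throughout I would use the identifications valid in the semifinite trace setting, namely $\mathcal{A}=\M\otimes L^\infty(\bR)$, $h=\I\otimes e^t$, and $L^p(\M)=\{a\otimes e^{\cdot/p}:a\in L^p(\M,\tau_\M)\}$ (with $L^\infty(\M)=\{a\otimes 1:a\in\M\}$). Recall also that $\varphi_{1+\infty}(u)=\min(1,u)$, $\varphi_{1\cap\infty}(u)=\max(1,u)$, that $\widetilde{\varphi}_{1+\infty}(u)=u\,\psi_{1\cap\infty}^{-1}(1/u)=\min(1,u)=\varphi_{1+\infty}(u)$, that $\varphi_{1+\infty}(u)\varphi_{1\cap\infty}(u)=u$, and that $\zeta_1(u)=\varphi_{1+\infty}(u)/u$ is bounded by $1$ so that $\zeta_1(h),\varphi_{1+\infty}(h)\in\mathcal{A}$.

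First I would compute, via the Borel functional calculus for the commuting operators $a\otimes1$ and $\I\otimes e^t$, the effect of the four embeddings displayed just before the definition of $X_{1+\infty}$. A routine calculation gives
$$\iota_\infty(a\otimes1)=\iota_1(a\otimes e^t)=\iota_{2,r}(a\otimes e^{t/2})=\iota_{2,l}(a\otimes e^{t/2})=a\otimes\varphi_{1+\infty}(e^t),$$
where $a$ ranges respectively over $L^\infty(\M,\tau_\M)$, $L^1(\M,\tau_\M)$, $L^2(\M,\tau_\M)$ and $L^2(\M,\tau_\M)$ (in the semifinite trace case the ``left'' and ``right'' $L^2$-spaces both reduce to the tracial $L^2(\M,\tau_\M)$, since $a\otimes1$ commutes with $\I\otimes e^{t/2}$). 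Combining this with Theorem~\ref{con2} then yields
$$X_{1+\infty}=\{\,a\otimes\varphi_{1+\infty}(e^t)\;:\;a\in L^1(\M,\tau_\M)+L^2(\M,\tau_\M)+L^\infty(\M,\tau_\M)\,\}.$$

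Next I would unwind Definition~\ref{deforldef} for $\psi=\psi_{1+\infty}$ (so $\psi^*=\psi_{1\cap\infty}$ and $\varphi_{\psi^*}=\varphi_{1\cap\infty}$). For $a\otimes\varphi_{1+\infty}(e^t)\in\widetilde{\mathcal{A}}$ and projections $e,f\in\mathfrak{n}_\nu$ of finite trace one has, using $\varphi_{1\cap\infty}\varphi_{1+\infty}=\mathrm{id}$,
$$[e\varphi_{1\cap\infty}(h)^{1/2}]\,(a\otimes\varphi_{1+\infty}(e^t))\,[\varphi_{1\cap\infty}(h)^{1/2}f]=eaf\otimes e^t,$$
which lies in $L^1(\M)=\{b\otimes e^t:b\in L^1(\M,\tau_\M)\}$ precisely when $eaf\in L^1(\M,\tau_\M)$. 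Since, by Corollary~\ref{Lux} applied with $\psi=\psi_{1+\infty}$, the operators $a\otimes\varphi_{1+\infty}(e^t)$ belonging to $\widetilde{\mathcal{A}}$ are exactly those with $a\in(L^1+L^\infty)(\M,\tau_\M)$, and since for such $a$ one clearly has $eaf\in L^1(\M,\tau_\M)$ for every pair of finite-trace projections, this identifies
$$L_{1+\infty}(\M)=\{\,a\otimes\varphi_{1+\infty}(e^t)\;:\;a\in(L^1+L^\infty)(\M,\tau_\M)\,\}.$$
Finally, $L^2(\M,\tau_\M)\subseteq(L^1+L^\infty)(\M,\tau_\M)$ by the standard truncation $a=a\chi_{(1,\infty)}(|a|)+a\chi_{[0,1]}(|a|)$, the first summand lying in $L^1$ because $|a|\chi_{(1,\infty)}(|a|)\le|a|^2$ and the second being bounded; hence $L^1+L^2+L^\infty=L^1+L^\infty$ and the two tensor descriptions coincide, giving $X_{1+\infty}=L_{1+\infty}(\M)$.

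I expect the one genuinely delicate point to be the verification that the ``for all finite-trace $e,f$'' $L^1$-membership condition of Definition~\ref{deforldef} really does carve out exactly the stated set of simple tensors; once the tensor descriptions of $X_{1+\infty}$ and $L_{1+\infty}(\M)$ are both in place, the remaining identification is immediate from $L^2\subseteq L^1+L^\infty$. As observed in the remark preceding the statement, combining this set equality with Lemma~\ref{lem:genmaxnorm}, Theorem~\ref{412}, Proposition~\ref{qntop} and the Open Mapping Theorem then also forces $\|\cdot\|_{1+\infty}$ and $\mu_1(\cdot)$ to induce the same topology on this space, so $L_{1+\infty}(\M)$ is normable in the semifinite case.
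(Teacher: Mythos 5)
Your reduction to the tensor picture is sound as far as it goes: the computation that all four embeddings land on simple tensors $a\otimes\varphi_{1+\infty}(e^t)$, the identity $\widetilde{\varphi}_{1+\infty}=\varphi_{1+\infty}$, and the absorption $L^2\subseteq L^1+L^\infty$ by spectral truncation are all correct, and together with Theorem \ref{con2} they do yield $X_{1+\infty}=\{a\otimes\varphi_{1+\infty}(e^t):a\in (L^1+L^\infty)(\M,\tau_\M)\}$ and hence the inclusion $X_{1+\infty}\subseteq L_{1+\infty}(\M)$. The problem is the point you explicitly defer: your identification of $L_{1+\infty}(\M)$ only checks that tensors $a\otimes\varphi_{1+\infty}(e^t)$ with $a\in L^1+L^\infty$ satisfy the two-sided finite-projection criterion of Definition \ref{deforldef}; it gives no control over an arbitrary $\widetilde{a}\in\widetilde{\mathcal{A}}$ satisfying that criterion. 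That missing containment is exactly the inclusion $L_{1+\infty}(\M)\subseteq X_{1+\infty}$, which is the only nontrivial direction of the proposition (the paper's proof opens with ``it suffices to show $L_{1+\infty}(\M)\subset X_{1+\infty}$''), so as written your argument proves one inclusion and flags the other.

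The gap is fillable with material already in the paper, and that is essentially how the paper argues. In the semifinite setting the criterion of Definition \ref{deforldef} is the projection criterion of Remark \ref{rem:b} (obtained by adapting the proof of Theorem \ref{kothecross}), which shows that any $\widetilde{a}$ satisfying it is necessarily a simple tensor $g\otimes\widetilde{\varphi}_{\psi^*}(e^t)$ with $g$ in the tracial Orlicz space; this is the ``canonical isomorphism'' $L_{1+\infty}(\M)\cong L_{1+\infty}(\M,\tau_\nu)$ the paper invokes. The paper then decomposes an arbitrary $a\in L_{1+\infty}(\M,\tau_\nu)$ directly, using $\psi_{1+\infty}(|a|)=(|a|-\I)\chi_{[1,\infty)}(|a|)\in L^1(\M,\tau_\nu)$ and the polar decomposition $a=v|a|$, as $a=[a\chi_{[0,1]}(|a|)+v\chi_{[1,\infty)}(|a|)]+(a-v)\chi_{[1,\infty)}(|a|)\in L^\infty+L^1$, and transfers this through the canonical map, i.e.\ through your $\iota_\infty$ and $\iota_1$; the $L^2$ summands and Theorem \ref{con2} are not needed for this direction. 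So your route is workable and close in spirit to the paper's, but to make it a proof you must either cite Remark \ref{rem:b}/Theorem \ref{kothecross} or reproduce that argument to get the tensor description of $L_{1+\infty}(\M)$, rather than listing it as an expected delicate point.
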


\begin{proof}
It suffices to show that $L_{1+\infty}(\M) \subset X_{1+\infty}$. From the previous section we know that in this setting $L_{1+\infty}(\M)$ is canonically isomorphic to $L_{1+\infty}(\M,\tau_\nu)$. From the preliminary material in the introductory section we know that each element $a \in L_{1+\infty}(\M,\tau_\nu) \subset \tM$ must satisfy the requirement that $\psi_{1+\infty}(\alpha |a|) \in L^1(\M,\tau_\nu)$ for some $\alpha > 0$, where $\psi_{1+\infty}$ is the Orlicz function defined as in equation \ref{eq:1inf}. For the sake of argument suppose that $\alpha=1$. Observe that then $\psi_{1+\infty}(|a|) = (|a|-\I)\chi_{[1,\infty)}(|a|)$. So $|a|$ may be written as the sum of the bounded operator $|a|\chi_{[0,1]}(|a|) + \chi_{[1,\infty)}(|a|) \in \M$ and the element $(|a|-\I)\chi_{[1,\infty)}(|a|)$ of $L^1(\M,\tau_\nu)$. If we let $v$ be the partial isometry in the polar decomposition of $a$, we have that $a = [a\chi_{[0,1]}(|a|) + v\chi_{[1,\infty)}(|a|)] + (a-v)\chi_{[1,\infty)}(|a|) \in L^1(\M,\tau_\nu) + L^\infty(\M,\tau_\nu)$. To prove the claim we simply transfer this information to the space $L_{1+\infty}(\M)$ by means of the canonical map from this space to $L_{1+\infty}(\M,\tau_\nu)$.
\end{proof}

We now return to the promised verification of the equivalence of the conditions listed in Theorem \ref{K3gen}.

\bigskip 

\noindent\textbf{Theorem \ref{K3gen}${}^\prime$} \emph{The following are equivalent:
\begin{enumerate}
\item $a\in S^{\psi}$.
\item $S_{\psi^*} a^* \subset L^2(\M)$.
\item $a\widetilde{\varphi}_{\psi^*}(h)^{1/2}e \in L^2(\M)$ for all projections $e\in \mathfrak{n}_\nu$.
\end{enumerate}}

\bigskip

\begin{proof}
We need only prove that (3) $\Rightarrow$ (1). Hence let $a, a_0 \in \widetilde{\mathcal{A}}$ be given so that $a\widetilde{\varphi}_{\psi^*}(h)^{1/2}e, a_0\widetilde{\varphi}_{\psi^*}(h)^{1/2}e  \in L^2(\M)$ for all projections $e\in \mathfrak{n}_\nu$. On 
arguing as in Proposition \ref{orldef}, we can show that then $$a[\widetilde{\varphi}_{\psi^*}(h)^{1/2}b], a_0[\widetilde{\varphi}_{\psi^*}(h)^{1/2}b]  \in L^2(\M)\mbox{ for all } b\in \mathfrak{n}_\nu.$$ Since (as was noted at the start of this subsection) 
$\zeta_\psi(t) = \frac{\widetilde{\varphi}_{\psi^*}(t)}{\varphi_{1\cap\infty}(t)}$ is bounded, $\zeta_\psi(h)\in\mathcal{A}$. It is then an exercise 
to see that $\zeta_\psi(h)^{1/2}a_0^*a\zeta_\psi(h)^{1/2}\in L_{1+\infty}(\M)$ and that the above fomula may therefore be written in the form 
$$a\zeta_\psi(h)^{1/2}[\varphi_{1\cap\infty}(h)^{1/2}b], a_0\zeta_\psi(h)^{1/2}[\varphi_{1\cap\infty}(h)^{1/2}b]  \in L^2(\M)\mbox{ for all } b\in \mathfrak{n}_\nu.$$However as can be seen from the first part of the proof of Theorem \ref{con2}, this is precisely what we need to ensure that $\zeta_\psi(h)^{1/2}a_0^*a\zeta_\psi(h)^{1/2}$ induces a well-defined bounded linear functional on 
$\varphi_{1\cap\infty}(h)^{1/2}\mathfrak{m}_\nu\varphi_{1\cap\infty}(h)^{1/2}$. So in fact 
$$\zeta_\psi(h)^{1/2}a_0^*a\zeta_\psi(h)^{1/2}\in X_{1+\infty} = \iota_1(L^1(\M)) + \iota_{2,r}(L^2(\M;r)) + \iota_{2,l}(L^2(\M;l)) + \iota_\infty(L^\infty(\M)).$$Let $\chi_{[0,\delta]}=\chi_{[0,\delta]}(h)$ be the spectral projections from the spectral resolution of $h$, and let  $g\in L^\infty(\M)$ be given. On making use of the facts that $\theta_s(\chi_{[0,\delta]}(h)) = \chi_{[0,\delta]}(\theta_s(h))=\chi_{[0,\delta]}(e^{-s}h)=\chi_{[0,e^s\delta]}(h)$ and that $\varphi_{1\cap\infty}(h).\varphi_{1+\infty}(h)=h$, we then have that 
\begin{eqnarray*}
&& \theta_s((\chi_{[0,\delta_0]}.\varphi_{1\cap\infty}(h)^{1/2})\iota_\infty(g)(\varphi_{1\cap\infty}(h)^{1/2}.\chi_{[0,\delta]}))\\
&=& \theta_s((\chi_{[0,\delta_0]}h^{1/2})g(h^{1/2}\chi_{[0,\delta]}))\\
&=& e^{-s}(\chi_{[0,e^s\delta_0]}h^{1/2})g(h^{1/2}\chi_{[0,e^s\delta]})\\
&=& e^{-s}(\chi_{[0,e^s\delta_0]}\varphi_{1\cap\infty}(h)^{1/2})\iota_\infty(g)(\varphi_{1\cap\infty}(h)^{1/2}\chi_{[0,e^s\delta]}).
\end{eqnarray*}
Similar statements hold for elements of each of $\iota_1(L^1(\M))$, $\iota_{2,r}(L^2(\M;r))$, and $\iota_{2,l}(L^2(\M;l))$. Applying these observations to $\zeta_\psi(h)^{1/2}a_0^*a\zeta_\psi(h)^{1/2}$, therefore yields the conclusion that 
\begin{eqnarray}\label{eq:ab}
&& \theta_s((\chi_{[0,\delta_0]}.\widetilde{\varphi}_{\psi^*}(h)^{1/2})a_0^*a(\widetilde{\varphi}_{\psi^*}(h)^{1/2}.\chi_{[0,\delta]}))\\
&=& \theta_s((\chi_{[0,\delta_0]}.\varphi_{1\cap\infty}(h)^{1/2})\zeta_\psi(h)^{1/2}a_0^*a\zeta_\psi(h)^{1/2}(\varphi_{1\cap\infty}(h)^{1/2}.\chi_{[0,\delta]}))\nonumber\\
&=& e^{-s}(\chi_{[0,e^s\delta_0]}.\varphi_{1\cap\infty}(h)^{1/2})\zeta_\psi(h)^{1/2}a_0^*a\zeta_\psi(h)^{1/2}(\varphi_{1\cap\infty}(h)^{1/2}.\chi_{[0,e^s\delta]})\nonumber\\
&=& (\chi_{[0,e^s\delta_0]}.\widetilde{\varphi}_{\psi^*}(h)^{1/2})a_0^*a(\widetilde{\varphi}_{\psi^*}(h)^{1/2}.\chi_{[0,e^s\delta]})\nonumber
\end{eqnarray}
for any $\delta, \delta_0 >0$. Next notice that for any $b\in \mathfrak{n}_\nu$, Proposition \ref{prop:meas} ensures that $[b\varphi_\psi(h)^{1/2}]$ is a well-defined element of $\widetilde{\mathcal{A}}$ for which we have that $$[b\varphi_\psi(h)^{1/2}][\widetilde{\varphi}_{\psi^*}(h)^{1/2}e] = [bh^{1/2}]e \in L^2(\M)\mbox{ for all projections }e\in \mathfrak{n}_\nu.$$We may therefore set $a_0=[b\varphi_\psi(h)^{1/2}]$ where $b\in \mathfrak{n}_\nu$. In that case equation \ref{eq:ab} reduces to the claim that 
\begin{eqnarray*}
e^{-s/2}(\chi_{[0,e^s\delta_0]}[h^{1/2}b^*])\theta_s(a(\widetilde{\varphi}_{\psi^*}(h)^{1/2}.\chi_{[0,\delta]}))&=&
\theta_s((\chi_{[0,\delta_0]}[h^{1/2}b^*])a(\widetilde{\varphi}_{\psi^*}(h)^{1/2}.\chi_{[0,\delta]}))\\
&=& (\chi_{[0,e^s\delta_0]}[h^{1/2}b^*])a(\widetilde{\varphi}_{\psi^*}(h)^{1/2}.\chi_{[0,e^s\delta]})
\end{eqnarray*}
for any $\delta, \delta_0 >0$. Given $\alpha > 0$, left multiply by the bounded operator $\chi_{[\alpha, \infty]}h^{-1/2}$ to get 
$$\chi_{[\alpha,e^s\delta_0]}b^*\theta_s(a(\widetilde{\varphi}_{\psi^*}(h)^{1/2}.\chi_{[0,\delta]}))= e^{-s/2}\chi_{[\alpha,e^s\delta_0]}b^*a(\widetilde{\varphi}_{\psi^*}(h)^{1/2}.\chi_{[0,e^s\delta]}).$$If we let $\alpha$ decrease to 0 and $\delta_0$ 
increase without bound, this in turn yields the fact that $$b^*\theta_s(a(\widetilde{\varphi}_{\psi^*}(h)^{1/2}.\chi_{[0,\delta]}))= e^{-s/2}b^*a(\widetilde{\varphi}_{\psi^*}(h)^{1/2}.\chi_{[0,e^s\delta]})\mbox{ for any }b\in\mathfrak{n}_\nu.$$Finally recall that there exists an 
increasing approximate unit $\{b_\alpha\}$ in $(\mathfrak{n}_\nu)^+$. (See exercise 2.4.8(d) of \cite{S}.) Using this approximate unit, we therefore arrive at the required conclusion that
\begin{eqnarray*}
\theta_s(a(\widetilde{\varphi}_{\psi^*}(h)^{1/2}.\chi_{[0,\delta]}))&=&\lim_\alpha b_\alpha\theta_s(a(\widetilde{\varphi}_{\psi^*}(h)^{1/2}.\chi_{[0,\delta]}))\\ 
&=& e^{-s/2}\lim_\alpha b_\alpha a(\widetilde{\varphi}_{\psi^*}(h)^{1/2}.\chi_{[0,e^s\delta]})\\
&=& e^{-s/2}a(\widetilde{\varphi}_{\psi^*}(h)^{1/2}.\chi_{[0,e^s\delta]})
\end{eqnarray*} 
for any $\delta > 0$. 
\end{proof}

\subsection{A context for interpolation}

We close this section by indicating how the theory of type III Orlicz spaces may be used to construct type III analogues of noncommutative Banach Function Spaces. For this we need some rudimentary facts regarding the $K$-method of interpolation. These brief comments on the $K$-method of interpolation are extracted from the book of Bennett and 
Sharpley \cite{BS}. Readers wishing to have more extensive details than those provided hereafter are referred to this book. A pair of Banach spaces $X_0$ and $X_1$ are called a compatible couple (or Banach couple) if there exists a Hausdorff topological vector space $\mathfrak{X}$ into which each of $X_0$ and $X_1$ are continuously embedded. If we 
regard these Banach spaces as subspaces of $\mathfrak{X}$, we may give meaning to the notion of the intersection $X_0\cap X_1$ and sum $X_0+X_1$ of such a compatible couple. These new spaces turn out to be Banach spaces themselves 
when equipped with the norms $$\|f\|_{X_0\cap X_1} = \max\{\|f\|_{X_0}, \|f\|_{X_1}\},$$and $$\|f\|_{X_0+X_1} = \inf\{\|f_0\|_{X_0}+\|f_1\|_{X_1}: f = f_0+f_1\}.$$ Given such a compatible couple, for each $f \in X_0+X_1$ and each $t>0$, we define the associated $K$-functional to be the quantity
$$K(f, t; X_0, X_1) = \inf\{\|f_0\|_{X_0}+t\|f_1\|_{X_1}: f = f_0+f_1\}$$where the infimum extends over all representations $f = f_0+f_1$ of $f$ with $f_0\in X_0$ and $f_1\in X_1$. For each fixed $f \in X_0+X_1$, the $K$-functional turns out to be a non-negative concave function of $t\geq 0$, and hence may be written in the form 
$$K(f, t; X_0, X_1) = K(f, 0+; X_0, X_1) + \int_0^t k(f, s; X_0, X_1)\, ds,$$where for each $f$ the so-called 
$k$-functional $k(f, s; X_0, X_1)$ is a uniquely defined non-negative, decreasing, right-continuous function of $s>0$. In cases where the spaces $X_0$ and $X_1$ are fixed, we simply write $K(f,t)$ and $k(f,s)$ for these 
functionals. For each so-called rearrangement invariant monotone Riesz-Fischer norm $\rho$ on the cone of non-negative locally finite measurable functions on $([0,\infty), m)$, the space $(X_0,X_1)_\rho$ consisting of all $f \in \overline{(X_0\cap X_1)}^{X_0} + X_1$ for which $\rho(k(f,\cdot))<\infty$, turns out to be a Banach space when equipped with the norm $\|f\|_\rho = \rho(k(f,\cdot))$. (Refer to the proof of 
\cite[Theorem 5.1.19]{BS} for this verification.) 

[A monotone Riesz-Fischer norm $\rho$ of the type mentioned above is just a functional on the cone of non-negative locally finite measurable functions on $([0,\infty), m)$, which satisfies the properties of a Banach Function norm (as mentioned in the preliminaries) on this cone, as well as the the additional restrictions that 
\begin{itemize}
\item $\rho(g)<\infty$ whenever $\rho(f) <\infty$ and $\int_0^t g(s)\, ds \leq \int_0^t f(s)\, ds$ for all $t>0$;
\item whenever $m(E) < \infty$ for some measurable set, we have that $\rho(\chi_E) < \infty$ and that there exists $C_E > 0$ so that $\int_E f \, dm \leq C_E\rho(f)$ for all $f\in L^0_+(0,\infty)$;
\item for any sequence of non-negative locally finite measurable functions $\{f_n\}$ we have that $\rho(\sum_{n=1}^\infty f_n) \leq \sum_{n=1}^\infty \rho(f_n)$.
\end{itemize}
This includes a very wide class of Banach Function norms.]

For a resonant measure space $(X, \Sigma, \nu)$ it is known that the exact interpolation spaces between $L^1(X,\Sigma,\nu)$ and $L^\infty(X,\Sigma,\nu)$ are just the Banach Function spaces with rearrangement invariant monotone Riesz-Fischer norms. In the type III setting we may therefore formally define type III analogues of such spaces to be appropriate ``intermediate spaces'' of $L^1(\M)$ and $L^\infty(\M)$. However the scale of spaces that we will end up with, will depend on how we give meaning to the idea of sum and intersection of $L^1(\M)$ and $L^\infty(\M)$. As subspaces of $\widetilde{\mathcal{A}}$ we of course have that $L^1(\M)\cap L^\infty(\M) = \{0\}$ in a na\"ive set theoretic sense. This is however far too simplistic an approach. Instead we use the language of Orlicz spaces and propose a scale where the role of the intersection and sum of $L^1(\M)$ and $L^\infty(\M)$ will be played by $L^{1\cap\infty}$ and $X_{1+\infty}$ respectively. From our study of these spaces we see that this approach has the advantage that it takes the $L^2$ norm into account at the very outset of the theory. Direct interpolation between $L^{1\cap\infty}$ and $X_{1+\infty}$ by means of the $K$-method is unlikely to yield the correct scale of spaces. (See exercise 1(d) on page 426 of \cite{BS}.) What we need to do is modify the $K$-method to obtain a framework where the spaces $L^{1\cap\infty}$ and $X_{1+\infty}$ naturally play the role of intersection and sum when interpolating between $L^1(\M)$ and $L^\infty(\M)$. The role that is played by the decreasing rearrangement in the semifinite setting, will then be played by the $k$-functional in this modified $K$-method. In support of this proposal we note the following fact:

\begin{proposition}
Let $\M$ be a semifinite von Neumann algebra and the weight $\nu =\tau_\nu$ an fns trace on $\M$. The $k$-functional for the compatible couple $X_0=L^{1}(\M,\tau)$ and $X_1=L^{\infty}(\M,\tau)$ is precisely $k(f, s) = \mu_s(f)$.
\end{proposition}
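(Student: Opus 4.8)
The plan is to reduce the statement to the noncommutative Calder\'on formula
$$K(f,t;L^1(\M,\tau),L^\infty(\M,\tau)) = \int_0^t \mu_s(f)\,ds \qquad (t>0),$$
valid for every $f\in L^1(\M,\tau)+L^\infty(\M,\tau)$, and then to read the $k$-functional off the integral representation of the $K$-functional. Note that $f\in L^1(\M,\tau)+L^\infty(\M,\tau)=L^{1+\infty}(\M,\tau)$ guarantees that $\mu_t(f)<\infty$ for every $t>0$ and that $s\mapsto\mu_s(f)$ is integrable near $0$; both facts will be used silently below.

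For the inequality $K(f,t)\ge\int_0^t\mu_s(f)\,ds$ I would take an arbitrary decomposition $f=g+h$ with $g\in L^1(\M,\tau)$ and $h\in L^\infty(\M,\tau)$. Letting $s'\to0^+$ in $\mu_{s+s'}(g+h)\le\mu_s(g)+\mu_{s'}(h)$ gives $\mu_s(f)\le\mu_s(g)+\|h\|_\infty$, so, using the trace formula $\tau(|g|)=\int_0^\infty\mu_s(g)\,ds$, one gets $\int_0^t\mu_s(f)\,ds\le\int_0^t\mu_s(g)\,ds+t\|h\|_\infty\le\|g\|_1+t\|h\|_\infty$; taking the infimum over decompositions yields the bound. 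For the reverse inequality I would exhibit an optimal decomposition: writing $f=u|f|$ for the polar decomposition and using the Borel functional calculus for $|f|$, put $g=u\,(|f|-\mu_t(f)\I)_+$ and $h=u\,\min(|f|,\mu_t(f)\I)$, so that $f=g+h$ and $\|h\|_\infty\le\mu_t(f)$. Applying the property $\phi(\mu_s(|a|))=\mu_s(\phi(|a|))$ (listed in the preliminaries, from \cite{FK}) to the increasing continuous function $\phi(x)=(x-\mu_t(f))_+$ gives $\mu_s(g)=(\mu_s(f)-\mu_t(f))_+$; since this vanishes for $s\ge t$ and is dominated by $\mu_s(f)$ on $(0,t)$, $g$ has finite trace and hence is a $\tau$-measurable element of $L^1(\M,\tau)$, with $\|g\|_1=\int_0^{t'}(\mu_s(f)-\mu_t(f))\,ds$ where $t'=\sup\{s:\mu_s(f)>\mu_t(f)\}\le t$. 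As $\mu_s(f)=\mu_t(f)$ for $s$ between $t'$ and $t$, one computes $\|g\|_1+t\|h\|_\infty\le\int_0^{t'}\mu_s(f)\,ds+(t-t')\mu_t(f)=\int_0^t\mu_s(f)\,ds$, which together with the first inequality proves the Calder\'on formula.

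Having $K(f,t)=\int_0^t\mu_s(f)\,ds$, I would observe that $K(f,0+)=\lim_{t\to0^+}\int_0^t\mu_s(f)\,ds=0$ (local integrability of $\mu(f)$), so the representation $K(f,t)=K(f,0+)+\int_0^t k(f,s)\,ds$ reduces to $\int_0^t k(f,s)\,ds=\int_0^t\mu_s(f)\,ds$ for all $t>0$. By the properties of the generalised singular value function recorded in the preliminaries (from \cite{FK}), $s\mapsto\mu_s(f)$ is non-negative, non-increasing and right-continuous on $(0,\infty)$ --- precisely the properties that characterise $k(f,\cdot)$ uniquely --- and two right-continuous functions on $(0,\infty)$ with the same indefinite integral agree everywhere. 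Hence $k(f,s)=\mu_s(f)$ for all $s>0$.

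The only real work is the Calder\'on formula, and within it the bookkeeping that the candidate $g$ is genuinely a $\tau$-measurable element of $L^1(\M,\tau)$ and that the infimum is attained; once that formula is established, the identification of $k(f,\cdot)$ with $\mu(f)$ is immediate from the uniqueness clause. Alternatively, since $K(f,t;L^1,L^\infty)=\int_0^t\mu_s(f)\,ds$ is part of the standard theory of generalised singular values, one may simply quote it from \cite{FK} and proceed directly to the last paragraph.
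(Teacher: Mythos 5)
Your proposal is correct and follows essentially the same route as the paper: the paper simply cites the formula $K(f,t;L^1,L^\infty)=\int_0^t\mu_s(f)\,ds$ proved after Theorem 4.4 of \cite{FK} and concludes, exactly as in your final paragraph, with the identification of $k(f,\cdot)$ and $\mu(f)$ via the uniqueness of the integral representation. Your additional direct verification of the Calder\'on formula (optimal decomposition $g=u(|f|-\mu_t(f)\I)_+$, $h=u\min(|f|,\mu_t(f)\I)$) is sound but not needed beyond the citation.
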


\begin{proof}
This is a direct consequence of the formula proved directly after Theorem 4.4 of \cite{FK}. Specifically of the fact that $$\int_0^t\mu_s(f)\, ds = \inf\{\|f_0\|_{1}+t\|f_1\|_{\infty}: f = f_0+f_1\}$$ for each $f \in L^1(\M,\tau)+L^\infty(\M,\tau)$, where the infimum extends over all representations $f = f_0+f_1$ of $f$ with $f_0\in L^{1}(\M,\tau)$ and $f_1\in L^{\infty}(\M,\tau)$.
\end{proof}
    
\begin{definition}[Interpolation between $L^1(\M)$ and $L^\infty(\M)$ -- modified $K$-method]
For each $f\in X_{1+\infty}$ and each $t>0$, we define the associated $\widetilde{K}$-functional to be the quantity
$$\widetilde{K}(f, t) = \inf(\|f_1\|_{1}+\sqrt{t}\|f_{2,r}\|_2 + \sqrt{t}\|f_{2,l}\|_2 +t\|f_\infty\|_{\infty})$$where the infimum extends over all representations $f = \iota_1(f_1) + \iota_{2,r}(f_{2,r}) + \iota_{2,l}(f_{2,l}) + \iota_\infty(f_\infty)$ of $f$ with $f_1\in L^1(\M)$, $f_{2,r}\in L^2(\M;r)$, $f_{2,l}\in L^2(\M;l)$ and $f_\infty\in L^\infty(\M)$.
\end{definition}

\begin{proposition}\label{416} 
For each fixed $f \in X_{1+\infty}$, the $\widetilde{K}$-functional is a a non-negative non-decreasing concave function of $t\geq 0$, and hence may be written in the form 
$$\widetilde{K}(f, t) = \widetilde{K}(f, 0+) + \int_0^t \widetilde{k}(f, s)\, ds,$$where the functional $\widetilde{k}(f, s)$ is a uniquely defined non-negative, decreasing, right-continuous function of $s>0$.
\end{proposition}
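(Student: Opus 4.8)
The plan is to imitate the classical argument for the $K$-functional (as in \cite{BS}), the only structural novelty being that the functions over which we take the infimum are \emph{concave} in $t$ — because of the $\sqrt{t}$ coefficients — rather than affine. First I would record that $\widetilde K(f,t)$ is finite and non-negative for every $t\geq 0$. Since $f\in X_{1+\infty}$, it admits at least one decomposition $f = \iota_1(f_1) + \iota_{2,r}(f_{2,r}) + \iota_{2,l}(f_{2,l}) + \iota_\infty(f_\infty)$ with all four components lying in the relevant spaces and having finite norms — this is precisely what membership of $X_{1+\infty}$ means in view of Theorem \ref{con2} and the definition of $\|\cdot\|_{1+\infty}$. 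Hence for each $t\geq 0$ the infimum defining $\widetilde K(f,t)$ is taken over a non-empty set of non-negative reals, so $0\leq\widetilde K(f,t)<\infty$.

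Next, for each admissible decomposition $\mathrm d = (f_1,f_{2,r},f_{2,l},f_\infty)$ of $f$ I would consider the function $$\phi_{\mathrm d}(t) = \|f_1\|_1 + \sqrt{t}\,\|f_{2,r}\|_2 + \sqrt{t}\,\|f_{2,l}\|_2 + t\,\|f_\infty\|_\infty \qquad (t\geq 0).$$ This has the form $a + b\sqrt{t} + ct$ with $a,b,c\geq 0$; since $t\mapsto\sqrt{t}$ is concave, $t\mapsto ct$ is affine and constants are concave, each $\phi_{\mathrm d}$ is a non-negative, non-decreasing, concave function on $[0,\infty)$. Monotonicity passes to the infimum (if $t_1\leq t_2$ then $\phi_{\mathrm d}(t_1)\leq\phi_{\mathrm d}(t_2)$ for every $\mathrm d$, hence $\widetilde K(f,t_1)\leq\widetilde K(f,t_2)$), and the pointwise infimum of a family of concave functions is again concave; therefore $\widetilde K(f,\cdot) = \inf_{\mathrm d}\phi_{\mathrm d}$ is a non-negative, non-decreasing, concave function of $t\geq 0$.

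Finally I would invoke the elementary structure theory of concave functions used in \cite{BS} for the ordinary $K$-functional: a finite concave function on $(0,\infty)$ is continuous there and has at every point a right-hand derivative, which is non-increasing and right-continuous. Calling this derivative $\widetilde k(f,\cdot)$ and using that $\widetilde K(f,\cdot)$ is in addition non-decreasing, we get $\widetilde k(f,\cdot)\geq 0$ and the existence of the finite limit $\widetilde K(f,0+) := \lim_{t\to 0^+}\widetilde K(f,t) = \inf_{t>0}\widetilde K(f,t)$. Integrating $\widetilde k(f,\cdot)$ over $[a,t]\subset(0,\infty)$ and letting $a\to 0^+$ (monotone convergence, since $\widetilde k(f,\cdot)\geq 0$) yields $$\widetilde K(f,t) = \widetilde K(f,0+) + \int_0^t \widetilde k(f,s)\,ds,$$ and uniqueness follows since any right-continuous decreasing function reproducing $\widetilde K(f,\cdot)$ through this formula must agree almost everywhere, hence everywhere, with the right derivative. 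The only points requiring any care are the finiteness of $\widetilde K(f,\cdot)$ (so that ``infimum of concave is concave'' is applied with finite values) and the limit at $t=0$; neither presents a real obstacle.
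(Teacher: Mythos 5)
Your proof is correct and follows essentially the same route as the paper: the paper also fixes an admissible decomposition, uses the concavity of $t\mapsto\sqrt{t}$ to bound $\alpha\widetilde{K}(f,t_1)+(1-\alpha)\widetilde{K}(f,t_2)$ by the value of that decomposition's cost at $t=\alpha t_1+(1-\alpha)t_2$, and then takes the infimum — which is exactly your "pointwise infimum of concave functions is concave" step written out explicitly — before appealing to the classical theory of concave functions for the integral representation. Your added remarks on finiteness, monotonicity and the right-derivative construction of $\widetilde{k}(f,\cdot)$ are consistent with what the paper leaves as routine.
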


\begin{proof}
Once we prove that $\widetilde{K}(f, t)$ is non-increasing and concave, the integral representation of this functional mentioned in the hypothesis, will follow directly from classical theory. The verification that $\widetilde{K}(f, t)$ is non-increasing is a fairly straightforward exercise, and hence we only prove the fact that it is concave. With this in mind let $f = \iota_1(f_1) + \iota_{2,r}(f_{2,r}) + \iota_{2,l}(f_{2,l}) + \iota_\infty(f_\infty)$ be a representation of $f\in X_{1+\infty}$ with $f_1\in L^1(\M)$, $f_{2,r}\in L^2(\M;r)$, $f_{2,l}\in L^2(\M;l)$ and $f_\infty\in L^\infty(\M)$, and let $t = \alpha t_1 + (1-\alpha)t_2$ where $0\leq \alpha\leq 1$, and $t_1, t_2 >0$. Notice that $[0,\infty) \to [0,\infty): t \mapsto \sqrt{t}$ is itself a concave function in that the line segment joining $(t_1, \sqrt{t_1})$ and $(t_2, \sqrt{t_2})$ lies below the graph of this function. This in turn ensures that $$\alpha\sqrt{t_1} + (1-\alpha)\sqrt{t_2} \leq \sqrt{\alpha t_1 + (1-\alpha)t_2} = \sqrt{t}.$$Using this inequality it is now clear that
\begin{eqnarray*}
\alpha\widetilde{K}(f, t_1) + (1-\alpha)\widetilde{K}(f, t_2) &\leq& \alpha(\|f_1\|_{1}+\sqrt{t_1}\|f_{2,r}\|_2 + \sqrt{t_1}\|f_{2,l}\|_2 +t_1\|f_\infty\|_{\infty})\\
& & + (1-\alpha)(\|f_1\|_{1}+\sqrt{t_2}\|f_{2,r}\|_2 + \sqrt{t_2}\|f_{2,l}\|_2 +t_2\|f_\infty\|_{\infty})\\
&=& \|f_1\|_1 + (\alpha\sqrt{t_1} + (1-\alpha)\sqrt{t_2})(\|f_{2,r}\|_2 \|f_{2,l}\|_2) + t\|f_\infty\|_\infty\\
&\leq& \|f_1\|_1 + \sqrt{t}(\|f_{2,r}\|_2 + \|f_{2,l}\|_2) + t\|f_\infty\|_\infty.
\end{eqnarray*}
Taking the infimum over all such representations of $f$ then yields the fact that $$\alpha\widetilde{K}(f, t_1) + (1-\alpha)\widetilde{K}(f, t_2) \leq \widetilde{K}(f, t),$$as required.  
\end{proof}

In passing from the semifinite setting to the type III setting, the role played by $\mu_t(f)$ in the former, could therefore reasonably be played by $\widetilde{k}(f, t)$ in the latter. Given any monotone Riesz-Fischer norm $\rho$, we may therefore propose to define the associated non-commutative Riesz-Fischer space $L_{RF}^\rho(\M)$ of $\M$ to be the space of all $f \in X_{1+\infty}$ for which $\rho(\widetilde{k}(f,\cdot)) < \infty$, and propose the quantity $\|f\|_\rho = \rho(\widetilde{k}(f,\cdot))$ as a potential norm.

\section{Orlicz spaces for $\sigma$-finite algebras}

Throughout this section we will assume that $\M$ is a $\sigma$-finite von Neumann algebra equipped with a faithful normal state $\nu$. We point out that this ensures that density $h = \frac{d\widetilde{\nu}}{d\tau}$ is is in fact $\tau_{\mathcal{A}}$-measurable. This fact will be used throughout this section, and not only helps to greatly simplify the theory, but also opens the way for the introduction of ``left'' and ``right'' Orlicz spaces in this context. 

\begin{definition}
Let $\psi$ be an Orlicz function. We define the left Orlicz spaces corresponding to the Luxemburg and Orlicz norms to respectively be
$$L^\psi(\M;l) =\{a\in \widetilde{\mathcal{A}}: \widetilde{\varphi}_{\psi^*}(h)a \in L^1(\M)\}$$and $$L_\psi(\M;l) =\{a\in \widetilde{\mathcal{A}}: \varphi_{\psi^*}(h)a \in L^1(\M)\}.$$ The right Orlicz spaces $L^\psi(\M;r)$ and $L_\psi(\M;r)$ are defined similarly.
\end{definition} 

We proceed with the task of describing the topologies on these spaces. As may be expected the theory closely parallels the general theory presented in the previous section. 

\begin{theorem}
Let $\psi$ be an Orlicz function. For any $a$ in either $L^\psi(\M;l)$ or $L_\psi(\M;l)$, we then have that $$t\mu_t(a) \leq \mu_1(a) \qquad\mbox{for all} \qquad 0<t\leq 1.$$ 
\end{theorem}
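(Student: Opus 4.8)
The plan is to mimic the proof of Theorem~\ref{mu1} almost verbatim, adapting it to the one-sided spaces. Since the two cases ($L^\psi(\M;l)$ and $L_\psi(\M;l)$) are handled by identical arguments (swapping $\widetilde{\varphi}_{\psi^*}$ for $\varphi_{\psi^*}$), I would prove only the statement for $L^\psi(\M;l)$. Fix $a\in L^\psi(\M;l)$ and $0<t\le 1$, writing $t=e^s$ with $s\le 0$. The first step is exactly as in Theorem~\ref{mu1}: from the covariance relation $\tau_{\mathcal{A}}\circ\theta_s=e^{-s}\tau_{\mathcal{A}}$ and the fact that $\theta_s$ sends spectral projections of $|a|$ to those of $|\theta_s(a)|$, one gets $\tfrac1t\lambda_r(a)=\lambda_r(\theta_s(a))$ for all $r>0$, and hence $\mu_t(a)=\mu_1(\theta_s(a))$ for all $t=e^s$, $s\le 0$.

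Next I would record the elementary fact used in Theorem~\ref{mu1}: for $b,c\in\widetilde{\mathcal{A}}$ with $|b|\le\I$ one has $\mu_1(bc)=\mu_1(c^*|b|^2c)^{1/2}\le\mu_1(c^*c)^{1/2}=\mu_1(c)$, and similarly $\mu_1(cb)\le\mu_1(c)$. Then I would introduce the contraction adapted to the left space: set $d_t=\widetilde{\varphi}_{\psi^*}(\tfrac1t h)^{-1}\widetilde{\varphi}_{\psi^*}(h)$, which satisfies $|d_t|\le\I$ since $\widetilde{\varphi}_{\psi^*}$ is increasing and $0<t\le 1$. The heart of the argument is the identity
\begin{equation*}
\theta_s(a)=\tfrac1t\,[d_t\, a].
\end{equation*}
Granting this, the inequality follows just as in \eqref{eq:muineq}: $t\mu_t(a)=t\mu_1(\theta_s(a))=\mu_1(d_t a)\le\mu_1(a)$, using $\mu_1(d_t a)\le\mu_1(a)$ from the contraction estimate above.

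It remains to verify the identity $\theta_s(a)=\tfrac1t[d_t a]$. Here I would exploit the definition of $L^\psi(\M;l)$ directly (rather than passing through a family of projections, as was necessary in the two-sided case): membership $a\in L^\psi(\M;l)$ means $\widetilde{\varphi}_{\psi^*}(h)a\in L^1(\M)$, equivalently $\theta_s(\widetilde{\varphi}_{\psi^*}(h)a)=e^{-s}\widetilde{\varphi}_{\psi^*}(h)a$. Using $\theta_s(\widetilde{\varphi}_{\psi^*}(h))=\widetilde{\varphi}_{\psi^*}(e^{-s}h)=\widetilde{\varphi}_{\psi^*}(\tfrac1t h)$, multiplicativity of $\theta_s$, and the cancellation $\widetilde{\varphi}_{\psi^*}(\tfrac1t h)\,d_t=\widetilde{\varphi}_{\psi^*}(h)$, I would compute
\begin{equation*}
\tfrac1t\,\widetilde{\varphi}_{\psi^*}(\tfrac1t h)\,[d_t a]=\tfrac1t\,\widetilde{\varphi}_{\psi^*}(h)a=e^{-s}\widetilde{\varphi}_{\psi^*}(h)a=\theta_s(\widetilde{\varphi}_{\psi^*}(h)a)=\widetilde{\varphi}_{\psi^*}(\tfrac1t h)\,\theta_s(a),
\end{equation*}
and since $\widetilde{\varphi}_{\psi^*}(\tfrac1t h)$ is injective (it is a positive non-singular operator), this forces $\tfrac1t[d_t a]=\theta_s(a)$. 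The main obstacle I anticipate is purely the bookkeeping of strong products versus ordinary products — one must be careful that $[d_t a]$ is genuinely $\tau_{\mathcal{A}}$-measurable (which follows since $d_t\in\mathcal{A}$ is bounded and $a\in\widetilde{\mathcal{A}}$) and that left-multiplying the measurable identity by the possibly unbounded $\widetilde{\varphi}_{\psi^*}(\tfrac1t h)$ and then cancelling is legitimate; in the $\sigma$-finite setting this is cleaner than in the general case because $h$ itself is $\tau_{\mathcal{A}}$-measurable, so all the operators in sight are well-behaved and the cancellation step presents no real difficulty.
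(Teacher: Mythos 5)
Your proposal is correct and takes essentially the same route as the paper: the paper likewise reduces to $L^\psi(\M;l)$, writes $t=e^s$, derives the identity $\theta_s(a)=\tfrac{1}{t}b_ta$ with $b_t=\widetilde{\varphi}_{\psi^*}(\tfrac{1}{t}h)^{-1}\widetilde{\varphi}_{\psi^*}(h)$ by applying $\theta_s$ to $\widetilde{\varphi}_{\psi^*}(h)a\in L^1(\M)$, and then concludes from $|b_t|\leq\I$ together with $\mu_t(a)=\mu_1(\theta_s(a))$ exactly as in Theorem \ref{mu1}. Your explicit cancellation step via the non-singularity of $\widetilde{\varphi}_{\psi^*}(\tfrac{1}{t}h)$ merely spells out what the paper leaves implicit.
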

 
\begin{proof}
It suffices to prove the theorem for space $L^\psi(\M;l)$. So let $a \in L^\psi(\M;l)$, and $0<t\leq1$ be given, and write $t$ as $t=e^s$ where $s\leq 0$. The proof now runs along similar lines as the one for symmetric Orlicz spaces. The primary difference is that here we may conclude that $$\theta_s(a) = \frac{1}{t}b_ta$$where $b_t = \widetilde{\varphi}_{\psi^*}(\tfrac{1}{t}h)^{-1}\widetilde{\varphi}_{\psi^*}(h)$. To see this note that by the definition of $L^1(\M)$ and $L^\psi(\M;l)$ we have that 
\begin{eqnarray*}
e^{-s}(\widetilde{\varphi}_{\psi^*}(h)a) &=& \theta_s(\widetilde{\varphi}_{\psi^*}(h)a)\\
&=& (\widetilde{\varphi}_{\psi^*}(\theta_s(h))\theta_s(a)\\
&=& \widetilde{\varphi}_{\psi^*}(e^{-s}h)\theta_s(a).
\end{eqnarray*}
As before this fact combined with the observation that $|b_t| \leq \I$, leads to the required conclusion.
\end{proof} 
 
\begin{proposition}\label{sigmaqntop}
The quantity $\mu_1(\cdot)$ is a quasinorm for both $L^\psi(\M;l)$ and $L_\psi(\M;l)$. The topology induced on these spaces by this quasinorm is complete and is homeomorphic to the topology of convergence in measure inherited from $\widetilde{\mathcal{A}}$. 
\end{proposition}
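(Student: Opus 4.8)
The plan is to follow the proof of Proposition~\ref{qntop} almost verbatim, with the preceding theorem ($t\mu_t(a)\le\mu_1(a)$ for $0<t\le1$) playing the role there played by Theorem~\ref{mu1}. Since the arguments for $L^\psi(\M;l)$ and $L_\psi(\M;l)$ are word-for-word identical (that theorem covers both), I would carry them out only for $L^\psi(\M;l)$. The one genuine simplification available here is that in the $\sigma$-finite setting $h$, and hence $\widetilde\varphi_{\psi^*}(h)$ (which by the lemma of Section~3 is dominated by a multiple of $\I+h$), is already $\tau_{\mathcal{A}}$-measurable; so one can work with the honest product $\widetilde\varphi_{\psi^*}(h)a$ rather than with the two-sided sandwiches by finite-trace projections needed in the general case.

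First I would verify the quasinorm axioms. Linearity of $L^\psi(\M;l)$ is immediate from linearity of $b\mapsto\widetilde\varphi_{\psi^*}(h)b$ on $\widetilde{\mathcal{A}}$ together with the fact that $L^1(\M)$ is a linear subspace of $\widetilde{\mathcal{A}}$; positive homogeneity of $\mu_1$ is a standard property of the singular value function. For definiteness, if $\mu_1(a)=0$ then by the preceding theorem $t\mu_t(a)\le\mu_1(a)=0$ for all $0<t\le1$, hence $\mu_t(a)=0$ for all $t>0$ by monotonicity of $t\mapsto\mu_t(a)$; the trace formula then gives $\tau_{\mathcal{A}}(|a|)=0$, and faithfulness of $\tau_{\mathcal{A}}$ forces $a=0$. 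The generalised triangle inequality follows from
\[
\mu_1(a+b)\le\mu_{1/2}(a)+\mu_{1/2}(b)\le 2\bigl(\mu_1(a)+\mu_1(b)\bigr),
\]
the first step being subadditivity of $\mu$ and the second the preceding theorem applied at $t=1/2$.

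Next I would establish the equivalence of topologies. Convergence in measure implies $\mu_1$-convergence directly from the description of the measure topology via the singular value functions \cite[Lemma 3.1]{FK}. Conversely, given $0<\epsilon\le1$ and $a$ with $\mu_1(a)<\epsilon^2$, the preceding theorem yields $\mu_\epsilon(a)\le\epsilon^{-1}\mu_1(a)<\epsilon$; since $s\mapsto\lambda_s(a)$ is non-increasing and $\lambda_{\mu_\epsilon(a)}(a)\le\epsilon$, we get $\lambda_\epsilon(a)\le\epsilon$, i.e.\ $a$ lies in the basic neighbourhood $\mathcal{N}(\epsilon,\epsilon)$ \cite[I.7]{Tp}; hence $\mu_1$-convergence implies convergence in measure. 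With the two topologies identified, completeness of $L^\psi(\M;l)$ reduces to its being a closed subspace of $\widetilde{\mathcal{A}}$, which is complete in measure. For closedness I would rephrase membership: using $b\in L^1(\M)\Leftrightarrow\theta_s(b)=e^{-s}b$ for all $s$, together with $\theta_s(\widetilde\varphi_{\psi^*}(h)a)=\widetilde\varphi_{\psi^*}(e^{-s}h)\theta_s(a)$ (since $\theta_s(h)=e^{-s}h$), the condition $a\in L^\psi(\M;l)$ is equivalent to the family of identities $\widetilde\varphi_{\psi^*}(e^{-s}h)\theta_s(a)=e^{-s}\widetilde\varphi_{\psi^*}(h)a$, $s\in\mathbb{R}$. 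For each fixed $s$ the operators $\widetilde\varphi_{\psi^*}(e^{-s}h)$ and $\widetilde\varphi_{\psi^*}(h)$ are $\tau_{\mathcal{A}}$-measurable, and $\theta_s$ together with left and right multiplication by fixed measurable operators are continuous for the measure topology; so each such identity cuts out a closed subset of $\widetilde{\mathcal{A}}$, and $L^\psi(\M;l)$, being their intersection, is closed.

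I expect the only point requiring genuine care to be this closedness argument — specifically the measurability of $\widetilde\varphi_{\psi^*}(e^{-s}h)$ for each fixed $s$ and the measure-continuity of the maps $a\mapsto\widetilde\varphi_{\psi^*}(e^{-s}h)\theta_s(a)-e^{-s}\widetilde\varphi_{\psi^*}(h)a$, both of which rest squarely on the $\sigma$-finiteness hypothesis that makes $h$ itself $\tau_{\mathcal{A}}$-measurable. Everything else is a routine transcription of the steps in Proposition~\ref{qntop}.
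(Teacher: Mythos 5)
Your proposal is correct and follows essentially the same route as the paper's own proof, which is likewise a transcription of Proposition \ref{qntop} using the preceding theorem ($t\mu_t(a)\leq\mu_1(a)$), with closedness obtained by writing $L^\psi(\M;l)$ as the intersection of the kernels of the measure-continuous maps $a\mapsto\theta_s(\widetilde{\varphi}_{\psi^*}(h)a)-e^{-s}\widetilde{\varphi}_{\psi^*}(h)a$, $s\in\mathbb{R}$. The only (immaterial) deviation is the definiteness step: the paper gets $\widetilde{\varphi}_{\psi^*}(h)a=0$ from $\|\widetilde{\varphi}_{\psi^*}(h)a\|_1=2\mu_2(\widetilde{\varphi}_{\psi^*}(h)a)\leq 2\mu_1(\widetilde{\varphi}_{\psi^*}(h))\mu_1(a)=0$ and the non-singularity of $h$, whereas you deduce $\mu_t(a)=0$ for all $t>0$ and conclude via the trace formula; both arguments are valid.
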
 

\begin{proof}
We consider only the space $L^\psi(\M;l)$.

First suppose that we are given $a \in L^\psi(\M;l)$ with $\mu_1(a)=0$. We have that 
$$\|\varphi_{\psi^*}(h)a\|_1 = 2\mu_2(\varphi_{\psi^*}(h)a) \leq 2\mu_1(\varphi_{\psi^*}(h))\mu_1(a)=0.$$But then $\varphi_{\psi^*}(h)a=0$, which ensures that $a=0$. The verification that $\mu_1$ satisfies a generalised triangle inequality, runs along similar lines as before. In this case the fact that $L^\psi(\M;l)$ is a closed subspace of $\widetilde{\mathcal{A}}$, follows from the observation that $L^\psi(\M;l)$ is nothing but the intersection of the kernels of the operators $$a \to \theta_s(\widetilde{\varphi}_{\psi^*}(h)a) - e^{-s}\widetilde{\varphi}_{\psi^*}(h)a \qquad s\in \mathbb{R},$$all of which are continuous in the topology of convergence in measure. The proof that the topology induced on $L^\psi(\M;l)$ by $\mu_1$ is precisely the topology of convergence in measure, is almost entirely analogous to that of the former case.
\end{proof}

\begin{theorem}
Let $\psi$ be an Orlicz function. The topology of convergence in measure on $L^\psi(\M;l)$ (respectively 
$L_\psi(\M;l)$) is normable whenever the upper Boyd index of $L^\psi(\mathbb{R})$ (respectively $L_\psi(\mathbb{R})$) is strictly less than 1. 
\end{theorem}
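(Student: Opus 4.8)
The plan is to mirror, in the one-sided $\sigma$-finite setting, the argument already carried out in Theorem \ref{ntop}. First I would record that $L^\psi(\M;l)$ and $L_\psi(\M;l)$ are linearly homeomorphic for the topology of convergence in measure: since $\varphi_{\psi^*}(t) \le \widetilde{\varphi}_{\psi^*}(t) \le 2\varphi_{\psi^*}(t)$, the operator $\gamma = \widetilde{\varphi}_{\psi^*}(h)\varphi_{\psi^*}(h)^{-1}$ and its inverse are bounded functions of $h$ — here one uses that in the $\sigma$-finite case $h$ is $\tau_{\mathcal{A}}$-measurable, so these are honest elements of $\mathcal{A}$ — and $a \mapsto \gamma a$ carries $L^\psi(\M;l)$ onto $L_\psi(\M;l)$ with $\mu_t(\gamma a) \le 2\mu_t(a)$ and $\mu_t(\gamma^{-1} b) \le \mu_t(b)$ for all $t \geq 0$. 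Moreover the Luxemburg and Orlicz norms on $L^\psi(\mathbb{R})$ are equivalent (\cite[4.8.14]{BS}), so $L^\psi(\mathbb{R})$ and $L_\psi(\mathbb{R})$ coincide up to equivalence of norm and hence have the same Boyd indices; thus the two halves of the statement carry the same hypothesis, and it suffices to treat $L_\psi(\M;l)$ under the assumption that the upper Boyd index $\overline{\alpha}$ of $L_\psi(\mathbb{R})$ is strictly less than $1$.

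As in the proof of Theorem \ref{ntop}, the hypothesis $\overline{\alpha} < 1$ is equivalent, via the fact that $L^{\psi^*}(\mathbb{R})$ is the K\"othe dual of $L_\psi(\mathbb{R})$, to $\underline{\alpha} := \underline{\alpha}(L^{\psi^*}(\mathbb{R})) > 0$. Set
$$k_{\psi^*}(t) = \sup_{0 < s} \frac{\varphi_{\psi^*}(st)}{\varphi_{\psi^*}(s)};$$
by \cite[4.8.17]{BS} this is the function of \cite[(8.36)]{BS}, it is non-decreasing with $k_{\psi^*}(1) = 1$, and by \cite[(8.40)]{BS} there is $0 < \delta < 1$ with $(t/2)^{\underline{\alpha}} \le k_{\psi^*}(t) \le t^{\underline{\alpha}/2}$ for $0 < t \le \delta$; together with the boundedness of $k_{\psi^*}$ on $[0,1]$, this makes $t \mapsto k_{\psi^*}(t)/t$ integrable on $[0,1]$. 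Now fix $a \in L_\psi(\M;l)$ and $t = e^s$ with $s \le 0$. Exactly as in the proof of Theorem \ref{mu1} and its one-sided variant established just above, the defining relations of $L^1(\M)$ and $L_\psi(\M;l)$ give $\theta_s(a) = \tfrac{1}{t}\, b_t a$ with $b_t = \varphi_{\psi^*}(\tfrac{1}{t}h)^{-1}\varphi_{\psi^*}(h)$ and $0 \le b_t \le \I$. Combining the general identity $\mu_t(a) = \mu_1(\theta_s(a))$ — a consequence only of $\tau_{\mathcal{A}} \circ \theta_s = e^{-s}\tau_{\mathcal{A}}$, as noted in the proof of Theorem \ref{mu1}, hence valid for any element of $\widetilde{\mathcal{A}}$ — with the Borel functional calculus estimate $\varphi_{\psi^*}(\lambda)/\varphi_{\psi^*}(\lambda/t) \le k_{\psi^*}(t)$, i.e.\ $\|b_t\|_\infty \le k_{\psi^*}(t)$, and with $\mu_1(b_t a) \le \|b_t\|_\infty \mu_1(a)$, we obtain
$$t\mu_t(a) = \mu_1(b_t a) \le k_{\psi^*}(t)\,\mu_1(a), \qquad 0 < t \le 1.$$

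Since $t \mapsto \mu_t(a)$ is non-increasing, this yields $\mu_1(a) \le \int_0^1 \mu_t(a)\,dt \le \bigl(\int_0^1 k_{\psi^*}(t)/t\,dt\bigr)\mu_1(a)$, so the functional $N(a) = \int_0^1 \mu_t(a)\,dt$ is equivalent to the quasinorm $\mu_1$ on $L_\psi(\M;l)$. Now $N$ is homogeneous and subadditive on $\widetilde{\mathcal{A}}$ (\cite{FK}), and it vanishes only at $0$ on $L_\psi(\M;l)$, being equivalent to $\mu_1$, which separates points there by Proposition \ref{sigmaqntop}; hence $N$ is a genuine norm, and by Proposition \ref{sigmaqntop} the $\mu_1$-topology — equivalently the $N$-topology — is the topology of convergence in measure, which is therefore normable. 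Transporting along $a \mapsto \gamma a$ gives the same conclusion for $L^\psi(\M;l)$. I do not expect a genuine obstacle here: the argument is that of Theorem \ref{ntop} with the two-sided conjugation $d_t^{1/2}(\cdot)d_t^{1/2}$ replaced by the one-sided multiplication $b_t(\cdot)$, which changes nothing since only $\|b_t\|_\infty \le k_{\psi^*}(t)$ and the submultiplicativity $\mu_1(xy) \le \|x\|_\infty\mu_1(y)$ are used. The only points needing a line of care are invoking $\sigma$-finiteness so that $h$, and hence $\varphi_{\psi^*}(h)$, $b_t$ and $\gamma$, are bounded or measurable operators rather than merely affiliated objects, and recording that $\mu_t(a) = \mu_1(\theta_s(a))$ is fully general and so applies here verbatim.
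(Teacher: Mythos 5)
Your proposal is correct and follows the same route the paper intends: the paper's proof of this theorem is literally the single line ``the proof of Theorem \ref{ntop} easily adapts,'' and what you have written is precisely that adaptation, replacing the two-sided conjugation $d_t^{1/2}(\cdot)d_t^{1/2}$ by the one-sided multiplication $b_t(\cdot)$ and using only $\|b_t\|_\infty \leq k_{\psi^*}(t)$ together with the dilation-function estimates from \cite{BS} and the subadditivity of $\int_0^1\mu_t(\cdot)\,dt$ from \cite{FK}. The preliminary reduction via the bounded operator $\gamma$ and the point about measurability of $h$ in the $\sigma$-finite case are exactly the observations the paper relies on implicitly, so there is no gap.
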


\begin{proof} The proof of Theorem \ref{ntop} easily adapts.
\end{proof}

Our next result describes the content of K\"othe duality in this context.

\begin{proposition}\label{K3fin} For the spaces defined above we have that 
$$L_{\psi^*}(\M;r) = \{a\in \widetilde{\mathcal{A}}: ab \in L^1(\M), \mbox{for all } b\in L^\psi(\M;l)\}$$and 
$$L^{\psi^*}(\M;r) = \{a\in \widetilde{\mathcal{A}}: ab \in L^1(\M), \mbox{for all } b\in L_\psi(\M;l)\}.$$
\end{proposition}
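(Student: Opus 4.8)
The plan is to establish the first identity (the second follows by swapping the roles of $\psi$ and $\psi^*$ and using that $(\psi^*)^* = \psi$). The strategy is to reduce everything to the already-developed crossed-product machinery and to K\"othe duality for semifinite algebras (Proposition~\ref{kothe}), using crucially the fact, valid in the $\sigma$-finite case, that $h = \frac{d\widetilde{\nu}}{d\tau_{\mathcal{A}}}$ is $\tau_{\mathcal{A}}$-measurable. This measurability means all the operators $\widetilde{\varphi}_{\psi^*}(h)$, $\varphi_{\psi^*}(h)$, $h^{1/2}$, etc., are genuine elements of $\widetilde{\mathcal{A}}$ rather than merely affiliated extended-positive-part objects, so the ``left'' and ``right'' spaces are honestly defined and the juxtapositions in the statement are ordinary strong products.

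First I would recast membership. Given $a \in \widetilde{\mathcal{A}}$, note $a \in L^\psi(\M;l)$ means precisely $\widetilde{\varphi}_{\psi^*}(h) a \in L^1(\M)$, i.e.\ (using $b \in L^1(\M) \Leftrightarrow \theta_s(b) = e^{-s} b$) that $\theta_s(\widetilde{\varphi}_{\psi^*}(h) a) = e^{-s}\widetilde{\varphi}_{\psi^*}(h) a$ for all $s$. The natural bijection is $a \mapsto \widetilde{\varphi}_{\psi^*}(h) a$, which I claim carries $L^\psi(\M;l)$ onto a copy of the Haagerup space $L^1(\M)$ decorated by the fundamental function; more usefully, for the semifinite template one identifies $L^\psi(\M;l)$ with the ``left'' variant of $L^\psi(\M,\tau_\M)$ via $a \otimes \widetilde{\varphi}_{\psi^*}(e^t) \mapsto \dots$ exactly as in Corollary~\ref{Lux}. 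The point of this step is to transport the problem to the classical K\"othe-duality statement $L_{\psi^*}(X,\Sigma,m) = \{f : fg \in L^1 \text{ for all } g \in L^\psi\}$, which holds because $L^{\psi}$ and $L_{\psi^*}$ are a K\"othe dual pair of Banach function spaces (Proposition~\ref{kothe} and \cite[Theorem 4.8.14]{BS}).

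Next I would prove the two inclusions of the set identity directly in $\widetilde{\mathcal{A}}$. For ``$\subseteq$'': if $a \in L_{\psi^*}(\M;r)$, i.e.\ $\varphi_\psi(h) a \in L^1(\M)$, and $b \in L^\psi(\M;l)$, i.e.\ $\widetilde{\varphi}_{\psi^*}(h) b \in L^1(\M)$, then writing $ab = (a \varphi_\psi(h)^{1/2})(\varphi_\psi(h)^{-1/2}\cdot)$—more cleanly, using $\varphi_\psi(t)\widetilde{\varphi}_{\psi^*}(t) = t$ so that $\varphi_\psi(h)^{1/2}$ and $\widetilde{\varphi}_{\psi^*}(h)^{1/2}$ multiply to $h^{1/2}$—I would factor $ab$ through $L^2(\M) \cdot L^2(\M) \subseteq L^1(\M)$ by checking the $\theta_s$-homogeneity: $\theta_s(ab) = e^{-s}ab$ follows from $\theta_s(\varphi_\psi(h)a) = e^{-s}\varphi_\psi(h)a$ and $\theta_s(\widetilde{\varphi}_{\psi^*}(h)b) = e^{-s}\widetilde{\varphi}_{\psi^*}(h)b$ together with $\theta_s(h) = e^{-s}h$, modulo a spectral truncation argument in the spirit of the proof of Theorem~\ref{K3gen} to handle the non-measurability issues cleanly. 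For ``$\supseteq$'': suppose $ab \in L^1(\M)$ for every $b \in L^\psi(\M;l)$. Feeding in the specific elements $b = \widetilde{\varphi}_{\psi^*}(h)^{-1}(g \otimes \cdots)$ ranging over enough test elements—concretely, the elements of the form $\widetilde{\varphi}_{\psi^*}(h)^{-1} e$ or $\widetilde{\varphi}_{\psi^*}(h)^{-1}[eh^{1/2}]$ for projections $e \in \mathfrak{n}_\nu$, which lie in $L^\psi(\M;l)$ since $h$ is measurable—forces $\varphi_\psi(h) a$ to pair boundedly against a norming family, and then an appeal to the classical characterization (Proposition~\ref{kothe}) transported back through the crossed-product identification yields $\varphi_\psi(h) a \in L^1(\M)$, i.e.\ $a \in L_{\psi^*}(\M;r)$.

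The main obstacle I anticipate is the ``$\supseteq$'' direction: showing that the a priori weak hypothesis ``$ab \in L^1(\M)$ for all $b \in L^\psi(\M;l)$'' is strong enough to force the structural conclusion $\varphi_\psi(h) a \in L^1(\M)$. The difficulty is precisely the one already met in Theorem~\ref{kothecross} and Remark~\ref{rem:b}: one must reconstruct $a$ as a genuine element of the right space from its behaviour against a sufficiently rich family of test operators, and must ensure the limiting/summation procedure (over an increasing net of finite-trace projections $e_\lambda$ with $\sum_\lambda e_\lambda = \I$) actually converges inside $\widetilde{\mathcal{A}}$ rather than merely in the extended positive part. Here the $\sigma$-finiteness and measurability of $h$ should make life easier than in Theorem~\ref{kothecross}, because $\widetilde{\varphi}_{\psi^*}(h)$ is invertible on its range with measurable inverse, so the required test elements genuinely belong to $L^\psi(\M;l)$ and the reconstruction of $a$ can proceed without passing to $\widehat{\M}_+$. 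I would therefore expect the argument to be a streamlined version of the proof of Theorem~\ref{kothecross}, with Lemma~\ref{arblemma}'s left/right-support technology replaced by the simpler fact that multiplication by the bounded, boundedly-invertible (on the relevant spectral subspaces) operators $\varphi_\psi(h)^{\pm 1}$, $\widetilde{\varphi}_{\psi^*}(h)^{\pm 1}$ is a homeomorphism of $\widetilde{\mathcal{A}}$.
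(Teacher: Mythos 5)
Your first inclusion ($L_{\psi^*}(\M;r)$ contained in the right-hand set) is essentially the paper's argument: writing $ab=(a\varphi_{\psi}(h))\,h^{-1}\,(\widetilde{\varphi}_{\psi^*}(h)b)$ and using $\theta_s(h)=e^{-s}h$ together with the $e^{-s}$-homogeneity of the two $L^1$-factors gives $\theta_s(ab)=e^{-s}ab$, hence $ab\in L^1(\M)$; no detour through $L^2\cdot L^2$ or spectral truncation is needed. The genuine gap is in your ``$\supseteq$'' direction. The test elements you propose do not lie in $L^\psi(\M;l)$: for $b=\widetilde{\varphi}_{\psi^*}(h)^{-1}e$ one gets $\widetilde{\varphi}_{\psi^*}(h)b=e\in\M$, and for $b=\widetilde{\varphi}_{\psi^*}(h)^{-1}[eh^{1/2}]$ one gets $[eh^{1/2}]\in L^2(\M)$; neither belongs to $L^1(\M)$, since in this Haagerup-type setting $L^1(\M)$ consists exactly of those $x$ with $\theta_s(x)=e^{-s}x$, so $\M\cap L^1(\M)=\{0\}$ and $L^2(\M)\cap L^1(\M)=\{0\}$. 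Moreover your planned appeal to Proposition \ref{kothe} ``transported back through the crossed-product identification'' is not available: in this section $\M$ is a general $\sigma$-finite (possibly type III) algebra equipped only with a state $\nu$, so there is no trace $\tau_\M$ and no semifinite K\"othe duality to transport; the heavy reconstruction you anticipate (norming families, an analogue of Theorem \ref{kothecross} or Lemma \ref{arblemma}) therefore rests on machinery that does not apply here.

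What is missing is the one observation that makes this direction trivial, and which is the paper's entire argument for it: because $\nu$ is a state, $h=\frac{d\widetilde{\nu}}{d\tau_{\mathcal{A}}}$ itself lies in $L^1(\M)$, and the fundamental-function identity $\widetilde{\varphi}_{\psi^*}(t)\varphi_{\psi}(t)=t$ gives $\widetilde{\varphi}_{\psi^*}(h)\varphi_{\psi}(h)=h\in L^1(\M)$, i.e.\ the single element $b=\varphi_{\psi}(h)$ already belongs to $L^\psi(\M;l)$. Plugging this $b$ into your hypothesis yields $a\varphi_{\psi}(h)\in L^1(\M)$ at once, which is precisely the defining condition for $a\in L_{\psi^*}(\M;r)$ (watch also your occasional left/right slip, e.g.\ writing $\theta_s(\varphi_\psi(h)a)$ where the right-hand space requires $a\varphi_\psi(h)$). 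So the correct proof is a two-line argument, not a streamlined version of Theorem \ref{kothecross}.
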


\begin{proof}
We only prove the first equality. Notice that $\widetilde{\varphi}_{\psi}(h)\varphi_{\psi^*}(h)=h$. This ensures that $\varphi_{\psi^*}(h) \in  L^{\psi^*}(\M;l)$, which in turn establishes the inclusion ``$\supset$''. To see the converse inclusion notice that for any $a \in L_{\psi^*}(\M;r)$ and $b \in L^{\psi}(\M;l)$ the fact that $a\widetilde{\varphi}_{\psi}(h), \varphi_{\psi}(h)b \in L^1(\M)$, ensures that $$\theta_s(ab) = \theta_s(a\widetilde{\varphi}_{\psi}(h).h^{-1}.\varphi_{\psi}(h)b) = e^{-s}a\widetilde{\varphi}_{\psi}(h).e^{s}h^{-1}.e^{-s}\varphi_{\psi}(h)b =e^{-s}ab$$for all $s\in \mathbb{R}$. Hence $ab \in L^1(\M)$. 
\end{proof}

\begin{corollary}\label{dualaction}
The spaces $L^\psi(\M;l)$, $L_{\psi^*}(\M;r)$ form a dual pair with the dual action of the spaces on each other defined by $$\langle a,b\rangle = \tr(ba) \quad\mbox{for all } \quad a\in L^\psi(\M;l), b\in L_{\psi^*}(\M;r).$$(A similar conclusion holds for the pair $L_\psi(\M;l)$, $L^{\psi^*}(\M;r)$. 
\end{corollary}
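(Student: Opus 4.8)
The plan is to verify the two defining properties of a dual pair: that the bilinear form $\langle a,b\rangle=\tr(ba)$ is well defined, and that it is non-degenerate in each variable. Well-definedness is immediate from Proposition~\ref{K3fin}: for $b\in L_{\psi^*}(\M;r)$ and $a\in L^\psi(\M;l)$ that characterisation gives $ba\in L^1(\M)$, so $\tr(ba)$ makes sense, and bilinearity is clear. Throughout I use the standing $\sigma$-finiteness assumption of this section, which makes $h$ and hence every Borel function of $h$ a $\tau_{\mathcal{A}}$-measurable element of $\widetilde{\mathcal{A}}$, so that the strong products below are legitimate and may be manipulated by the Borel functional calculus; I also use that, $\nu$ being a state, $h\in L^1(\M)$, and that $(L^1(\M),\M)$ is a non-degenerate dual pair under $(c,x)\mapsto\tr(xc)$.

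For non-degeneracy in the first variable I would argue as follows. Suppose $a\in L^\psi(\M;l)$ satisfies $\tr(ba)=0$ for every $b\in L_{\psi^*}(\M;r)$, and set $c=\widetilde{\varphi}_{\psi^*}(h)a$, which lies in $L^1(\M)$ by the very definition of $L^\psi(\M;l)$. The point is that for each $x\in\M$ the element $x\widetilde{\varphi}_{\psi^*}(h)$ belongs to $L_{\psi^*}(\M;r)$: using the quasi-concavity identity $\widetilde{\varphi}_{\psi^*}(t)\varphi_\psi(t)=t$ (which follows from $\varphi_X(t)\varphi_{X'}(t)=t$ and $\widetilde{\varphi}_\psi(t)=t/\varphi_{\psi^*}(t)$) one gets $x\widetilde{\varphi}_{\psi^*}(h)\varphi_\psi(h)=xh\in L^1(\M)$, which is exactly membership of $L_{\psi^*}(\M;r)$. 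Hence $0=\tr\bigl((x\widetilde{\varphi}_{\psi^*}(h))a\bigr)=\tr(xc)$ for all $x\in\M$, so $c=\widetilde{\varphi}_{\psi^*}(h)a=0$; since $\widetilde{\varphi}_{\psi^*}$ vanishes only at $0$ and $h$ is non-singular, $\widetilde{\varphi}_{\psi^*}(h)$ is injective with dense range, forcing $a=0$. Non-degeneracy in the second variable is dual: if $b\in L_{\psi^*}(\M;r)$ annihilates all of $L^\psi(\M;l)$, put $d=b\varphi_\psi(h)\in L^1(\M)$; for $y\in\M$ one has $\varphi_\psi(h)y\in L^\psi(\M;l)$ because $\widetilde{\varphi}_{\psi^*}(h)\varphi_\psi(h)y=hy\in L^1(\M)$, so $\tr(dy)=\tr\bigl(b(\varphi_\psi(h)y)\bigr)=0$ for all $y\in\M$, whence $d=0$ and, again by injectivity of $\varphi_\psi(h)$, $b=0$.

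The parenthetical assertion for the pair $L_\psi(\M;l),L^{\psi^*}(\M;r)$ should follow verbatim by interchanging $\psi$ and $\psi^*$, now invoking the second equality of Proposition~\ref{K3fin} and the companion identity $\varphi_{\psi^*}(t)\widetilde{\varphi}_\psi(t)=t$. I expect the only genuinely delicate points to be the manipulation of the strong products $x\varphi(h)$ and $\varphi(h)y$ and the functional-calculus identities relating $\widetilde{\varphi}_{\psi^*}(h)$, $\varphi_\psi(h)$ and $h$ — precisely where the $\sigma$-finiteness of $\M$ enters — but once $h$ is known to be $\tau_{\mathcal{A}}$-measurable these are routine, so I do not anticipate a substantial obstacle.
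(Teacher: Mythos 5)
Your proposal is correct, but it proves a different portion of the assertion than the paper does, so it is worth comparing the two. The paper's own proof is two lines: well-definedness of $\langle a,b\rangle=\tr(ba)$ via Proposition \ref{K3fin}, followed by the quantitative estimate $|\tr(ba)|\leq\tr(|ba|)=2\mu_2(ba)\leq 2\mu_1(b)\mu_1(a)$ (using $\|x\|_1=t\mu_t(x)$ and submultiplicativity of the singular values), which ties the pairing to the quasi-norms $\mu_1(\cdot)$ in the spirit of Corollary \ref{dualpair}; separation of points is not addressed there at all. You verify well-definedness in exactly the same way, and then supply precisely the non-degeneracy that the paper leaves implicit: testing $a$ against the elements $x\widetilde{\varphi}_{\psi^*}(h)$, whose membership of $L_{\psi^*}(\M;r)$ follows from $x\widetilde{\varphi}_{\psi^*}(h)\varphi_\psi(h)=xh\in L^1(\M)$ (with $h\in L^1(\M)$ because $\nu$ is a state), reduces the question to the non-degeneracy of the $(\M,L^1(\M))$ duality, and the injectivity of $\widetilde{\varphi}_{\psi^*}(h)$ (faithfulness of $\nu$) then forces $a=0$; the symmetric argument with $\varphi_\psi(h)y\in L^\psi(\M;l)$ handles the second variable. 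So your route buys the separation property that the standard notion of dual pair requires, while the paper's route buys the continuity estimate that makes the duality usable alongside the quasi-norm topology of Proposition \ref{sigmaqntop}; if you want your proof to subsume the paper's, simply append the one-line bound $|\langle a,b\rangle|\leq\tr(|ba|)=2\mu_2(ba)\leq2\mu_1(b)\mu_1(a)$. One small caveat: your blanket claim that $\sigma$-finiteness makes \emph{every} Borel function of $h$ a $\tau_{\mathcal{A}}$-measurable operator is too strong (for instance $h^{-1}$ need not be measurable when $\tau_{\mathcal{A}}$ is infinite), but the two functions you actually use, $\varphi_\psi$ and $\widetilde{\varphi}_{\psi^*}$, are continuous, increasing and of at most linear growth (the estimate $\varphi_\psi(t)\leq k_\psi(1+t)$ from Section 3), so $\varphi_\psi(h)$ and $\widetilde{\varphi}_{\psi^*}(h)$ are indeed $\tau_{\mathcal{A}}$-measurable and all your strong-product manipulations are legitimate.
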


\begin{proof}
Given any $a\in L^\psi(\M;l)$, $b\in L_{\psi^*}(\M;r)$ it follows from the proposition that $ba \in L^1(\M)$. We then have that $|\langle a,b\rangle| \leq \tr(|ba|) = 2\mu_2(ba) \leq 2\mu_1(b)\mu_1(a)$. 
\end{proof}

\section{Open questions}

We will focus our discussion on the space $L^\psi(\M)$. Similar comments of course hold for $L_\psi(\M)$. The one outstanding question to be settled in the case of semifinite algebras equipped with an \emph{fns} trace, is whether for any $a\in L_\psi(\M,\tau_\M)$ the norm $\|a\|_\psi^O$ is equal to (rather than merely equivalent to) the quantity $\mu_1(a\otimes\widetilde{\varphi}_{\psi^*}(e^t))$.

In the general case the most important issues that still need to be fully settled are the questions regarding normability and duality for the spaces $L^\psi(\M)$. These two questions are of course interrelated. Normability of a large class of Orlicz spaces has been proven, but can we do better? An obvious candidate for a norm on $L^\psi(\M)$ would be the quantity $$|\!|\!|a|\!|\!|_\psi=\sup\left\{\frac{|\tr(b_0ab_1^*)|}{\mu_1(b_0)\mu_1(b_1)} : b_0, b_1\in S_{\psi^*}\right\}.$$Given $a\in L^\psi(\M)$ and $b_0, b_1\in S_{\psi^*}$, it readily follows that $|\tr(b_0ab_1^*)|=3\mu_3(b_0ab_1^*)\leq 3\mu_1(b_0)\mu_1(b_1)\mu_1(a)$, and hence that $|\!|\!|a|\!|\!|_\psi\leq \mu_1(a)$. But are these 
quantities in fact equivalent on $L^\psi(\M)$? A related question is to determine the size of the space $\mathrm{span}[(S_{\psi^*})^*S_{\psi^*}]$ inside $L_{\psi}(\M)$. Finally in a situation where indeed $L^{\psi}(\M) = \mathrm{span}[(S^\psi)^*S^\psi]$ and 
$L_{\psi^*}(\M) = \mathrm{span}[(S_{\psi^*})^*S_{\psi^*}]$, do we then actually have that $$\wtr(ab)\leq K\mu_1(a)\mu_1(b)$$for some $K>0$ and all $a\in L^\psi(\M)$ and $b\in L_{\psi^*}(\M)$? 


\begin{thebibliography}{888888}

\bibitem[AB] {AB} C.D. Aliprantis and O. Burkinshaw, \emph{Locally Solid Riesz Spaces}, Academic Press, New York, 1978.

\bibitem[BS] {BS} G Bennet and R Sharpley, \textit{Interpolation of Operators}, 
Academic Press, London, 1988.

\bibitem[BrR] {BRo} O Bratteli and D W Robinson, \textit{Operator Algebras and
  Quantum Statistical Mechanics: Vol 1 (2nd ed)}, Springer, New York, 1987.

\bibitem[DDdP1] {DDdP} PG Dodds, T K.-Y Dodds and B de Pagter, Non-commutative 
Banach Function spaces, \textit{Math Z} \textbf{201}(1989), 583-597.

\bibitem[DDdP3] {DDdP3} PG Dodds, T K.-Y Dodds and B de Pagter, Noncommutative K\"{o}the duality, 
\textit{Trans Amer Math Soc} \textbf{339}(1993), 717-750.

\bibitem[FK] {FK} T Fack and H Kosaki, Generalized s-numbers of 
$\tau$-measurable operators, \textit{Pacific J Math} \textbf{123}(1986), 
269-300.

\bibitem[GL] {GL2} S Goldstein and J M Lindsay, Markov semigroups
KMS-symmetric for a weight, \textit{Math Ann} \textbf{313} (1999), 39--67.

\bibitem[GP] {GP} J Gustavsson and J Peetre, Interpolation of Orlicz spaces, \textit{Studia Math} \textbf{60} (1977), 33-59.

\bibitem[H] {Ha} U Haagerup, Operaor Valued Weights in von Neumann Algebras, I, \textit{J Funct Anal} \textbf{32}(1979), 175-206.

\bibitem[HJX] {HJX} U Haagerup, M Junge and Q Xu, A reduction method for noncommutative $L_p$-spaces 
and applications, \textit{Trans Amer Math Soc} \textbf{362}(2010), 2125-2165.

\bibitem[HuM] {HM} H Hudzik and L Maligranda, Amemiya norm equals Orlicz norm in general, \textit{Indag Math} \textbf{44}(2000), 573-585.

\bibitem[KR] {KR} M A Krasnoselsky and Y B Rutitsky, \textit{Convex functions and Orlicz spaces} (translated from russian), Hindustan Publishing Corp., Delhi, 1962.

\bibitem[LM] {LM} L E Labuschagne, W. A. Majewski, Maps on Noncommutative Orlicz Spaces, \textit{Illinois J Math}, to appear.

\bibitem[S] {S} V S Sunder, \textit{An Invitation to von Neumann Algebras},
Springer-Verlag, New York, 1987.

\bibitem[Tak]{Tak} M Takesaki, \textit{Theory of Operator
Algebras, Vol I,II,III}, Springer, New York, 2003.

\bibitem[Tp] {Tp} M Terp, `$L^p$ spaces associated with von
Neumann algebras', Notes, K{\o}benhavns Universitet, Matematisk Institut, Rapport No. 3a/3b, K{\o}benhavn, 1981.

\bibitem[X] {X} Q Xu, Analytic functions with values in lattices and symmetric spaces 
of measurable operators, \textit{Math Proc Cambridge Philos Soc} \textbf{109}(1991), 541-563.

\end{thebibliography}
\end{document}